\documentclass[10pt]{amsart}

\pdfoutput=1

\usepackage{mathrsfs, amsmath, amsthm, amssymb, mathtools}%
\usepackage{tikz}%
\usepackage{hyperref}%
\usepackage[all]{xy}%
\xyoption{2cell}%
\UseAllTwocells%

\hypersetup{
 pdftitle={The formal theory of Tannaka duality},
 pdfauthor={Daniel Schaeppi},
 pdfkeywords={Tannaka duality} {pseudomonoids} {Hopf monoidal comonads}
}


\DeclareMathOperator{\id}{id}
\DeclareMathOperator{\el}{el}
\DeclareMathOperator{\ev}{ev}
\DeclareMathOperator{\op}{op}
\DeclareMathOperator{\rev}{rev}

\DeclareMathOperator{\Nat}{Nat}
\DeclareMathOperator{\Lan}{Lan}

\DeclareMathOperator{\Mod}{\mathbf{Mod}}
\DeclareMathOperator{\Rep}{Rep}
\DeclareMathOperator{\Vect}{\mathbf{Vect}}
\DeclareMathOperator{\Coalg}{\mathbf{Coalg}}
\DeclareMathOperator{\fgp}{fgp}
\DeclareMathOperator{\Hom}{Hom}
\DeclareMathOperator{\Spec}{Spec}

\DeclareMathOperator{\Map}{Map}

\DeclareMathOperator{\cospan}{Cospan}
\DeclareMathOperator{\CommAlg}{\mathbf{CommAlg}}

\DeclareMathOperator{\Coact}{Coact}
\DeclareMathOperator{\Gray}{\mathbf{Gray}}
\DeclareMathOperator{\PsMon}{\mathbf{PsMon}}
\DeclareMathOperator{\BrPsMon}{\mathbf{BrPsMon}}
\DeclareMathOperator{\SymPsMon}{\mathbf{SymPsMon}}
\DeclareMathOperator{\MonComon}{\mathbf{MonComon}}
\DeclareMathOperator{\BrMonComon}{\mathbf{BrMonComon}}
\DeclareMathOperator{\SymMonComon}{\mathbf{SymMonComon}}
\DeclareMathOperator{\Psa}{\mathbf{Psa}}

\DeclareMathOperator{\proj}{proj}
\DeclareMathOperator{\Fil}{Fil}
\DeclareMathOperator{\MF}{MF}
\DeclareMathOperator{\colim}{colim}

\DeclareMathOperator{\Cocts}{\mathbf{Cocts}}

\DeclareMathOperator{\Cat}{\mathbf{Cat}}
\DeclareMathOperator{\CAT}{\mathbf{CAT}}
\DeclareMathOperator{\Set}{\mathbf{Set}}
\DeclareMathOperator{\Ab}{\mathbf{Ab}}

\DeclareMathOperator{\Comon}{\mathbf{Comon}}
\DeclareMathOperator{\Comod}{\mathbf{Comod}}

\newcommand{\ca}[1]{\mathscr{#1}}
\newcommand{\VNat}{\ca{V}\mbox{-}\Nat}
\newcommand{\Vcat}{\ca{V}\mbox{-}\Cat}
\newcommand{\VCAT}{\ca{V}\mbox{-}\CAT}
\newcommand{\Prs}[1]{\mathcal{P}\ca{#1}}

\newcommand{\dual}[1]{{#1}^{\circ}}
\newcommand{\ldual}[1]{{#1}^{\vee}}

\newcommand{\defl}{\mathrel{\mathop:}=}


\theoremstyle{plain}
\newtheorem{thm}{Theorem}[subsection]
\newtheorem{prop}[thm]{Proposition}
\newtheorem{lemma}[thm]{Lemma}
\newtheorem{cor}[thm]{Corollary}

\theoremstyle{definition}
\newtheorem{example}[thm]{Example}
\newtheorem{rmk}[thm]{Remark}
\newtheorem{dfn}[thm]{Definition}
\newtheorem{notation}[thm]{Notation}

\newtheoremstyle{citing}{}{}{\itshape}{}{\bfseries}{.}{ }{\thmnote{#3}}
\theoremstyle{citing}
\newtheorem{cit}{}

\newtheoremstyle{citingdfn}{}{}{}{}{\bfseries}{.}{ }{\thmnote{#3}}
\theoremstyle{citingdfn}

\makeatletter
\def\slashedarrowfill@#1#2#3#4#5{%
  $\m@th\thickmuskip0mu\medmuskip\thickmuskip\thinmuskip\thickmuskip
   \relax#5#1\mkern-7mu%
   \cleaders\hbox{$#5\mkern-2mu#2\mkern-2mu$}\hfill
   \mathclap{#3}\mathclap{#2}%
   \cleaders\hbox{$#5\mkern-2mu#2\mkern-2mu$}\hfill
   \mkern-7mu#4$%
}
\def\rightslashedarrowfill@{%
  \slashedarrowfill@\relbar\relbar\mapstochar\rightarrow}
\newcommand\xslashedrightarrow[2][]{%
  \ext@arrow 0055{\rightslashedarrowfill@}{#1}{#2}}
\makeatother

\keywords{Tannaka duality, pseudomonoids, Hopf monoidal comonads}
\subjclass[2000]{16T05, 18D20}

\title{The formal theory of Tannaka duality}
\author{Daniel Sch\"appi}
\address{University of Chicago, Department of Mathematics, 5734 S University Avenue, 60637 Chicago}

\begin{document}

\begin{abstract}
 A Tannakian category is an abelian tensor category equipped with a fiber functor and additional structures which ensure that it is equivalent to the category of representations of some affine groupoid scheme acting on the spectrum of a field extension. If we are working over an arbitrary commutative ring rather than a field, the categories of representations cease to be abelian. We provide a list of sufficient conditions which ensure that an additive tensor category is equivalent to the category of representations of an affine groupoid scheme acting on an affine scheme, or, more generally, to the category of representations of a Hopf algebroid in a symmetric monoidal category. In order to do this we develop a ``formal theory of Tannaka duality'' inspired by Ross Street's ``formal theory of monads.'' We apply our results to certain categories of filtered modules which are used to study $p$-adic Galois representations.
\end{abstract}

\maketitle

\section{Introduction}\label{INTRODUCTION}

 Tannaka duality is a duality between affine group schemes over a field $k$ and their categories of representations. It was developed by Saavedra Rivano, Deligne and Milne \cite{SAAVEDRA, DELIGNE_MILNE, DELIGNE} and answers the following questions:

\begin{enumerate}
 \item[(1)] The reconstruction problem: can an affine group scheme be reconstructed from its category of representations?
 \item[(2)] The recognition problem: which $k$-linear functors $w \colon \ca{A} \rightarrow \Vect_k$ are equivalent to a forgetful functor $V \colon \Rep(G) \rightarrow \Vect_k$ for an affine group scheme $G$ over $k$?
\end{enumerate}

 Tannaka duality can also be extended to affine groupoid schemes acting on $\Spec(K)$ for some field extension $K \supseteq k$. In \cite[Th\'eor\`eme~1.12(iii)]{DELIGNE} it was shown that any affine groupoid scheme $G$ acting on $\Spec(K)$ can be reconstructed from the forgetful functor $\Rep(G) \rightarrow \Vect_K$.
 
 Recall that a monoidal category is called \emph{autonomous} if every object has a dual. An autonomous symmetric monoidal abelian $k$-linear category is called a \emph{rigid tensor category}. In \cite[Th\'eor\`eme~1.12(ii)]{DELIGNE}, Deligne proved that for a rigid tensor category $\ca{A}$, a $k$-linear tensor functor $w\colon \ca{A} \rightarrow \Vect_K$ is equivalent to a forgetful functor $\Rep(G) \rightarrow \Vect_K$ for an affine groupoid scheme $G$ acting on $\Spec(K)$ if and only if $w$ is faithful and exact. Rigid tensor categories for which such a functor exists are called \emph{Tannakian categories}, and strong monoidal exact $k$-linear functors are called \emph{fiber functors}. If a Tannakian category admits a fiber functor for $K=k$, it is called \emph{neutral}.

 We can ask the corresponding questions for affine group schemes over arbitrary rings, and affine groupoid schemes acting on an arbitrary affine scheme. However, if we want to study \emph{autonomous} categories of representations, we have to restrict our attention to representations whose underlying modules are finitely generated and projective. Then the resulting category is no longer abelian, and a lot of the techniques used in \cite{DELIGNE} are no longer applicable. Using an alternative, more categorical approach, we will prove a generalized version of the reconstruction and recognition results of Saavedro Rivano and Deligne. The motivation for this project was the following question posed by Richard Pink.

 \subsection{Motivating example}\label{F_MODULES_INTRO_SECTION}
 Let $k$ be a perfect field of characteristic $p>0$, and let $W$ be the ring of Witt-vectors with coefficients in $k$. We write $W_n$ for the quotient ring $W\slash p^n W$. In \cite{FONTAINE_LAFFAILLE}, J.-M.\ Fontaine and G.\ Laffaille defined the category $\MF_{fl}$, which consists of filtered $W$-modules of finite length with some additional structure (see Section~\ref{F_MODULES_SECTION} for a precise definition). These categories are used to construct $p$-adic Galois representations. There are faithful functors from certain subcategories of $\MF_{fl}$ to the category of Galois representations on $\mathbb{Z}_p$-modules of finite length (see \cite[Th\'eor\`eme~3.3]{FONTAINE_LAFFAILLE}). By passing to a limit, one obtains continuous Galois representations on free $\mathbb{Z}_p$-modules (see \cite[\S~7.14 and Proposition~7.15]{FONTAINE_LAFFAILLE}), and by inverting $p$ one obtains crystalline Galois representations. The category occurring in this last step is a $\mathbb{Q}_p$-linear Tannakian category whose fiber functor lands in the category of vector spaces over the field of fractions of $W$ (see \cite[Remarques~7.10]{FONTAINE_LAFFAILLE}). Richard Pink asked whether it is possible to apply the Tannakian philosophy at an earlier stage of this process, where the categories involved are $\mathbb{Z}\slash p^n\mathbb{Z}$-linear or $\mathbb{Z}_p$-linear.

We follow J.-P.\ Wintenberger and call the objects of $\MF_{fl}$ \emph{filtered $F$-modules}\footnote{ In \cite{WINTENBERGER}, the objects of $\MF_{fl}$ are called `$F$-modules filtr\'es sur $W$'.}. We let $MF^n_{\proj}$ be the full subcategory of $\MF_{fl}$ consisting of those filtered $F$-modules whose underlying $W$-modules are finitely generated projective $W_n$-modules. This category is $\mathbb{Z}\slash p^n \mathbb{Z}$-linear, and the forgetful functor gives a $\mathbb{Z} \slash p^n \mathbb{Z}$-linear functor $w \colon \MF^n_{\proj} \rightarrow \Mod^{\fgp}_{W_n}$. In Section~\ref{F_MODULES_SECTION} we will prove the following theorem.

\begin{cit}[Theorem~\ref{FILTERED_MODULE_THEOREM}]
 There is a groupoid $G=\Spec(L_n)$ acting on $\Spec(W_n)$ and a symmetric strong monoidal equivalence $\MF^n_{\proj} \simeq \Rep(G)$, where $\Rep(G)$ is the category of dualizable representations of $G$.
 \end{cit}

\subsection{Generalization to arbitrary cosmoi}\label{COSMOS_SECTION}
 An affine groupoid $G=\Spec(H)$ acting on $\Spec(B)$ over $\Spec(R)$ is precisely a Hopf algebroid $(B,H)$ in the monoidal category $\Mod_R$. The notion of a Hopf algebroid makes sense in \emph{any} symmetric monoidal category, and we can equally well study the recognition and reconstruction problems in this context. In order to have the desired categorical techniques available, we need our symmetric monoidal categories to be complete, cocomplete and closed. Following B\'enabou \cite{BENABOU} and Kelly \cite{KELLY_COSMOS}, we call a complete and cocomplete symmetric monoidal closed category a \emph{cosmos}. Hopf algebroids in cosmoi of \emph{graded} modules play an important role in algebraic topology (cf.\ \cite{HOVEY}).
 
 For certain classes of cosmoi these questions have already been studied: T.\ Wedhorn studied the reconstruction problem over Dedekind rings, and the recognition problem for valuation rings (see \cite{WEDHORN}). B.\ Day solved both problems for finitely presentable cosmoi for which the full subcategory of objects with duals is closed under finite limits and colimits (see \cite{DAY}). P.\ McCrudden used a result of B.\ Pareigis to solve the reconstruction problem for \emph{Maschkean} categories, which are certain abelian monoidal categories in which all monomorphisms split (see \cite{PAREIGIS}, \cite{MCCRUDDEN_MASCHKE}). 
 
 All these approaches make the assumption that the category of objects with duals is closed under finite limits. But an $R$-module has a dual if and only if it is finitely generated and projective, and a kernel of a morphism between projective modules is in general not projective; therefore, the above results cannot be applied to the case where $\ca{V}$ is the cosmos $\Mod_R$ of $R$-modules for a general commutative ring $R$, such as the case of the example described in Section~\ref{F_MODULES_INTRO_SECTION}. 
 
 Nori's Tannakian Theorem (unpublished, see \cite{BRUGUIERES}) concerns Hopf algebras in categories of Pro-objects of finitely generated modules, and it is only applicable if the homological dimension is at most one. Our main example of filtered $F$-modules concerns $\mathbb{Z}/p^n \mathbb{Z}$-linear categories, and $\mathbb{Z}/p^n \mathbb{Z}$ has infinite homological dimension.
 
 There are various generalizations of Tannaka duality to quantum algebra, e.g.\, in the work of Ph\`ung H\^o Hai \cite{PHUNG_HO_HAI}, and Korn\'el Szlach\'anyi \cite{SZLACHANYI}. While Hai stays in the world of abelian categories, Szlach\'anyi also encounters the problem that the base category is a category of finitely generated projective modules. His results inspired some of the specialized theorems we prove for the case of cosmoi of (graded) $R$-modules. Note that Szlach\'anyi only studies noncommutative bialgebroids. The forgetful functors of their categories of comodules are strong monoidal for the nonsymmetric tensor product of $R$-$R$-bimodules. Thus his results can't be applied to prove facts about affine groupoids.

\subsection{Discussion of results}\label{DESCRIPTION_OF_RESULTS_SECTION}

 Deligne's proof of the recognition result for the case of fields proceeds in several steps. Under the contravariant equivalence between affine schemes over $\Spec(k)$ and $k$-algebroids, the affine groupoids correspond to Hopf algebroids. A Hopf algebroid $(B,H)$ in the category of $k$-vector spaces consists of two $k$-algebras $B$ and $H$, together with two homomomorphisms of $k$-algebras $B \rightarrow H$ (called the left and right unit, corresponding to the source and target maps). These turn $H$ into a $B$-$B$-bimodule. In addition to this, a Hopf algebroid has a comultiplication $H \rightarrow H\mathop{{\otimes}_B} H$ and a counit $H \rightarrow B$ (corresponding to the composition operation and the map sending an object to its identity in the affine groupoid), and an antipode $H \rightarrow H$ (corresponding to the map which sends a morphism to its inverse). 
 
 A \emph{$B$-$B$-coalgebroid} is a $B$-$B$-bimodule $C$ with a comultiplication $C \rightarrow C \mathop{\otimes_B} C$ and a counit $C \rightarrow B$, which are coassociative and counital. Since we are working over $k$, a $B$-$B$-bimodule is understood to be an abelian group with a left and a right $B$-action such that the two $k$-actions coincide. In particular, if we take $B=k$, then a $B$-$B$-coalgebroid is simply a $k$-coalgebra. Note that every Hopf algebroid is in particular a coalgebroid. Moreover, the structure of a coalgebroid is the bare minimum needed to define a category of comodules. A \emph{$C$-comodule} is a $B$-module $M$ endowed with a coaction $M \rightarrow C\mathop{\otimes_B} M$ which is compatible with the comultiplication and the counit.
 
 It is in fact convenient to think of a Hopf algebroid as a coalgebroid endowed with additional structure. Deligne first studies the relationship between $k$-linear categories equipped with a $k$-linear functor to the category of finite dimensional $K$-vector spaces on the one hand, and $K$-$K$-coalgebroids on the other. In a second step, he shows that symmetric monoidal structures induce a commutative algebra structure on the corresponding coalgebroid, and thirdly that the existence of duals implies the existence of an antipode.
 
 \begin{dfn}
 Let $R$ be a commutative ring, and let $B$ be a commutative $R$-algebra. A \emph{Cauchy comodule} of a $B$-$B$-coalgebroid $C$ is a comodule whose underlying $B$-module is finitely generated and projective. The category of Cauchy comodules of $C$ is denoted by $\Rep(C)$.
 \end{dfn}

 Ross Street observed that the functor which sends a coalgebra to its category of Cauchy comodules, equipped with its forgetful functor, has a left biadjoint. He called this biadjunction the \emph{Tannakian biadjunction} (cf.\ \cite[\S~16]{STREET_QUANTUM_GROUPS}). We give a generalized construction of the Tannakian biadjunction, and show that it is symmetric monoidal. The latter will take up a large part of the second half of the paper, and it provides a conceptual explanation for why the left biadjoint sends (weak) monoids in the domain (monoidal categories) to monoids in the codomain (bialgebroids). Under this interpretation, the reconstruction problem and the recognition problem have a precise mathematical formulation:
 \begin{enumerate}
  \item[(1)] Reconstruction problem: when is the counit of the Tannakian biadjunction an isomorphism?
  \item[(2)] Recognition problem: when is the unit of the Tannakian biadjunction an equivalence?
 \end{enumerate}

 Note that the asymmetry between these two problems is only apparent: a morphism of coalgebroids is an equivalence if and only if it is an isomorphism, because there is no notion of natural transformation between morphisms of coalgebroids.
 
 We prove a necessary and sufficient condition for the counit to be an isomorphism, and a sufficient condition for the unit to be an equivalence, both for arbitrary cosmoi as a base. However, the conditions simplify considerably for the cosmos $\Mod_R$ of modules of a commutative ring $R$. The Tannakian biadjunction relevant for this case is the biadjunction
 \[
  \xymatrix{B\mbox{-}B\mbox{-}\Coalg \rrtwocell<5>^{L(-)}_{\Rep(-)}{`\perp} && R\mbox{-}\Cat \slash \Mod_B}
 \]
 where $B$ denotes a fixed $R$-algebra, and $R\mbox{-}\Cat \slash \Mod_B$ denotes the 2-category whose objects are $R$-linear functors with codomain $\Mod_B$. The neutral Tannakian biadjunction is obtained by taking $B=R$.
 
 The classical reconstruction result relies on the fact that every comodule of a Hopf algebroid can be written as a union of finite dimensional comodules. A union is a special case of a colimit, and we arrive at a necessary and sufficient condition for reconstruction if we replace inclusions by arbitrary maps. We start by giving a description of the relevant diagram.
 
 Given a subcategory $\ca{A} \subseteq \ca{C}$ and an object $C \in \ca{C}$, we write $\ca{A} \slash C$ for the \emph{category of $\ca{A}$-objects over $C$}. The objects of $\ca{A} \slash C$ are morphisms $ \phi \colon A \rightarrow C$ in $\ca{C}$ whose domain lies in $\ca{A}$, and the morphisms between $\phi \colon A \rightarrow C$ and $\phi^{\prime} \colon A^{\prime} \rightarrow C$ are the morphisms $A \rightarrow A^{\prime}$ which make the evident triangle commute. The \emph{domain functor} $D \colon \ca{A}\slash C \rightarrow \ca{C}$ is the functor which sends an object $\phi$ to its domain. The \emph{tautological cocone} on $D$ is the cocone with vertex $C$ whose component at the object $\phi \colon A \rightarrow C$ is $\phi$ itself, thought of as a morphism from $D(\phi)$ to $C$.

 The following theorem solves the reconstruction problem in the neutral case. It is a consequence of Theorem~\ref{DENSE_RECONSTRUCTION_THM}.
 
 \begin{thm}\label{NEUTRAL_RECONSTRUCTION_PROBLEM}
 Let $R$ be a commutative ring, and let $G=\Spec(H)$ be an affine group scheme over $\Spec(R)$. Then the $H$-component of the counit of the Tannakian adjunction is an isomorphism if and only if the tautological cocone exhibits $H$, considered as a comodule over itself, as the colimit of the diagram
 \[
  D \colon \Rep(H) \slash H \rightarrow \Comod(H)  
 \]
 of Cauchy comodules over $H$.
\end{thm}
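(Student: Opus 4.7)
My plan is to deduce this theorem from Theorem~\ref{DENSE_RECONSTRUCTION_THM}, which is stated (in the excerpt) as the general cosmos-level characterization of when the counit of the Tannakian biadjunction is an isomorphism. The strategy is therefore largely one of specialization and translation: I would show that, in the concrete case where the base cosmos is $\Mod_R$ and the coalgebroid is a commutative Hopf algebra $H$, the general density/colimit condition provided by Theorem~\ref{DENSE_RECONSTRUCTION_THM} takes precisely the form of the claimed colimit condition on the tautological cocone with vertex $H$.

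First I would unpack the $H$-component of the counit, $\epsilon_H \colon L(\Rep(H), w_H) \to H$, where $w_H \colon \Rep(H) \to \Mod_R$ is the forgetful functor. The left biadjoint $L$ evaluated at $(\Rep(H), w_H)$ is constructed as a coend assembling the duals and coactions of Cauchy comodules; concretely, $\epsilon_H$ is the canonical arrow into $H$ determined by the family of coactions $\rho_M \colon M \to H \otimes M$ for $M \in \Rep(H)$, regarded via the matrix-coefficient correspondence.

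Next I would identify the universal property of $L(\Rep(H), w_H)$ in $\Comod(H)$ with that of $\colim D$. The key observation is the adjunction/Yoneda-style bijection: morphisms of comodules $M \to H$ (that is, objects of $\Rep(H)/H$) correspond under duality to matrix coefficients of $M$, and this is exactly the data assembled by the coend defining $L$. The tautological cocone on $D$ therefore transports to the universal cocone defining $L(\Rep(H), w_H)$, and the comparison map induced into $H$ matches $\epsilon_H$. Once this match is in place, the equivalence follows: $\epsilon_H$ is an isomorphism if and only if $H$ receives the universal map out of this cocone, which is exactly the colimit condition in the statement.

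The main obstacle I expect is bookkeeping the colimits on two levels at once. The condition provided by Theorem~\ref{DENSE_RECONSTRUCTION_THM} is phrased in terms of a colimit in $\Comod(H)$, whereas $L$ is built out of a construction visible at the level of underlying $R$-modules. To bridge this, I would verify that the colimit indexed by $\Rep(H)/H$ is preserved (or created) by the forgetful functor $\Comod(H) \to \Mod_R$; the natural approach is a cofinality argument showing that $\Rep(H)/H$ is suitably filtered, so that this forgetful functor preserves the colimit in question and the comodule-level density condition faithfully reflects the underlying-module density condition. Once this compatibility is established, the remainder of the argument is a direct comparison of universal properties.
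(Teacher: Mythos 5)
Your overall route is the paper's: Theorem~\ref{NEUTRAL_RECONSTRUCTION_PROBLEM} is obtained purely by specializing Theorem~\ref{DENSE_RECONSTRUCTION_THM} to the cosmos $\ca{V}=\Mod_R$ (whose finitely generated free modules form a dense autonomous generator) and to the base $\ca{B}=\ca{I}$, so that the comonad $C$ is the coalgebra $H$, the unique object $B\in\ca{B}$ yields the comodule $(CB,\delta_B)=(H,\delta)$, the slice $\Rep(C)\slash(CB,\delta_B)$ is $\Rep(H)\slash H$, and $\Prs{B}_C$ is $\Comod(H)$. The first two-thirds of your write-up is consistent with this, and the matrix-coefficient/coend discussion merely re-derives material already contained in the proof of Theorem~\ref{DENSE_RECONSTRUCTION_THM}.

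The difficulty is your final paragraph. First, the obstacle you identify is not actually present: the colimit condition in Theorem~\ref{DENSE_RECONSTRUCTION_THM} is already phrased as a conical colimit in $\Prs{B}_C=\Comod(H)$, exactly as in the statement you are proving, so no translation between a comodule-level and an underlying-module-level colimit is required to perform the specialization. Second, the fix you propose would fail even in the places where such a translation genuinely is used (inside the proof of Theorem~\ref{DENSE_RECONSTRUCTION_THM} and in Corollary~\ref{ENOUGH_CAUCHY_COMODULES_COR}): the category $\Rep(H)\slash H$ is not filtered in general --- it has finite coproducts because $\Rep(H)$ is additive, but there is no reason parallel pairs admit a coequalizing map within it --- so a cofinality-via-filteredness argument is not available. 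The correct, and much simpler, reason the forgetful functor $\Comod(H)\to\Mod_R$ preserves (indeed creates) \emph{all} colimits is that it is the comonadic forgetful functor, a left adjoint with right adjoint $H\otimes_R-$; the paper invokes exactly this fact. Replacing your filteredness step by this observation closes the argument and brings it into agreement with the paper's.
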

 
 For example, sufficient conditions for this to hold are that $H$ is flat and the category of Cauchy comodules forms a generator of the category of all comodules (cf.\ Corollary~\ref{ENOUGH_CAUCHY_COMODULES_COR}). As far as the author knows, it is an open question whether or not there are flat Hopf algebras such that the Cauchy comodules do \emph{not} form a generator of $\Comod(H)$.

 The classical recognition result concerns \emph{exact} $k$-linear functors. It turns out that we have to generalize left and right exactness separately. Right exactness concerns the preservation of cokernels, and the category of finitely generated projective modules is usually not closed under cokernels. We can easily deal with this situation by restricting our attention to those morphisms whose cokernel happens to be finitely generated projective.
 
 The generalization of left exactness is more subtle. Since the category of finitely generated projective modules is not closed under kernels, we can't expect that the domain category of a fiber functor has kernels. A \emph{flat} functor is a generalization of a left exact functor which makes sense for any domain category. In order to define flat functors we need to introduce the \emph{category of elements} of a functor $w \colon \ca{A} \rightarrow \Set$. This category is denoted by $\el(w)$, and its objects are pairs $(A,x)$, where $A \in \ca{A}$ and $x\in w(A)$. The morphisms $(A,x) \rightarrow (A^\prime, x^\prime)$ are given by the morphisms $f\colon A \rightarrow A^\prime$ in $\ca{A}$ with $w(f)(x)=x^\prime$. If $\ca{A}$ has finite limits, then a functor $w \colon \ca{A} \rightarrow \Set$ preserves finite limits if and only if the category $\el(w)$ is \emph{cofiltered}, that is, if and only if
\begin{itemize}
 \item the category $\el(w)$ is nonempty;
 \item for any two objects $(A,x),(A^\prime,x^{\prime}) \in \el(w)$, there is an object $(B,y) \in \el(w)$ together with morphisms $(B,y) \rightarrow (A,x)$, $(B,y) \rightarrow (A^\prime,x^{\prime})$; and
 \item for any two morphisms $f,g \colon (A,x) \rightarrow (A^\prime,x^{\prime})$ in $\el(w)$, there is an object $(B,y)$ and a morphism $h\colon (B,y) \rightarrow (A,x)$ such that $fh=gh$.
\end{itemize}

\begin{dfn}
 A functor $w \colon \ca{A} \rightarrow \Set$ (where $\ca{A}$ does not necessarily have finite limits) is called \emph{flat} if the category of elements of $w$ is cofiltered. If a functor lands in the category of modules of a commutative ring, then we call it flat if the composite with the evident forgetful functor to $\Set$ is flat.
 \end{dfn}

 In order to state our recognition theorem we have to explain one more point of terminology. There are basically two perspectives one can take on enriched categories. One point of view is that an enriched category is first and foremost an ordinary category, endowed with further structure that makes it enriched. This is very natural for $R$-linear or topological categories, for example, where the enrichment consists of additional structure on the hom-sets. The second point of view is that an enriched category consists of a class of objects, together with a \emph{hom-object} for any pair of objects. This hom-object is itself an object of some base category $\ca{V}$. The underlying unenriched category is then \emph{constructed} from this data by applying a canonical forgetful functor $\ca{V}\rightarrow \Set$. This point of view is more natural when the base is the category of differential graded $R$-modules. The canonical forgetful functor sends a differential graded module to the set of cycles of degree zero, which is not an ``underlying set.''
 
 Moreover, there are very natural base change functors which---when applied to each hom-object---change the underlying unenriched category significantly. Therefore we adopt the second point of view. In particular, Given an $R$-linear functor $F \colon \ca{A} \rightarrow \ca{B}$, we write $F_0 \colon \ca{A}_0 \rightarrow \ca{B}_0$ for the underlying functor between the underlying unenriched categories.
 
 The following theorem, which is part of Theorem~\ref{TANNAKA_AFFINE_GROUPOID_THM}, is proved by showing that the unit of the Tannakian adjunction is an equivalence under the stated assumptions.
 
 \begin{thm}
 Let $B$ be a commutative $R$-algebra, let $\ca{A}$ be an additive autonomous symmetric monoidal $R$-linear category, and let $w \colon \ca{A} \rightarrow \Mod_B$ be a symmetric strong monoidal $R$-linear functor. Suppose that
 \begin{enumerate}
\item[i)] 
 the functor $w_0$ is faithful and reflects isomorphisms;
\item[ii)] 
 the functor $w_0$ is flat, that is, the category $\el(w_0)$ of elements of $w_0$ is cofiltered;
\item[iii)]
 if the cokernel of $w_0(f)$ is finitely generated and projective, then the cokernel of $f$ exists and is preserved by $w_0$.
 \end{enumerate}
 Then there exists an affine groupoid $G=\Spec(H)$ acting on $\Spec(B)$ and a symmetric strong monoidal equivalence $\ca{A} \simeq \Rep(G)$. This equivalence is compatible with $w$ and the forgetful functor. Moreover, $H$ is flat as a left and as a right $B$-module.
\end{thm}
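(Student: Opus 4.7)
The plan is to apply the symmetric monoidal Tannakian biadjunction $L \dashv \Rep$ developed earlier in the paper to $w$, viewed as an object of $R\mbox{-}\Cat \slash \Mod_B$. Because this biadjunction has been enhanced to one of symmetric monoidal structure, the symmetric strong monoidal $R$-linear structure on $w$ equips $L(w)$ with a commutative bialgebroid structure; the autonomy of $\ca{A}$ then furnishes an antipode, making $H \defl L(w)$ a commutative Hopf algebroid and $G \defl \Spec(H)$ the sought-after affine groupoid acting on $\Spec(B)$. The unit of the biadjunction supplies a canonical symmetric strong monoidal $R$-linear functor $\eta_w \colon \ca{A} \rightarrow \Rep(H)$ over $\Mod_B$, so the theorem reduces to showing that $\eta_w$ is an equivalence and that $H$ is flat over $B$ on each side.

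Flatness of $H$ and full faithfulness of $\eta_w$ are extracted from conditions (i) and (ii). Realizing $H$ as the coend $\int^{A \in \ca{A}} w(A)^{\vee} \otimes_R w(A)$, the cofilteredness of $\el(w_0)$ from (ii) rewrites this as a filtered colimit of finitely generated projective $B$-bimodules, yielding flatness on both sides. For full faithfulness, the hom-object $\Rep(H)(\eta_w A, \eta_w A')$ is computed as a $B$-module of $H$-comodule maps, and using autonomy both $\ca{A}(A, A')$ and this hom-object can be expressed via the invariants of $A^{\vee} \otimes A'$; flatness of $w_0$ commutes $w_0$ with the relevant end, and faithfulness from (i) then forces the canonical comparison map to be an isomorphism.

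The main obstacle is essential surjectivity of $\eta_w$, and this is where condition (iii) is indispensable. Given a Cauchy $H$-comodule $(M, \rho)$, the task is to exhibit some $A \in \ca{A}$ with $\eta_w(A) \cong (M, \rho)$. The strategy is to present $(M, \rho)$ as the cokernel in $\Rep(H)$ of a morphism $\eta_w(f)$ for some $f \colon A_1 \rightarrow A_0$ in $\ca{A}$; such a presentation is available because the essential image of $\eta_w$ forms a generating family for Cauchy comodules, a consequence of the cofilteredness of $\el(w_0)$ combined with the full faithfulness established above. Since $(M, \rho)$ is Cauchy, the cokernel $w_0(A_0) \twoheadrightarrow M$ is finitely generated projective over $B$, so hypothesis (iii) produces $\mathrm{coker}(f)$ in $\ca{A}$ with $w_0$ preserving it; the reflection of isomorphisms from (i) then promotes the canonical map $\eta_w(\mathrm{coker}(f)) \rightarrow (M, \rho)$ to an isomorphism in $\Rep(H)$. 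The symmetric strong monoidal character of $\eta_w$ is automatic from the monoidal structure of the biadjunction, and compatibility with $w$ and the forgetful functor is built into the unit, completing the plan.
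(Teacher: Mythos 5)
Your overall architecture matches the paper's: the symmetric monoidal Tannakian biadjunction together with autonomy produces the commutative Hopf algebroid $H=L(w)$, and the theorem reduces to showing that the unit $\eta_w$ is an equivalence (this is Theorem~\ref{COMMUTATIVE_RING_THM} fed into Corollary~\ref{TANNAKIAN_ADJUNCTION_GROUPOID_COR}), and your essential surjectivity argument --- present a Cauchy comodule as a cokernel of some $\eta_w(f)$, invoke (iii) to build the cokernel in $\ca{A}$, and use (i) to upgrade the comparison map --- is exactly the paper's. The genuine gap is in your full faithfulness step. Reducing via autonomy to invariants of $A^\vee\otimes A'$ is fine, but the assertion that ``faithfulness from (i) then forces the canonical comparison map to be an isomorphism'' is a non sequitur: faithfulness of $w_0$ gives \emph{injectivity} of $\ca{A}(A,A')\to\Rep(H)(\eta_w A,\eta_w A')$, whereas the hard content is \emph{fullness}, i.e.\ that every comodule map $wA\to wA'$ is of the form $w(g)$; neither (i) nor flatness of $w_0$ supplies a mechanism for this. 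In the paper fullness is obtained by factoring the Yoneda embedding through the replete image of the right adjoint to the Beck comparison functor (Lemma~\ref{ADDITIVE_FULLY_FAITHFUL_LEMMA}), and verifying its hypothesis uses condition (iii) again in an essential way: given $f\colon wB\to wA$ with $f\cdot w(h)=w(g)$ for some epimorphism $w(h)$, one builds a presentation $C'\to C\to B$, uses (iii) to realize $h$ as a cokernel in $\ca{A}$, and only then descends $g$ to $g'\colon B\to A$ with $w(g')=f$. A proof of fullness from (i) and (ii) alone would be genuinely new, and you have not given one.

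Two smaller points. Your flatness claim is overstated as written: the coend is not a filtered colimit of finitely generated projective $B$-$B$-\emph{bimodules}, and what (ii) actually yields (via left exactness of the left Kan extension $L_w$) is flatness of $H$ on \emph{one} side; the paper gets the other side from the antipode, i.e.\ from the fact that the source and target of a groupoid are isomorphic. Also, your two-step presentation $A_1\to A_0\to M$ silently uses that the kernel of $w(A_0)\twoheadrightarrow M$ is again finitely generated projective (tameness); this holds here because the unit of $\Mod_R$ is projective, but it should be said.
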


 Note that the forgetful functor from the category of representations of any affine groupoid scheme acting on $\Spec(B)$ satisfies i) and iii), that is, i) and iii) are necessary conditions. On the other hand, condition ii) is clearly stronger: it implies that $H$ is flat as a left and as a right $B$-module. It is an open question whether or not the converse is true: if $H$ is flat as a left and as a right $B$-module, is the forgetful functor $\Rep(G)\rightarrow \Mod_B$ flat?

\subsection{Outline}\label{PAPER_OUTLINE_SECTION}

 The proof of our Tannakian theorem is split into three parts. In the first part we will set up the categorical framework for dealing with the reconstruction problem and the recognition problem. More precisely, we give a new construction of the Tannakian biadjunction for cosmoi, which we summarize in Section~\ref{OUTLINE_TK_BIADJUNCTION_SECTION}. Instead of proving the existence of this biadjunction directly, we show that it is a special case of a ``formal'' Tannakian biadjunction for 2-categories. The construction of the latter closely mimics the construction of the semantics-structure adjunction in Street's ``formal theory of monads'' \cite{STREET_FTM}, and we shall later use a comparison between the two to prove our recognition results.
 
 The name ``formal category theory'' is sometimes used when we think of the objects of a 2-category as a generalized notion of category, that is, when we forget about the fact that our (structured) categories have objects and morphisms, and think of them as primitive objects in a surrounding 2-category.
 
 Specifically, the notion of a monad makes sense in any 2-category, and Ross Street observed that the category of Eilenberg-Moore algebras of a monad has a universal property in the 2-category of categories. In a general 2-category, objects with the corresponding universal property are called \emph{Eilenberg-Moore objects}. A large part of the theory of monads can be developed if we assume that Eilenberg-Moore objects exist, for example, the semantics-structure adjunction.
 
 The category of finitely generated comodules of a coalgebra has a similar universal property in the bicategory of \emph{$\ca{V}$-modules}\footnote{Modules are also known as \emph{distributors} (see \cite{BENABOU}) or \emph{profunctors}.}. In a general bicategory, we call objects with this universal property \emph{Tannaka-Krein objects}. In Section~\ref{TK_ADJ_2CAT_SECTION} we will see that Tannaka-Krein objects can be used to construct the formal Tannakian biadjunction for 2-categories. In Section~\ref{TANNAKIAN_BIADJUNCTION_MODV_SECTION} we show that for any cosmos $\ca{V}$, the 2-category of $\ca{V}$-modules has Tannaka-Krein objects and that the Tannakian biadjunction for cosmoi is a special case of the formal Tannakian biadjunction.
 
 In the second part of the paper we study the reconstruction and recognition problems in the bicategory of $\ca{V}$-modules. We give criteria which ensure that the unit is an equivalence, and necessary and sufficient conditions for the counit to be an isomorphism. It is well known when the unit of the semantics-structure adjunction is an equivalence (Beck's monadicity theorem), and from the construction of Tannaka-Krein objects in the bicategory of $\ca{V}$-modules it will be clear that this is crucial for understanding the unit of the Tannakian adjunction. Note that Beck's monadicity theorem was also a key ingredient in the proof of \cite[Th\'eor\`eme~1.12]{DELIGNE}. The general recognition result (proved in Section~\ref{RECOGNITION_SECTION}) allows for considerable simplifications if we make some assumptions on the cosmos $\ca{V}$. We prove some of these specialized recognition results in Sections~\ref{DAG_RECOGNITION_SECTION} and \ref{ABELIAN_RECOGNITION_SECTION}. They can be applied to cosmoi of $R$-modules and differential graded $R$-modules for a commutative ring $R$.

 In the third part of the paper we expand our categorical framework to entail (symmetric) monoidal structures and (commutative) bialgebras and bialgebroids. To do this we investigate the interaction between the Tannakian biadjunction and the monoidal structures on its domain and codomain. More precisely, we will show that the Tannakian adjunction is a monoidal biadjunction. As a consequence we find that it lifts to categories of (weak) monoids on either side. This provides a conceptual explanation of the fact that monoidal structures induce a bialgebra structure on the associated coalgebra. We proceed to show compatibilities with braidings and symmetries, which allow us to lift the Tannakian biadjunction to commutative bialgebras. In particular, the recognition results from Sections~\ref{RECOGNITION_SECTION}, \ref{DAG_RECOGNITION_SECTION} and \ref{ABELIAN_RECOGNITION_SECTION} lift to the setting of (symmetric) bialgebras and bialgebroids.
 
 In order not to lose ourselves in technicalities we provide an overview of the main argument and a summary of the compatibility results in Section~\ref{BIALGEBRAS_HOPF_ALGEBRAS_SECTION}, and defer their proofs to Section~\ref{TANNAKIAN_BIADJUNCTION_GRAY_SECTION}. Each one of them entails checking that a considerable number of axioms hold, each of which is straightforward to check if we use a convenient notation for 2-cells in a monoidal 2-category.
 
 To prove recognition result for categories of representations of affine groupoids in Section~\ref{AFFINE_GROUPOIDS} we also need to study the interaction between duals and antipodes. We do this using the notion of Hopf monoidal comonads in Section~\ref{HOPF_MONOIDAL_COMONADS_SECTION}.

 In Appendix~\ref{QUANTUM_BIALGEBRAS_SECTION} we outline how our theory can be extended to deal with dual quasi-bialgebras and dual quasi-triangular quasi-bialgebras.
 
 Throughout the paper we will talk about categories enriched in a cosmos $\ca{V}$. The standard source for these is \cite{KELLY_BASIC}. We provide the necessary background material whenever it is needed.
 
 We also frequently use 2-categories and bicategories. The former are precisely the $\ca{V}$-categories for the cosmos $\ca{V}=\Cat$ of small categories. For $A$, $B$ objects of a 2-category $\ca{K}$ we call the objects of $\ca{K}(A,B)$ the \emph{1-cells} of $\ca{K}$, and the morphisms of $\ca{K}(A,B)$ are called \emph{2-cells}. The \emph{0-cells} of $\ca{K}$ are by definition the objects of $\ca{K}$. For example, in the 2-category of $R$-linear categories, the 0-cells are small $R$-linear categories, the 1-cells are $R$-linear functors, and the 2-cells are natural transformations.
 
 A \emph{bicategory} is a weakened form of a 2-category, where composition of 1-cells is only associative up to coherent invertible 2-cells. The composition of 1-cells in the examples of bicategories we consider mostly arise from some form of tensor product, which is only associative up to canonical ismorphism. A nice exposition of the theory of 2-categories and bicategories can be found Steve Lack's ``2-categories companion'' \cite{LACK_COMPANION}.

\section*{Acknowledgments}
 This paper contains generalizations of results of my master's thesis, which was written under the advice of Prof.\ Richard Pink at ETH Z\"urich, Switzerland. I thank Richard Garner, Peter May, Richard Pink, Mike Shulman and Ross Street for kindly answering questions and for giving suggestions for improvement. I am especially grateful to Mike Shulman and Richard Garner, who both pointed out significant simplifications of some of my original proofs, and to Peter May, for his help with the organization and exposition. I thank Emily Riehl and Claire Tomesch for their help with editing an earlier draft of this paper.

\tableofcontents
\section{The category of filtered modules}\label{F_MODULES_SECTION}
\subsection{Filtered \texorpdfstring{$F$}{F}-modules}
 We can apply the generalized theory of Tannakian duality to the category of filtered modules introduced by Fontaine and Laffaille in \cite{FONTAINE_LAFFAILLE}. Fix a perfect field $k$ of characteristic $p>0$, and let $W$ be the ring of Witt vectors with coefficients in $k$. For our purposes it suffices to know that $W$ is a discrete valuation ring with residue field $k$ which contains the ring of $p$-adic integers $\mathbb{Z}_p$, and that $p \in \mathbb{Z}_p$ is a uniformizer of $W$. A construction of the ring can be found in \cite[\S~II.6]{SERRE}. There is an automorphism $\sigma \colon W \rightarrow W$ of $\mathbb{Z}_p$-algebras which lifts the Frobenius automorphism on the residue field $k$ of $W$ (see \cite[Th\'eor\`eme~II.7 and Proposition~II.10]{SERRE}). This automorphism $\sigma$ is again called the Frobenius automorphism. For a $W$-module $M$, we write $M_\sigma$ for the $W$-module obtained by base change along $\sigma$. In the following definition we use the same notation and terminology that was introduced in \cite{WINTENBERGER}. We write $W_n$ for the quotient ring $W/p^n W$.

\begin{dfn}
 A \emph{filtered $F$-Module}\footnote{A filtered $F$-module is a filtered $W$-module with additional structure; the $F$ is part of the name and does not stand for a ring.} consists of
\begin{itemize}
\item
 a $W$-module $M$ with a decreasing filtration $(\Fil^i M)_{i \in \mathbb{Z}}$ of submodules $\Fil^i M \subseteq M$. The filtration is \emph{exhaustive}, $\bigcup_{i\in \mathbb{Z}} \Fil^i M = M$, and \emph{separated}, $\bigcap_{i\in \mathbb{Z}} \Fil^i M =0$;
\item
 for each $i\in \mathbb{Z}$, a morphism $\phi^i \colon \Fil^i M \rightarrow M_\sigma$ of $W$-modules such that the restriction of $\phi^i$ to $\Fil^{i+1} M$ is $p\phi^{i+1}$.
\end{itemize}
 A morphism of filtered $F$-modules $M \rightarrow M^\prime$ is a morphism $g\colon M\rightarrow M^\prime$ of $W$-modules such that for all $i\in \mathbb{Z}$, $g(\Fil^i M ) \subseteq \Fil^i M^\prime$ and $\phi^i_{M^{\prime}} \circ g = g \circ \phi^i_M$. We denote the category of filtered $F$-modules by $\MF$, and we write $\MF_{fl}$ for the full subcategory of objects $M$ which satisfy
\begin{itemize}
 \item
 the $W$-module $M$ has finite length;
 \item
 the images of the $\phi^i$ span $M$, that is, $\sum_{i\in \mathbb{Z}} \phi^i(\Fil^i M) =M$.
\end{itemize}
The category $\MF^n_{fl}$ is the full subcategory of $\MF_{fl}$ consisting of those objects whose underlying $W$-module $M$ is annihilated by $p^n$ (equivalently, for which $M$ is a $W_n$-module). We write $\MF^n_{\proj}$ for the full subcategory of $\MF^n_{fl}$ consisting of objects whose underlying module is a finitely generated projective $W_n$-module.
\end{dfn}

 J.-M.\ Fontaine and G.\ Laffaille have shown that $\MF_{fl}$ is an abelian $\mathbb{Z}_p$-linear category, and that the forgetful functor $w \colon \MF_{fl} \rightarrow \Mod_W$ is an exact $\mathbb{Z}_p$-linear functor \cite{FONTAINE_LAFFAILLE}. It follows immediately that $\MF^n_{fl}$ is an abelian $\mathbb{Z}/p^n\mathbb{Z}$-linear category, and that $w$ restricts to a $\mathbb{Z}/p^n \mathbb{Z}$-linear functor $w \colon \MF^n_{fl} \rightarrow \Mod_{W_n}$. Thus, $\MF^n_{\proj}$ is $\mathbb{Z}/p^n \mathbb{Z}$-linear, and we can further restrict $w$ to a functor on $\MF^n_{\proj}$ whose image is contained in the category of finitely generated projective $W_n$-modules.

 In order to prove that $\MF^n_{\proj}$ is the category of of representations of an affine groupoid we need to introduce one more auxiliary category. We write $\MF_{fg}$ for the full subcategory of $\MF$ of filtered $F$-modules $M$ which satisfy
\begin{itemize}
 \item
 the $W$-module $M$ is finitely generated;
 \item
 the modules $\Fil^i M$ are direct summands of $M$;
 \item
 the images of the $\phi^i$ span $M$, that is, $\sum_{i\in \mathbb{Z}} \phi^i(\Fil^i M) =M$.
\end{itemize}
 The following proposition was proved by J.-P.\ Wintenberger in \cite{WINTENBERGER}. It shows in particular that we have a sequence $\MF^n_{\proj} \subseteq \MF^n_{fl} \subseteq \MF_{fl} \subseteq MF_{fg}$ of full subcategories.

\begin{prop} \label{F_MODULE_COFILTERED_PROP}
 The category of filtered $F$-modules has the following properties.
\begin{enumerate}
 \item[i)]
 The category $\MF_{fg}$ is abelian, and the forgetful functor $w \colon \MF_{fg} \rightarrow \Mod_W$ is an exact $\mathbb{Z}_p$-linear functor.
 \item [ii)]
 For any object $M$ of $\MF_{fl}$, the filtration by submodules consists of direct summands. Thus $\MF_{fl}$ can be identified with the full subcategory of $\MF_{fg}$ consisting of objects which are annihilated by some power of $p$.
 \item[iii)]
 For any object $M$ of $\MF_{fg}$ there exists an object $M^\prime$ of $\MF_{fg}$ and an epimorphism $g \colon M^{\prime} \rightarrow M$ such that the underlying $W$-module of $M^\prime$ is free.
\end{enumerate}
\end{prop}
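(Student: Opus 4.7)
The plan is to prove the three parts separately, following Wintenberger's strategy. The underlying theme for i) and ii) is that $\MF$ is already an additive category with kernels and cokernels computed at the level of underlying $W$-modules, and the task is to show that $\MF_{fg}$ is closed under these operations (so that the abelian axioms, which already hold in $\Mod_W$, transfer since the forgetful functor to $\Mod_W$ is faithful and exact).

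For part~i), I would first describe (co)limits in $\MF$. Given $f\colon M \to M^\prime$, the kernel $K \defl \ker(f)$ carries the filtration $\Fil^i K \defl K \cap \Fil^i M$ and structure maps $\phi^i_K$ obtained by restricting $\phi^i_M$; the restriction lands in $K_\sigma$ since $(-)_\sigma$ is exact ($\sigma$ being an automorphism). The cokernel is dual. To show $\MF_{fg}$ is closed under these, I must verify each of the three defining conditions for $K$ and $Q = \mathrm{coker}(f)$. Finite generation is automatic since $W$ is noetherian. The spanning condition passes to cokernels trivially, and for kernels it follows once the direct-summand condition is established. The direct-summand condition on the filtration is the heart of the argument: using that $\Fil^i M$ and $\Fil^i M^\prime$ are direct summands together with the fact that $f$ respects the filtration, one shows that $\Fil^i K$ splits off $K$ and $\Fil^i Q$ splits off $Q$. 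This uses the structure theory of finitely generated modules over the DVR $W$ and the interplay between the $\phi^i$ and the filtration.

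For part~ii), I would argue that for $M \in \MF_{fl}$ the graded pieces $\mathrm{gr}^i M \defl \Fil^i M/\Fil^{i+1} M$ together with the $\phi^i$ force a direct-sum decomposition of $M$ compatible with the filtration. The spanning condition on a module of finite length constrains how the $p$-torsion can be distributed across the filtration, and a length-induction then shows that each $\Fil^i M$ splits off. The identification of $\MF_{fl}$ with the subcategory of $\MF_{fg}$ of objects annihilated by some power of $p$ follows immediately. For part~iii), given $M \in \MF_{fg}$ I would pick finite sets of lifts $\tilde{x}_{i,j}\in \Fil^i M$ of generators of each $\mathrm{gr}^i M$ (meaningful because the filtration is bounded) and construct $M^\prime$ as a free $W$-module on symbols $e_{i,j}$ with $\Fil^i M^\prime \defl \bigoplus_{j\ge i} W\langle e_{j,-}\rangle$, which is manifestly by direct summands. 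The map $g\colon M^\prime \to M$ sends $e_{i,j}$ to $\tilde{x}_{i,j}$, and the structure maps $\phi^i_{M^\prime}$ are defined on the basis by choosing lifts in $M^\prime_\sigma$ of the $\phi^i_M(\tilde{x}_{i,j})$. The subtle point is the spanning axiom in $M^\prime$: I would enlarge the initial generating set to also contain lifts of elements witnessing the spanning condition in $M$ and iterate; the bounded-range assumption on the filtration together with finite generation of each graded piece guarantees that the process terminates.

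The main obstacle is the direct-summand preservation in part~i): splitness of submodules of $W$-modules is delicate and need not be preserved under arbitrary intersections or quotients, and the only leverage available is the compatibility of $f$ with the $\phi^i$ and the filtration. Likewise, securing the spanning axiom in part~iii) without letting $M^\prime$ grow unboundedly requires care, but this is essentially bookkeeping once the bounded-filtration observation is in hand.
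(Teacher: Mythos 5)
The paper does not prove this proposition at all: it simply cites Wintenberger (\cite[Propositions~1.4.1 and~1.6.3]{WINTENBERGER}), so any honest comparison is with the standard Fontaine--Laffaille/Wintenberger argument. Measured against that, your sketch of part~i) has a genuine gap. You propose to compute kernels and cokernels on underlying $W$-modules with the induced filtrations and then let the abelian axioms ``transfer'' along the faithful exact forgetful functor. But the exactness of $w$ is part of what must be proved, and the real obstruction is not whether kernels and cokernels exist (they do, already in $\MF$, with $\Fil^i K = K \cap \Fil^i M$ and the quotient filtration on the cokernel) but whether the canonical map from coimage to image is an isomorphism \emph{of filtered modules}. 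It is always a bijection on underlying modules, yet the coimage carries the filtration $f(\Fil^i M)$ while the image carries $f(M)\cap \Fil^i M^{\prime}$, and these can differ --- this is exactly why the plain category of filtered modules is not abelian. The content of part~i) is that every morphism in $\MF_{fg}$ is \emph{strict}, i.e.\ $f(\Fil^i M)=f(M)\cap \Fil^i M^{\prime}$, and this is forced by the spanning condition $\sum_i \phi^i(\Fil^i M)=M$ via a length/d\'evissage comparison (the admissibility mechanism of Fontaine--Laffaille). Your plan never identifies strictness as the issue, and the ``direct-summand preservation'' you flag as the heart of the matter, while also needed, would not by itself yield abelianness or the exactness of $w$.

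Your outlines of parts~ii) and~iii) are closer to the mark: the length induction on graded pieces for~ii) and the free cover built from lifts of generators of $\mathrm{gr}^i M$ for~iii) are essentially the standard constructions, and your implicit use of boundedness of the filtration is justified for finitely generated $M$ (a separated, exhaustive descending chain of direct summands of a finitely generated $W$-module stabilizes). But the iteration you invoke to secure the spanning axiom for $M^{\prime}$ in~iii) is asserted rather than argued, and in any case parts~ii) and~iii) cannot rescue~i) as written. To repair the proposal you would need to either prove strictness directly or cite it, as the paper does.
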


\begin{proof}
 Parts i) and ii) are proved in \cite[Proposition~1.4.1]{WINTENBERGER}, and part iii) is \cite[Proposition~1.6.3]{WINTENBERGER}.
\end{proof}

\subsection{Autonomous symmetric monoidal structure}
 In \cite[\S~1.7]{WINTENBERGER} it was shown that the category $\MF_{fg}$ is endowed with a $\mathbb{Z}_p$-linear tensor product which turns $\MF_{fg}$ into a closed symmetric monoidal $\mathbb{Z}_p$-linear category, and that an object in this category is dualizable if and only if its underlying $W$-module is torsion free. Moreover, this tensor product is lifted from $\Mod_W$ in the sense that the forgetful functor $w \colon \MF_{fg} \rightarrow \Mod_W$ is a strong symmetric monoidal $\mathbb{Z}_p$-linear functor. By taking reduction mod $p^n$ it follows immediately that $\MF^n_{\proj}$ is a symmetric monoidal $\mathbb{Z}\slash p^n\mathbb{Z}$-linear category, that $w \colon \MF^n_{\proj} \rightarrow \Mod_{W_n}$ is a symmetric monoidal $\mathbb{Z}\slash p^n\mathbb{Z}$-linear functor, and that every object of $\MF^n_{\proj}$ has a dual. To see this last fact one can use the observation that every object of $\MF^n_{fl}$ is obtained by reduction mod $p^n$ from a torsion free object of $\MF_{fg}$, which is immediate from part iii) of Proposition~\ref{F_MODULE_COFILTERED_PROP}.

\begin{thm} \label{FILTERED_MODULE_THEOREM}
 There is a groupoid $G=\Spec(L_n)$ acting on $\Spec(W_n)$ and a symmetric strong monoidal equivalence $\MF^n_{\proj} \simeq \Rep(G)$. The Hopf algebroid $L_n$ is given by
\[
 L_n = \int^{M \in \MF^n_{\proj}} w(M) \otimes_{\mathbb{Z}/p^n \mathbb{Z}} w(M)^\vee\rlap{,}
\]
 where the right action on $L_n$ is induced by the $W_n$-actions on $w(M)^\vee$, and the left action is induced by the $W_n$-actions on $w(M)$. The $W_n$-$W_n$-coalgebra $L_n$ is flat as a right and as a left $W_n$-module.
\end{thm}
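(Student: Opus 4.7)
The plan is to apply the general Tannakian recognition theorem stated in Section~\ref{DESCRIPTION_OF_RESULTS_SECTION}, itself a consequence of Theorem~\ref{TANNAKA_AFFINE_GROUPOID_THM}, to the forgetful functor $w \colon \MF^n_{\proj} \to \Mod_{W_n}$, taking $R = \mathbb{Z}/p^n\mathbb{Z}$ and $B = W_n$. The structural hypotheses on the domain and on $w$ are already in place: the preceding subsection assembled from Wintenberger's work that $\MF^n_{\proj}$ is an additive autonomous symmetric monoidal $\mathbb{Z}/p^n\mathbb{Z}$-linear category and that $w$ is a symmetric strong monoidal $\mathbb{Z}/p^n\mathbb{Z}$-linear functor. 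It then remains to verify the three conditions i), ii), iii) of that theorem.

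For conditions i) and iii) I would exploit the fact that $\MF^n_{\proj}$ sits as a full subcategory of the abelian $\mathbb{Z}_p$-linear category $\MF_{fg}$ of Proposition~\ref{F_MODULE_COFILTERED_PROP}(i), and that $w$ extends to an exact faithful functor $\MF_{fg} \to \Mod_W$. Faithfulness of $w_0$ is immediate from the definition of a morphism of filtered $F$-modules, and reflection of isomorphisms follows because a faithful exact functor between abelian categories reflects monomorphisms and epimorphisms, hence isomorphisms. For iii), any morphism $f$ in $\MF^n_{\proj}$ admits a cokernel in $\MF_{fg}$ which is preserved by $w$; this cokernel lies in $\MF^n_{\proj}$ precisely when its underlying $W_n$-module is finitely generated projective, which is exactly the stated hypothesis.

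The main obstacle is condition ii), the flatness of $w_0$, i.e., the cofilteredness of $\el(w_0)$. Nonemptiness is trivial, and for any two objects $(M,x),(M',x')$ the pair $(M \oplus M',(x,x'))$ provides a common refinement since $\MF^n_{\proj}$ is closed under finite direct sums. The hard case is equalization: given parallel morphisms $f,g \colon (M,x) \to (M',x')$, the equalizer $K$ of $f$ and $g$ lives in $\MF_{fg}$ but is typically not finitely generated projective over $W_n$. I would circumvent this by applying Proposition~\ref{F_MODULE_COFILTERED_PROP}(iii) to $K$ to produce an object $N_0 \in \MF_{fg}$ with free underlying $W$-module, equipped with an epimorphism $N_0 \twoheadrightarrow K$; since $K$ is $p^n$-annihilated, this epimorphism factors through $\bar{N}_0 := N_0/p^n N_0$, which lies in $\MF^n_{\proj}$ (its underlying $W_n$-module is free of finite rank, and the remaining conditions defining $\MF^n_{\proj}$ descend from $N_0$). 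Lifting $x$ along the surjection $w_0(\bar{N}_0) \twoheadrightarrow w_0(K) \subseteq w_0(M)$ yields an object $(\bar{N}_0,y)$ of $\el(w_0)$ together with a morphism to $(M,x)$ that equalizes $f$ and $g$, as required.

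Once the three conditions are verified, the recognition theorem produces an affine groupoid $G = \Spec(L_n)$ acting on $\Spec(W_n)$, the asserted symmetric strong monoidal equivalence compatible with $w$, and the flatness of $L_n$ as a left and right $W_n$-module. To pin down the explicit coend description of $L_n$ I would trace through the construction of the left biadjoint of the Tannakian biadjunction over the cosmos $\Mod_{\mathbb{Z}/p^n\mathbb{Z}}$ carried out in Section~\ref{TANNAKIAN_BIADJUNCTION_MODV_SECTION}, in which $L_n$ is realized up to canonical isomorphism as $\int^{M \in \MF^n_{\proj}} w(M) \otimes_{\mathbb{Z}/p^n \mathbb{Z}} w(M)^\vee$, with the two commuting $W_n$-actions induced from the $W_n$-actions on $w(M)$ and on $w(M)^\vee$ respectively, exactly as stated.
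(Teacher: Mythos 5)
Your proposal is correct and follows essentially the same route as the paper: both verify the hypotheses of Theorem~\ref{TANNAKA_AFFINE_GROUPOID_THM} with $R=\mathbb{Z}/p^n\mathbb{Z}$ and $B=W_n$, reduce i) and iii) to the exact faithful extension of $w$ to the ambient abelian category, and handle the equalization step of ii) by covering the equalizer $K$ with a free object of $\MF_{fg}$ via Proposition~\ref{F_MODULE_COFILTERED_PROP}(iii) and passing to its reduction mod $p^n$ (the paper realizes your $\bar{N}_0$ as the cokernel of multiplication by $p^n$). The identification of $L_n$ as the stated coend is likewise obtained exactly as you indicate, from the construction of the left biadjoint in the cosmos $\Mod_{\mathbb{Z}/p^n\mathbb{Z}}$.
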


\begin{proof}
 We show that the conditions of Theorem~\ref{TANNAKA_AFFINE_GROUPOID_THM} are satisfied for $R=\mathbb{Z}/p^n \mathbb{Z}$ and $B=W_n$. Conditions i) and iii) are immediate from the fact that we have an embedding $\MF^n_{\proj} \rightarrow \MF^n_{fl}$ where $\MF^n_{fl}$ is abelian, together with an extension of the forgetful functor to an exact faithful functor $\MF^n_{fl} \rightarrow \Mod_{W_n}$.

 It remains to to check that $\el(w)$ is cofiltered. Since $\MF^n_{\proj}$ has direct sums, it suffices to check that for any pair of morphisms $f,g \colon (M,x) \rightarrow (M^\prime,x^\prime)$ in $\el(w)$, there is an object $(N,y)$ in $\el(w)$ and a morphism $h \colon (N,y) \rightarrow (M,x)$ such that $fh=gh$. Let $K$ be the equalizer of $f,g$ in $\MF^n_{fl}$. We have $f(x)=x^\prime=g(x)$, so $x \in K$. From Proposition~\ref{F_MODULE_COFILTERED_PROP}, part iii) we know that there is an object $L$ of $\MF_{fg}$ with an epimorphism $k \colon L \rightarrow K$ such that the underlying $W$-module of $L$ is free. Multiplication with $p^n$ defines an endomorphism of $L$ in $\MF_{fg}$. The cokernel $N$ of this endomorphism is a free $W_n$-module. Since $K$ is an object of $\MF^n_{fl}$, it is annihilated by $p^n$, so we get a morphism $h\colon N \rightarrow K$ in $\MF_{fg}$ making the diagram
\[
 \xymatrix{L \ar[r]^{p^n} \ar[rd]_{0} & L \ar[r] \ar[d]^{k} & N \ar[ld]^{h} \\
& K }
\]
 commutative. Surjectivity of the morphism $k$ implies that $h$ is surjective. In particular, there is an element $y \in N$ with $h(y)=x$. Since $N$ is a finitely generated free $W_n$-module, this gives the desired morphism $h \colon (N,y) \rightarrow (M,x)$ in the category of elements of $w \colon \MF^n_{\proj} \rightarrow \Mod_{W_n}$.
\end{proof}

\section{Outline of the Tannakian biadjunction}
\label{OUTLINE_TK_BIADJUNCTION_SECTION}

 \subsection{The Tannakian biadjunction} 
 Let $\ca{V}$ be a cosmos. At the heart of our work is the Tannakian biadjunction between coalgebras and coalgebroids on the one hand, and fiber functors on the other. In this section we will recall some standard terminology and definitions from enriched category theory which are used in the following theorem.
 
 \begin{thm}\label{TANNAKIAN_BIADJUNCTION_VMOD_THM}
 Let $\ca{B}$ be a $\ca{V}$-category. Let
 \[
  R \colon \Comon(\ca{B}) \rightarrow \Vcat \slash \overline{\ca{B}}
 \]
 be the 2-functor which on objects sends a comonad to the forgetful functor from its $\ca{V}$-category of Cauchy comodules to the Cauchy completion $\overline{\ca{B}}$ of $\ca{B}$. If the category $\Comon(\ca{B})$ is regarded as a 2-category with only identity 2-cells, then $R$ has a left biadjoint $L$.
 \end{thm}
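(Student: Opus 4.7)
My plan is to exhibit this biadjunction as the cosmos-specific instance of a formal Tannakian biadjunction living in the bicategory $\ca{V}\mbox{-}\Mod$ of $\ca{V}$-modules (profunctors), in parallel with Street's semantics-structure adjunction. The key identifications are that objects of $\Comon(\ca{B})$ are exactly comonads on the 0-cell $\ca{B}$ in $\ca{V}\mbox{-}\Mod$, and that objects of the Cauchy completion $\overline{\ca{B}}$ correspond to those 1-cells $I \to \ca{B}$ in $\ca{V}\mbox{-}\Mod$ which admit right adjoints. Consequently a $\ca{V}$-functor $w \colon \ca{A} \to \overline{\ca{B}}$ is equivalent data to a $\ca{V}$-profunctor $\ca{A} \to \ca{B}$ that is pointwise Cauchy, i.e.\ admits a right adjoint at each object of $\ca{A}$.

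Given such $w$ with pointwise right adjoint $\ldual{w}$, I would construct $L(w)$ as the composite
\[
 L(w) \defl w \otimes_{\ca{A}} \ldual{w} \;=\; \int^{A \in \ca{A}} w(A) \otimes \ldual{w(A)}
\]
viewed as a $\ca{B}$-$\ca{B}$-bimodule. The counit $L(w) \to I_{\ca{B}}$ assembles the counits of the pointwise adjunctions, and the comultiplication $L(w) \to L(w) \otimes_{\ca{B}} L(w)$ comes from inserting the units $I_{\ca{A}} \to \ldual{w} \otimes w$ under the coend; coassociativity and counitality then reduce to the standard bicategorical calculation showing that the composite of an adjoint pair of 1-cells is a comonad. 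On morphisms, a map $\phi \colon w \to w'$ over $\overline{\ca{B}}$ induces a 2-cell between the underlying profunctors whose mate is compatible with the comonad structures, giving $L(\phi) \colon L(w) \to L(w')$ in $\Comon(\ca{B})$.

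For the universal property, I would verify that a comonad morphism $L(w) \to C$ corresponds bijectively to a lift of $w$ through $\Rep(C) \to \overline{\ca{B}}$. The unit of the biadjunction equips each $w(A)$ with a canonical $L(w)$-coaction produced by the coend inclusion, so any lift to $\Rep(C)$ supplies compatible $C$-coactions on the $w(A)$ which, by the universal property of the coend, assemble uniquely into the desired comonad morphism. The main obstacle I anticipate is the 2-dimensional coherence: because $\Comon(\ca{B})$ carries only identity 2-cells, the pseudo-natural unit and counit of the biadjunction must restrict to strict functoriality on that side. This coherence is best dispatched by first isolating the purely formal content at the level of Tannaka-Krein objects in an arbitrary bicategory, then verifying that $\ca{V}\mbox{-}\Mod$ supplies such objects, rather than attempting a direct hands-on verification of every 2-cell axiom in the enriched setting.
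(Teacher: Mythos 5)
Your proposal is correct and follows essentially the same route as the paper: isolate a formal Tannakian biadjunction in a 2-category with Tannaka--Krein objects, identify maps $\ca{A} \xslashedrightarrow{} \ca{B}$ in $\Mod(\ca{V})$ with $\ca{V}$-functors into $\overline{\ca{B}}$, define $L(w)$ as the comonad $w\cdot\overline{w}$ (your coend formula), and verify that $\Mod(\ca{V})$ has Tannaka--Krein objects so that the abstract right biadjoint agrees with $\Rep(-)$. The only cosmetic difference is that the paper works in a strictified 2-category of presheaf categories and left adjoint $\ca{V}$-functors rather than directly with coends of profunctors, precisely to sidestep the colimit bookkeeping you flag at the end.
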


\subsection{Recollection about enriched category theory}\label{ENRICHED_CATEGORIES_SECTION}
 We denote the tensor product of $\ca{V}$ by $-\otimes - \colon \ca{V}\times \ca{V} \rightarrow \ca{V} $, the unit object by $I$ and the internal hom by $[-,-]$. A category $\ca{A}$ \emph{enriched} in $\ca{V}$ has objects $a, a^\prime, \ldots$ and instead of hom-sets, it has \emph{hom-objects} $\ca{A}(a,a^\prime) \in \ca{V}$, see \cite[\S~1]{KELLY_BASIC}. The basic concepts of category theory can be generalized to this context. For example, for a small $\ca{V}$-category $\ca{A}$, there is a $\ca{V}$-category $\Prs{A}$ of enriched presheaves on $\ca{A}$ (that is, $\ca{V}$-functors $\ca{A}^{\op} \rightarrow \ca{V}$), and a corresponding Yoneda embedding. We denote the category of small $\ca{V}$-categories and $\ca{V}$-functors by $\Vcat$, and we write $\VCAT$ for the (very large) 2-category of all large $\ca{V}$-categories and $\ca{V}$-functors. The reader who is unfamiliar with the general theory of enriched categories should keep in mind the case $\ca{V}=\Mod_R$, $R$ a commutative ring, where $\ca{V}$-category, $\ca{V}$-functor and $\ca{V}$-natural transformation correspond to the notions of $R$-linear category, $R$-linear functor and ordinary natural transformation respectively. Note that we do not require that an $R$-linear category has finite direct sums. Most of the general concepts are self-explanatory in this context.

\begin{dfn}
 The \emph{unit $\ca{V}$-category} is the $\ca{V}$-category $\ca{I}$ with a single object $\ast$ and $\ca{I}(\ast,\ast)=I$.
\end{dfn}

\subsection{The bicategory of modules}
Apart from the notion of $\ca{V}$-functor, there is an alternative choice of morphism between $\ca{V}$-categories. Understanding both of these and the interaction between the two is crucial for our construction of the Tannakian adjunction.

\begin{dfn}\label{MODULE_DFN}
 Let $\ca{V}$ be a cosmos, and let $\ca{A}$ and $\ca{B}$ be $\ca{V}$-categories. A \emph{module}\footnote{Modules are also known as \emph{bimodules}, \emph{distributors}, or \emph{profunctors}.} $M \colon \ca{A} \xslashedrightarrow{} \ca{B}$ is a $\ca{V}$-functor $\ca{B}^{\op}\otimes \ca{A} \rightarrow \ca{V}$ and composition of modules $M \colon \ca{A} \xslashedrightarrow{} \ca{B}$ and $N \colon \ca{B} \xslashedrightarrow{} \ca{C}$ is denoted by $N\odot M$ and specified by the coend
\[
N\odot M(c,a) \defl \int^{b \in \ca{B}} N(c,b)\otimes M(b,a)\rlap{.}
\]
 This composition is associative up to coherent isomorphism and the representable modules are identities up to isomorphism by a form of the Yoneda lemma (see \cite[Formula~(3.71)]{KELLY_BASIC}). We get a bicategory $\Mod(\ca{V})$ with 0-cells the small $\ca{V}$-categories, 1-cells the modules and 2-cells the $\ca{V}$-natural transformations between them.
\end{dfn}

\begin{example}
An algebra $B$ in $\ca{V}$ can be interpreted as a $\ca{V}$-category with precisely one object. A $\ca{V}$-functor $B \rightarrow \ca{V}$ picks out an object of $\ca{V}$ together with an action of $B$, that is, the category of $\ca{V}$-functors $B \rightarrow \ca{V}$ is equivalent to the category of $B$-modules. A module $B \xslashedrightarrow{} B$ in the sense of Definition~\ref{MODULE_DFN} is precisely a $B$-$B$-bimodule. Moreover, composition of modules is given by tensoring the corresponding bimodules over $B$. 
\end{example}

\subsection{The category \texorpdfstring{$\Comon(\ca{B})$}{\textbf{Comon}(B)}}
We are now ready to define one of the categories appearing in the Tannakian biadjunction.

\begin{dfn}
 Let $\ca{B}$ be a $\ca{V}$-category. A comonad on $\ca{B}$ is a coalgebra in the category $\Mod(\ca{V})(\ca{B},\ca{B})$ of endomodules of $\ca{B}$, whose monoidal structure is given by composition of modules. The category of comonads is denoted by $\Comon(\ca{B})$.
\end{dfn}

Above we have seen that an endomodule of a $\ca{V}$-category $B$ with a single object (that is, an algebra in $\ca{V}$) is precisely a $B$-$B$-bimodule. Thus a comonad on $B$ is precisely a $B$-$B$-coalgebroid. In particular, if $B=I$, it is simply a coalgebra in $\ca{V}$.

\subsection{Cauchy completion and fiber functors}\label{CAUCHY_COMPLETION_SECTION}
 We can now define Cauchy objects and Cauchy completions of $\ca{V}$-categories. Let $\ca{B}$ be a small $\ca{V}$-category. An object $F\in \Prs{B}$ is called a \emph{Cauchy object} if the representable functor $\Prs{B}(F,-)$ is cocontinuous (in the enriched sense, see Section~\ref{WEIGHTED_COLIMIT_SECTION}). The \emph{Cauchy completion} $\overline{\ca{B}}$ of $\ca{B}$ is the full subcategory of Cauchy objects in $\Prs{B}$. If $F$ is represented by the object $B\in \ca{B}$, then the Yoneda lemma implies that $\Prs{B}(F,-)$ is isomorphic to the functor which evaluates a presheaf at $B$. Since colimits in presheaf categories are computed pointwise (see \cite[Section~3.3]{KELLY_BASIC}), it follows that $\overline{\ca{B}}$ contains all the representable functors, that is, we have $\ca{B} \subseteq \overline{\ca{B}}$.
 
 Cauchy completions are best explained by giving a few examples. 
 
\begin{example}\label{CAUCHY_COMPLETION_SMALL_PROJECTIVE_EXAMPLE}
 Let $\ca{V}=\Mod_R$ be the cosmos of $R$-modules for some commutative ring $R$, and let $B$ be an $R$-algebra, considered as a one object $\ca{V}$-category $\ca{B}$. The presheaf category $\Prs{B}$ is isomorphic to the $R$-linear category of right $B$-modules. A $B$-module $M$ is a Cauchy object if and only if it is finitely generated and projective. For this reason, Cauchy objects in an arbitrary cosmos are sometimes called \emph{small projective} objects.  
\end{example}
 
\begin{rmk}
 The name ``Cauchy completion'' comes from a different example due to F.\ W.\ Lawvere. Let $\ca{V}$ be the cosmos $[0,\infty]$ of extended nonnegative real numbers. For $x,y$ objects of $[0,\infty]$, there is a unique morphism $x \rightarrow y$ if and only if $x\geq y$, and the tensor product is given by addition of real numbers. A $\ca{V}$-category is a (generalized) metric space. Any ordinary metric space $X$ gives an example of a $[0,\infty]$-category, and the Cauchy completion as a $[0,\infty]$-category coincides with the usual Cauchy completion of $X$ as a metric space (see \cite{LAWVERE_METRIC}).
\end{rmk}
 
\begin{example}
 If $\ca{V}=\Set$, then a small $\ca{V}$-category $\ca{B}$ is just a small ordinary category, and $\overline{\ca{B}}$ is the \emph{Karoubi envelope} of $\ca{B}$, which is the universal category containing $\ca{B}$ in which all idempotents split.
\end{example}
 
\begin{example}
 An important example of a Cauchy completion which works in any cosmos is the following. For $\ca{B}=\ca{I}$, the unit $\ca{V}$-category, we have $\Prs{B}\simeq \ca{V}$. The representable functors $\Prs{B}(X,-)$ correspond to $[X,-]$ under this equivalence. Since $X$ has a dual if and only if the internal hom-functor is cocontinuous we conclude that the Cauchy completion of $\ca{I}$ is equivalent to the full subcategory of $\ca{V}$ consisting of objects with duals.
\end{example}

 We can now give a precise definition of the codomain of the Tannakian biadjunction.

\begin{dfn}
 Let $\ca{V}$ be a small $\ca{V}$-category. The 2-category $\Vcat\slash \overline{\ca{B}}$ has objects the $\ca{V}$-functors with codomain $\overline{\ca{B}}$ (and small domain). A 1-cell $(\ca{A},w)\rightarrow (\ca{A}^{\prime},w^{\prime})$ is a pair $(s,\sigma)$ of a $\ca{V}$-functor $s \colon \ca{A} \rightarrow \ca{A}^{\prime}$ and a $\ca{V}$-natural isomorphism $\sigma \colon w^{\prime} \cdot s \Rightarrow w$. The 2-cells $(s,\sigma) \Rightarrow (t,\tau)$ are $\ca{V}$-natural transformations $s \Rightarrow t$ such that the equation
 \[
  \vcenter{\hbox{
  \def\objectstyle{\scriptstyle}
  \def\labelstyle{\scriptscriptstyle}
\def\twocellstyle{\scriptscriptstyle}
  \xymatrix{ \ca{A} \xtwocell[0,2]{}\omit{<3>\sigma} \ar[rr]^{s} \ar[rd]_{w} & & \ca{A}^{\prime} \ar[ld]^{w^{\prime}} \\
  & \overline{\ca{B}}
  }
  }}=
  \vcenter{\hbox{
  \def\objectstyle{\scriptstyle}
  \def\labelstyle{\scriptscriptstyle}
\def\twocellstyle{\scriptscriptstyle}
  \xymatrix{ \ca{A} \xtwocell[0,2]{}\omit{<4>\beta} \rrtwocell^{s}_{*!<10pt,0pt>+{t}}{\tau} \ar[rd]_{w} & & \ca{A}^{\prime} \ar[ld]^{w^{\prime}} \\
  & \overline{\ca{B}}
  }
  }}
 \]
 holds.
\end{dfn}

 Let $R$ be a commutative ring, and let $\ca{V}=\Mod_R$. From Example~\ref{CAUCHY_COMPLETION_SMALL_PROJECTIVE_EXAMPLE} we know that the Cauchy completion of an $R$-algebra $B$ is the category of finitely generated projective right $B$-modules. An object of $\Vcat \slash \overline{B}$ is precisely an $R$-linear functor $w \colon \ca{A} \rightarrow \Mod^{\fgp}_B$. Thus the fiber functors considered in \cite{DELIGNE} are present in the 2-category $\Vcat \slash \overline{B}$.

\subsection{The right biadjoint}\label{RIGHT_BIADJOINT_SUBSECTION}
 Let $\ca{B}$ be a small $\ca{V}$-category, and let $C$ be a comonad on $\ca{B}$. A \emph{comodule} of $C$ is a module $M \colon \ca{I} \xslashedrightarrow{} \ca{B}$ (that is, a presheaf on $\ca{B}$) together with a coaction
 \[
 \rho \colon M \rightarrow C\odot M
 \]
 which is coassociative and counital. A \emph{Cauchy comodule} is a comodule $(M,\rho)$ such that $M$, considered as a presheaf on $\ca{B}$, lies in the Cauchy completion of $\ca{B}$. The category of Cauchy comodules is denoted by $\Rep(C)$.
 
\begin{dfn}
 Let $\ca{B}$ be a small $\ca{V}$-category. The 2-functor
 \[
  R \colon \Comon(B) \rightarrow \Vcat\slash \overline{\ca{B}}
 \]
 sends a comonad $C$ to the forgetful functor $\Rep(C) \rightarrow \overline{\ca{B}}$.
\end{dfn}

\begin{example}
 Let $R$ be a commutative ring, $\ca{V}=\Mod_R$, let $B$ be an $R$-algebra, and let $C$ be a comonad on $B$. Then $C$ is a coalgebroid acting on $B$, and $\Rep(C)$ is the category of $C$-comodules whose underlying $B$-module is finitely generated and projective.
\end{example}

\subsection{Monoidal structure}
 Let $\ca{B}$ be a monoidal $\ca{V}$-category, for example, a commutative algebra in $\ca{V}$. Then both the category $\Comon(\ca{B})$ and the 2-category $\Vcat \slash \overline{\ca{B}}$ inherit a monoidal structure. A weak monoid in $\Vcat\slash \overline{\ca{B}}$ is a small monoidal $\ca{V}$-category equipped with a strong monoidal $\ca{V}$-functor to $\overline{\ca{B}}$. In Section~\ref{BIALGEBRAS_HOPF_ALGEBRAS_SECTION} we will see that the Tannakian adjunction is compatible with these monoidal structures. In particular, the left adjoint sends (weak) monoids to monoids (generalized bialgebroids). This provides a conceptual explanation for the fact that the coalgebra associated to a strong monoidal fiber functor inherits a bialgebra structure.

\section{The Tannakian biadjunction for general 2-categories}\label{TK_ADJ_2CAT_SECTION}

\subsection{Outline}
We begin by stating a theorem of which Theorem~\ref{TANNAKIAN_BIADJUNCTION_VMOD_THM} is a special case. Let $\ca{M}$ be a 2-category, that is, a category enriched in $\Cat$. We shall gradually define terms to make sense of and prove the following theorem.
 
\begin{thm}[The Tannakian biadjunction]\label{TANNAKIAN_BIADJUNCTION_2CAT_THM}
 Let $B$ be an object of $\ca{M}$. Then there is a canonical 2-functor
 \[
  L \colon \Map(\ca{M},B) \rightarrow \Comon(B) \smash{\rlap{.}}
 \]
 If $\ca{M}$ has Tannaka-Krein objects, then there is a canonical pseudofunctor
 \[
  R=\Rep(-) \colon \Comon(B) \rightarrow \Map(\ca{M},B)
 \]
 such that $L$ is left biadjoint to $R$.
\end{thm}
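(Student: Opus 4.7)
The plan is to mirror Street's construction of the semantics--structure adjunction in the formal theory of monads, with comonads and Tannaka--Krein objects playing the roles of monads and Eilenberg--Moore objects. Throughout I take $\Map(\ca{M},B)$ to consist of 1-cells $w\colon A\to B$ in $\ca{M}$ which are ``maps'' in the bicategorical sense (i.e. left adjoints), with 1-cells the triangles $(s,\sigma)$ over $B$ and with 2-cells the evident modifications.

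First I would construct $L$. Given $w\colon A\to B$ in $\Map(\ca{M},B)$, choose a right adjoint $u\colon B\to A$ with unit $\eta\colon 1_A\Rightarrow uw$ and counit $\varepsilon\colon wu\Rightarrow 1_B$. The endomorphism $wu\colon B\to B$ carries a canonical comonad structure with counit $\varepsilon$ and comultiplication $w\eta u\colon wu\Rightarrow wuwu$; coassociativity and counitality follow immediately from the triangle identities. On a 1-cell $(s,\sigma)\colon (A,w)\to (A',w')$, the mate of $\sigma\colon w's\Rightarrow w$ under the adjunctions $w\dashv u$ and $w'\dashv u'$ yields a 2-cell $wu\Rightarrow w'u'$ in $\ca{M}(B,B)$, and a direct diagram chase shows this 2-cell respects counit and comultiplication. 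Functoriality on 2-cells is analogous, and strictness of this assignment follows from strict functoriality of the mate correspondence.

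Next I would define $R$. Under the hypothesis that $\ca{M}$ has Tannaka--Krein objects, each comonad $C$ on $B$ is equipped with a forgetful map $d_C\colon \Rep(C)\to B$ satisfying a universal property: 1-cells $X\to \Rep(C)$ correspond naturally to pairs consisting of a 1-cell $X\to B$ together with a $C$-coaction, and $d_C$ itself corresponds to a tautological $C$-coaction on $d_C$. This universal property forces $d_C$ to be a left adjoint (the right adjoint being the cofree-comodule construction), so $d_C\in \Map(\ca{M},B)$. Given a comonad morphism $f\colon C\to C'$, change of scalars along $f$ turns the tautological $C$-coaction on $d_C$ into a $C'$-coaction; the universal property then produces a 1-cell $\Rep(f)\colon \Rep(C)\to \Rep(C')$ over $B$, unique up to canonical isomorphism. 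The resulting coherent but non-strict functoriality explains why $R$ is only a pseudofunctor.

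Finally I would establish the biadjunction by constructing a pseudonatural equivalence
\[
\Comon(B)\bigl(L(w),C\bigr)\;\simeq\;\Map(\ca{M},B)\bigl(w,R(C)\bigr).
\]
In one direction, a comonad morphism $\alpha\colon wu\Rightarrow C$ acts on the canonical coaction $w\eta\colon w\Rightarrow wuw$ to produce a $C$-coaction on $w$, which by the universal property of $\Rep(C)$ classifies a 1-cell $w\to R(C)$ in $\Map(\ca{M},B)$. In the other direction, a triangle from $w$ to $d_C$ pulls back the tautological coaction on $d_C$ to a $C$-coaction on $w$, and the mate correspondence then produces a comonad morphism $wu\Rightarrow C$. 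The counit $LR(C)\Rightarrow C$ is an isomorphism, because for the cofree/forgetful adjunction attached to $d_C$ the comonad $L(d_C)$ is canonically isomorphic to $C$; this is the formal shadow of the classical fact that a coalgebra is reconstructed from its category of comodules. The unit $w\Rightarrow RL(w)$ is the comparison into $\Rep(L(w))$ classifying the tautological coaction $w\eta\colon w\Rightarrow wuw$.

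The main obstacle is coherence: showing that these two assignments are mutually inverse pseudonaturally in $w$ and $C$, and that the triangle identities hold up to coherent invertible modification. This comes down to a careful interplay between the mate correspondence (used to define $L$ on 1-cells and 2-cells) and the universal property of Tannaka--Krein objects (used to define $R$ on comonad morphisms), which must be set up so that tautological coactions transport correctly along comparison 1-cells. The pseudofunctoriality of $R$ is the reason we obtain a biadjunction rather than a strict 2-adjunction, so one must work throughout in the bicategorical setting of \cite{LACK_COMPANION}, tracking the canonical coherence isomorphisms that arise from the universal lifts.
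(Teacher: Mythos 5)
Your overall strategy---defining $L(w)=w\cdot\overline{w}$ with structure cells coming from the adjunction $w\dashv\overline{w}$, defining $R$ via the universal property of Tannaka--Krein objects, and establishing the biadjunction through the three-way correspondence between comonad morphisms $L(w)\Rightarrow C$, $C$-coactions on $w$, and triangles $w\to v_C$---is exactly the paper's. But two of your assertions are wrong. First, the universal property you attribute to $\Rep(C)$ (``1-cells $X\to\Rep(C)$ correspond to 1-cells $X\to B$ together with a $C$-coaction'') is the Eilenberg--Moore universal property, not the Tannaka--Krein one: a Tannaka--Krein object classifies only those coactions whose underlying 1-cell is a \emph{map}, and this restriction is the entire point (in $\Mod(\ca{V})$ the category of Cauchy comodules does not classify arbitrary coactions). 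Your argument survives only because every coaction you actually feed into the universal property lives on a map $w$; but as literally stated your $R(C)$ would be the Eilenberg--Moore object, and the fact that $v_C$ is a left adjoint is part of the \emph{data} of a Tannaka--Krein object, not something ``forced'' by a cofree-comodule right adjoint.

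Second, the claim that the counit $LR(C)\Rightarrow C$ is an isomorphism is false: this is precisely the reconstruction problem, which the paper isolates as a separate, nontrivial question (it fails, for instance, when $C$ has too few Cauchy comodules). The counit is merely the mate of the tautological coaction $\rho_C$ under $v_C\dashv\overline{v_C}$, and nothing forces it to be invertible. Fortunately a biadjunction does not require an invertible counit, so the claim is not needed; what must actually be verified is that your two hom-assignments are mutually inverse and 2-natural, which you defer to ``coherence.'' The observation that makes this tractable---and which is missing from your outline---is that $\Comon(B)$ has only identity 2-cells, so the hom-equivalence $\theta_{w,C}$ is forced to be strictly 2-natural and is automatically full; one is left with faithfulness (there is at most one 2-cell between parallel 1-cells of $\Map(\ca{M},B)$ into $v_C$, since such a 2-cell is determined after whiskering with $v_C$) and essential surjectivity (every comonad morphism $\phi\colon L(w)\to C$ arises from the map coaction $\phi w\cdot w\eta$ via $T^{-1}$).
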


 Since a left biadjoint and its right biadjoint mutually determine each other, it suffices to describe one of them and define the other in terms of a universal property. In Section~\ref{OUTLINE_TK_BIADJUNCTION_SECTION} we chose to define the right biadjoint, without describing its left biadjoint explicitly. It turns out that for the generalized Tannakian biadjunction, the description of the left biadjoint is easier. Thus we will define the right biadjoint in terms of a universal property. Our construction closely mimics the way the semantics-structure adjunction between monads and their categories of algebras is constructed in Ross Street's ``formal theory of monads'' (see \cite{STREET_FTM}).
 
 In Section~\ref{TK_OBJECTS_IN_VMOD_SECTION} we will then show that any 2-category biequivalent to the bicategory $\Mod(\ca{V})$ for some cosmos $\ca{V}$ satisfies the conditions of the above theorem. To prove Theorem~\ref{TANNAKIAN_BIADJUNCTION_VMOD_THM}, it then suffices to show that the right biadjoint appearing in the statement is equivalent to the right biadjoint of Theorem~\ref{TANNAKIAN_BIADJUNCTION_VMOD_THM} (which is described in Section~\ref{RIGHT_BIADJOINT_SUBSECTION}).
 
 We choose this approach because the bicategory $\Mod(\ca{V})$ is rather difficult to work with: composition of 1-cells is defined in terms of a colimit formula, which makes it hard to compute with 2-cells explicitly. By working with a 2-category equivalent to $\Mod(\ca{V})$, the technicalities involved in dealing with these colimits automatically recede in the background.
 
 \subsection{The 2-category \texorpdfstring{$\Comon(B)$}{\textbf{Comon}(B)} of comonads on \texorpdfstring{$B$}{B}} 
 Let $B$ be an object of $\ca{M}$. A \emph{comonad} on $B$ is a comonoid in the (strict) monoidal category $\ca{M}(B,B)$, with monoidal structure given by composition. The category of comonads on $B$ is denoted by $\Comon(B)$. We think of $\Comon(B)$ as a 2-category with no nonidentity 2-cells.
 
 \subsection{The slice 2-category \texorpdfstring{$\Map(\ca{M},B)$}{Map(M,B)}} 
 The notion of a left adjoint 1-cell makes sense in any 2-category: a 1-cell $f \colon A \rightarrow B$ is left adjoint to $g \colon B \rightarrow A$ if there are 2-cells $\eta \colon \id \Rightarrow gf$ and $\varepsilon \colon fg \Rightarrow \id$ satisfying the triangle identities. Left adjoints in $\ca{M}$ will play a crucial role from now on. In accordance with the categorical literature we make the following definition.

\begin{dfn}
 Let $\ca{M}$ be a 2-category. A \emph{map} between 0-cells $A$ and $B$ of $\ca{M}$ is a left adjoint $f \colon A \rightarrow B$, together with a chosen right adjoint $\overline{f}$, unit $\eta$ and counit $\varepsilon$. A 2-cell between maps $(f,\overline{f},\eta,\varepsilon) \Rightarrow (g,\overline{g},\eta,\varepsilon)$ is simply a 2-cell $f \Rightarrow g$. The 2-category of maps in $\ca{M}$ is denoted by $\Map(\ca{M})$.
\end{dfn}

\begin{rmk}
 If we only insisted on the existence of a right adjoint we would get a 2-category that is biequivalent to $\Map(\ca{M})$. The fact that we have a chosen unit and counit at our disposal is merely a technical convenience.
\end{rmk}

\begin{dfn}\label{LAX_SLICE_DFN}
 Let $B$ be an object of $\ca{M}$. The \emph{lax slice} 2-category, denoted by $\ca{M} \slash_\ell B$, is the 2-category with objects the 1-cells with codomain $B$ and 1-cells from $w \colon A \rightarrow B$ to $w^{\prime} \colon A^{\prime} \rightarrow B$ the pairs $(a,\alpha)$ where $a \colon A \rightarrow A^{\prime}$ is a 1-cell in $\ca{M}$ and $\alpha$ is a 2-cell
 \[
  \vcenter{\hbox{
  \def\objectstyle{\scriptstyle}
  \def\labelstyle{\scriptscriptstyle}
  \def\twocellstyle{\scriptscriptstyle}
  \xymatrix{ A \xtwocell[0,2]{}\omit{<3>\alpha} \ar[rr]^{f} \ar[rd]_{w} & & A^{\prime} \ar[ld]^{w^{\prime}} \\
  & B
  }
  }}
 \]
 in $\ca{M}$. The 2-cells $(a,\alpha) \rightarrow (b,\beta)$ are 2-cells $\phi \colon a \Rightarrow b$ such that the equation
 \[
  \vcenter{\hbox{
  \def\objectstyle{\scriptstyle}
  \def\labelstyle{\scriptscriptstyle}
  \def\twocellstyle{\scriptscriptstyle}
  \xymatrix{ A \xtwocell[0,2]{}\omit{<3>\alpha} \ar[rr]^{f} \ar[rd]_{w} & & A^{\prime} \ar[ld]^{w^{\prime}} \\
  & B
  }
  }}=
  \vcenter{\hbox{
  \def\objectstyle{\scriptstyle}
  \def\labelstyle{\scriptscriptstyle}
  \def\twocellstyle{\scriptscriptstyle}
  \xymatrix{ A \xtwocell[0,2]{}\omit{<4>\beta} \rrtwocell^{f}_{*!<10pt,0pt>+{g}}{\phi} \ar[rd]_{w} & & A^{\prime} \ar[ld]^{w^{\prime}} \\
  & B
  }
  }} 
  \]
 holds.

 The 2-category $\Map(\ca{M},B)$ is the sub-2-category of the lax slice 2-category with objects the maps with codomain $B$ and 1-cells the 1-cells $(a,\alpha)$ of $\ca{M} \slash_{\ell} B$ for which $a$ is a map and $\alpha$ is invertible.
\end{dfn}

\subsection{String diagrams and the calculus of mates}\label{STRING_DIAGRAMS_SECTION}
 In a general 2-category, we can replace pasting diagrams (see \cite{KELLY_STREET}) by their ``Poincar\'e dual'' graphs, that is, in a pasting diagram we replace all the objects by 2-dimensional regions (2-disks), all the 1-cells by strings (1-disks) orthogonal to the original arrows, and all the 2-cells by points (0-disks), as indicated in the example below:
\[
\vcenter{\hbox{
\def\objectstyle{\scriptstyle}
\def\labelstyle{\scriptscriptstyle}
\def\twocellstyle{\scriptscriptstyle}
\xymatrix@C=5mm@R=3mm{
&& A_2 \ar[r]^{f_2} & A_3 \ar[rd]^{f_3}\\
& A_1
\xtwocell[0,3]{}\omit{\alpha}
 \ar[ru]^{f_1} \ar[rd]^{g_0} &&& A_4 \ar[rd]^{f_4}\\
A_0 \ar[ru]^{f_0}
\xtwocell[0,2]{}\omit{\beta}
\ar[rd]_{h_0} && B_0 \ar[r]^{g_2} & B_1
\xtwocell[0,2]{}\omit{\gamma} 
\ar[rd]_{g_4} \ar[ru]^{g_3} && A_5 \\
& C_0 
\xtwocell[0,3]{}\omit{\delta}
\ar[ru]_{g_1} \ar[rd]_{h_1} &&& C_3 \ar[ru]_{h_4}\\
&& C_1 \ar[r]_{h_2} & C_2 \ar[ru]_{h_3}
}}}\quad=\quad
\vcenter{\hbox{


}}
\]
 The same 2-cell can also be represented in symbols by the sequence
 \[
 h_4 \delta h_0 \cdot \gamma g_2 g_1 h_0 \cdot f_4 g_3 g_2 \beta \cdot f_4 \alpha f_0
 \]
 but this notation is very cumbersome and hides a lot of information: frequently a 2-cell can be described by several sequences that look quite different, and it is easy to write down nonsensical sequences where the domains and codomains of the 2-cells don't match up.

 Note that there is no need to add arrows to our strings, the orientation on the page contains enough information: by convention, 1-cells are composed left to right and 2-cells are composed top to bottom. Moreover, we generally omit the labels of objects of our 2-category. Since all 1-cells have a uniquely determined source and target, that information is in fact redundant.
 
 The idea of using string diagrams to describe morphisms is due to Penrose \cite{PENROSE}. For monoidal categories it was made mathematically precise in \cite{JOYAL_STREET_TENSOR}. In the context of 2-categories and bicategories, string diagrams first appeared in \cite{STREET_STRING_DIAGRAMS}. The string notation has several advantages. First, it is more apt to deal with identity morphisms than the pasting diagram notation. For 1-cells $f,g \colon A \rightarrow A$ and 2-cells $\alpha \colon f \Rightarrow \id$, $\beta \colon \id \Rightarrow g$, the three string diagrams
\[
\vcenter{\hbox{


}}
\]
 all represent the same 2-cell. The corresponding pasting diagrams look like
\[
\vcenter{\hbox{
\def\objectstyle{\scriptstyle}
\def\labelstyle{\scriptscriptstyle}
\def\twocellstyle{\scriptscriptstyle}
\xymatrix{ A \rtwocell^{\id}_g{\beta} & A \rtwocell^f_{\id}{\alpha} & A }
}}\quad
\vcenter{\hbox{
\def\objectstyle{\scriptstyle}
\def\labelstyle{\scriptscriptstyle}
\def\twocellstyle{\scriptscriptstyle}
\xymatrix{ A \ruppertwocell^f{\alpha} \rlowertwocell_g{\beta} \ar[r]|{\id} & A }
}}\quad
\vcenter{\hbox{
\def\objectstyle{\scriptstyle}
\def\labelstyle{\scriptscriptstyle}
\def\twocellstyle{\scriptscriptstyle}
\xymatrix{ A \rtwocell^f_{\id}{\alpha} & A \rtwocell^{\id}_g{\beta} & A }
}}
\]
 and it is less evident from the notation that they actually give the same 2-cell when evaluated. Second, the string diagram notation allows us to keep track of the specific order in which a diagram is evaluated. Without loss of generality, we can assume that no two 2-cells appear at the same height. The string diagram is evaluated from top to bottom, that is, one starts with the highest 2-cell, whiskers it appropriately, and then proceeds with the next one. Therefore we can suggest a specific order of (vertical) composition just by giving the string diagram. With the pasting diagram notation, this cannot be done as easily.
 
 String diagrams also simplify dealing with internal adjunctions. An adjunction $f \dashv \overline{f}$ comes with two 2-cells $\eta \colon \id \Rightarrow \overline{f} \cdot f$ and $\varepsilon \colon f \cdot \overline{f} \Rightarrow \id$. We denote these by
 \[
 \vcenter{\hbox{


}}
\]
 is a 2-functor.
\end{prop}

\begin{proof}
 It is a well-known fact that the composite of a left adjoint with a right adjoint is a comonad. We leave it to the reader to check that $L(-)$ is a 2-functor. This can be done quite easily by using the string diagram formalism from Section~\ref{STRING_DIAGRAMS_SECTION}.
\end{proof}

\subsection{Tannaka-Krein objects}\label{TK_OBJECTS_SECTION}
In \cite{STREET_FTM}, Ross Street showed that the category of Eilenberg-Moore coalgebras of a comonad has a universal property in the 2-category of categories, functors and natural transformations, namely it is universal among coactions on functors. This can be generalized to arbitrary 2-categories: objects which are universal among coactions on 1-cells are called Eilenberg-Moore objects. We introduce the notion of a Tannaka-Krein object of a comonad, which is universal among those coactions whose underlying 1-cells are maps (that is, left adjoints with chosen right adjoints).

\begin{dfn}\label{COACTION_DFN}
 Let $c \colon B \rightarrow B$ be a comonad in $\ca{M}$. A \emph{coaction} of $c$ consists of a 1-cell $v \colon A \rightarrow B$ in $\ca{M}$, together with a 2-cell $\rho \colon v \Rightarrow c \cdot v$ such that the equations
\[
\vcenter{\hbox{


}}
\]
holds. A \emph{map coaction} of $c$ is a coaction $(v,\rho)$ where $v$ is a map. For an object $\ca{A} \in \ca{M}$ we write $\Coact_A(c)$ for the category of $c$-coactions with domain $A$, and we write $\Coact_A^m(c)$ for the full subcategory of map coactions of $c$.
\end{dfn}

\begin{dfn}\label{EM_OBJECTS_DFN}
 Let $\ca{M}$ be a 2-category and let $c\colon B \rightarrow B $ be a comonad in $\ca{M} $. An object $B_c$ endowed with a $c$-coaction $(v_c,\rho_c)\colon B_c \rightarrow B$ is called an \emph{Eilenberg-Moore object} of $c$ if it is universal among all $c$-coactions, in the sense that the functor
 \[
 T\colon \ca{M}(A,B_c) \rightarrow \Coact_A(c)
 \]
which sends a morphism $f \colon A \rightarrow B_c$ to the coaction
\[
T(f) \defl \quad \vcenter{\hbox{

\begin{tikzpicture}[y=0.80pt, x=0.8pt,yscale=-1, inner sep=0pt, outer sep=0pt, every text node part/.style={font=\scriptsize} ]
\path[draw=black,line join=miter,line cap=butt,line width=0.500pt]
  (354.3307,885.8267) .. controls (354.3307,896.4567) and (354.3307,900.0000) ..
  (354.3307,910.6299);
\path[draw=black,line join=miter,line cap=butt,line width=0.500pt]
  (336.6142,935.4330) .. controls (354.3307,910.6299) and (336.6142,935.4330) ..
  (354.3307,910.6299);
\path[draw=black,line join=miter,line cap=butt,line width=0.500pt]
  (354.3307,910.6299) .. controls (372.0472,935.4330) and (354.3307,910.6299) ..
  (372.0472,935.4330);
\path[draw=black,line join=miter,line cap=butt,line width=0.500pt]
  (318.8976,885.8267) .. controls (318.8976,914.1732) and (318.8976,914.1732) ..
  (301.1811,935.4330);
\path[fill=black] (351.98203,882.02802) node[above right] (text3208) {$v_c$
  };
\path[fill=black] (354.18192,911.25513) node[circle, draw, line width=0.500pt,
  minimum width=5mm, fill=white, inner sep=0.25mm] (text3212) {$\rho_c$    };
\path[fill=black] (333.07086,946.06299) node[above right] (text3216) {$v_c$
  };
\path[fill=black] (370.50394,945.60626) node[above right] (text3220) {$c$     };
\path[fill=black] (295.63779,948.06299) node[above right] (text3224) {$f$     };
\path[fill=black] (315.35431,882.28345) node[above right] (text3224-8) {$f$   };

\end{tikzpicture}
}}
\]
 and which sends a 2-cell $\phi \colon f \Rightarrow g \colon A \rightarrow B_c$ to $T(\phi) \defl v_c \phi$ is an isomorphism of categories.
\end{dfn}

 The universal property of a Tannaka-Krein object is slightly more complicated to state. Basically we just replace the word ``coaction'' by ``map-coaction'', but there is also a nontrivial interaction between 1-cells and maps that doesn't appear in the definition of Eilenberg-Moore objects. Furthermore, instead of insisting that $T$ be an isomorphism of categories (as in the case of EM-objects), we only want a an equivalence of categories. This allows us to transfer Tannaka-Krein objects along a biequivalence between 2-categories.

\begin{dfn}\label{TK_OBJECTS_DFN}
 Let $c \colon B \rightarrow B$ be a comonad in $\ca{M}$. A \emph{Tannaka-Krein object} for $c$ is an object $\Rep(c)$ together with a map coaction $(v_c,\rho_c) \colon \Rep(c) \rightarrow B$ which is universal among map coactions in the following sense: every map coaction $(v,\rho)$ is isomorphic to a coaction
\[
\rho_c \cdot f \defl \vcenter{\hbox{

\begin{tikzpicture}[y=0.80pt, x=0.8pt,yscale=-1, inner sep=0pt, outer sep=0pt, every text node part/.style={font=\scriptsize} ]
\path[draw=black,line join=miter,line cap=butt,line width=0.500pt]
  (354.3307,885.8267) .. controls (354.3307,896.4567) and (354.3307,900.0000) ..
  (354.3307,910.6299);
\path[draw=black,line join=miter,line cap=butt,line width=0.500pt]
  (336.6142,935.4330) .. controls (354.3307,910.6299) and (336.6142,935.4330) ..
  (354.3307,910.6299);
\path[draw=black,line join=miter,line cap=butt,line width=0.500pt]
  (354.3307,910.6299) .. controls (372.0472,935.4330) and (354.3307,910.6299) ..
  (372.0472,935.4330);
\path[draw=black,line join=miter,line cap=butt,line width=0.500pt]
  (318.8976,885.8267) .. controls (318.8976,914.1732) and (318.8976,914.1732) ..
  (301.1811,935.4330);
\path[fill=black] (351.98203,882.02802) node[above right] (text3208) {$v_c$
  };
\path[fill=black] (354.18192,911.25513) node[circle, draw, line width=0.500pt,
  minimum width=5mm, fill=white, inner sep=0.25mm] (text3212) {$\rho_c$    };
\path[fill=black] (333.07086,946.06299) node[above right] (text3216) {$v_c$
  };
\path[fill=black] (370.50394,945.60626) node[above right] (text3220) {$c$     };
\path[fill=black] (295.63779,948.06299) node[above right] (text3224) {$f$     };
\path[fill=black] (315.35431,882.28345) node[above right] (text3224-8) {$f$   };

\end{tikzpicture}
}}
\]
 for some map $f \colon A \rightarrow \Rep(c)$, and for \emph{any} 1-cell $g \colon A \rightarrow \Rep(c)$, whiskering with $v_c$ induces a bijection between 2-cells $g \Rightarrow f$ and morphisms of coactions $(v_c \cdot g,\rho_c \cdot g) \rightarrow (v_c \cdot f,\rho_c \cdot f)$.
\end{dfn}

\begin{rmk}\label{TK_DOUBLE_LIMIT_RMK}
 If $\Rep(c)$ endowed with the $c$-coaction $(v_c, \rho_c)$ is a Tannaka-Krein object in $\ca{M}$, then the functor
 \[
 T \colon \Map\bigl(A,\Rep(c)\bigr) \rightarrow \Coact_A^m(c)
 \]
 which sends a map $f$ to the coaction $(v_c \cdot f,\rho_c \cdot f)$ and a 2-cell $\phi \colon f \Rightarrow g$ between two maps $A \rightarrow \Rep(c)$ to $v_c \cdot \phi$ is an equivalence of categories. This fact is all we are going to need in order to construct the Tannakian biadjunction.
 
 The stronger universal property in the definition allows us to lift \emph{extraordinary} 2-cells. This is relevant for lifting an autonomous structure (that is, duals with chosen evaluation and coevaluation) along the forgetful functor $v_c$, a problem which we don't study in this paper.

\end{rmk}

The following example was pointed out by Ignacio L\'opez Franco.

\begin{example}
 Let $\ca{K}$ be the 2-category of categories with equalizers, 1-cells the functors which preserve equalizers, and 2-cells the natural transformations. Then the category of comodules of a comonad in $\ca{K}$ has equalizers, so $\ca{K}$ has Eilenberg-Moore objects. Moreover, by the dual of \cite[Theorem~1]{DUBUC_TRIANGLE}, the lift of a left adjoint to the category of comodules is itself a left adjoint. Therefore every Eilenberg-Moore object is also a Tannaka-Krein object in $\ca{K}$. 
 
 The same is true for the 2-category of $\ca{V}$-categories with equalizers (in the enriched sense) and $\ca{V}$-functors which preserve equalizers.
\end{example}

\subsection{The pseudofunctor \texorpdfstring{$\Rep(-)$}{\textbf{Rep}(-)}}
The universal property of Tannaka-Krein objects allows us to construct the desired pseudofunctor from comonads on $B$ to the 2-category of maps into $B$.

\begin{prop}\label{REP_2_FUNCTOR_PROP}
 Let $\ca{M}$ be a 2-category with Tannaka-Krein objects, and let
 \[
 T^{-1} \colon \Coact_A(c) \rightarrow \Map\bigl(A,\Rep(c)\bigr)
 \]
 be an inverse to the functor $T$ from Remark~\ref{TK_DOUBLE_LIMIT_RMK}. Then the assigment which sends a comonad $c$ on $B$ to the Tannaka-Krein object $v_c \colon \Rep(c) \rightarrow B$ and a morphism of comonads $\phi \colon c \rightarrow c^{\prime}$ to the 1-cell 
 \[
 \def\objectstyle{\scriptstyle}
  \def\labelstyle{\scriptscriptstyle}
  \def\twocellstyle{\scriptscriptstyle}
 \xymatrix{\Rep(c) \rrtwocell\omit{<3>*!<-3pt,0pt>+{\lambda_{\phi}}} \ar[rr]^{T^{-1}(\rho_{\phi})} \ar[rd]_{v_c} && \Rep(c^{\prime}) \ar[ld]^{v_{c^{\prime}}} \\ & B }
 \]
 defines a pseudofunctor $\Comon(B) \rightarrow \Map(\ca{M},B)$, where $\rho_{\phi}$ is the coaction
\[
\rho_{\phi} \defl
\vcenter{\hbox{


}}
 \]
 By whiskering the equations in question with $v_{c^{\prime}}$ or $v_{c^{\prime\prime\prime}}$ one can now easily check that $\mu$ and $\mu_0$ give the desired pseudofunctor structure.
 
 Choosing $T^{-1}$ amounts to choosing a map $T^{-1}(\rho)$ together with an isomorphism $\lambda_{\rho} \colon TT^{-1}(\rho) \rightarrow \rho$ of coactions. Because $\rho_{\id}=\rho_c$, we can make sure that $T^{-1}(\rho_{\id})=\id$ and $\lambda_{\rho_{\id}}=\id$. With this choice for $T^{-1}$, the resulting pseudofunctor is normal.
\end{proof}

\subsection{Proof of the Tannakian biadjunction} We are now ready to prove that $\Rep(-)$ is a right biadjoint of $L(-)$.

\begin{proof}[Proof of Theorem~\ref{TANNAKIAN_BIADJUNCTION_2CAT_THM}]
 To show that $L$ is a left biadjoint of $\Rep(-)$ we have to give a pseudonatural equivalence
 \[
  \theta_{w,c} \colon \Map(\ca{M},B)\bigr(w,\Rep(c)\bigl) \rightarrow \Comon(B)\bigl(L(w),c\bigr) \smash{\rlap{.}}
 \]
 If such an equivalence exists it has to be strictly 2-natural, because there are no nonidentity 2-cells in the codomain. We define the functor $\theta_{w,c}$ on an object $(s,\sigma) \colon w \rightarrow v_c$ by
 \[
 \theta_{w,c}\bigl((s,\sigma)\bigr) \defl \vcenter{\hbox{


}}
 \]
 and on a morphism $\alpha \colon (s,\sigma) \rightarrow (t,\tau)$ by $\theta_{w,c}(\alpha)=\id$. The latter makes sense because the existence of $\alpha$ implies $L\bigl((s,\sigma)\bigr)=L\bigl((t,\tau)\bigr)$ (cf.\ Definition~\ref{LAX_SLICE_DFN}). We leave it to the reader to check that this gives a well-defined functor, that is, that the morphism $w \cdot \overline{w} \rightarrow c$ defined above is a morphism of comonads.
 
 To see that $\theta_{w,c}$ is a 2-natural transformation, we have to check that for any 1-cell $(a,\alpha) \colon v \rightarrow w$ and any morphism of comonads $\phi \colon c \rightarrow c^{\prime}$, the equation
 \[
  \phi \cdot \theta_{w,c}\bigl((s,\sigma)\bigr) \cdot L\bigl((a,\alpha)\bigr)=\theta_{v,c^{\prime}}\bigl((\Rep(\phi),\lambda_{\phi}) \cdot (s,\sigma) \cdot (a,\alpha)\bigr)
 \]
 holds. The key observation for this is that the equation
 \[
  \vcenter{\hbox{


}}
 \]
 holds, which follows from the fact that $\lambda_{\phi}$ is a component of the natural transformation $TT^{-1} \Rightarrow \id$ (see Proposition~\ref{REP_2_FUNCTOR_PROP}). We leave it to the reader to check the details.
 
 It remains to show that $\theta$ is fully faithful and essentially surjective. It is obviously full, because there are no nonidentity 2-cells in the codomain. To see that it is faithful, we need to check that there is at most one 2-cell between two 1-cells $(s,\sigma)$ and $(t,\tau)$ from $w$ to $v_c$. But whiskering with $v_c$ is the 2-cell part of the equivalence $T$ from Remark~\ref{TK_DOUBLE_LIMIT_RMK}, so it suffices to check that $v_c \alpha=v_c \beta$ for any two 2-cells $(s,\sigma) \Rightarrow (t,\tau)$. This follows immediately from the definition of 2-cells in $\Map(\ca{M},B)$, see Definition~\ref{LAX_SLICE_DFN}.
 
 To see that $\theta_{w,c}$ is essentially surjective, first note that for any morphism of comonads $\phi \colon L(w) \rightarrow c$, the 2-cell
 \[
 \rho\defl \vcenter{\hbox{

\begin{tikzpicture}[y=0.80pt, x=0.8pt,yscale=-1, inner sep=0pt, outer sep=0pt, every text node part/.style={font=\scriptsize} ]
\path[draw=black,line join=miter,line cap=butt,line width=0.500pt]
  (0.0000,1140.9449) .. controls (0.0000,1155.1181) and (-7.0866,1165.7480) ..
  (-17.7165,1176.3779);
\path[draw=black,line join=miter,line cap=butt,line width=0.500pt]
  (-53.1496,1204.7244) .. controls (-52.9790,1192.1229) and (-53.1496,1186.9661)
  .. (-53.1496,1176.3779) .. controls (-53.1496,1165.7480) and
  (-46.0630,1151.5748) .. (-35.4331,1151.5748) .. controls (-24.8031,1151.5748)
  and (-17.7165,1165.7480) .. (-17.7165,1176.3779);
\path[draw=black,line join=miter,line cap=butt,line width=0.500pt]
  (-17.7165,1204.7244) .. controls (-17.7165,1187.0079) and (-17.7165,1194.0945)
  .. (-17.7165,1176.3779);
\path[fill=black] (-17.5991,1176.4987) node[circle, draw, line width=0.500pt,
  minimum width=5mm, fill=white, inner sep=0.25mm] (text31673) {$\phi$    };
\path[fill=black] (-21.370337,1211.3826) node[above right] (text31677) {$c$
  };
\path[fill=black] (-56.882809,1212.0112) node[above right] (text31681) {$w$
  };
\path[fill=black] (-21.259842,1158.6614) node[above right] (text31685)
  {$\overline{w}$     };
\path[fill=black] (-3.7712359,1137.5293) node[above right] (text31689) {$w$   };

\end{tikzpicture}
}}
 \]
 is a map coaction. The axioms for a morphism of comonads correspond precisely to the axioms for a coaction. Let $s=T^{-1}(\rho)$ and let $\sigma \colon v_c \cdot s \Rightarrow w$ be the $\rho$-component of the natural transformation $TT^{-1} \Rightarrow \id $. It is now easy to see that $\theta_{w,c}\bigl((s,\sigma)\bigr)=\phi$, which shows that $\theta_{w,c}$ is indeed an equivalence of categories.
 \end{proof}
 
\section{Details for the Tannakian biadjunction in \texorpdfstring{$\Mod(\ca{V})$}{Mod(V)}}\label{TANNAKIAN_BIADJUNCTION_MODV_SECTION}
 
 Let $\ca{V}$ be a cosmos. In order to see that Theorem~\ref{TANNAKIAN_BIADJUNCTION_VMOD_THM} is a consequence of Theorem~\ref{TANNAKIAN_BIADJUNCTION_2CAT_THM}, we first have to describe a 2-category $\ca{M}$ which is biequivalent to $\Mod(\ca{V})$. In a second step, we have to show that $\ca{M}$ has Tannaka-Krein objects. Lastly, we have to show that the pseudofunctor $\Rep(-)$ from Proposition~\ref{REP_2_FUNCTOR_PROP} is equivalent to the 2-functor $R$ described in Section~\ref{RIGHT_BIADJOINT_SUBSECTION}. In order to do all this we need the notions of weighted colimits and cocontinuous $\ca{V}$-functors. This is a place where the theory of enriched categories differs considerably from the unenriched theory. We will mention all the technical details about weighted colimits that are used later in the proof of our recognition result. 
 
 \subsection{Recollections about weighted colimits}\label{WEIGHTED_COLIMIT_SECTION}
 When we enrich the notion of colimits, we naturally arrive at the concept of a \emph{weighted colimit}\footnote{Weighted colimits are called \emph{indexed colimits} in \cite{KELLY_BASIC}.}: an object $K$ of a $\ca{V}$-category $\ca{E}$ is said to be the colimit of $G\colon \ca{D} \rightarrow \ca{E}$ weighted by  $H \colon \ca{D}^{\op} \rightarrow \ca{V}$ if there is an isomorphism
\[
\xymatrix{\ca{E}(K,E) \ar[r]^-{\phi_E} & \Prs{D}\bigl(H,\ca{E}(G-,E)\bigr)}
\]
 of $\ca{V}$-functors which is $\ca{V}$-natural in $E$. The object $K$ is usually denoted by $H \star G$. For any small $\ca{V}$-category $\ca{B}$, the category $\Prs{B}$ of enriched presheaves on $\ca{B}$ has all weighted colimits (see \cite[\S~3.3]{KELLY_BASIC}). The identity of $H \star G$ corresponds under $\phi$ to the \emph{unit}
\[
 \xymatrix{H \ar[r]^-{\lambda} & \ca{E}(G-,H\star G)}
\]
 of $H \star G$, which has the property that for any $\ca{V}$-natural transformation $\alpha \colon H \Rightarrow \ca{E}(G-,E)$, there is a unique morphism $a \colon H\star G \rightarrow E$ such that $\alpha = \ca{E}(G-,a)$. For $\ca{V}=\Mod_R$, the existence of a $\lambda$ with this property is equivalent to the existence of the natural isomorphism $\phi$ (see \cite[\S~3.1]{KELLY_BASIC}). In particular, if $L \colon \ca{E} \rightarrow \ca{E}^{\prime}$ is a $\ca{V}$-functor such that the colimit $H \star LG$ exists, there is a unique morphism $\widehat{L} \colon H \star LG \rightarrow L(H\star G)$ for which the diagram
\[
 \xymatrix{H  \ar[rr]^-{\lambda} \ar[d]_-{\lambda^{\prime}} && \ca{E}(G-,H\star G) \ar[d]^-{L} \\
 \ca{E}^{\prime}(LG-,H\star LG) \ar[rr]_-{\ca{E}(LG-,\widehat{L})} && \ca{E}^{\prime}\bigl(LG-,L(H\star G)\bigr)}
\]
 is commutative, where $\lambda^{\prime}$ denotes the unit of $H \star LG$. The morphism $\widehat{L} \colon H\star LG \rightarrow L(H\star G)$ is called the \emph{comparison morphism}, and we say that $L$ \emph{preserves} the colimit $H \star G$ if $H\star LG$ exists and $\widehat{L}$ is an isomorphism. A $\ca{V}$-functor is said to be \emph{cocontinuous} if it preserves all small weighted colimits that exist. 
 
 A $\ca{V}$-functor $L\colon \ca{E} \rightarrow \ca{E}^{\prime}$ is called a \emph{left $\ca{V}$-adjoint} or simply \emph{left adjoint} if there is a $\ca{V}$-functor $R\colon \ca{E}^{\prime} \rightarrow \ca{E}$ and $\ca{V}$-natural transformations $\eta \colon \id\Rightarrow RL$ and $\varepsilon \colon LR \Rightarrow \id$ satisfying the usual triangle identities. In other words, a left $\ca{V}$-adjoint is precisely a map in $\VCAT$. Recall that we get underlying ordinary categories, functors and natural transformations if we apply the forgetful functor $V=\ca{V}(I,-)\colon \ca{V} \rightarrow \Set$ to the hom-objects of a $\ca{V}$-category. The condition that $L$ is a left $\ca{V}$-adjoint is in general stronger than saying that the underlying ordinary functor $L_0$ is a left adjoint, but if $\ca{V}=\Mod_R$, the two notions agree (see \cite[\S~1.11]{KELLY_BASIC}).
 
 As one would expect, if $L$ is a left adjoint, then it is cocontinuous (see \cite[\S~3.2]{KELLY_BASIC}). The category of cocontinuous $\ca{V}$-functors $\ca{A} \rightarrow \ca{B}$ will be denoted by $\Cocts[\ca{A},\ca{B}]$.

 Let $V\in \ca{V}$. If the $\ca{V}$-functor $[V,\ca{E}(E,-)] \colon \ca{E} \rightarrow \ca{V}$ is representable, we denote the representing object by $V \odot E$ and we call it the \emph{tensor product} or simply \emph{tensor} of $V$ and $E$. This concept is a special case of a weighted colimit: for $\ca{D}=\ca{I}$, the unit $\ca{V}$-category, giving a weight amounts to giving an object $V \in \ca{V}$, giving a $\ca{V}$-functor $\ca{I} \rightarrow \ca{E}$ amounts to giving an object $E \in \ca{E}$, and the colimit of $E$ weighted by $V$ is precisely the tensor $V\odot E$. For a subcategory $\ca{X} \subseteq \ca{V}$ we say that $\ca{E}$ is \emph{$\ca{X}$-tensored} if the tensor $V\odot E$ exists for all $E\in \ca{E}$ and all $V\in \ca{X}$. If $\ca{E}$ is $\ca{X}$-tensored for $\ca{X}=\ca{V}$ we simply say that $\ca{E}$ is \emph{tensored}. 

If a $\ca{V}$-category has all small weighted colimits, then the colimit of $G\colon \ca{D} \rightarrow \ca{E}$ weighted by  $H \colon \ca{D}^{\op} \rightarrow \ca{V}$ is given by the coend
\[
H \star G = \int^{D\in \ca{D}} HD \odot GD
\]
 (see \cite[\S~3.10]{KELLY_BASIC}). There are also weights corresponding to ordinary diagrams in the underlying category. To distinguish them from general weights the corresponding colimits are called \emph{conical colimits}. For $\ca{V}=\Mod_R$, a conical colimit exists if and only if the corresponding ordinary colimit exists in the underlying category (see \cite[\S~3.8]{KELLY_BASIC}), so in this case there is no need to distinguish the two notions.
 
 \subsection{Free cocompletions} 
 In order to talk about free cocompletions we will use the concept of a left Kan extension of $\ca{V}$-functors. These are discussed in \cite[\S~4]{KELLY_BASIC}. We only need the special case of left Kan extension along the Yoneda embedding, for which we use the following notation. All the facts we need about them are implicit in Theorem~\ref{FREE_COCOMPLETION_THM}.

\begin{notation}\label{FREE_COCOMPLETION_NOTATION}
 We write $L_K$ for the left Kan extension $\Lan_Y K$ of $K \colon \ca{A} \rightarrow \ca{C}$ along the Yoneda embedding $Y\colon \ca{A} \rightarrow \Prs{A}$, and we denote the unit of this Kan extension by $\alpha_K \colon K \Rightarrow L_K Y$. We let $\widetilde{K} \colon \ca{C} \rightarrow \Prs{A}$ be the right adjoint of $L_K$, that is, $\widetilde{K}(C)=\ca{C}(K-,C)$. The unit and counit of the adjunction $L_K \dashv \widetilde{K}$ are denoted by $\eta^K \colon \id \Rightarrow \widetilde{K} L_K$ and $\varepsilon^K \colon L_K \widetilde{K} \Rightarrow \id$ respectively.
\end{notation}

 \begin{thm}\label{FREE_COCOMPLETION_THM}
 Let $Y \colon \ca{A} \rightarrow \Prs{A}$ be the Yoneda embedding of the small $\ca{V}$-category $\ca{A}$, and let $\ca{C}$ be a cocomplete $\ca{V}$-category. Then for any cocontinuous $\ca{V}$-functor $S \colon \Prs{A} \rightarrow \ca{C}$ we have
\[
S\cong L_K \cong -\star K \colon \Prs{A} \rightarrow \ca{C}
\]
 where $K=SY \colon \ca{A} \rightarrow \ca{C}$. The assignment $S \mapsto SY$ is an equivalence of $\ca{V}$-categories
\[
[Y,\ca{C}] \colon \Cocts[\Prs{A},\ca{C}] \rightarrow [\ca{A},\ca{C}]\rlap{.}
\]
 The inverse to this equivalence sends $K$ to (a choice of) $\Lan_Y K$.
\end{thm}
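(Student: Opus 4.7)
The plan is to extract the result from three standard facts about presheaf $\ca{V}$-categories: the density formula for the Yoneda embedding, the fact that every left adjoint is cocontinuous, and the fact that the unit of a left Kan extension along a fully faithful $\ca{V}$-functor is an isomorphism (see \cite[\S3.17 and \S4.23]{KELLY_BASIC}). The density formula says that for any $H\in\Prs{A}$ one has
\[
  H \;\cong\; H\star Y \;=\; \int^{A\in\ca{A}} HA \odot Y(A)\rlap{,}
\]
and this isomorphism is $\ca{V}$-natural in $H$.

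First I would identify $L_K$ with $-\star K$. Because $L_K\dashv \widetilde{K}$ by construction, $L_K$ is cocontinuous, so applying it to the density formula gives
\[
  L_K(H)\;\cong\; L_K(H\star Y)\;\cong\; H\star L_K Y\;\cong\; H\star K\rlap{,}
\]
where the last isomorphism uses that the unit $\alpha_K\colon K\Rightarrow L_K Y$ is an isomorphism, which in turn follows from the fact that $Y$ is fully faithful. In particular the weighted colimit $H\star K$ exists in $\ca{C}$ for every $H\in\Prs{A}$ (using cocompleteness of $\ca{C}$), and the assignment $K\mapsto L_K\cong -\star K$ gives a $\ca{V}$-functor $[\ca{A},\ca{C}]\to\Cocts[\Prs{A},\ca{C}]$.

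Next I would show that every cocontinuous $S\colon\Prs{A}\to\ca{C}$ is isomorphic to $L_{SY}$. Setting $K=SY$ and again using density plus cocontinuity of $S$,
\[
  S(H)\;\cong\; S(H\star Y)\;\cong\; H\star SY\;=\; H\star K\;\cong\; L_K(H)\rlap{,}
\]
with the isomorphism $\ca{V}$-natural in $H$. This shows both that $S\cong L_{SY}$ and, together with $L_K Y\cong K$, that the two $\ca{V}$-functors $[Y,\ca{C}]$ and $\Lan_Y(-)$ are mutually inverse up to isomorphism, so $[Y,\ca{C}]$ is a $\ca{V}$-equivalence.

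The only step needing real care is the density isomorphism $H\cong H\star Y$ and the verification that the comparison map in step one is actually induced by $\alpha_K$ and is invertible; once one accepts density and the fully faithful Yoneda embedding, the rest of the argument is a purely formal chain of isomorphisms. The hardest part of the statement is really keeping track of $\ca{V}$-naturality of the displayed isomorphisms, which one handles by noting that both sides arise as representing objects for $\ca{V}$-natural transformations with a common domain, so the isomorphisms exhibited pointwise automatically come from $\ca{V}$-natural transformations via the enriched Yoneda lemma.
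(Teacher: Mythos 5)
Your sketch is correct; the paper itself offers no argument here, simply citing \cite[Theorem~4.51]{KELLY_BASIC}, and what you have written is the standard proof behind that citation (density of $Y$ in the form $H\cong H\star Y$, cocontinuity of left adjoints, and invertibility of the unit $\alpha_K$ because $Y$ is fully faithful). The only point worth making explicit is that the adjunction $L_K\dashv\widetilde{K}$ you invoke at the start is itself immediate from the defining universal property $\ca{C}(H\star K,C)\cong\Prs{A}\bigl(H,\ca{C}(K-,C)\bigr)$ of the weighted colimit, which exists because $\ca{C}$ is cocomplete, so no circularity arises.
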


\begin{proof}
 This is (part of) \cite[Theorem~4.51]{KELLY_BASIC}.
\end{proof}

 \subsection{The 2-category \texorpdfstring{$\ca{M}$}{M} and maps in \texorpdfstring{$\Mod(\ca{V})$}{Mod(V)}} The goal of this section is to establish a relationship between maps in the bicategory of modules on the one hand, and $\ca{V}$-functors on the other. 
 
  \begin{rmk}\label{VMOD_MAPS_RMK}
 A $\ca{V}$-functor $f \colon \ca{A} \rightarrow \ca{B}$ induces a module $f_\ast=\ca{B}(1,f) \colon \ca{A} \xslashedrightarrow{} \ca{B}$ which sends $(b,a)$ to $\ca{B}(b,fa)$, and a module $f^\ast=\ca{B}(f,1) \colon \ca{B} \xslashedrightarrow{} \ca{A}$ which sends $(a,b)$ to $\ca{B}(fa,b)$. The module $f_\ast$ is left adjoint to $f^\ast$, with unit given by
 \[
 \xymatrix{ \ca{A}(a,a^\prime) \ar[r]^-{f} & \ca{B}(fa,fa^{\prime}) \ar[r]^-{\cong} & \int^{b \in \ca{B}} \ca{B}(fa,b)\otimes \ca{B}(b,fa^\prime) }
 \]
 where the isomorphism is a consequence of the enriched Yoneda lemma (see \cite[Formula~(3.71)]{KELLY_BASIC}).
 \end{rmk}
 
 Using the fact that enriched presheaf categories are free cocompletions, we can now prove the following proposition.

\begin{prop}\label{MODULES_2CATEGORY_PROP}
 The bicategory $\Mod(\ca{V})$ is biequivalent to the 2-category with objects the small $\ca{V}$-categories, with 1-cells from $\ca{A}$ to $\ca{B}$ given by the left adjoint $\ca{V}$-functors $\Prs{A} \rightarrow \Prs{B}$, and 2-cells the $\ca{V}$-natural transformations between those.
\end{prop}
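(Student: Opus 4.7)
The plan is to build a pseudofunctor $\Phi \colon \Mod(\ca{V}) \to \ca{M}$ that is the identity on 0-cells, where $\ca{M}$ denotes the 2-category in the statement, and show it is locally an equivalence of categories. On 1-cells, given a module $M \colon \ca{A} \xslashedrightarrow{} \ca{B}$, I view $M$ by currying as a $\ca{V}$-functor $\widehat{M} \colon \ca{A} \to \Prs{B}$ sending $a$ to $M(-,a)$, and set $\Phi(M) \defl L_{\widehat{M}} = \Lan_Y \widehat{M} \colon \Prs{A} \to \Prs{B}$ as in Notation~\ref{FREE_COCOMPLETION_NOTATION}. This is cocontinuous and has explicit right adjoint $\widetilde{\widehat{M}}$, so it qualifies as a 1-cell in $\ca{M}$. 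On 2-cells, a $\ca{V}$-natural transformation $M \Rightarrow N$ curries to $\widehat{M} \Rightarrow \widehat{N}$, and by Theorem~\ref{FREE_COCOMPLETION_THM} this determines a unique $\ca{V}$-natural transformation $L_{\widehat{M}} \Rightarrow L_{\widehat{N}}$, which I take as the 2-cell part of $\Phi$.

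Local equivalence follows by composing the canonical isomorphism $[\ca{B}^{\op} \otimes \ca{A}, \ca{V}] \cong [\ca{A}, \Prs{B}]$ with the equivalence $[\ca{A}, \Prs{B}] \simeq \Cocts[\Prs{A}, \Prs{B}]$ from Theorem~\ref{FREE_COCOMPLETION_THM}. Since that theorem also equips each cocontinuous $S \colon \Prs{A} \to \Prs{B}$ with the right adjoint $\widetilde{SY}$, cocontinuous and left adjoint $\ca{V}$-functors between presheaf categories coincide, so the hom-categories of $\ca{M}$ really agree with the targets of the above equivalence.

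The core of the argument is the compositor. For modules $M \colon \ca{A} \xslashedrightarrow{} \ca{B}$ and $N \colon \ca{B} \xslashedrightarrow{} \ca{C}$ I need a coherent isomorphism $L_{\widehat{N}} \circ L_{\widehat{M}} \cong L_{\widehat{N \odot M}}$. Both sides are cocontinuous functors $\Prs{A} \to \Prs{C}$, so by the uniqueness clause of Theorem~\ref{FREE_COCOMPLETION_THM} it suffices to compare their composites with the Yoneda embedding $Y \colon \ca{A} \to \Prs{A}$. On a representable $Y(a)$ the left-hand side gives $L_{\widehat{N}}\bigl(M(-,a)\bigr)$; expressing $M(-,a) \in \Prs{B}$ as the canonical coend $\int^{b \in \ca{B}} M(b,a) \odot Y(b)$ of representables and using cocontinuity of $L_{\widehat{N}}$ yields $\int^{b \in \ca{B}} M(b,a) \odot N(-,b)$, which is precisely $(N \odot M)(-,a) = \widehat{N \odot M}(a) = L_{\widehat{N \odot M}}\bigl(Y(a)\bigr)$. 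The unitors are obtained similarly from the fact that $\widehat{\ca{A}(-,-)}$ picks out the Yoneda embedding, whose Kan extension along itself is the identity.

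The main obstacle is verifying the pseudofunctor coherence axioms, namely associativity of the compositor and compatibility with the unitors. After restricting to representables along the relevant Yoneda embeddings, these axioms reduce via Theorem~\ref{FREE_COCOMPLETION_THM} to Fubini-type interchange of iterated coends and naturality of the tensor $\odot$, and are therefore entirely formal. Once done, $\Phi$ is a pseudofunctor that is the identity on objects and a local equivalence, hence the desired biequivalence.
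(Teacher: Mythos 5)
Your proposal is correct and follows essentially the same route as the paper: curry a module to a $\ca{V}$-functor $\ca{A}\to\Prs{B}$ via the hom-tensor adjunction and pass to the left Kan extension along Yoneda using Theorem~\ref{FREE_COCOMPLETION_THM}. The paper simply states this correspondence and leaves the compositor, unitors, and coherence checks implicit, whereas you spell them out; the added detail is sound (comparison on representables via co-Yoneda and cocontinuity, Fubini for the coherence axioms) but not a different argument.
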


\begin{proof}
 This follows from the hom-tensor adjunction for $\ca{V}$-categories and from the fact that presheaf categories are free cocompletions. More precisely, giving a $\ca{V}$-functor
 \[
  \ca{A}\otimes \ca{B}^{\op} \rightarrow \ca{V}
 \]
 is the same as giving a $\ca{V}$-functor $\ca{A} \rightarrow \Prs{B}$, which corresponds to a unique left adjoint $\Prs{A} \rightarrow \Prs{B}$ by Theorem~\ref{FREE_COCOMPLETION_THM}.
\end{proof}

\begin{lemma}\label{MAPS_LEMMA}
 The category of maps $\ca{A} \xslashedrightarrow{} \ca{B}$ in $\Mod(\ca{V})$ is equivalent to the category of $\ca{V}$-functors $\ca{A} \rightarrow \overline{\ca{B}}$ from $\ca{A}$ to the Cauchy completion $\overline{\ca{B}}$ of $\ca{B}$. More concretely, under the biequivalence from Proposition~\ref{MODULES_2CATEGORY_PROP}, this means that whiskering with the Yoneda embedding gives a bijection
\[
 \Map\bigl( \Mod(\ca{V})\bigr)(\ca{A},\ca{B}) \rightarrow [\ca{A},\overline{\ca{B}}]\rlap{\smash{.}}
\]
\end{lemma}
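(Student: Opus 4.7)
The plan is to translate through the biequivalence of Proposition~\ref{MODULES_2CATEGORY_PROP} and then combine the universal property of presheaf categories (Theorem~\ref{FREE_COCOMPLETION_THM}) with the definition of Cauchy objects. First I would observe that under the biequivalence of Proposition~\ref{MODULES_2CATEGORY_PROP}, a module $M \colon \ca{A} \xslashedrightarrow{} \ca{B}$ is a map (i.e.\ admits a right adjoint 1-cell in $\Mod(\ca{V})$) if and only if the corresponding left adjoint $L \colon \Prs{A} \to \Prs{B}$ has a right adjoint that is \emph{itself} a left adjoint, because a biequivalence preserves adjoint 1-cells.

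By Theorem~\ref{FREE_COCOMPLETION_THM}, left adjoints $L \colon \Prs{A} \to \Prs{B}$ are in bijective correspondence, via $L \mapsto K \defl LY_{\ca{A}}$, with $\ca{V}$-functors $K \colon \ca{A} \to \Prs{B}$; the inverse sends $K$ to the left Kan extension $L_K$ whose right adjoint $\widetilde{K}$ is given pointwise by $\widetilde{K}(F)(a) = \Prs{B}(Ka,F)$ (Notation~\ref{FREE_COCOMPLETION_NOTATION}). Applying Theorem~\ref{FREE_COCOMPLETION_THM} a second time, now to $\widetilde{K} \colon \Prs{B} \to \Prs{A}$, this right adjoint is itself a left adjoint exactly when it is cocontinuous. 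Thus the task reduces to characterising cocontinuity of $\widetilde{K}$ in terms of the original $K$.

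The core step is a pointwise argument. Since weighted colimits in $\Prs{A}$ are computed pointwise, the evaluation $\ca{V}$-functors $\mathrm{ev}_a \colon \Prs{A} \to \ca{V}$ preserve and jointly reflect all small weighted colimits. Hence $\widetilde{K}$ is cocontinuous if and only if each composite $\mathrm{ev}_a \circ \widetilde{K} = \Prs{B}(Ka,-)$ is cocontinuous, and by the very definition of Cauchy object recalled in Section~\ref{CAUCHY_COMPLETION_SECTION} this is equivalent to $Ka$ lying in $\overline{\ca{B}} \subseteq \Prs{B}$ for every $a \in \ca{A}$. Therefore $L$ corresponds to a map precisely when $K = LY$ factors through the Cauchy completion, which is the asserted description of maps via whiskering with the Yoneda embedding; the functoriality and fully faithfulness on 2-cells are immediate from the 2-categorical part of Theorem~\ref{FREE_COCOMPLETION_THM}.

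The main obstacle is the pointwise-cocontinuity step, where one must be careful that ``cocontinuous'' is understood in the genuinely enriched sense (preservation of weighted colimits, not merely conical ones). I would resolve this by recalling from \cite{KELLY_BASIC} that evaluation at an object of $\ca{A}$ is an enriched left adjoint and therefore preserves all weighted colimits, and that the family $\{\mathrm{ev}_a\}_{a \in \ca{A}}$ jointly reflects them. Once this is in place, every remaining step is a direct invocation of Theorem~\ref{FREE_COCOMPLETION_THM} or of the definition of $\overline{\ca{B}}$.
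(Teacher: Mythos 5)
Your argument is correct and follows essentially the same route as the paper's proof: pass through the biequivalence of Proposition~\ref{MODULES_2CATEGORY_PROP}, identify the right adjoint of $L_K$ as $F \mapsto \Prs{B}(K-,F)$, reduce its cocontinuity to a pointwise condition via the evaluation functors, and invoke the definition of Cauchy objects. The only difference is that you spell out explicitly (via a second application of Theorem~\ref{FREE_COCOMPLETION_THM}) why ``cocontinuous'' and ``is itself a left adjoint'' coincide here, a step the paper leaves implicit.
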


\begin{proof}
 By Proposition~\ref{MODULES_2CATEGORY_PROP}, a module is a left adjoint in $\Mod(\ca{V})$ if and only if the corresponding left adjoint $L_F$ has a right adjoint which is cocontinuous. This right adjoint is given by $X \mapsto \Prs{B}(F-,X)$. Since colimits in presheaf categories are computed pointwise, it is cocontinuous if and only if for every $A\in \ca{A}$, the $V$-functor $\Prs{B}(FA,-)$ preserves all $\ca{V}$-colimits. Objects in $\Prs{B}$ with the property that the corresponding representable functor preserves $V$-colimits are by definition the objects of the  Cauchy completion $\overline{\ca{B}}$ of $\ca{B}$ (see Section~\ref{CAUCHY_COMPLETION_SECTION}). In other words: a module $M \colon \ca{A} \xslashedrightarrow{} \ca{B}$ is a left adjoint if and only if the corresponding functor $F \colon \ca{A} \rightarrow \Prs{B}$ factors through the Cauchy completion $\overline{\ca{B}}$.
\end{proof}

\subsection{The existence of Tannaka-Krein objects}\label{TK_OBJECTS_IN_VMOD_SECTION} 
In order to construct Tannaka-Krein objects in $\Mod(\ca{V})$, we will use the fact that the 2-category $\VCAT$ has Eilenberg-Moore objects.

\begin{prop}\label{VCAT_EM_OBJECTS_PROP}
Let $C\colon \ca{B} \rightarrow \ca{B} $ be a comonad in $\VCAT$. Then the forgetful functor  $V \colon \ca{B}_C \rightarrow \ca{B}$ from the $\ca{V}$-category of $C$-comodules to $\ca{B}$ is an Eilenberg-Moore object of $C$. The component of the coaction $\rho \colon V \Rightarrow C \cdot V$ at a comodule $(M,r)$ is given by $\rho_{(M,r)}=r$.
\end{prop}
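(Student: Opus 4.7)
The plan is to construct the $\ca{V}$-category $\ca{B}_C$ explicitly via equalizers in $\ca{V}$ and verify the universal property of Definition~\ref{EM_OBJECTS_DFN} directly. First I would describe $\ca{B}_C$: its objects are pairs $(M,r)$ with $M \in \ca{B}$ and $r\colon M \to CM$ a coaction (i.e.\ coassociative and counital with respect to the comultiplication and counit of $C$). The hom-object $\ca{B}_C\bigl((M,r),(M',r')\bigr)$ is defined as the equalizer in $\ca{V}$ of the two morphisms
\[
\ca{B}(M,M') \rightrightarrows \ca{B}(M,CM')
\]
obtained respectively by post-composing with $r'$ and by applying $C$ followed by pre-composing with $r$. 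Composition and identities are lifted from $\ca{B}$ via the universal property of the equalizer. This makes $V$ into a faithful $\ca{V}$-functor, and the coaction $\rho$ with $\rho_{(M,r)} \defl r$ is $\ca{V}$-natural precisely because the hom-objects of $\ca{B}_C$ are defined as the equalizer expressing this naturality constraint.

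Next I would verify the universal property by constructing an explicit inverse $S$ to
\[
T\colon \VCAT(\ca{A},\ca{B}_C) \rightarrow \Coact_{\ca{A}}(C)\rlap{.}
\]
Given a coaction $(v,\sigma)$, define $S(v,\sigma)$ on objects by $A \mapsto (vA, \sigma_A)$; the coaction axioms for $\sigma$ at each object translate pointwise to the comodule axioms for $(vA, \sigma_A)$. On hom-objects, the $\ca{V}$-functor $v$ gives a morphism $\ca{A}(A,A') \to \ca{B}(vA,vA')$, and the $\ca{V}$-naturality of $\sigma$ asserts exactly that this morphism equalizes the two maps to $\ca{B}(vA, CvA')$ defining the hom-object of $\ca{B}_C$. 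Hence there is a unique factorization through the equalizer, defining $S(v,\sigma)$ on hom-objects. The identities $TS = \id$ and $ST = \id$ on objects are then immediate from the definitions.

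For the 2-cell part of the bijection, a $\ca{V}$-natural transformation $\phi\colon f \Rightarrow g$ between $\ca{V}$-functors $\ca{A} \to \ca{B}_C$ has components $\phi_A \in \ca{V}(I, \ca{B}_C(fA, gA))$ which, after composition with the equalizing inclusion into $\ca{B}$, are precisely components of a $\ca{V}$-natural transformation $V\phi$ whose components factor through the equalizers. Faithfulness of $V$ on hom-objects shows $\phi$ is uniquely determined by $V\phi$, and the equalizer condition on each component is the 2-cell version of the morphism-of-coactions equation in Definition~\ref{COACTION_DFN}. This gives the required bijection between 2-cells of $\VCAT(\ca{A},\ca{B}_C)$ and morphisms of coactions, completing the verification that $T$ is an isomorphism of categories.

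The main obstacle is purely bookkeeping: one must check that the equalizer construction genuinely produces a $\ca{V}$-category (associative and unital composition lifted via universal properties), that $V$ and $\rho$ satisfy the enriched axioms, and that the universal property is a strict isomorphism of categories rather than a mere equivalence---this last point is why working with equalizers in $\ca{V}$ (which exist because $\ca{V}$ is a cosmos, hence complete) is essential, as opposed to the weaker Tannaka-Krein universal property encountered in Definition~\ref{TK_OBJECTS_DFN}.
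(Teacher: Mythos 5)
Your proof is correct and follows essentially the same route as the paper: the paper simply cites Dubuc (Proposition II.1.1, dualized) for the unique lifting of coactions and notes that the hom-objects of $\ca{B}_C$ are by definition equalizers to handle the 2-cell part, which is exactly the equalizer construction you carry out in detail.
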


\begin{proof}
 The statement that every coaction has a unique lift is dual to \cite[Proposition~II.1.1]{DUBUC}. The statement about $\ca{V}$-natural transformations between lifts follows from the fact that for any two $C$-comodules $M$ and $M^{\prime}$, the component $V_{M,M^{\prime}} \colon \ca{B}_C (M,M^{\prime}) \rightarrow \ca{C}(VM, VM^{\prime})$ of $V$ is by definition an equalizer (cf.\ \cite[p.~64]{DUBUC}).
 \end{proof}

\begin{rmk}\label{NEW_EM_ADJUNCTION_RMK}
 Let $C \colon \ca{B} \rightarrow \ca{B}$ be a comonad in $\VCAT$. Then the forgetful functor $\ca{B}_C \rightarrow \ca{B}$ has a right adjoint, which sends an object $B \in\ca{B}$ to the comodule $(CB,\delta_B)$ where $\delta \colon C \Rightarrow C^2$ is the comultiplication of $C$.
\end{rmk}

 We can now describe the object part of a Tannaka-Krein object in $\Mod(\ca{V})$.

\begin{dfn}\label{CAUCHY_COMODULE_DFN}
 Let $\ca{V}$ be a cosmos, $\ca{B}$ a small $\ca{V}$-category, and let $C$ be a comonad on $\ca{B}$ in $\Mod(\ca{V})$ (equivalently, $C$ is a cocontinuous comonad on $\Prs{B}$). A \emph{Cauchy comodule} of $C$ is a comodule whose underlying object lies in the Cauchy completion $\overline{\ca{B}}$ of $\ca{B}$ (see Section~\ref{CAUCHY_COMPLETION_SECTION}). The category of Cauchy comodules of $C$ is denoted by $\Rep(C)$.
\end{dfn}

The following lemma will be relevant for showing the 2-cell part of the universal property of a Tannaka-Krein object in $\Mod(\ca{V})$.

\begin{lemma}\label{VMOD_TK_2CELL_LEMMA}
 Let $K \colon \ca{B} \rightarrow \ca{C}$ be a fully faithful $\ca{V}$-functor where $\ca{B}$ is small and $\ca{C}$ is cocomplete. Let $F \colon \Prs{A} \rightarrow \Prs{B}$ and $G \colon \Prs{A}^{\prime} \rightarrow \Prs{B}$ be cocontinuous $\ca{V}$-functors with cocontinuous right adjoints, and let $M \colon \Prs{A} \rightarrow \Prs{A}^{\prime}$ be any cocontinuous $\ca{V}$-functor. Then whiskering with $L_K$ induces a bijection
 \[
 L_K(-) \colon  \VNat(GM,F) \rightarrow \VNat(L_K G M, L_K F)\rlap{\smash{.}}
 \]
\end{lemma}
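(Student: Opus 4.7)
The plan is to reduce, via Theorem~\ref{FREE_COCOMPLETION_THM}, to a pointwise assertion about hom-objects, and then to solve that using the adjunction $L_K \dashv \widetilde{K}$ together with the fact that $L_K$ is fully faithful on Cauchy objects.

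First, since $L_K$ is a left adjoint and therefore cocontinuous, all four $\ca{V}$-functors $GM$, $F$, $L_K GM$, $L_K F$ with domain $\Prs{A}$ are cocontinuous. Writing $Y_A \colon \ca{A} \to \Prs{A}$ for the Yoneda embedding, Theorem~\ref{FREE_COCOMPLETION_THM} gives bijections
\[
  \VNat(GM, F) \cong \VNat(GM Y_A, F Y_A) \text{ and } \VNat(L_K GM, L_K F) \cong \VNat(L_K GM Y_A, L_K F Y_A),
\]
which are compatible with whiskering by $L_K$. Both restricted $\VNat$'s are ends over $a \in \ca{A}$ of hom-objects, so it will suffice to show that for each $a$ the induced map $\Prs{B}(GM Y_A(a), F Y_A(a)) \to \ca{C}(L_K GM Y_A(a), L_K F Y_A(a))$ is invertible in $\ca{V}$. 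Through the adjunction $L_K \dashv \widetilde{K}$ this map coincides with post-composition by the unit $\eta^K_{F Y_A(a)}$, so the lemma reduces to invertibility of $\eta^K_X \colon X \to \widetilde{K} L_K X$ at $X = F Y_A(a)$.

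The hypothesis that $F$ has a cocontinuous right adjoint is what makes this reduction tractable. Under Theorem~\ref{FREE_COCOMPLETION_THM}, $F = L_{F Y_A}$ and its right adjoint must be $\widetilde{F Y_A}$, whose cocontinuity (colimits in $\Prs{A}$ being pointwise) is equivalent to each $F Y_A(a)$ being a Cauchy object of $\Prs{B}$. Hence $F Y_A(a) \in \overline{\ca{B}}$, and everything reduces to the single claim that $\eta^K_X$ is invertible whenever $X \in \overline{\ca{B}}$.

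The main obstacle is precisely this last claim. The cleanest route uses the universal property of the Cauchy completion: since $\ca{C}$ is cocomplete, $K$ extends essentially uniquely to a $\ca{V}$-functor $\overline{K} \colon \overline{\ca{B}} \to \ca{C}$, which must agree with the restriction of $L_K$ to $\overline{\ca{B}}$ by uniqueness of such extensions, and fully faithfulness transfers from $K$ to $\overline{K}$ (a standard fact about Cauchy completion). Invertibility of $\eta^K_X$ for $X \in \overline{\ca{B}}$ then follows by testing pointwise in $b \in \ca{B}$: writing $Y_B \colon \ca{B} \to \Prs{B}$ for the Yoneda embedding, the component $(\eta^K_X)_b \colon X(b) = \Prs{B}(Y_B(b), X) \to \ca{C}(K(b), L_K X) = \widetilde{K} L_K X(b)$ is the map given by applying $L_K$ to morphisms from $Y_B(b)$ to $X$, which is invertible because both $Y_B(b)$ and $X$ lie in $\overline{\ca{B}}$ where $L_K$ is fully faithful.
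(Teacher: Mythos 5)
Your argument is correct, and its backbone coincides with the paper's: restrict along the Yoneda embedding using Theorem~\ref{FREE_COCOMPLETION_THM}, use the hypothesis that $F$ has a cocontinuous right adjoint to place each $FY_{\ca{A}}(a)$ in $\overline{\ca{B}}$, and reduce everything to the invertibility of the unit $\eta^K$ on Cauchy objects. There are, however, two genuine differences in execution. First, where the paper transposes across the adjunctions $G \dashv \overline{G}$ and $L_K G \dashv \overline{G}\widetilde{K}$ and reduces to invertibility of $\overline{G}\eta^K FY$, you work directly with the ends computing $\VNat$ and reduce to invertibility of the hom-object maps $(L_K)_{GMYa,\,FYa}$, which under the adjunction become $\Prs{B}(GMYa,\eta^K_{FYa})$. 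This is more direct, proves the stronger statement that $L_K$ is fully faithful on the relevant hom-objects, and makes visible that the hypothesis that $G$ has a cocontinuous right adjoint is never actually used. Second, where the paper proves that $\eta^K_X$ is invertible for $X \in \overline{\ca{B}}$ by reducing to representables via the characterization of $\overline{\ca{B}}$ as the closure of the representables under absolute colimits, you instead cite the fact that the extension $\overline{K}=L_K|_{\overline{\ca{B}}}$ of a fully faithful $K$ to the Cauchy completion is again fully faithful. That fact is true and citable, but you should be aware that its proof \emph{is} the paper's absolute-colimit argument: the full subcategory of $\Prs{B}$ on which the comparison $\Prs{B}(-,-)\rightarrow\ca{C}(L_K-,L_K-)$ is invertible is closed under absolute colimits (all functors, including the hom-functors, preserve them) and contains the representables precisely because $K$ is fully faithful. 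So you have relocated, rather than avoided, the one nontrivial step; if you want the proof to be self-contained you should spell that closure argument out.
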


\begin{proof}
 Since $F$, $G$, $M$ and $L_K$ are cocontinuous, it follows from the fact that whiskering with the Yoneda embedding is fully faithful (see Theorem~\ref{FREE_COCOMPLETION_THM}) that it suffices to show that
 \[
 L_K(-) \colon  \VNat(GMY,FY) \rightarrow \VNat(L_K G MY, L_K FY)
 \]
 is invertible. The assignment which sends a 2-cell $\phi \colon GMY \Rightarrow FY $ to
 \[
  \vcenter{\hbox{


}}
 \]
 gives a bijection $\VNat(L_K GMY, L_K FY) \cong \VNat(MY,\overline{G} \widetilde{K} L_K FY)$. It is immediate from the definition of these bijections that the diagram
 \[
  \xymatrix{ \VNat(GMY,FY) \ar[d]_{\cong} \ar[rrr]^-{L_K(-)} &&& \VNat(L_K GM, L_K  FY) \ar[d]^{\cong} \\
   \VNat(MY,\overline{G}FY) \ar[rrr]_-{\VNat(M,\overline{G} \eta^K FY)} &&&  \VNat(MY,\overline{G} \widetilde{K} L_K FY)}
 \]
 is commutative. Therefore it suffices to check that $\overline{G} \eta^K FY$ is invertible. This follows if we can show that $\eta^K FY$ is invertible.
 
 From Theorem~\ref{FREE_COCOMPLETION_THM} and Lemma~\ref{MAPS_LEMMA} we know that $FY$ factors through the Cauchy completion $\overline{\ca{B}}$ of $\ca{B}$. We claim that $\eta^K_X$ is invertible for any object $X \in \overline{\ca{B}}$. From \cite{STREET_ABSOLUTE} we know that $\overline{\ca{B}}$ is also the free cocompletion of $\ca{B}$ under absolute colimits, that is, colimits that are preserved by any functor. For example, in the case $\ca{V}=\Mod_R$, these absolute colimits are given by finite direct sums and splittings of idempotents, which are clearly preserved by any $R$-linear functor. Thus the problem can be reduced to showing that $\eta^K_X$ is invertible for any representable presheaf $X$. Thus we have to show that for any object $B \in \ca{B}$, the $\ca{V}$-natural transformation
 \[
  \eta^K_{\ca{B}(-,B)} \colon \ca{B}(-,B) \rightarrow \ca{C}\bigl(K-, L_K \ca{B}(-,B)\bigr)
 \]
 is invertible. From the Yoneda lemma it follows that 
 \[
  K_{-,B} \colon \ca{B}(-,B) \rightarrow \ca{C}(K-,KB)
 \]
 has the same universal property as $\eta^K_{\ca{B}(-,B)}$. This is not hard to see in the unenriched case, and the same proof works for $\ca{V}=\Mod_R$. For the general case see \cite[Formula (3.10)]{KELLY_BASIC}. Thus $\eta^K_{\ca{B}(-,B)}$ is up to isomorphism given by $K_{-,B}$, which is invertible by assumption.
\end{proof}

\begin{thm}\label{VMOD_TK_OBJECTS_THM}
 Let $\ca{V}$ be a cosmos and let $C \colon \ca{B} \xslashedrightarrow{} \ca{B}$ be a comonad in $\Mod(\ca{V})$, that is, $C$ is a cocontinuous comonad on $\Prs{B}$. Let
\[
V \dashv W \colon (\Prs{B})_C \rightarrow \Prs{B}
\]
 be the adjunction between the $C$-comodules and $\Prs{B}$ (cf.\ Proposition~\ref{VCAT_EM_OBJECTS_PROP}). We write $K$ for the canonical inclusion $\Rep(C) \rightarrow \Prs{B}_C$  and we choose a left Kan extension
 \[
 L_K \colon \mathcal{P}{\Rep(C)} \rightarrow (\Prs{B})_C
 \]
 of $K$ along the Yoneda embedding. Then $V \cdot L_K$, together with the coaction
 \[
\rho_C \defl \vcenter{\hbox{


}}
 \]
 holds. From Lemma~\ref{MAPS_LEMMA} we know that the composite $FY$ of the Yoneda embedding of $\ca{A}$ and $F$ factors through $\overline{\ca{B}}$. But this implies that $\widehat{F}Y=KG$ for some $\ca{V}$-functor $G \colon \ca{A} \rightarrow \Rep(C)$. Let $L_{YG}$ be a left Kan extension of $YG \colon \ca{A} \rightarrow \mathcal{P}{\Rep(C)}$ along the Yoneda embedding of $\ca{A}$. We have $\ca{V}$-natural isomorphisms
 \[
 L_K L_{YG} Y \cong L_K YG\cong KG =\widehat{F}Y
 \]
 between cocontinuous functors. Theorem~\ref{FREE_COCOMPLETION_THM} implies that the above composite of isomorphisms comes from an isomorphism $\sigma \colon L_K L_{YG} \Rightarrow \widehat{F}$. It is now easy to see that $V \sigma$ gives an isomorphism between $T(L_{YG})$ and $(F,\rho)$ in $\Coact^m_{\ca{A}}(C)$. This shows that $T$ is essentially surjective. 
 
 It remains to show that $VL_K$ satisfies the 2-cell part of the universal property of a Tannaka-Krein object. This follows immediately from Lemma~\ref{VMOD_TK_2CELL_LEMMA} and from the fact that the $\ca{V}$-functor $V \colon \Prs{B}_C \rightarrow \Prs{B}$ is an Eilenberg-Moore object in $\VCAT$ (see Proposition~\ref{VCAT_EM_OBJECTS_PROP} and Definition~\ref{EM_OBJECTS_DFN}).
\end{proof}

\begin{rmk}\label{REP_EQUIVALENT_R_RMK}
 Let $\ca{M}$ be the 2-category from Proposition~\ref{MODULES_2CATEGORY_PROP}. The composite of the pseudofunctor $\Rep(-)$ with the biequivalence
 \[
  \Map\bigl(\Mod(\ca{V}),\ca{B}\bigr) \rightarrow \Vcat \slash \overline{\ca{B}}
 \]
 (cf.\ Lemma~\ref{MAPS_LEMMA}) is equivalent to the strict 2-functor obtained by restricting the 2-functor $\Prs{B}_{(-)}$ to Cauchy comodules. This follows from the fact that $L_K Y$ is naturally isomorphic to $K \colon \Rep(C) \rightarrow \Prs{B}_C$ (see Theorem~\ref{FREE_COCOMPLETION_THM}).
\end{rmk}

\begin{proof}[Proof of Theorem~\ref{TANNAKIAN_BIADJUNCTION_VMOD_THM}] 
Let $\ca{M}$ be the 2-category from Proposition~\ref{MODULES_2CATEGORY_PROP}. It has Tannaka-Krein objects by Theorem~\ref{VMOD_TK_OBJECTS_THM}. From Theorem~\ref{TANNAKIAN_BIADJUNCTION_2CAT_THM} we know that the functor $\Rep(-)$ has a left biadjoint $L$, and in Remark~\ref{REP_EQUIVALENT_R_RMK} we saw that $\Rep(-)$ is equivalent to the 2-functor $R$ defined in Section~\ref{RIGHT_BIADJOINT_SUBSECTION}.
\end{proof}

\subsection{The counit of the Tannakian biadjunction} 
Since we now know what Tannaka-Krein objects in $\Mod(\ca{V})$ look like, we can give an explicit description of the counit of the Tannakian adjunction for $\Mod(\ca{V})$.

\begin{prop}\label{COUNIT_PROP}
 Let $\ca{V}$ be a cosmos, and let $C$ be a comonad in $\Mod(\ca{V})$. With the notation from Theorem~\ref{VMOD_TK_OBJECTS_THM}, the $C$-component of the counit of the Tannakian adjunction is given by
 \[
  V\varepsilon^K W
 \]
 which is invertible if and only if $\varepsilon^K W$ is invertible.
\end{prop}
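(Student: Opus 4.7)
The plan is to unwind the abstract definition of the counit using the formula established in the proof of Theorem~\ref{TANNAKIAN_BIADJUNCTION_2CAT_THM}. Since the counit $\varepsilon_C$ corresponds under the equivalence $\theta_{v_C,C}$ to the identity 1-cell on $v_C$, I substitute $s=\id$ and $\sigma=\id$ into the string diagram defining $\theta_{w,c}((s,\sigma))$; it collapses to
\[
 \varepsilon_C = (C \cdot \varepsilon^{v_C}) \circ (\rho_C \cdot \overline{v_C}),
\]
where $\varepsilon^{v_C}$ denotes the counit of the adjunction $v_C \dashv \overline{v_C}$.

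Next I translate this into $\VCAT$ via Proposition~\ref{MODULES_2CATEGORY_PROP} and the explicit description of Tannaka-Krein objects from Theorem~\ref{VMOD_TK_OBJECTS_THM}. There $v_C = V L_K$, $\overline{v_C} = \widetilde{K} W$, and $C = VW$ under the Eilenberg--Moore adjunction of Proposition~\ref{VCAT_EM_OBJECTS_PROP} together with Remark~\ref{NEW_EM_ADJUNCTION_RMK}. Writing $\eta^V$ and $\varepsilon^V$ for the unit and counit of $V \dashv W$, the coaction $\rho_C$ is then $V\eta^V L_K$, because the natural transformation $\rho$ of Proposition~\ref{VCAT_EM_OBJECTS_PROP} has component $\rho_{(M,r)}=r=V\eta^V_{(M,r)}$ at every comodule. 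The counit of the composite adjunction $V L_K \dashv \widetilde{K} W$ is $\varepsilon^V \circ V\varepsilon^K W$, so
\[
 \varepsilon_C = (VW\varepsilon^V) \circ (VWV\varepsilon^K W) \circ (V\eta^V L_K \widetilde{K} W).
\]
Applying the middle-four interchange law to the two rightmost factors (equivalently, the equality $WV\varepsilon^K \circ \eta^V L_K \widetilde{K} = \eta^V \circ \varepsilon^K$) rewrites them as $V\eta^V W \circ V\varepsilon^K W$. The triangle identity $W\varepsilon^V \circ \eta^V W = \id_W$ then collapses the remaining first two factors, leaving $\varepsilon_C = V\varepsilon^K W$.

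For the invertibility statement, one direction is immediate since whiskering preserves isomorphisms. For the converse, the forgetful $\ca{V}$-functor $V \colon \Prs{B}_C \to \Prs{B}$ reflects isomorphisms, as the inverse of the underlying map of a comodule morphism is automatically a comodule morphism; since invertibility of a $\ca{V}$-natural transformation is detected pointwise, $V\varepsilon^K W$ being invertible forces each $V(\varepsilon^K_{WB})$, and hence $\varepsilon^K_{WB}$, to be invertible.

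The main technical hurdle is the careful bookkeeping of the three-fold vertical composition and the various whiskerings needed to identify $\varepsilon_C$ in $\VCAT$; once the formula is laid out as displayed above, the reduction to $V\varepsilon^K W$ by interchange and the triangle identity is automatic.
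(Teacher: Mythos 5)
Your proposal is correct and follows essentially the same route as the paper: the paper's proof simply asserts that the formula is immediate from the explicit description of Tannaka--Krein objects in Theorem~\ref{VMOD_TK_OBJECTS_THM} and the construction of $\theta$ in Theorem~\ref{TANNAKIAN_BIADJUNCTION_2CAT_THM}, and that invertibility follows because $V$ reflects isomorphisms. Your computation (substituting $s=\id$, $\sigma=\id$, identifying $\rho_C=V\eta^V L_K$, and collapsing via interchange and a triangle identity) is exactly the detailed unwinding of that assertion.
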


\begin{proof}
 This is an immediate consequence of the explicit description of Tannaka-Krein objects in $\Mod(\ca{V})$ (see Theorem~\ref{VMOD_TK_OBJECTS_THM}) and the construction of the pseudonatural equivalence from Theorem~\ref{TANNAKIAN_BIADJUNCTION_2CAT_THM}. The fact about invertibility follows because the forgetful functor $V$ reflects isomorphisms.
\end{proof}

\subsection{The semantics-structure adjunction}\label{SEMANTICS_STRUCTURE_ADJUNCTION}
 In any 2-category with Eilenberg-Moore objects, there exists an analogous adjunction between comonads on an object and maps into that object, called the semantics-structure adjunction (see \cite{STREET_FTM} for the general case and \cite{DUBUC} for the case of $\VCAT$). This adjunction is closely related to the Tannakian adjunction. Given an adjunction $L \dashv R \colon \ca{A} \rightarrow \ca{B}$ in $\VCAT$, we get a comonad $(LR,L\eta R, \varepsilon)$ on $\ca{B}$. The category $\ca{B}_{LR}$ of $LR$-comodules (also known as coalgebras) is the Eilenberg-Moore object of $LR$ in $\VCAT$ (see Proposition~\ref{VCAT_EM_OBJECTS_PROP}).
 
 \begin{dfn}\label{COMPARISON_FUNCTOR_DFN}
 Let $L \dashv R \colon \ca{A} \rightarrow \ca{B}$ be an adjunction in $\VCAT$. The \emph{comparison functor}
  \[
   J \colon \ca{A} \rightarrow \ca{B}_{LR}
  \]
 is the functor which sends an object $C \in \ca{C}$ to the $LR$-comodule $(LC,L\eta_C)$, that is, $J$ is the lifted functor corresponding to the $LR$-coaction $L\eta$ under the equivalence $T$ from Definition~\ref{EM_OBJECTS_DFN} and Proposition~\ref{VCAT_EM_OBJECTS_PROP}.
 \end{dfn}

 The functor $J$ is in fact the unit of the semantics-structure adjunction (see \cite{DUBUC}), which has been studied quite extensively. For example, Beck's monadicity theorem gives necessary and sufficient conditions for $J$ to be an equivalence. The next proposition shows that the unit of the Tannakian biadjunction is a composite of a Yoneda embedding with a comparison functor. Thus we can use the existing body of knowledge about $J$ to prove facts about the unit of the Tannakian adjunction.

\subsection{The unit of the Tannakian biadjunction} The unit of the Tannakian biadjunction for $\Mod(\ca{V})$ can be written in terms of the unit of the semantics-structure adjunction.

\begin{prop}\label{UNIT_PROP}
 Let $w \colon \ca{A} \rightarrow \overline{\ca{B}}$ be a $\ca{V}$-functor, and let $L_w$ be the left Kan extension of $w$ along the Yoneda embedding. Let $C$ be the comonad $L_w \widetilde{w}$ and let $N$ be the corestriction of the composite
 \[
 \xymatrix{ \ca{A} \ar[r]^-{Y} & \Prs{A} \ar[r]^-{J} & (\Prs{B})_C }
 \]
 to $\Rep(C)$, where $J$ is the comparison functor. Then the $w$-component of the unit of the Tannakian adjunction is $L_{YN}$, the left Kan extension of the composite
 \[
 \xymatrix{ \ca{A} \ar[r]^-{N} & \Rep(L_w \cdot \widetilde{w}) \ar[r]^-{Y} & \mathcal{P}{\Rep(L_w \cdot \widetilde{w}\bigr)}}
 \]
 along the Yoneda embedding of $\ca{A}$. In particular, if $\ca{A}$ is Cauchy complete, then the unit of the Tannakian adjunction is an equivalence in $\Mod(\ca{V})$ if and only if $N$ is an equivalence of $\ca{V}$-categories.
\end{prop}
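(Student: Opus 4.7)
The plan is to unwind the $w$-component of the unit of $L \dashv \Rep(-)$ through its defining hom-equivalence, recognize the resulting map coaction as the one that defines the comparison functor $J$, and then read off the corresponding map from the explicit construction of Tannaka-Krein objects in Theorem~\ref{VMOD_TK_OBJECTS_THM}. By the universal property of a biadjunction, the $w$-component of the unit is the 1-cell of $\Map(\ca{M},\ca{B})$ sent to $\id_{L(w)}$ by the equivalence $\theta_{w,L(w)}$ of Theorem~\ref{TANNAKIAN_BIADJUNCTION_2CAT_THM}. Specializing the essential-surjectivity argument in that proof to $\phi = \id_{L(w)}$, the resulting 1-cell is $T^{-1}(\rho)$, where $T$ is the equivalence of Remark~\ref{TK_DOUBLE_LIMIT_RMK} and
\[
\rho = w \cdot \eta \colon w \Rightarrow L(w) \cdot w
\]
is the map coaction obtained by whiskering the unit $\eta$ of $w \dashv \overline{w}$ with $w$. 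Transporting this to the 2-category biequivalent to $\Mod(\ca{V})$ of Proposition~\ref{MODULES_2CATEGORY_PROP}, the map $w$ is represented by $L_w \colon \Prs{A} \to \Prs{B}$ with right adjoint $\widetilde{w}$, so $L(w)$ becomes $C = L_w \widetilde{w}$ and $\rho$ becomes the coaction $L_w \cdot \eta \colon L_w \Rightarrow C \cdot L_w$. By the Eilenberg-Moore universal property of $(\Prs{B})_C$ in $\VCAT$ (Proposition~\ref{VCAT_EM_OBJECTS_PROP}), this coaction corresponds to a unique $\ca{V}$-functor $\Prs{A} \to (\Prs{B})_C$, and by Definition~\ref{COMPARISON_FUNCTOR_DFN} that $\ca{V}$-functor is precisely the comparison functor $J$.

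The explicit description of $T^{-1}$ in the proof of Theorem~\ref{VMOD_TK_OBJECTS_THM} then identifies $T^{-1}(L_w, L_w \cdot \eta)$ with $L_{YG}$, where $G \colon \ca{A} \to \Rep(C)$ is the corestriction of $JY$ to the Cauchy comodules. Such a corestriction exists because $L_w Y \cong w$ lands in $\overline{\ca{B}}$ by hypothesis, and by the very definition in the statement $G = N$. This yields the first claim.

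For the final claim, observe that $\Rep(C)$ is itself Cauchy complete: the cocontinuity of $C$ makes the forgetful functor $V \colon (\Prs{B})_C \to \Prs{B}$ create colimits, so $(\Prs{B})_C$ is cocomplete and hence Cauchy complete, and $\Rep(C) = V^{-1}(\overline{\ca{B}})$ is closed under absolute colimits in $(\Prs{B})_C$ because $V$ creates them and $\overline{\ca{B}}$ is closed under absolute colimits in $\Prs{B}$. Therefore $\overline{\Rep(C)} = \Rep(C)$, and Lemma~\ref{MAPS_LEMMA} identifies the map $L_{YN}$, viewed as a 1-cell $\ca{A} \xslashedrightarrow{} \Rep(C)$ in $\Mod(\ca{V})$, with the $\ca{V}$-functor $N \colon \ca{A} \to \Rep(C)$; when $\ca{A}$ is also Cauchy complete this identification is an equivalence of categories, so $L_{YN}$ is an equivalence in $\Mod(\ca{V})$ if and only if $N$ is an equivalence of $\ca{V}$-categories. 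The main obstacle throughout is the second step---matching the abstract coaction $w \cdot \eta$ with the $\VCAT$-level coaction $L_w \cdot \eta$ defining $J$---which requires careful tracking of how units of internal adjunctions transport through the biequivalence of Proposition~\ref{MODULES_2CATEGORY_PROP}.
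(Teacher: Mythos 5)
Your proposal is correct and follows essentially the same route as the paper: both identify the unit component via the hom-equivalence $\theta_{w,L(w)}$, recognize the relevant coaction $L_w\eta$ as the one defining the comparison functor $J$, and extract $L_{YN}$ from the explicit construction of Tannaka--Krein objects in Theorem~\ref{VMOD_TK_OBJECTS_THM}. The only difference is organizational --- the paper verifies directly (by a string-diagram computation with an isomorphism $\phi\colon L_K\cdot L_{YN}\Rightarrow J$) that $\theta_{L_w,C}(L_{YN},\sigma)=\id_{L_w\widetilde{w}}$, whereas you compute $\theta^{-1}(\id)$ by reusing the essential-surjectivity construction --- and your closing argument via Lemma~\ref{MAPS_LEMMA} matches the paper's.
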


\begin{proof}
 Since precomposing with the Yoneda embedding gives a fully faithful functor
 \[
  \Cocts[\Prs{A},(\Prs{\ca{B}})_C ] \rightarrow [\ca{A},(\Prs{B})_C ] 
 \]
 there is a unique $\ca{V}$-natural isomorphism $\phi \colon L_K \cdot L_{YN} \Rightarrow J$ such that the equation
\[
 \vcenter{\hbox{


}}=\id_{L_w \widetilde{w}}
 \]
 holds. This shows that the unit of the Tannakian adjunction is indeed $L_{YN}$. If $\ca{A}$ is Cauchy complete it follows from Lemma~\ref{MAPS_LEMMA} that $L_{YN}$ is an equivalence if and only if $N$ is.
\end{proof}

\section{The recognition theorem in \texorpdfstring{$\Mod(\ca{V})$}{Mod(V)}}\label{RECOGNITION_SECTION}
\subsection{Statement of the theorem}
 In this section we study the unit of the Tannakian biadjunction. We fix a cosmos $\ca{V}$ with the property that Cauchy completions of small categories are again small. All cosmoi of (differential graded or graded) $R$-modules have this property. More generally, all cosmoi which are locally presentable as closed categories have this property (see Definition~\ref{LOCALLY_FINITELY_PRESENTABLE_DFN} for the notion and \cite{JOHNSEN} for a proof).
 
\begin{thm}\label{RECOGNITION_THM}
 Let $\ca{A}$ be a small $\ca{V}$-category, and let $w \colon \ca{A} \rightarrow \overline{\ca{B}}$ be a $\ca{V}$-functor. The $(\ca{A},w)$-component of the unit of the Tannakian biadjunction is an equivalence of $\ca{V}$-categories if the following hold:
\begin{enumerate}
\item[i)]
 The functor $w \colon \ca{A} \rightarrow \Prs{B}$ reflects isomorphisms;
\item[ii)]
 The left adjoint $L_w \colon \Prs{A} \rightarrow \Prs{B}$ preserves equalizers of $L_w$-cosplit pairs (see Definition~\ref{COSPLIT_EQUALIZER_DFN}); and
\item[iii)]
 For all small $\ca{V}$-categories $\ca{D}$, all weights $H \colon \ca{D}^{\op} \rightarrow \ca{V}$ and all $\ca{V}$-functors $G \colon \ca{D} \rightarrow \ca{A}$ such that the weighted colimit $H \star w G$ lies in the subcategory $\overline{\ca{B}} \subseteq \Prs{B}$, the weighted colimit $H\star G$ exists and is preserved by $w \colon \ca{A} \rightarrow \Prs{B}$.
\end{enumerate}
 Moreover, if $\mathbf{\Phi}$ is a class of weights with the property that for each $X \in \Prs{A}$ there is a weight $H \colon \ca{D}^{\op} \rightarrow \ca{V}$ in $\mathbf{\Phi}$ and a $\ca{V}$-functor $G \colon \ca{D} \rightarrow \ca{A}$ such that $X \cong H \star YG$, then $N \colon \ca{A} \rightarrow \Rep\bigl(L(w)\bigr)$ is an equivalence if i) and ii) hold and iii) holds for all weights in the class $\mathbf{\Phi}$.
\end{thm}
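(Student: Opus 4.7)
The plan is to apply Proposition~\ref{UNIT_PROP}, which identifies the $(\ca{A},w)$-component of the unit with $L_{YN}$, where $C = L_w \widetilde{w}$ and $N \colon \ca{A} \rightarrow \Rep(C)$ is the corestriction of $J \circ Y$ to Cauchy comodules; here $J \colon \Prs{A} \rightarrow (\Prs{B})_C$ denotes the comparison functor for the adjunction $L_w \dashv \widetilde{w}$. By the same proposition, it suffices to show that $L_{YN}$ is an equivalence in $\Mod(\ca{V})$, and for the moreover statement we want the stronger conclusion that $N$ itself is an equivalence of $\ca{V}$-categories. Both reductions amount to showing that $N$ is fully faithful together with the appropriate form of essential surjectivity.

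The first step is to derive from conditions (i) and (ii) that $J$ is an equivalence, via an enriched form of the dual of Beck's monadicity theorem for the adjunction $L_w \dashv \widetilde{w}$. Condition (ii) supplies the preservation of equalizers of $L_w$-cosplit pairs directly. The conservativity of $L_w$ must be extracted from (i), and this is the main obstacle: the hypothesis that $w$ reflects isomorphisms on $\ca{A}$ needs to be propagated to $L_w$ reflecting them on all of $\Prs{A}$. I expect to proceed by exploiting the density of the Yoneda embedding, using (ii) to analyse the counit $\varepsilon^w$ on objects in the essential image of $\widetilde{w}$, and then combining the cocontinuity of $L_w$ with (i) applied to representables to obtain the general statement. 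Once $J$ is an equivalence, the identity $L_w Y \cong w$ means that $J Y$ takes values in $\Rep(C)$, so it factors as $K \circ N$ with $K \colon \Rep(C) \hookrightarrow (\Prs{B})_C$ fully faithful; fully faithfulness of $N$ then follows from that of $Y$, $J$, and $K$.

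The second step derives essential surjectivity from condition (iii). Given a Cauchy comodule $(M, \rho) \in \Rep(C)$, the equivalence $J^{-1}$ produces $X \in \Prs{A}$ with $J(X) \cong K(M, \rho)$. Writing $X$ as a weighted colimit of representables $X \cong H \star YG$, either with $H$ chosen from $\mathbf{\Phi}$ in the moreover case or via the canonical density presentation of $X$ in general, the cocontinuity of $L_w$ identifies $M \cong L_w X \cong H \star wG$, which lies in $\overline{\ca{B}}$ because $(M,\rho)$ is a Cauchy comodule. Condition (iii), applied to the appropriate class of weights, then produces $H \star G$ in $\ca{A}$ and guarantees that $w$ preserves it, and a routine compatibility check with the $C$-coaction shows that $N(H \star G) \cong (M, \rho)$ as $C$-comodules. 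In the moreover case the assumption on $\mathbf{\Phi}$ directly makes $N$ essentially surjective and hence an equivalence of $\ca{V}$-categories; in the general case, the same argument establishes essential surjectivity of $N$, which suffices to conclude that $L_{YN}$ is an equivalence in $\Mod(\ca{V})$.
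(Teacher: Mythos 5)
Your plan founders at the first step. To run Beck's monadicity theorem for $L_w \dashv \widetilde{w}$ you need $L_w$ to reflect isomorphisms on all of $\Prs{A}$, but condition~i) only says that $w \cong L_w Y$ reflects isomorphisms on representables, and conservativity does not propagate from a dense subcategory to the whole presheaf category: a cocontinuous functor can be conservative on representables while killing maps between general colimits of them. Your proposed workaround (``using~(ii) to analyse the counit on the image of $\widetilde{w}$ and combining cocontinuity with~(i)'') has no mechanism behind it, and indeed the theorem's conclusion is only that $\ca{A} \simeq \Rep\bigl(L(w)\bigr)$, not that $\Prs{A} \simeq (\Prs{B})_{L_w\widetilde{w}}$ — so $J$ being an equivalence is not even a consequence of the statement, let alone a step on the way to it. Your essential surjectivity argument inherits the same problem, since it invokes $J^{-1}$.

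The paper's proof is structured precisely to avoid full monadicity. It uses only parts~i) and~ii) of Proposition~\ref{BECK_PROP}: condition~ii) gives that $J$ has a \emph{fully faithful} right adjoint $E$, whence $J$ restricts to an equivalence on the replete image $\ca{C}$ of $E$ (Proposition~\ref{REPLETE_IMAGE_PROP}) with no conservativity needed. The real work is then to show the Yoneda embedding factors through $\ca{C}$: for $A \in \ca{A}$ one takes the unit component $f \colon \ca{A}(-,A) \rightarrow F$ of $J \dashv E$, writes $F \cong H \star YG$ with $H \in \mathbf{\Phi}$, uses the invertibility of $L_w f$ together with the comparison morphisms for weighted colimits and condition~iii) to produce $h \colon A \rightarrow H \star G$ with $w(h)$ invertible, applies condition~i) to invert $h$, and concludes that $\ca{A}(-,A)$ is a retract of $F \in \ca{C}$ (and $\ca{C}$ is closed under retracts). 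Essential surjectivity is likewise checked only for objects of $\ca{C}$ whose image under $L_w$ is Cauchy, via Corollary~\ref{FULLY_FAITHFUL_COR}, again using condition~iii). You correctly located where each hypothesis should enter, but the retract argument through $\ca{C}$ is the missing idea that makes the proof go through without the false conservativity claim.
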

 
 Note that condition iii) is a necessary condition: the forgetful functor from the category of all comodules to $\Prs{B}$ creates colimits. Similarly, condition i) is a necessary condition because the forgetful functor from comodules to $\Prs{B}$ reflects isomorphisms. The question whether or not ii) is a necessary condition for the unit to be an equivalence is open.
 
 \subsection{The enriched Beck monadicity theorem}
 As we saw in Proposition~\ref{UNIT_PROP}, the unit of the Tannakian biadjunction is closely related to the comparison functor $J$ (see Definition~\ref{COMPARISON_FUNCTOR_DFN}). The following proposition summarizes the facts about $J$ that we'll need in order to prove both the general and the specialized recognition theorems. In order to state it we need to introduce the following concept.
 
\begin{dfn}\label{COSPLIT_EQUALIZER_DFN}
 A \emph{cosplit equalizer} in an unenriched category $\ca{C}_0$ is a diagram of the form
\[
\turnradius{5pt}
\xymatrix{
E \ar[r]^{s} & A \ar@<0.5ex>[r]^u \ar@<-0.5ex>[r]_v \ar `u[l] `[l]+<0.4pt,8pt>_p [l]+<0pt,5pt> &  B \ar `u[l] `[l]+<2.4pt,8pt>_{q} [l]+<2pt,5pt>\\
}
\]
 such that the equalities $us=vs$, $ps=\id_E$, $qu=\id_A$ and $qv=sp$ hold. These identities imply that $s$ exhibits $E$ as equalizer of $u$ and $v$. If $F \colon \ca{A}_0 \rightarrow \ca{C}_0$ is a functor, then we say that a pair $f,g\colon A \rightarrow A^\prime$ in $\ca{A}_0$ is \emph{$F$-cosplit} if there is a cosplit equalizer in $\ca{C}_0$ as above with $u=Ff$, $v=Fg$.
 \end{dfn}

\begin{prop}\label{BECK_PROP}
 Let $L \dashv R \colon \ca{A} \rightarrow \ca{B}$ be an adjunction in $\VCAT$. Then the following hold:
 \begin{enumerate}
 \item[i)] If $\ca{A}$ has equalizers of $L$-cosplit pairs, then the comparison functor
 \[
 J \colon \ca{A} \rightarrow \ca{B}_{LR}
 \]
 (see Definition~\ref{COMPARISON_FUNCTOR_DFN}) has a right adjoint $E$.
 \item[ii)] If, in addition, $L$ preserves equalizers of $L$-cosplit pairs, then $E$ is fully faithful.
 \item[iii)] \emph{(Beck's monadicity theorem)} The comparison functor is an equivalence if, in addition, $L$ reflects isomorphisms.
 \end{enumerate}
\end{prop}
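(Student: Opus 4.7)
The plan is to adapt the classical comonadicity argument to the $\ca{V}$-enriched setting. For part (i), given a coalgebra $(B,\beta) \in \ca{B}_{LR}$, I would define $E(B,\beta)$ to be the equalizer in $\ca{A}$ of
\[
R\beta,\, \eta_{RB} \colon RB \to RLRB.
\]
To see that this pair is $L$-cosplit, I would exhibit $\beta \colon B \to LRB$ as the equalizer in $\ca{B}$ of $L\eta_{RB}$ and $LR\beta$, with cosplitting $p = \varepsilon_B$ and $q = \varepsilon_{LRB}$. The four axioms of Definition~\ref{COSPLIT_EQUALIZER_DFN} reduce respectively to coassociativity (using $\delta = L\eta R$), the counit axiom of the coaction, the triangle identity $\varepsilon L \cdot L\eta = \id_L$ evaluated at $RB$, and $\ca{V}$-naturality of $\varepsilon$ applied to $\beta$. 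The equalizer $E(B,\beta)$ therefore exists by hypothesis. To establish $J \dashv E$, I would exhibit the $\ca{V}$-natural isomorphism
\[
\ca{B}_{LR}\bigl(JA, (B,\beta)\bigr) \cong \ca{A}\bigl(A, E(B,\beta)\bigr)
\]
by transposing under $L \dashv R$: the comodule-morphism condition $\beta \circ f = LRf \circ L\eta_A$ for $f \colon LA \to B$ becomes, after adjunction, the equalizer-compatibility $R\beta \circ f^\flat = \eta_{RB} \circ f^\flat$ for $f^\flat \colon A \to RB$, which factors uniquely through $E(B,\beta)$. This bijection is an isomorphism of hom-objects in $\ca{V}$ because it combines the enriched hom-adjunction with the enriched universal property of the equalizer, and functoriality of $E$ is immediate from the uniqueness of such factorizations.

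For part (ii), I would show that the counit $\varepsilon^J_{(B,\beta)} \colon JE(B,\beta) \to (B,\beta)$ is invertible. Its underlying map is $\varepsilon_B \circ Ls$, where $s \colon E(B,\beta) \to RB$ is the equalizer. Under the added hypothesis, $L$ sends $s$ to the equalizer of $L\eta_{RB}$ and $LR\beta$ in $\ca{B}$; but the cosplit diagram already displays $\beta$ as such an equalizer, so $Ls$ and $\beta$ are canonically isomorphic, and $\varepsilon_B \circ Ls$ is exactly the resulting comparison isomorphism $LE(B,\beta) \to B$. Hence $\varepsilon^J$ is invertible and $E$ is fully faithful.

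For part (iii), only the unit $\eta^J_A \colon A \to EJA$ remains to be shown invertible. Applying part (ii) to the coalgebra $JA = (LA, L\eta_A)$ gives a canonical isomorphism $\kappa \colon LA \to L(EJA)$ characterized by $L(s_{JA}) \circ \kappa = L\eta_A$, where $s_{JA} \colon EJA \to RLA$ is the equalizer. Since $\eta^J_A$ is by construction the unique factorization of $\eta_A$ through $s_{JA}$, applying $L$ and cancelling the monomorphism $L(s_{JA})$ yields $L(\eta^J_A) = \kappa$, which is an isomorphism. The hypothesis that $L$ reflects isomorphisms then yields that $\eta^J_A$ itself is invertible, completing the proof.

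The main obstacle will be the bookkeeping needed to keep everything $\ca{V}$-enriched rather than merely set-level. However, since every step is assembled from the enriched hom-adjunction and the universal property of enriched equalizers, and since the cosplit structure makes the relevant equalizers absolute (so that enriched and conical equalizers coincide for $L$-cosplit pairs), the classical Beck calculation transfers verbatim to $\VCAT$.
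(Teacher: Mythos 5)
Your proof is correct and follows essentially the same route as the paper: construct $E$ objectwise as the equalizer of the $L$-cosplit pair $R\beta,\eta_{RB}$, deduce full faithfulness of $E$ from $L$ preserving that equalizer, and get the unit invertible from $L$ reflecting isomorphisms (the paper merely packages the construction as a single pointwise equalizer in $[\ca{B}_{LR},\ca{A}]$ and checks the triangle identities directly rather than via the hom-object isomorphism). One small caveat: your closing remark that the cosplit structure makes the equalizers in $\ca{A}$ absolute is misplaced --- only their images in $\ca{B}$ are split, hence absolute; in $\ca{A}$ one simply reads the hypothesis as the existence of conical (enriched) equalizers, which is all your argument actually uses.
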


\begin{proof}
 In the case where $\ca{V}=\Set$ and $\ca{B}$ is a presheaf category, this can be found in \cite[\S~3]{APPLEGATE_TIERNEY}. For arbitrary $\ca{V}$ a proof of iii) can be found in \cite[Theorem~2.II.1]{DUBUC}, and parts i) and ii) are almost implicit there. We say that a diagram consisting of $\ca{V}$-functors has a certain property \emph{pointwise} if the diagram evaluated in any object of the domain has the corresponding property. Let $V \colon \ca{B}_{LR} \rightarrow \ca{B}$ be the forgetful functor. The pair
\[
\xymatrix{RV \ar@<0.5ex>[r]^-{\eta RV} \ar@<-0.5ex>[r]_-{R\rho} & RLRV}
\]
 in the $\ca{V}$-functor category $[\ca{B}_{LR},\ca{A}]$ is pointwise $L$-cosplit since the diagram
 \[
\turnradius{7pt}
\xymatrix{
V \ar[r]^-{\rho} & LRV \ar@<0.5ex>[r]^-{L\eta RV} \ar@<-0.5ex>[r]_-{LR \rho} \ar `u[l] `[l]+<0.4pt,8pt>_-{\varepsilon V} [l]+<0pt,6pt> &  LRLRV \ar `u[l] `[l]+<2.4pt,8pt>_-{\varepsilon LRV} [l]+<2pt,6pt>\\
}  
 \]
 is a pointwise cosplit equalizer in $[\ca{B}_{LR},\ca{B}]$. Let $e \colon E \rightarrow RV$ be the pointwise equalizer of this pointwise $L$-cosplit pair, which exists by assumption i). The $\ca{V}$-natural transformation $\eta$ equalizes $\eta RVJ=\eta RL$ and $R \rho J=RL \eta$. Thus there is a unique natural transformation $\pi \colon \id \rightarrow EJ$ such that $eJ \cdot \pi=\eta$. Similarly we get a unique $\ca{V}$-natural transformation $\xi \colon VJE=LE \rightarrow V$ with $\rho \cdot \xi=Le$. Using the fact that $\rho$ is a split monomorphism one can show that $\xi$ is a morphism of coactions, and that $\pi$ and the lift $\overline{\xi} \colon JE \rightarrow \id$ satisfy the triangle identities. Thus $E$ is right adjoint to $J$.
 
 To see ii) note that a right adjoint is fully faithful if and only if the counit is an isomorphism. But $\overline{\xi}$ is an isomorphism if and only if $\xi$ is (because $V$ reflects isomorphisms), and $\xi$ is an isomorphism if and only if $L$ preserves this particular equalizer of a pointwise $L$-cosplit pair.

 This also gives us an alternative proof of Beck's monadicity theorem: from the triangle identities we find that $J\pi$ is an isomorphism, and from the fact that $L=VJ$ reflects isomorphisms we conclude that both the unit and the counit of the adjunction $J \dashv E$ are isomorphisms. This shows that $J$ is an equivalence if iii) holds. 
\end{proof}

\subsection{The proof of the recognition theorem}\label{PROOF_STRATEGY_SECTION}
We first give a brief outline of our strategy of proof. From Proposition~\ref{BECK_PROP} we know that the unit of the Tannakian adjunction is essentially given by the composite $JY$ of the Yoneda embedding and the comparison functor $J$ associated to the adjunction $L_w \dashv \widetilde{w}$ (see Definition~\ref{COMPARISON_FUNCTOR_DFN}). From assumption ii) in Theorem~\ref{RECOGNITION_THM} and from Proposition~\ref{BECK_PROP} we know that $J$ is left adjoint to a fully faithful functor $E$. It is a purely formal consequence that the restriction of $J$ to the image of $E$ is fully faithful. Thus, in order to show that the unit of the Tannakian biadjunction is fully faithful, it suffices to check that the Yoneda embedding factors through the image of $E$.

Afterwards we will use assumption iii) of Theorem~\ref{RECOGNITION_THM} to show that the unit is essentially surjective. The following proposition and its corollary summarize some of the facts we will need to show that the Yoneda embedding factors through the image of $E$.

\begin{prop}\label{REPLETE_IMAGE_PROP}
 Let $L \dashv R \colon \ca{A} \rightarrow \ca{B}$ be an adjunction in $\VCAT$, and assume that $\ca{A}$ has equalizers of $L$-cosplit pairs. Let $J \dashv E \colon \ca{A} \rightarrow \ca{B}_{LR}$ be as in Proposition~\ref{BECK_PROP}, and let $\pi$ be the unit of the adjunction $J\dashv E$. Let $\ca{C}$ be the full subcategory of $\ca{A}$ consisting of objects $X$ for which $\pi_X$ is an isomorphism.
 
 If $L$ preserves equalizers of $L$-cosplit pairs, then $L\pi$ is an isomorphism, $\ca{C}$ is the replete image of $E$ (the full subcategory consisting of objects isomorphic to objects in the image of $E$), and $J$ restricts to an equivalence $\ca{C} \rightarrow \ca{B}_{LR}$. Moreover, the category $\ca{C}$ is closed under retracts.
\end{prop}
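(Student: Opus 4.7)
The proof is almost entirely formal: it rests on the triangle identities of the adjunction $J \dashv E$ produced by Proposition~\ref{BECK_PROP}, together with the observation that the preservation hypothesis on $L$ forces $E$ to be fully faithful (part ii of that proposition), so that the counit $\overline{\xi} \colon JE \Rightarrow \id$ is invertible.

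First I would deduce invertibility of $L\pi$: the triangle identity $\overline{\xi} J \cdot J\pi = \id_J$ together with invertibility of $\overline{\xi}$ shows that $J\pi$ is invertible, and applying the forgetful functor $V \colon \ca{B}_{LR} \rightarrow \ca{B}$ (which satisfies $VJ = L$ by the definition of $J$) then gives the claim. Next I would identify $\ca{C}$ with the replete image of $E$. In one direction, the other triangle identity $E\overline{\xi} \cdot \pi E = \id_E$ combined with invertibility of $\overline{\xi}$ forces $\pi_{EY} = E(\overline{\xi}_Y^{-1})$, so $EY$ belongs to $\ca{C}$ for every $Y \in \ca{B}_{LR}$; naturality of $\pi$ implies that $\ca{C}$ is closed under isomorphism, hence contains the full replete image of $E$. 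Conversely, if $\pi_X$ is invertible, then $X \cong EJX$ already lies in the replete image of $E$. The resulting corestriction $E \colon \ca{B}_{LR} \rightarrow \ca{C}$ is then both fully faithful and essentially surjective, so it is an equivalence of $\ca{V}$-categories whose inverse is the restriction of $J$ to $\ca{C}$.

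For closure of $\ca{C}$ under retracts, I would take a splitting $Z \xrightarrow{i} Y \xrightarrow{r} Z$ with $r \cdot i = \id_Z$ and $\pi_Y$ invertible, propose the explicit formula $\pi_Z^{-1} \defl r \cdot \pi_Y^{-1} \cdot EJ(i)$, and verify both identities $\pi_Z \cdot \pi_Z^{-1} = \id$ and $\pi_Z^{-1} \cdot \pi_Z = \id$ by short computations using $\ca{V}$-naturality of $\pi$ applied to $i$ and to $r$, which reduce the composites to $EJ(r \cdot i) = \id$ and $r \cdot i = \id_Z$ respectively. I do not expect any genuine obstacle to arise: essentially all the substantive content of the statement is already encoded in Proposition~\ref{BECK_PROP}, and what remains is routine diagram chasing with the triangle identities.
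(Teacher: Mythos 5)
Your proof is correct and follows essentially the same route as the paper's: invertibility of $J\pi$ (hence $L\pi = VJ\pi$) from one triangle identity plus the invertible counit supplied by part ii) of Proposition~\ref{BECK_PROP}, invertibility of $\pi E$ from the other triangle identity to identify $\ca{C}$ with the replete image, and naturality of $\pi$ for closure under retracts. Your explicit formula $\pi_Z^{-1} = r \cdot \pi_Y^{-1} \cdot EJ(i)$ just spells out the paper's remark that isomorphisms are closed under retracts in the arrow category, and the verification you sketch goes through.
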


\begin{proof}
 From one of the triangle identities and part ii) of Proposition~\ref{BECK_PROP} we know that $J\pi$ is invertible. Thus $L\pi=VJ\pi$ is invertible. The other triangle identity shows that $\pi E$ is invertible. This shows that $E$ factors through $\ca{C}$. The unit of $J \dashv E$ restricted to $\ca{C}$ is invertible by definition, the counit is invertible by part ii) of Proposition~\ref{BECK_PROP}. Closure under retracts follows from naturality of $\pi$ and the fact that isomorphisms are closed under retracts in the arrow category.
\end{proof}

\begin{cor}\label{FULLY_FAITHFUL_COR}
 Let $\ca{A}$ be Cauchy complete, let $w \colon \ca{A} \rightarrow \overline{\ca{B}}$ be a $\ca{V}$-functor, and let $J\colon \Prs{A} \rightarrow \Prs{B}_{L_w \widetilde{w}}$ be the comparison functor corresponding to the adjunction $L_w \dashv \widetilde{w} \colon \Prs{A} \rightarrow \Prs{B}$. Let $\ca{C}$ be as in Proposition~\ref{REPLETE_IMAGE_PROP}. 
 
 If the Yoneda embedding factors through $\ca{C}$, and if $L_w$ preserves equalizers of $L_w$-cosplit pairs, then the $w$-component of the unit of the Tannakian adjunction is fully faithful. In that case it is essentially surjective if and only if every object $X$ in $\ca{C}$ with the property that $L_w(X)$ lies in $\overline{\ca{B}}$ is in the essential image of the Yoneda embedding.
\end{cor}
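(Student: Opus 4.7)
The plan is to combine Propositions~\ref{UNIT_PROP} and~\ref{REPLETE_IMAGE_PROP}. Writing $C$ for the comonad $L_w \widetilde{w}$, Proposition~\ref{UNIT_PROP} identifies the $w$-component of the unit of the Tannakian biadjunction with the left Kan extension $L_{YN}$, where $N$ is the corestriction of $JY$ to the full subcategory $\Rep(C) \subseteq \Prs{B}_C$; this corestriction makes sense because $L_w \cdot Y \cong w$ lands in $\overline{\ca{B}}$. Since $\ca{A}$ is Cauchy complete, Lemma~\ref{MAPS_LEMMA} identifies maps out of $\Prs{A}$ in $\Mod(\ca{V})$ with $\ca{V}$-functors from $\ca{A}$ into the Cauchy completion of the codomain, and under this identification $L_{YN}$ is represented by $N$ itself. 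Thus $L_{YN}$ is fully faithful as a map, respectively an equivalence, precisely when $N$ is fully faithful, respectively an equivalence, as a $\ca{V}$-functor.

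For the first half of the corollary I would invoke Proposition~\ref{REPLETE_IMAGE_PROP}: the hypothesis that $L_w$ preserves equalizers of $L_w$-cosplit pairs ensures that the restriction of $J$ to $\ca{C}$ is an equivalence onto $\Prs{B}_C$. By assumption the Yoneda embedding factors as $\ca{A} \to \ca{C} \hookrightarrow \Prs{A}$, so $JY$ is a composite of two fully faithful functors and is itself fully faithful. Restricting the codomain to $\Rep(C)$ preserves this property, whence $N$, and therefore the unit, is fully faithful.

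For the essential surjectivity clause, the equivalence provided by Proposition~\ref{REPLETE_IMAGE_PROP} identifies an object $X \in \ca{C}$ with the comodule $J(X) \in \Prs{B}_C$, whose underlying object in $\Prs{B}$ is $V J(X) = L_w(X)$. Hence $J(X)$ lies in the subcategory $\Rep(C)$ of Cauchy comodules if and only if $L_w(X) \in \overline{\ca{B}}$, and $N$ is essentially surjective precisely when every such $X$ lies in the essential image of $Y \colon \ca{A} \to \ca{C}$, which is the stated criterion. The only subtle point in the argument is the promotion of Proposition~\ref{UNIT_PROP}'s statement about equivalences to the fully faithful case, which however follows immediately from Remark~\ref{VMOD_MAPS_RMK}: a map in $\Mod(\ca{V})$ of the form $F_{\ast}$ is fully faithful if and only if the underlying $\ca{V}$-functor $F$ is.
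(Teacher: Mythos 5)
Your proof is correct and follows essentially the same route as the paper: Proposition~\ref{UNIT_PROP} reduces the question to the (co)restriction $N$ of $JY$, Proposition~\ref{REPLETE_IMAGE_PROP} gives that $J$ restricted to $\ca{C}$ is an equivalence (so $JY$ is fully faithful once $Y$ factors through $\ca{C}$), and the identification $VJ(X)=L_w(X)$ yields the essential-surjectivity criterion. The paper's proof is just a terser version of the same argument.
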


\begin{proof}
 The unit of the Tannakian adjunction is fully faithful if and only if the composite $JY$ of the comparison functor with the Yoneda embedding is (see Proposition~\ref{UNIT_PROP}), and the latter follows from Proposition~\ref{REPLETE_IMAGE_PROP} if $Y$ factors through $\ca{C}$.
 
 The statement about essential surjectivity follows from the fact that the restriction of $J$ to $\ca{C}$ is an equivalence and the description of the unit in Proposition~\ref{UNIT_PROP}.
\end{proof}

\begin{proof}[Proof of Theorem~\ref{RECOGNITION_THM}]
 Note that it suffices to prove the statement where assumption iii) involves a class $\mathbf{\Phi}$ of weights. Indeed, we can consider the Yoneda embedding $Y \colon \ca{A} \rightarrow \Prs{A}$ as a diagram on $\ca{A}$, and any $H \in \Prs{A}$ as a weight $\ca{A}^{\op} \rightarrow \ca{V}$. In \cite[Formula~(3.17)]{KELLY_BASIC} it is shown that the weighted colimit $H \star Y$ is isomorphic to $H$. Thus we can always let $\mathbf{\Phi}$ be the class of \emph{all} weights $\ca{A}^{\op} \rightarrow \ca{V}$.

 We have to check that the conditions from Corollary~\ref{FULLY_FAITHFUL_COR} hold. Cauchy completeness of $\ca{A}$ follows immediately from condition iii) and the characterization of Cauchy complete categories in terms of absolute colimits (see \cite{STREET_ABSOLUTE}). From ii) we know that $L_w$ preserves equalizers of $L_w$-cosplit pairs, and condition iii) implies that all objects $X$ of $\ca{C}$ with $L_w(X) \in \overline{\ca{B}}$ lie in the essential image of the Yoneda embedding. It only remains to check that the Yoneda embedding of $\ca{A}$ factors through the full subcategory $\ca{C}$ of $\Prs{A}$ from Proposition~\ref{REPLETE_IMAGE_PROP}. 
 
 Fix an object $A \in \ca{A}$, and let $f \colon \ca{A}(-,A) \rightarrow F$ be the $\ca{A}(-,A)$-component of the unit of the adjunction $J\dashv E$ defined in Proposition~\ref{BECK_PROP}. From Proposition~\ref{REPLETE_IMAGE_PROP} we know that $F$ lies in $\ca{C}$ and that $L_w f$ is invertible. We will use this fact to show that $f$ is a split monomorphism. This implies that $\ca{A}(A,-)$ is a retract of $F \in \ca{C}$, which completes our proof because $\ca{C}$ is closed under retracts.

 By assumption, there is a weight $H \colon \ca{D}^{\op} \rightarrow \ca{V}$ in the class $\mathbf{\Phi}$ and a functor $G \colon \ca{D} \rightarrow \ca{A}$ such that $F \cong H\star YG$, so we may as well assume that $f$ is a morphism from $\ca{A}(-,A)$ to $H\star YG$. 

 By the remarks in Section~\ref{WEIGHTED_COLIMIT_SECTION} we have a chain of isomorphisms
\[
 \xymatrix@C=25pt{w(A) \ar[r]^-{\alpha_w} & L_w YA \ar[r]^-{L_w f} & L_w (H\star YG) \ar[r]^-{\widehat{L_w}^{-1}} & H\star L_w YG \ar[r]^-{H\star \alpha_w G}  & H\star w G},
\]
 which shows that $H\star w G$ lies in $\overline{\ca{B}}$. By iii) it follows that the colimit $H \star G \in \ca{A}$ exists, and that $\widehat{w} \colon H\star w G \rightarrow w(H\star G)$ is an isomorphism. From $ w \cong L_w Y$ we conclude that $\widehat{L_w Y} \colon H\star L_w Y G \rightarrow L_w Y(H\star G)$ is an isomorphism. An easy application of the Yoneda Lemma and the definition of the comparison morphism (see Section~\ref{WEIGHTED_COLIMIT_SECTION}) show that $\widehat{L_w Y}=L_w (\widehat{Y}) \cdot \widehat{L_w}$, hence $L_w (\widehat{Y})$ is an isomorphism. Since the Yoneda embedding is full, the composite $\widehat{Y} \cdot f \colon \ca{A}(-,A) \rightarrow \ca{A}(-,H\star G)$ is of the form $Yh=\ca{A}(-,h)$ for a unique morphism $h \colon A \rightarrow H\star G$. Note that
 \[
  L_w Y(h)=L_w(\widehat{Y}) \cdot L_w (f)
 \]
 is an isomorphism, by the above argument and by Proposition~\ref{REPLETE_IMAGE_PROP} (recall that $f$ is a component of the unit of $J \dashv E$). Since $w \cong L_w Y$ it follows that $w(h)$ is an isomorphism, and condition i) implies that $h$ itself is an isomorphism. Thus
\[
 \xymatrix{\ca{A}(-,A) \ar@{=}[r] \ar[d]_{f} & \ca{A}(-,A) \\
H\star YG \ar[d]_{\widehat Y} \\
\ca{A}(-,H\star G) \ar[ruu]_{\ca{A}(-,h^{-1})}}
\]
 is commutative, which shows that $\ca{A}(-,A)$ is indeed a retract of an object in $\ca{C}$, hence it lies in $\ca{C}$.
\end{proof}
\section{Cosmoi with dense autonomous generator}\label{DAG_RECOGNITION_SECTION}

 When working with additive $R$-linear categories for some commutative ring $R$, the notion of weighted colimits is generally not needed. For example, any $R$-linear presheaf is a conical colimit of representable functors. The reason for this is that the finitely generated free modules form a dense autonomous generator of the cosmos of $R$-modules. We explain what this means and then show how the questions of when the unit and counit of the Tannakian biadjunction are equivalences simplify for cosmoi with dense autonomous generator.
 
 \subsection{Dense \texorpdfstring{$\ca{V}$}{V}-functors and dense \texorpdfstring{$\ca{V}$}{V}-categories}\label{DENSE_FUNCTORS_SECTION}
 The notion of a dense $\ca{V}$-functor is motivated as follows (see \cite[Chapter~5]{KELLY_BASIC}). A continuous map $f\colon X\rightarrow Y$ between Hausdorff topological spaces has dense image if and only if a continuous map $g\colon Y\rightarrow Z$ into another Hausdorff space is uniquely determined by the composite $gf$. A dense functor has an analogous property, where ``continuous map'' is replaced by ``cocontinuous functor.''
 
 \begin{dfn}
 A $\ca{V}$-functor $K \colon \ca{A} \rightarrow \ca{B}$ is \emph{dense} if precomposing with $K$ induces a fully faithful functor $\ca{A}\mbox{-}\Cocts[\ca{B},\ca{C}] \rightarrow [\ca{A},\ca{C}]$ for every $\ca{V}$-category $\ca{C}$, where $\ca{A}\mbox{-}\Cocts[\ca{B},\ca{C}]$ stands for the full subcategory of those $\ca{V}$-functors which preserve those weighted colimits whose weights have domain $\ca{A}^{\op}$.
 
 A full subcategory $\ca{A}$ of a $\ca{V}$-category $\ca{C}$ is called \emph{$\ca{V}$-dense} if the inclusion $\ca{A} \rightarrow \ca{C}$ is a dense $\ca{V}$-functor. It is called \emph{$\Set$-dense} if the inclusion $\ca{A}_0 \rightarrow \ca{C}_0$ of the underlying unenriched categories is a $\Set$-dense functor.
 \end{dfn}
 
 Note that $\ca{A}\mbox{-}\Cocts[\ca{B},\ca{C}]$ contains $\Cocts[\ca{B},\ca{C}]$ if $\ca{A}$ is small. It follows that for any dense $\ca{V}$-functor $K \colon \ca{A} \rightarrow \ca{B}$ with small domain, for any two cocontinuous $\ca{V}$-functors $F,G \colon \ca{B} \rightarrow \ca{C}$ and for any $\ca{V}$-natural transformation $\alpha \colon FK \Rightarrow GK$ there is a unique $\ca{V}$-natural transformation $\beta \colon F \Rightarrow G$ such that for all $A\in \ca{A}$, $\beta_{KA}=\alpha_A$. 
 
 \begin{rmk}
 The notions of $\ca{V}$-density and $\Set$-density are generally quite different. For example, for any commutative ring $R$, the free $R$-module on one generator is $\Mod_R$-dense in $\Mod_R$, but not $\Set$-dense. The free $R$-module on two generators on the other hand is $\Set$-dense in $\Mod_R$.
 \end{rmk}

 One of the most important examples of a dense functor is the Yoneda embedding $Y\colon \ca{A} \rightarrow \Prs{A}$ of a small $\ca{V}$-category $\ca{A}$ (see \cite[Proposition~5.16]{KELLY_BASIC}). A $\ca{V}$-functor $K \colon \ca{A} \rightarrow \ca{C}$ is $\ca{V}$-dense if and only if any object $C$ in $\ca{C}$ is a canonical weighted colimit of $K$ (see \cite[Theorem~5.1]{KELLY_BASIC}).

\subsection{Dense autonomous generators}
For certain cosmoi $\ca{V}$ and $\ca{V}$-categories $\ca{A}$, $\ca{C}$, a $\ca{V}$-functor $K \colon \ca{A} \rightarrow \ca{C}$ is $\ca{V}$-dense if and only if $K_0 \colon \ca{A}_0 \rightarrow \ca{C}_0$ is $\Set$-dense. If this is the case for the Yoneda embedding (which is always $\ca{V}$-dense), we find that every presheaf is a \emph{conical} colimit of representables. This allows us to drastically reduce the class of colimits that have to be considered in condition iii) of Theorem~\ref{RECOGNITION_THM}. The notion of a dense autonomous generator allows us to identify a large class of $\ca{V}$-categories for which $\Set$-density and $\ca{V}$-density coincide.

\begin{dfn}\label{DAG_DFN}
 An essentially small full subcategory $\ca{X} \subseteq \ca{V}$ of a cosmos $\ca{V}$ is called a \emph{dense autonomous generator} if $\ca{X}$ consists of objects with duals, is closed under the tensor product in $\ca{V}$ and under the formation of duals, and is $\Set$-dense in $\ca{V}$.
\end{dfn}

\begin{example}
 Let $R$ be a commutative ring. The full subcategory of finitely generated free $R$-modules is a $\Set$-dense subcategory which is closed under the tensor product in $\Mod_R$ and under the formation of duals.
\end{example}
 
 In the $R$-linear context it is not hard to prove the following result directly. We defer the proof in general to Appendix~\ref{DENSITY_IN_COSMOI_WITH_DAG_APPENDIX}.

\begin{cit}[Theorem~\ref{DENSITY_THM}]
 Let $\ca{V}$ be a cosmos which has a dense autonomous generator $\ca{X}$. Let $\ca{A}$ be an $\ca{X}$-tensored $\ca{V}$-category, and let $\ca{C}$ be a $\ca{V}$-category which is cotensored\footnote{Cotensors are the dual notion of tensors.}. A $\ca{V}$-functor $K \colon \ca{A} \rightarrow \ca{C}$ is $\ca{V}$-dense if and only if the underlying ordinary functor $K_0 \colon \ca{A}_0 \rightarrow \ca{C}_0$ is $\Set$-dense.
\end{cit}

\begin{example}
 Any additive $R$-linear category has in particular tensors with finitely generated free $R$-modules: we have $R^n\odot A \cong \oplus_{i=1}^n A$. Thus a $\ca{V}$-functor from an additive $R$-linear category to a complete $R$-linear category is $\Ab$-dense if and only if it is $\Set$-dense.
\end{example}

\begin{thm}\label{CONICAL_COLIMIT_THM}
 Let $\ca{V}$ be a cosmos which has a dense autonomous generator $\ca{X}$. Let $\ca{A}$ be a small $\ca{X}$-tensored $\ca{V}$-category. Fix a presheaf $F \in \Prs{A}$, and let $\ca{A}\slash F$ be the category of representable functors over $F$. Then $F$ is the conical colimit of the domain functor $D \colon \ca{A} \slash F \rightarrow \Prs{A}$.
\end{thm}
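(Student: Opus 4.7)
The plan is to reduce the conical colimit statement to the classical fact about $\Set$-dense functors via the equivalence of $\Set$-density and $\ca{V}$-density provided by Theorem~\ref{DENSITY_THM}, then upgrade the resulting ordinary colimit statement to an enriched conical colimit using that $\Prs{A}$ is tensored.

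First I would recall that the Yoneda embedding $Y\colon \ca{A} \to \Prs{A}$ is $\ca{V}$-dense; this is the standard consequence of the enriched Yoneda lemma mentioned in Section~\ref{DENSE_FUNCTORS_SECTION}. The $\ca{V}$-category $\Prs{A}$ is $\ca{V}$-complete and cocomplete, hence in particular cotensored, and $\ca{A}$ is $\ca{X}$-tensored by hypothesis. Therefore Theorem~\ref{DENSITY_THM} applies to $Y$, and allows us to conclude that the underlying ordinary functor $Y_0\colon \ca{A}_0 \to (\Prs{A})_0$ is $\Set$-dense. By the classical characterization of $\Set$-dense functors (the $\ca{V}=\Set$ case of the criterion discussed in Section~\ref{DENSE_FUNCTORS_SECTION}, which says that each object is a canonical colimit of representables), this $\Set$-density means precisely that for every presheaf $F$ the tautological cocone exhibits $F$ as the colimit in the underlying category $(\Prs{A})_0$ of the domain functor $D\colon \ca{A}\slash F \to (\Prs{A})_0$.

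It remains to upgrade this ordinary colimit to a conical colimit of the $\ca{V}$-category $\Prs{A}$. Because $\Prs{A}$ is cocomplete, it is in particular tensored; by \cite[\S~3.8]{KELLY_BASIC}, in a tensored $\ca{V}$-category the conical colimit of a diagram in $\ca{C}_0$ coincides with the ordinary colimit in $\ca{C}_0$ whenever the latter exists and is preserved by each tensor $V \odot (-)$. Since tensors in $\Prs{A}$ are left adjoints to internal homs, they preserve all colimits, so this verification is automatic. Hence the tautological cocone exhibits $F$ as the conical colimit of $D$, as required.

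The entire argument is essentially a corollary of Theorem~\ref{DENSITY_THM}; that theorem, whose proof is deferred to the appendix, is where all of the real work lies. The remaining ingredients are standard bookkeeping linking $\Set$-density, ordinary colimits in the underlying category, and enriched conical colimits in tensored $\ca{V}$-categories, so I do not anticipate a nontrivial obstacle beyond having Theorem~\ref{DENSITY_THM} available.
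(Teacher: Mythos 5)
Your proposal is correct and follows essentially the same route as the paper: apply Theorem~\ref{DENSITY_THM} to the ($\ca{V}$-dense) Yoneda embedding to deduce that $Y_0$ is $\Set$-dense, read off the ordinary colimit statement in $\Prs{A}_0$, and then upgrade it to a conical colimit via \cite[\S~3.8]{KELLY_BASIC}. The only (harmless) divergence is in the last step: the paper invokes the fact that $\Prs{A}$ is \emph{cotensored}, which is the standard condition under which ordinary colimits in $\ca{C}_0$ are automatically conical, whereas you argue via tensoredness together with the tensors $V\odot(-)$ preserving colimits --- also valid here since $\Prs{A}$ is both tensored and cotensored, but not quite the statement of \cite[\S~3.8]{KELLY_BASIC} as cited.
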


\begin{proof}
 The fact that the Yoneda embedding is always $\ca{V}$-dense and the completeness of $\Prs{A}$ imply that the conditions of Proposition~\ref{DENSITY_THM} are satisfied. It follows that $F$ is the ordinary colimit of the tautological cocone on $D \colon \ca{A}\slash F \rightarrow \Prs{A}_0$. But $\Prs{A}$ is cotensored, hence the notion of conical colimit and ordinary colimit coincide (see \cite[\S~3.8]{KELLY_BASIC}).
\end{proof}

\subsection{Locally presentable categories}
 The notion of a dense autonomous generator allows us to simplify condition~iii) of Theorem~\ref{RECOGNITION_THM}. In order to simplify condition~ii), we will use the notion of \emph{finite} limits in the enriched context introduced by  Max Kelly in \cite{KELLY_FINLIM}. This only makes sense if our cosmos is locally finitely presentable as a closed category. Recall that a subset $\ca{G} \subseteq \ca{C}_0$ of an unenriched category $\ca{C}_0$ is called a \emph{generator} if the representable functors $\ca{C}_0(G,-)$ are jointly faithful. A generator is called \emph{strong} if for each $C \in \ca{C}_0$ and each proper subobject $A$ of $C$ there exists a $G \in \ca{G}$ and a morphism $G \rightarrow C$ which does not factor through $A$.
 
 \begin{dfn}\label{LOCALLY_FINITELY_PRESENTABLE_DFN}
 An object $C$ of an (unenriched) category $\ca{C}_0$ is called \emph{finitely presentable} if the functor $\ca{C}_0(C,-) \colon \ca{C}_0 \rightarrow \Set$ preserves filtered colimits. A category $\ca{C}_0$ is called \emph{locally finitely presentable} if it is cocomplete and if there exists a strong generator consisting of finitely presentable objects.
  
 A cosmos $\ca{V}$ is \emph{locally finitely presentable as a closed category} if $\ca{V}_0$ is locally finitely presentable, the unit $I$ is finitely presentable and finitely presentable objects are closed under the tensor product in $\ca{V}$. 
 \end{dfn}

 \begin{prop}\label{COSMOI_WITH_DAG_ARE_LFP_PROP}
 A cosmos $\ca{V}$ with a dense autonomous generator $\ca{X}$ is locally finitely presentable as a closed category if and only if the unit $I$ is finitely presentable.
 \end{prop}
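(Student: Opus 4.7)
The forward direction is immediate from the definition of locally finitely presentable as a closed category, so I would focus on the converse. Assume that the unit $I$ is finitely presentable. My plan is to verify the three conditions of Definition~\ref{LOCALLY_FINITELY_PRESENTABLE_DFN} in turn, using the objects of $\ca{X}$ as a strong generator of finitely presentable objects.

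First I would show that every $X \in \ca{X}$ is finitely presentable in $\ca{V}_0$. Since $X$ admits a dual $\ldual{X}$, we have a natural isomorphism $\ca{V}_0(X,-) \cong \ca{V}_0(I,\ldual{X} \otimes -) = V(\ldual{X} \otimes -)$. The functor $\ldual{X} \otimes -$ is a left adjoint, hence preserves all small colimits, while $V = \ca{V}(I,-)$ preserves filtered colimits by the hypothesis that $I$ is finitely presentable. So $\ca{V}_0(X,-)$ preserves filtered colimits, meaning $X$ is finitely presentable.

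Next I would argue that $\ca{X}$ provides a strong generator for $\ca{V}_0$. By $\Set$-density, the restricted Yoneda functor $\ca{V}_0 \to [\ca{X}_0^{\op},\Set]$ is fully faithful, which (since $\ca{V}_0$ is cocomplete) implies that the small category $\ca{X}$ is a strong generator in the usual sense. Combined with cocompleteness of $\ca{V}_0$ and the previous step, this gives a strong generator of finitely presentable objects, so $\ca{V}_0$ is locally finitely presentable.

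It remains to show that finitely presentable objects are closed under $\otimes$. Here I would use the standard characterization (e.g.\ Adámek--Rosický, Proposition~1.69): in a locally finitely presentable category with a chosen small strong generator of finitely presentable objects, an object is finitely presentable if and only if it is a retract of a finite colimit of objects from that generator. So given finitely presentable $A$ and $B$, write each as a retract of a finite colimit $F$ and $G$ respectively of objects in $\ca{X}$. Since $-\otimes -$ is cocontinuous in each variable, $F \otimes G$ is a finite colimit of objects of the form $X_i \otimes Y_j$ with $X_i, Y_j \in \ca{X}$; closure of $\ca{X}$ under tensor places each $X_i \otimes Y_j$ in $\ca{X}$, hence finitely presentable. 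Since finite colimits and retracts preserve finite presentability, $A \otimes B$ is a retract of the finitely presentable object $F \otimes G$ and therefore finitely presentable. The main obstacle is really just invoking this retract characterization carefully; the rest is formal. This verifies all three conditions and proves the proposition.
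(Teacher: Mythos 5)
Your proof is correct and follows essentially the same route as the paper: dualizability of the objects of $\ca{X}$ plus finite presentability of $I$ gives finite presentability of each $X\in\ca{X}$, $\Set$-density gives the strong generator, and the tensor-closure step reduces to writing finitely presentable objects as retracts of finite colimits of $\ca{X}$-objects. The only (harmless) divergence is in that last step: you decompose \emph{both} tensor factors and invoke closure of $\ca{X}$ under $\otimes$, whereas the paper decomposes only one factor, handling the other via $\ca{V}_0(Y\otimes X,-)\cong\ca{V}_0\bigl(Y,X^\vee\otimes-\bigr)$ for $X\in\ca{X}$, and derives the retract-of-a-finite-colimit characterization directly from $\Set$-density rather than citing it.
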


\begin{proof}
 Necessity is obvious from the definition. For $X\in \ca{X}$ we have
\[
 \ca{V}_0(X,-)\cong \ca{V}_0(I,[X,-])\cong\ca{V}_0(I,X^\vee \otimes -),
\]
 which preserves filtered colimits by our assumption on the unit object $I$. Thus all the objects of $\ca{X}$ are finitely presentable. But $\ca{X}_0$ is $\Set$-dense, so it is in particular a strong generator, which shows that $\ca{V}_0$ is indeed locally finitely presentable.
 
 The unit object is finitely presentable by assumption. A slight generalization of the above argument shows that the tensor product $Y\otimes X$ of a finitely presentable object $Y$ with an object $X \in \ca{X}$ is finitely presentable. It remains to show that $Y \otimes Z$ is finitely presentable whenever $Z$ is. 
 
 Since $\ca{X}$ is $\Set$-dense, $Z$ is a colimit of objects in $\ca{X}$. Every colimit is the filtered colimit of its finite subcolimits (see \cite[Proposition~2.13.7]{BORCEUX}), so the identity of $Z$ factors through a finite colimit of objects of $\ca{X}$. Thus $Z$ is a retract of a finite colimit of objects of $\ca{X}$. Since finitely presentable objects are closed under finite colimits and retracts, and $Y\otimes-$ preserves colimits and retracts it follows that $Y\otimes Z$ is finitely presentable.
\end{proof}

\subsection{The recognition theorem when \texorpdfstring{$\ca{V}$}{V} has a DAG}
The proof of the recognition theorem for cosmoi with dense autonomous generators follows the strategy outlined in Section~\ref{PROOF_STRATEGY_SECTION}.

\begin{dfn}\label{OMEGA_RIGID_DFN}
 Let $\ca{A}$, $\ca{B}$ be small $\ca{V}$-categories, and let $w \colon \ca{A} \rightarrow \Prs{B}$ be a $\ca{V}$-functor, with underlying ordinary functor $w_0 \colon \ca{A}_0 \rightarrow \Prs{B}_0$. Let $\ca{D}$ be a small ordinary category. A diagram $D \colon \ca{D} \rightarrow \ca{A}_0$ is called \emph{$w$-rigid} if the colimit of $w_0 D \colon \ca{D} \rightarrow \Prs{B}_0$ lies in the Cauchy completion $\overline{\ca{B}}$ of $\ca{B}$.
\end{dfn}

\begin{thm}\label{DENSE_RECOGNITION_THM}
 Let $\ca{V}$ be a cosmos which has a dense autonomous generator $\ca{X}$, and assume that $I$ is finitely presentable. Let $\ca{A}$ be an $\ca{X}$-tensored small $\ca{V}$-category, let $\ca{B}$ be a small $\ca{V}$-category, and let $w \colon \ca{A} \rightarrow \Prs{B}$ be a $\ca{V}$-functor whose image is contained in $\overline{\ca{B}}$. Then the $w$-component of the unit of the Tannakian adjunction is an equivalence if:
\begin{enumerate}
 \item[i)]
 The functor $w_0$ reflects isomorphisms;
 \item[ii)]
 For each object $B\in\ca{B}$, the category $\el \bigl(V (\ev_B w)_0\bigr)$ of elements of the functor $V(\ev_B w)_0 \colon \ca{A}_0 \rightarrow \Set$ is cofiltered, where $\ev_B \colon \Prs{B} \rightarrow \ca{V}$ denotes the evaluation functor; and
 \item[iii)]
 The category $\ca{A}_0$ has colimits of $w$-rigid diagrams (see Definition~\ref{OMEGA_RIGID_DFN}), and $w_0$ preserves them.
\end{enumerate}
If these conditions are satisfied, then the comonad $L(w) \colon \Prs{B} \rightarrow \Prs{B}$ preserves finite limits.
\end{thm}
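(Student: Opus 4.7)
The plan is to derive this result from the general recognition theorem, Theorem~\ref{RECOGNITION_THM}, by exploiting the dense autonomous generator to reduce the verification of its three hypotheses to the three hypotheses stated here. Since $\ca{A}$ is $\ca{X}$-tensored and $I$ is finitely presentable (so that Proposition~\ref{COSMOI_WITH_DAG_ARE_LFP_PROP} applies), Theorem~\ref{CONICAL_COLIMIT_THM} tells us that every presheaf in $\Prs{A}$ is a conical colimit of representables, indexed by $\ca{A}\slash F$. Accordingly I take $\mathbf{\Phi}$ to be the class of weights $\ca{D}^{\op}\rightarrow \ca{V}$ corresponding to conical colimits (weights coming from small unenriched categories $\ca{D}$, constant at $I$). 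With this choice, the hypothesis of Theorem~\ref{RECOGNITION_THM} that $\mathbf{\Phi}$ generates $\Prs{A}$ by weighted colimits of representables is exactly the content of Theorem~\ref{CONICAL_COLIMIT_THM}.

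With $\mathbf{\Phi}$ fixed, condition (iii) of Theorem~\ref{RECOGNITION_THM} restricted to weights in $\mathbf{\Phi}$ says precisely that whenever the conical colimit of an ordinary diagram $w_0 D \colon \ca{D}\rightarrow \Prs{B}_0$ lies in $\overline{\ca{B}}$, the diagram $D$ has a colimit in $\ca{A}_0$ that is preserved by $w_0$. This is our condition~(iii) via Definition~\ref{OMEGA_RIGID_DFN}. Condition~(i) of Theorem~\ref{RECOGNITION_THM}, that $w$ reflects isomorphisms, is immediate from condition~(i) here, since $w$ factors through $\overline{\ca{B}}\subseteq \Prs{B}$.

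The real work lies in verifying condition~(ii) of Theorem~\ref{RECOGNITION_THM}: that $L_w$ preserves equalizers of $L_w$-cosplit pairs. I will establish the stronger statement that $L_w$ preserves all finite limits; this will also immediately yield the final assertion of the theorem, because $L(w)=L_w\widetilde{w}$ is then a composite of a right adjoint with a finite-limit preserving left adjoint. Since limits in $\Prs{B}$ are computed pointwise, it suffices to show that for each $B\in\ca{B}$ the cocontinuous $\ca{V}$-functor $\ev_B\cdot L_w \colon \Prs{A} \rightarrow \ca{V}$ preserves finite limits. By Theorem~\ref{FREE_COCOMPLETION_THM}, this functor is canonically identified with the left Kan extension along $Y$ of $\ev_B w\colon \ca{A}\rightarrow \ca{V}$, equivalently with the weighted-colimit functor $-\star\ev_B w$. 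The main obstacle is to pass from condition~(ii), which is a cofilteredness statement about the underlying $\Set$-valued functor $V(\ev_B w)_0$, to an enriched flatness statement for $\ev_B w$ itself.

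This is the step where the dense autonomous generator does its work. Using $\ca{X}$-density in $\ca{V}$, together with the fact that $\ca{A}$ is $\ca{X}$-tensored, every finite $\ca{V}$-weighted limit in $\Prs{A}$ can be reduced to finite \emph{conical} limits after tensoring with objects of $\ca{X}$; preservation of such limits by $-\star \ev_B w$ amounts, via adjointness of tensor and internal hom and finite presentability of $I$, to preservation of finite conical limits by the underlying $\Set$-valued cocontinuous extension of $V(\ev_B w)_0$. The latter holds iff $\el\bigl(V(\ev_B w)_0\bigr)$ is cofiltered, which is our hypothesis~(ii). Combining these reductions gives enriched flatness of $\ev_B w$ and thus finite-limit preservation of $\ev_B L_w$ for every $B$. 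Consequently all hypotheses of Theorem~\ref{RECOGNITION_THM} hold for the class $\mathbf{\Phi}$ chosen above, the $w$-component of the unit is an equivalence, and $L(w)$ preserves finite limits.
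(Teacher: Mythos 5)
Your overall strategy matches the paper's: apply Theorem~\ref{RECOGNITION_THM} with $\mathbf{\Phi}$ the class of conical weights, dispose of conditions (i) and (iii) via Theorem~\ref{CONICAL_COLIMIT_THM}, and reduce condition (ii) to showing that $\ev_B L_w\cong \Lan_Y(\ev_B w)$ is left exact for each $B\in\ca{B}$. (One small gloss: to match your condition (iii) on ordinary colimits in $\ca{A}_0$ with the enriched condition (iii) of Theorem~\ref{RECOGNITION_THM} you need that conical and ordinary colimits agree in $\ca{A}$, which requires first checking that $\ca{A}$ is $\ca{X}$-\emph{cotensored}; this follows because the objects of $\ca{X}$ have duals, so $X^\vee\odot A$ is a cotensor with $X$.)

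The genuine gap is in your treatment of condition (ii). You correctly identify the key step as passing from cofilteredness of $\el\bigl(V(\ev_B w)_0\bigr)$ to enriched flatness of $\ev_B w$, but the reduction you assert --- that preservation of finite weighted limits by $-\star\ev_B w$ ``amounts to'' preservation of finite conical limits by the underlying $\Set$-valued cocontinuous extension of $V(\ev_B w)_0$ --- is exactly the nontrivial content, and as stated it is not justified and not obviously true: enriched finite limits in the sense of \cite{KELLY_FINLIM} include cotensors with finitely presentable objects, enriched conical limits are strictly stronger than their underlying ordinary limits, and the cocontinuous extension of $V(\ev_B w)_0$ lives on $[\ca{A}_0^{\op},\Set]$ rather than on $\Prs{A}_0$, so there is no direct comparison. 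The paper's argument instead establishes flatness of $\ev_B w$ by exhibiting it as a \emph{filtered conical colimit of representables} in $[\ca{A},\ca{V}]$: since $\ca{A}$ is $\ca{X}$-cotensored, $\ca{A}^{\op}$ is $\ca{X}$-tensored, so Theorem~\ref{CONICAL_COLIMIT_THM} applied to $\ca{A}^{\op}$ and the contravariant Yoneda embedding presents $\ev_B w$ as the conical colimit of the domain functor on $\ca{A}^{\op}\slash\ev_B w$; this indexing category is (by the weak Yoneda lemma) the opposite of $\el\bigl(V(\ev_B w)_0\bigr)$, hence filtered by hypothesis (ii); and Kelly's \cite[\S~6.3]{KELLY_FINLIM} (applicable because $\ca{V}$ is locally finitely presentable as a closed category, by Proposition~\ref{COSMOI_WITH_DAG_ARE_LFP_PROP}) then gives that a filtered conical colimit of representables is flat. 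You should replace your asserted reduction with this argument, or supply a genuine proof of the $\Set$-level reduction you have in mind.
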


\begin{proof}
 We apply Theorem~\ref{RECOGNITION_THM}. Condition i) coincides with condition i) in Theorem~\ref{RECOGNITION_THM}. To see that iii) holds, we first note that $\ca{A}$ has cotensors with objects in $\ca{X}$: since any $X\in \ca{X}$ has a dual $X^\vee$, any $\ca{V}$-functor preserves tensors with $X$ (see \cite{STREET_ABSOLUTE}). In particular we have natural isomorphisms
\[
 \ca{A}(B,X^\vee \odot A) \cong X^\vee \otimes \ca{A}(B,A) \cong[X,\ca{A}(B,A)],
\]
 which shows that $\ca{A}$ is $\ca{X}$-cotensored. Since $\ca{X}\subseteq \ca{V}$ is $\Set$-dense, this implies that the notion of conical colimit in $\ca{A}$ coincides with the notion of ordinary colimit in $\ca{A}_0$ (cf.\ \cite[\S~3.8]{KELLY_BASIC}). If we let $\mathbf{\Phi}$ be the class of conical weights, the above observation, Theorem~\ref{CONICAL_COLIMIT_THM} and iii) imply that condition iii) of Theorem~\ref{RECOGNITION_THM} is satisfied. Therefore it only remains to check that ii) implies that $L_w \colon \Prs{A} \rightarrow \Prs{B}$ preserves the necessary equalizers. 
 
 Since $\ca{V}$ is locally finitely presentable as a closed category (see Proposition~\ref{COSMOI_WITH_DAG_ARE_LFP_PROP}) we can use the theory of flat functors developed by Kelly \cite{KELLY_FINLIM}. We will in fact show that $L_w$ preserves all \emph{finite} weighted limits (see \cite[\S~4]{KELLY_FINLIM} for a definition of finite weighted limits).

 To see this, we first note that the functor
\[
(\ev_B)_{B\in \ca{B}} \colon \Prs{B} \rightarrow \prod_{B\in \ca{B}} \ca{V} 
\]
 preserves and reflects all limits and all colimits because both limits and colimits in $\Prs{B}$ are computed pointwise. Hence, it suffices to check that for a fixed $B\in \ca{B}$, the functor $\ev_B L_w \colon \Prs{A} \rightarrow \ca{V}$ preserves finite limits. Using the terminology of \cite{KELLY_FINLIM}, we need to show that $\ev_B L_w$ is \emph{left exact}. Since the functor $(\ev_B)_{B\in \ca{B}}$ preserves all colimits, it also preserves left Kan extensions (see \cite[Proposition~4.14]{KELLY_BASIC}). Thus, $\ev_B L_w$ is naturally isomorphic to the left Kan extension of $\ev_B w$ along the Yoneda embedding $Y \colon \ca{A} \rightarrow \Prs{A}$. This reduces the problem to showing that $\ev_B w$ is \emph{flat}. Because $\ca{V}$ is locally finitely presentable we can apply \cite[\S~6.3]{KELLY_FINLIM}, that is, it suffices to check that $\ev_B w \in [\ca{A},\ca{V}]$ is a filtered (conical) colimit of representable functors.

 We have seen that $\ca{A}$ is $\ca{X}$-cotensored, which means that $\ca{A}^{\op}$ is $\ca{X}$-tensored. Theorem~\ref{CONICAL_COLIMIT_THM}, applied to the $\ca{V}$-category $\ca{A}^{\op}$ and the contravariant Yoneda embedding $Y^\prime \colon \ca{A}^{\op} \rightarrow [\ca{A}, \ca{V}]$, shows that $\ev_B w$ is isomorphic to the conical colimit of the domain functor $\ca{A}^{\op} \slash \ev_B w \rightarrow [\ca{A},\ca{V}]$. Using the weak Yoneda lemma (see \cite[\S~1.9]{KELLY_BASIC}) one can show that the category $\ca{A}^{\op} \slash \ev_B w$ is isomorphic to the opposite of the category of elements of $V(\ev_B w)_0$. The latter is filtered by assumption ii), hence the remarks in \cite[\S~6.3]{KELLY_FINLIM} show that $\ev_B w$ is indeed flat. 
 
 The functor $\widetilde{w}$ is right adjoint, so it preserves all limits, and we have just shown that $L_w$ preserves finite limits. Thus $L(w)=L_w \widetilde{w}$ preserves finite limits.
\end{proof}

\subsection{The counit of the biadjunction when \texorpdfstring{$\ca{V}$}{V} has a DAG}
The reconstruction problem also simplifies in the presence of a dense autonomous generator.

\begin{thm}\label{DENSE_RECONSTRUCTION_THM}
 Let $\ca{V}$ be a cosmos which has a dense autonomous generator $\ca{X}$. Let $\ca{B}$ be a $\ca{V}$-category with small Cauchy completion. Then the $C$-component of the counit of the Tannakian adjunction is an isomorphism if and only if for each $B \in \ca{B}$, the tautological cocone on the domain functor $D \colon \Rep(C) \slash (CB,\delta_B) \rightarrow \Prs{B}_C$ exhibits $(CB,\delta_B)$ as the conical colimit of $D$.
\end{thm}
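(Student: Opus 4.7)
The starting point is Proposition~\ref{COUNIT_PROP}, which identifies the $C$-component of the counit with $V \varepsilon^K W$, where $K \colon \Rep(C) \to \Prs{B}_C$ is the inclusion and $W$ is the cofree-comodule functor sending $B$ to $(CB, \delta_B)$; this is invertible if and only if the whiskered transformation $\varepsilon^K W$ is. The plan is to reduce the question to invertibility of the components $\varepsilon^K_{(CB, \delta_B)}$ for $B \in \ca{B}$ alone, and then to interpret each such invertibility as the conical-colimit condition in the statement.

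The first step is to observe that $V \varepsilon^K W$ is a $\ca{V}$-natural transformation between cocontinuous $\ca{V}$-functors $\Prs{B} \to \Prs{B}$. The target $V W = C$ is cocontinuous because $C$ is a 1-cell in $\Mod(\ca{V})$. For the source, using cocontinuity of $V$ together with the coend description of left Kan extensions along Yoneda, one obtains
\[
V L_K \widetilde{K} W(B) \cong \int^{M \in \Rep(C)} \Prs{B}(VKM, CB) \odot VKM,
\]
and this is cocontinuous in $B$ because each $VKM$ lies in $\overline{\ca{B}}$ (so $\Prs{B}(VKM,-)$ preserves colimits) and $C$ is cocontinuous. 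Theorem~\ref{FREE_COCOMPLETION_THM} then reduces invertibility of $V \varepsilon^K W$ to invertibility of its components at the representable presheaves $B \in \ca{B}$, and since $V$ reflects isomorphisms (Proposition~\ref{VCAT_EM_OBJECTS_PROP}), this in turn is invertibility of each $\varepsilon^K_{(CB, \delta_B)}$.

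For fixed $B \in \ca{B}$, the component $\varepsilon^K_{(CB, \delta_B)}$ is the canonical comparison $\widetilde{K}(CB, \delta_B) \star K \to (CB, \delta_B)$, and its invertibility is equivalent to $(CB, \delta_B)$ being a canonical $\ca{V}$-weighted colimit of $K$ with weight $\Prs{B}_C(K-, (CB, \delta_B))$. Applying an object-wise version of the density theorem Theorem~\ref{DENSITY_THM}, this weighted-colimit statement is equivalent to the assertion that the tautological cocone on $D \colon \Rep(C)/(CB, \delta_B) \to \Prs{B}_C$ exhibits $(CB, \delta_B)$ as the conical colimit of $D$. The $\ca{X}$-tensor hypothesis needed to invoke Theorem~\ref{DENSITY_THM} holds for $\Rep(C)$: since $\overline{\ca{B}}$ is closed under tensors with $\ca{X}$ and $C$ preserves such tensors, any Cauchy comodule $(M, r)$ carries the tensor $X \odot (M, r) \defl (X \odot M, X \odot r)$ in $\Rep(C)$.

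The main obstacle will be a careful check of how the reduction from $\ca{V}$-weighted to conical colimits transfers to $\Prs{B}_C$, and in particular the cotensor side of Theorem~\ref{DENSITY_THM}. Because the forgetful $V$ preserves and reflects colimits, the conical colimit of $D$ in $\Prs{B}_C$ can be computed by applying $V$ and invoking Theorem~\ref{CONICAL_COLIMIT_THM} in $\Prs{B}$ (where the cotensor hypothesis is immediate), and then lifting the cocone back along $V$ using cocontinuity of $C$. Making this last lifting precise—so that the density argument genuinely identifies the weighted colimit in $\Prs{B}_C$ with the conical one—is the delicate technical point of the proof.
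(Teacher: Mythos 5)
Your overall route matches the paper's: start from Proposition~\ref{COUNIT_PROP}, use cocontinuity of both sides of $V\varepsilon^K W$ and density of the Yoneda embedding to reduce to the components $\varepsilon^K_{(CB,\delta_B)}$ at representables (with $V$ reflecting isomorphisms), and bring in the $\ca{X}$-tensoredness of $\Rep(C)$ to pass from weighted to conical colimits. The gap is in the last step. You propose to apply (an object-wise version of) Theorem~\ref{DENSITY_THM} with target category $\Prs{B}_C$, and you correctly flag the cotensor hypothesis as the delicate point --- but it is more than delicate: $\Prs{B}_C$ is the Eilenberg--Moore category of a comonad that is only assumed \emph{cocontinuous}, so cotensors (being limits not preserved by $C$ in general) need not exist there, and the hypothesis of Theorem~\ref{DENSITY_THM} genuinely fails to be available. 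Your proposed repair --- computing the conical colimit of $D$ by applying $V$ and ``invoking Theorem~\ref{CONICAL_COLIMIT_THM} in $\Prs{B}$'' --- does not close the gap, because that theorem in $\Prs{B}$ decomposes a presheaf into representables $\ca{B}(-,b)$, which is not the diagram $VD$ of Cauchy comodules over $(CB,\delta_B)$; it does not identify the weighted colimit $\widetilde{K}(CB,\delta_B)\star K$ with the conical colimit of $D$.

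The paper avoids the issue by never doing density in $\Prs{B}_C$ at all: it applies Theorem~\ref{CONICAL_COLIMIT_THM} to the \emph{weight} $F=\Prs{B}_C\bigl(K-,(CB,\delta_B)\bigr)=\widetilde{K}(CB,\delta_B)$, viewed as an object of $\Prs{\Rep(C)}$ --- a presheaf category, which is cocomplete and cotensored, so the only hypothesis to check is that $\Rep(C)$ is $\ca{X}$-tensored (which you already verified). This exhibits $F$ as the conical colimit of the representables over it; since $L_K=-\star K$ is cocontinuous and $L_K\bigl(\Rep(C)(-,A)\bigr)\cong KA$, it follows that $L_K\widetilde{K}(CB,\delta_B)$ is the conical colimit of $D$, and the comparison morphism induced by the tautological cocone is exactly $\varepsilon^K_{(CB,\delta_B)}$. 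Replacing your final paragraph with this transport-along-$L_K$ argument turns your proposal into a complete proof.
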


\begin{proof}
 As in Proposition~\ref{COUNIT_PROP} we write $K$ for the inclusion $\Rep(C) \rightarrow \Prs{B}_C$. Since the forgetful functor from the category of comodules creates colimits it follows that $\ca{A}=\Rep(C)$ is $\ca{X}$-tensored. Indeed, for $B \in \overline{\ca{B}}$, $X \in \ca{X}$, we have $X \odot B \in \overline{\ca{B}}$ because $\ca{X}$ consists of objects with duals.
 
 From Proposition~\ref{COUNIT_PROP} we know that it suffices to check that $V\varepsilon^K W$, a $\ca{V}$-natural transformation between cocontinuous $\ca{V}$-functors, is invertible. Since the Yoneda embedding is dense, it suffices to check this for representable functors. The forgetful functor $V$ reflects isomorphisms, so we only need to show that $\varepsilon^K_{(CB,\delta_B)}$ is invertible for every representable functor $B$.
 
 By Theorem~\ref{CONICAL_COLIMIT_THM}, the presheaf $F = \Prs{B}_C\bigl(K-,(CB,\delta_B)\bigr)$ on $\ca{A}$ is the colimit of the diagram of representable functors over $F$. We have $L_K(\ca{A}(-,A)) \cong KA$ and $L_K$ preserves colimits, so we find that $L_K \widetilde{K}(CB,\delta_B)$ is the colimit of the diagram $D \colon \ca{A}\slash (CB,\delta_B) \rightarrow \Prs{B}_C$ of Cauchy comodules over $(CB,\delta_B)$. It is not hard to check that the comparison morphism induced by the tautological cocone is precisely the $(CB,\delta_B)$-component of $\varepsilon^K$.
\end{proof}

\begin{cor}\label{ENOUGH_CAUCHY_COMODULES_COR}
 Let $R$ be a commutative ring and let $C$ be a flat $R$-coalgebra such that the subcategory of Cauchy $C$-comodules is a generator of the category of all $C$-comodules. Then the $C$-component of the counit of the Tannakian adjunction is an isomorphism.
\end{cor}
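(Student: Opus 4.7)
My plan is to reduce the problem to the criterion of Theorem~\ref{DENSE_RECONSTRUCTION_THM}. Because $C$ is an ordinary $R$-coalgebra, the base $\ca{V}$-category $\ca{B}$ in that theorem is the unit $\ca{V}$-category $\ca{I}$, so $\Prs{B}_C$ identifies with $\Comod(C)$, and the only object to verify is the unique object of $\ca{I}$, which yields the regular comodule $(C,\delta)$. Thus it suffices to show that $(C,\delta)$ is the conical colimit in $\Comod(C)$ of the tautological diagram
\[
 D\colon \Rep(C) \slash (C,\delta) \longrightarrow \Comod(C), \quad (M,f) \longmapsto M,
\]
with cocone components given by the morphisms $f$ themselves.

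The flatness of $C$ enters in two ways. First, $\Comod(C)$ is a Grothendieck abelian category; second, the forgetful functor $V\colon \Comod(C) \to \Mod_R$ is exact, and in particular it preserves and reflects all small colimits. Write $X=\colim D$ in $\Comod(C)$ and $\phi\colon X \to C$ for the comparison map induced by the tautological cocone. The generator hypothesis gives a canonical epimorphism $\bigoplus_{(M,f)} M \twoheadrightarrow C$; this factors through $X$, so $\phi$ is an epimorphism.

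The main step is to show that $\phi$ is a monomorphism. For this I would exploit that $\Rep(C)$ is closed under finite direct sums---since finitely generated projective $R$-modules are, and finite direct sums of Cauchy comodules carry a canonical Cauchy comodule structure---so the slice $\Rep(C)\slash (C,\delta)$ has finite coproducts: given $(M,f)$ and $(M',f')$, there is $(M\oplus M',[f,f'])$ equipped with the evident coprojections. Applying $V$ and computing the colimit as a cokernel of coproducts in $\Mod_R$, any element of $\ker V(\phi)$ is represented by a finitely supported family $(x_{(M,f)})$ with $\sum f(x_{(M,f)})=0$; using the finite-coproduct closure one combines all involved pairs into a single $(M_0,f_0)=(\bigoplus M,[f])$ and recognizes the family as coming from a single element $x_0\in M_0$ with $f_0(x_0)=0$. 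Then the compatibility morphism $(M_0,0)\to(M_0,f_0)$ witnesses that the coprojection of $x_0$ equals the coprojection of $0$, so the element is already zero in $X$. Hence $\ker V(\phi)=0$, and $V(\phi)$ is an isomorphism, whence so is $\phi$ because $V$ is conservative.

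The main obstacle will be turning this informal combining argument into a clean proof; it reduces to a concrete manipulation of the cokernel presentation of $\colim D$ in $\Mod_R$ and uses nothing beyond flatness of $C$, the direct-sum closure of $\Rep(C)$, and the generator property.
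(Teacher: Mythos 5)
Your reduction to Theorem~\ref{DENSE_RECONSTRUCTION_THM}, the identification of the single object to check as $(C,\delta)$, and the surjectivity step via the generator hypothesis all match the paper's argument. The gap is in the monomorphism step. After you combine the finitely supported family into a single $(M_0,f_0)=(\bigoplus M,[f])$ with $x_0\in M_0$ and $f_0(x_0)=0$, you appeal to ``the compatibility morphism $(M_0,0)\to(M_0,f_0)$''. No such morphism exists in general: a morphism $(M_0,0)\to(M_0,f_0)$ in the slice is a comodule map $g\colon M_0\to M_0$ with $f_0\circ g=0$, so the identity qualifies only when $f_0=0$, and the zero map only yields the useless relation $\kappa_{(M_0,0)}=0$. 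The assertion that $f_0(x_0)=0$ forces $\kappa_{(M_0,f_0)}(x_0)=0$ in the colimit is exactly the injectivity you are trying to prove; it does not follow from finite coproducts alone.

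To close the gap you need the two ingredients your write-up mentions but never actually deploys at this point. First, flatness of $C$ is needed not merely for exactness of the forgetful functor but to guarantee that $\ker(f_0)$ (equivalently, the pullback of two Cauchy comodules over $(C,\delta)$, which is how the paper phrases it) is again a $C$-comodule, so that $x_0$ lives in a subcomodule $K\subseteq M_0$ on which $f_0$ vanishes. Second, the generator hypothesis must be invoked a \emph{second} time, in the form ``enough Cauchy comodules'': choose a Cauchy comodule $M$ with a comodule map $h\colon M\to K$ and $z\in M$ with $h(z)=x_0$. The composite $M\to K\hookrightarrow M_0$ is then a genuine morphism $(M,0)\to(M_0,f_0)$ in the slice, giving $\kappa_{(M_0,f_0)}(x_0)=\kappa_{(M,0)}(z)$, and the zero endomorphism of $(M,0)$ forces $\kappa_{(M,0)}(z)=0$. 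With these two insertions your argument becomes the paper's proof (stated there with pullbacks and pairs of elements rather than kernels, which is equivalent in this additive setting).
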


\begin{proof}
 Note that the subcategory of Cauchy comodules is a generator if and only if $C$ has \emph{enough} Cauchy comodules, in the sense that for every $C$-comodule $M$ and every element $m\in M$ there is a Cauchy comodule $M^\prime$ and a morphism $M^\prime \rightarrow M$ of $C$-comodules whose image contains $m$ (the  proof of \cite[Proposition~1.4.1]{HOVEY} for Hopf algebroids works equally well for coalgebras). It follows immediately that the comparison morphism $\alpha \colon \colim D \rightarrow (C,\delta)$ is surjective. It remains to show that $\alpha$ is injective.
 
 Any element of $\colim D$ lies in the image of one of its structure maps $\kappa_\phi$ for a Cauchy comodule $\phi\colon M\rightarrow (C,\delta)$ over $(C,\delta)$. Now let $x_1,x_2 \in \colim D$ with $\alpha(x_1)=\alpha(x_2)$. For $i=1,2$ let $\phi_i \colon M_i \rightarrow (C,\delta)$ be finite dimensional comodules over $(C,\delta)$, with elements $x_i^\prime \in M$ satisfying $\kappa_{\phi_i}(x_i^\prime)=x_i$. By definition of the comparison map we have $\alpha \kappa_{\phi_i}=\phi_i$, hence $\phi_1(x_1^\prime)=\phi_2(x_2^\prime)$. It follows that there is an element $y$ in the pullback $N$ of $\phi_1$ and $\phi_2$ which gets sent to $x_1^\prime$ and $x_2^\prime$ under the composite
 \[
 \xymatrix{N \ar[r]^{\mathrm{pr}_i} & M_i \ar[r]^-{\phi_i} & (C,\delta) \rlap{\smash{.}}}  
 \]
 Since $C$ is flat, the pullback $N$ is itself again a comodule. The only problem is that $N$ is not necessarily finitely generated and projective. However, $C$ has enough Cauchy comodules, so there is a Cauchy comodule $M$ with an element $z\in M$ and a morphism of comodules $\phi \colon M \rightarrow N$ with $\phi(z)=y$. The structure map corresponding to the composite $M \rightarrow N \rightarrow (C,\delta)$ sends $z$ to $x_1$ and to $x_2$, hence we must have $x_1=x_2$.
\end{proof}

 \begin{cor}[{\cite[\S~5.13]{WEDHORN}}]
 Let $C$ be a flat coalgebra over a hereditary Noetherian ring $R$. Then the $C$-component of the counit of the Tannakian adjunction is an isomorphism. 
 \end{cor}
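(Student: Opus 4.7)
The plan is to verify the hypotheses of Corollary~\ref{ENOUGH_CAUCHY_COMODULES_COR}, so it suffices to show that the Cauchy comodules form a generator of $\Comod(C)$; equivalently, that for every $C$-comodule $M$ and every element $m \in M$, there exists a Cauchy comodule $M'$ and a morphism $M' \to M$ of $C$-comodules whose image contains $m$.

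First I would reduce to the case where $M$ is finitely generated as an $R$-module. The standard argument, which uses only that $C$ is a flat $R$-coalgebra over a Noetherian base, shows that any $C$-comodule is the directed union of its $C$-subcomodules whose underlying $R$-module is finitely generated: given $m \in M$ with $\rho(m) = \sum_{i=1}^{s} c_i \otimes m_i$, the $R$-submodule generated by $m$ together with $m_1, \ldots, m_s$ is contained in a finitely generated subcomodule by a Noetherian-closure argument using that submodules of finitely generated modules are finitely generated. This reduces the problem to the following: given a $C$-comodule $N$ whose underlying $R$-module is finitely generated, and any $m \in N$, construct a Cauchy comodule $P$ and a comodule map $P \to N$ whose image contains $m$.

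For this second step I would use the hereditary hypothesis as follows. Choose a finitely generated free $R$-module $F$ and a surjection $q \colon F \to N$ of $R$-modules. The kernel $K$ is finitely generated, because $R$ is Noetherian, and projective, because $R$ is hereditary. Since $C$ is flat, the sequence $0 \to C \otimes_R K \to C \otimes_R F \to C \otimes_R N \to 0$ remains exact, and since $F$ is projective, the composite $\rho_N \circ q \colon F \to C \otimes_R N$ admits an $R$-linear lift $\tilde\rho \colon F \to C \otimes_R F$. The aim is to use the projectivity of $K$ to correct $\tilde\rho$ into a genuine coaction, after possibly enlarging $F$ by a direct summand built from $K$: the counitality defect $(\epsilon \otimes \id)\tilde\rho - \id_F$ factors through the inclusion $K \hookrightarrow F$, and the coassociativity defect $(\delta \otimes \id)\tilde\rho - (\id \otimes \tilde\rho)\tilde\rho$ factors through $C \otimes C \otimes K \hookrightarrow C \otimes C \otimes F$; projectivity of $K$ lets one absorb these obstructions to produce a coaction on a finitely generated projective module $P$ whose underlying $R$-module surjects onto $N$ via a comodule map.

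The main obstacle, and the step that genuinely requires hereditariness rather than merely flatness of $C$, is carrying out the last correction: modifying the naive lift $\tilde\rho$ into an honest coaction. The key payoff of the hereditary assumption is that $K$ is itself projective, which is what lets the obstructions be split off and absorbed; over a ring of higher homological dimension the naive $F$-presentation is insufficient, and one can no longer produce a Cauchy comodule surjecting onto $N$ in this way. Once this is done, the composite $P \to N \hookrightarrow M$ is a morphism from a Cauchy comodule to $M$ whose image contains $m$, so $C$ has enough Cauchy comodules and Corollary~\ref{ENOUGH_CAUCHY_COMODULES_COR} applies.
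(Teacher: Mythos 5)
Your overall strategy is right: reduce to Corollary~\ref{ENOUGH_CAUCHY_COMODULES_COR} by showing $C$ has enough Cauchy comodules, and your first reduction (every element of a comodule lies in a finitely generated subcomodule, via the pullback of $\rho$ against $C\otimes N \to C\otimes M$ using flatness of $C$ and Noetherianness of $R$) is exactly the paper's first step. The gap is in your second step. You propose to take a free presentation $q\colon F\to N$ with kernel $K$, lift $\rho_N\circ q$ to $\tilde\rho\colon F\to C\otimes F$, and then ``absorb'' the counitality and coassociativity defects using projectivity of $K$ to manufacture an honest coaction on a finitely generated projective $P$ surjecting onto $N$. It is true that the defects factor through $K$ and $C\otimes C\otimes K$ respectively, but there is no argument given (and none that I can see) for why they can be corrected: modifying $\tilde\rho$ by a map into $C\otimes K$ perturbs the two defects in a coupled, nonlinear way, and there is no obstruction-theoretic mechanism here that guarantees a simultaneous solution, nor any reason the resulting map $P\to N$ would be a comodule morphism. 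This step is the entire content of the corollary and cannot be waved through.

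The paper avoids this deformation problem entirely by building the Cauchy comodule \emph{inside} a cofree-type comodule rather than as a corrected presentation. Concretely: choose $\phi\colon R^n\to M$ hitting the $m_i$, and form the pullback $E$ of $\rho\colon M\to C\otimes M$ along $C\otimes\phi\colon C\otimes R^n\to C\otimes M$. Flatness of $C$ ensures this is a pullback of comodules, so $E$ is a subcomodule of the flat module $C\otimes R^n$ equipped with a comodule map $\psi\colon E\to M$ with $m$ in its image. By the first step, $m$ lifts into a finitely generated subcomodule $E'\subseteq E$. The hereditary hypothesis is then applied not to the kernel of a presentation but to $E'$ itself: a finitely generated submodule of a flat module over a hereditary Noetherian ring is projective, so $E'$ is a Cauchy comodule and $E'\to M$ does the job. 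I would redo your second step along these lines.
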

 
 \begin{proof}
 It suffices to show that the subcategory of Cauchy comodules of $C$ is a generator. First note that every element $m$ of a comodule $M$ is contained in a finitely generated subcomodule. Indeed, let $\rho(m)=\sum c_i \otimes m_i$, and let $N$ be the submodule generated by the $m_i$. The pullback diagram
 \[
  \newdir{ >}{{}*!/-7pt/@{>}}
  \xymatrix{ 
   N^{\prime} \ar@{{ >}->}[d] \ar@{{ >}->}[r] & C\otimes N \ar@{{ >}->}[d] \\ 
   M \ar@{{ >}->}[r]_-{\rho} &  C\otimes M 
  }
 \]
 in $\Mod_R$ is also a pullback diagram in the category of $C$-comodules because $C$ is flat. By definition of $N$ it follows that $N^{\prime}$ contains $m$. Commutativity of the diagram
 \[
  \newdir{ >}{{}*!/-7pt/@{>}}
  \xymatrix{ 
   N^{\prime} \ar@{{ >}->}[d] \ar@{{ >}->}[r] & C\otimes N \ar@{{ >}->}[d] \ar[r]^-{\varepsilon \otimes N} & N \ar@{{ >}->}[d] \\ 
   M \ar@{{ >}->}[r]_-{\rho} & C\otimes M \ar[r]_-{\varepsilon \otimes M} & M
  }  
 \]
 and the fact that $\varepsilon \otimes M \cdot \rho=\id_M$ imply that $N^\prime$ is a submodule of $N$, so it is finitely generated.
 
 It remains to show that for every $m \in M$, there exists a Cauchy comodule $M^{\prime}$ and a homomorphism of comodules $M^\prime \rightarrow M$ whose image contains $m$ (cf.\ \cite[Proposition~1.4.1]{HOVEY}). Choose a homomorphism of $R$-modules $\phi \colon R^n \rightarrow M$ whose image contains all the $m_i$, and let
 \[
  \newdir{ >}{{}*!/-7pt/@{>}}
  \xymatrix{
   E \ar@{{ >}->}[r] \ar[d]_{\psi} & C\otimes R^n \ar[d]^{C\otimes \phi} \\
   M \ar@{{ >}->}[r]_-{\rho} & C\otimes M
  }
 \]
 be a pullback diagram. Since $C$ is flat, this is in fact a pullback diagram in the category of comodules, and $E$ contains an element $e$ with $\psi(e)=m$ by definition of $\phi$. Let $E^{\prime}$ be a finitely generated subcomodule of $E$ containing $e$. Thus $E^{\prime}$ is a finitely generated submodule of the flat module $C\otimes R^n$. Since $R$ is hereditary and Noetherian it follows that $E^{\prime}$ is projective. Thus $E^\prime$ is the desired Cauchy comodule.
 \end{proof}

\section{Further simplifications when \texorpdfstring{$\ca{V}$}{V} is abelian} \label{ABELIAN_RECOGNITION_SECTION}
In this section we take a closer look at certain special cosmoi, in particular cosmoi of (differential graded) $R$-modules for a commutative ring $R$. In this case we can further simplify the recognition principle. This section contains generalizations of results found in \cite{SZLACHANYI}, adapted to the setting of more general cosmoi.

\subsection{Projective objects and tame \texorpdfstring{$\ca{V}$}{V}-categories}
We have already seen that a cosmos with dense autonomous generator is locally finitely presentable as a closed category if the unit object is locally presentable (see Proposition~\ref{COSMOI_WITH_DAG_ARE_LFP_PROP}). In the abelian context we can further simplify the reconstruction theorem when working with tame $\ca{V}$-categories. If the unit object of our cosmos is projective in the sense of the following definition, then all $\ca{V}$-categories are tame.

\begin{dfn}
Let $\ca{C}_0$ be an unenriched category. An object $C\in \ca{C}$ is called \emph{projective} if the functor $\ca{C}_0(C,-)$ preserves epimorphisms.
\end{dfn}
 
 Examples of cosmoi where the unit object is finitely presentable and projective are given by categories of $A$-graded $R$-modules, $A$ any abelian group. A nonexample is the cosmos of differential graded $R$-modules, where the unit object is not projective in the above sense. In order to prove the recognition principle for cosmoi of (graded) modules, we need the following technical lemmas.

\begin{lemma}\label{CAUCHY_OBJECTS_FGP_LEMMMA}
 Let $\ca{B}$ be a small $\ca{V}$-category and let $G$ be an object of the Cauchy completion $\overline{\ca{B}}$ of $\ca{B}$. If the unit of $\ca{V}$ is finitely presentable or projective, then $G$ is finitely presentable or projective in the category $\Prs{B}_0$.
\end{lemma}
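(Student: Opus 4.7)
The plan is to unpack the definition of a Cauchy object and to observe that preservation properties pass to underlying categories via the representable functor $V = \ca{V}(I,-) \colon \ca{V} \rightarrow \Set$. Recall from Section~\ref{CAUCHY_COMPLETION_SECTION} that an object $G \in \overline{\ca{B}} \subseteq \Prs{B}$ is defined to be one such that the $\ca{V}$-functor $\Prs{B}(G,-) \colon \Prs{B} \rightarrow \ca{V}$ is $\ca{V}$-cocontinuous. The hom-set functor on the underlying unenriched category is the composite
\[
 \Prs{B}_0(G,-) \;=\; V \circ \Prs{B}(G,-)_0 \colon \Prs{B}_0 \rightarrow \ca{V}_0 \rightarrow \Set,
\]
so finite presentability or projectivity of $G$ in $\Prs{B}_0$ amounts to this composite preserving filtered colimits or epimorphisms respectively.

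First I would argue that $\Prs{B}(G,-)_0$ itself preserves both filtered colimits and epimorphisms of $\Prs{B}_0$. Since $\Prs{B}$ is both tensored and cotensored over $\ca{V}$, conical colimits in $\Prs{B}$ coincide with the ordinary colimits computed in $\Prs{B}_0$ (and analogously for $\ca{V}$, cf.\ \cite[\S~3.8]{KELLY_BASIC}). A $\ca{V}$-cocontinuous $\ca{V}$-functor preserves in particular conical colimits, so $\Prs{B}(G,-)_0 \colon \Prs{B}_0 \rightarrow \ca{V}_0$ preserves all small colimits, including filtered colimits and epimorphisms (the latter because epimorphisms in a cocomplete category are exactly the colimits of their kernel pairs, or simply because a right adjoint from the codomain of $G$ exists after passing to $\Prs{B}$).

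Next I would dispatch the two cases by applying the hypothesis on the unit object. If $I$ is finitely presentable, then $V = \ca{V}(I,-)$ preserves filtered colimits by definition; composing with $\Prs{B}(G,-)_0$ yields that $\Prs{B}_0(G,-)$ preserves filtered colimits, so $G$ is finitely presentable in $\Prs{B}_0$. If $I$ is projective, then $V$ preserves epimorphisms; composed with the epimorphism-preserving functor $\Prs{B}(G,-)_0$, the composite $\Prs{B}_0(G,-)$ preserves epimorphisms, which is exactly the statement that $G$ is projective in $\Prs{B}_0$.

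The argument is essentially formal; the only step that requires some care is identifying conical colimits in $\Prs{B}$ with ordinary colimits of the underlying diagram in $\Prs{B}_0$, and similarly for $\ca{V}$, but both categories are tensored and cotensored so this is standard. No genuine obstacle is expected.
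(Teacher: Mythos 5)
Your proof is correct and follows essentially the same route as the paper: both decompose $\Prs{B}_0(G,-)$ as $\ca{V}_0(I,-)\circ\Prs{B}(G,-)_0$, use cocontinuity of $\Prs{B}(G,-)$ (the paper deduces preservation of ordinary colimits by noting that this functor is a left $\ca{V}$-adjoint via Theorem~\ref{FREE_COCOMPLETION_THM}, rather than via the conical-versus-ordinary colimit comparison), and then apply the hypothesis on the unit. One small correction: an epimorphism is detected by its \emph{cokernel} pair (the pushout of $f$ along itself being trivial), not its kernel pair, though your alternative justification via the existence of a right adjoint is fine as written.
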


\begin{proof}
 The ($\Set$-valued) hom-functor $\Hom_{\Prs{B}_0}(G,-)$ of $G$ is  given by the composite $\ca{V}_0(I,-) \circ \Prs{B}(G,-)_0$. The $\ca{V}$-functor $\Prs{B}(G,-)\colon \Prs{B} \rightarrow \ca{V}$ is cocontinuous by definition of Cauchy completions (see Section~\ref{CAUCHY_COMPLETION_SECTION}), so by Theorem~\ref{FREE_COCOMPLETION_THM} it has a right $\ca{V}$-adjoint. Thus $\Prs{B}(G,-)_0$ has an ordinary right adjoint, and it follows that $\Hom_{\Prs{B}_0}(G,-)$ preserves all the colimits that are preserved by $\ca{V}_0(I,-)$. In particular, it preserves filtered colimits (resp.\ epimorphisms) if $I$ is locally presentable (resp.\ projective). 
\end{proof}

 If the cosmos $\ca{V}$ is enriched in abelian groups, then the forgetful functor $V \colon \ca{V} \rightarrow \Set$ factors through $U \colon \Ab \rightarrow \Set$. Therefore the underlying unenriched category of any $\ca{V}$-category is enriched in abelian groups. In particular, it makes sense to talk about kernels and cokernels in a $\ca{V}$-category. 

\begin{dfn}\label{TAME_CATEGORY_DFN}
 Let $\ca{V}$ be an $\Ab$-enriched cosmos. A $\ca{V}$-category $\ca{B}$ is \emph{tame} if for all epimorphisms $p \colon A \rightarrow B$ in $\Prs{B}$ with $A, B\in \overline{\ca{B}}$, the kernel of $p$ lies in $\overline{\ca{B}}$. A cosmos $\ca{V}$ is called tame if all small $\ca{V}$-categories are tame.
\end{dfn}

\begin{rmk}
 We will only apply the definition of tame $\ca{V}$-categories in combination with Lemma~\ref{CAUCHY_OBJECTS_FGP_LEMMMA}, that is, we only care about the fact that the kernel of an epimorphism between objects of the Cauchy completion is finitely generated. The only examples of $\ca{V}$-categories with this property we know of are tame $\ca{V}$-categories in cosmoi with finitely presentable unit object.
\end{rmk}

\begin{prop}\label{TAME_IF_FGP_UNIT_PROP}
 Let $\ca{V}$ be an $\Ab$-enriched cosmos. If the unit of $\ca{V}$ is projective, then $\ca{V}$ is tame.
\end{prop}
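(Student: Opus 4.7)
The plan is to combine Lemma~\ref{CAUCHY_OBJECTS_FGP_LEMMMA} with the closure of Cauchy completions under retracts. Fix a small $\ca{V}$-category $\ca{B}$ and an epimorphism $p \colon A \to B$ in $\Prs{B}$ with $A, B \in \overline{\ca{B}}$. First I would invoke Lemma~\ref{CAUCHY_OBJECTS_FGP_LEMMMA}: since $I$ is projective in $\ca{V}_0$, every Cauchy object is projective in $\Prs{B}_0$. In particular $B$ is projective, so the epimorphism $p$ admits a section $s \colon B \to A$ with $ps = \id_B$.

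Next I would exploit the $\Ab$-enrichment. Since $\ca{V}$ is $\Ab$-enriched, so is $\Prs{B}$, and the category $\Prs{B}_0$ is additive with all small limits and colimits. The endomorphism $e \defl sp \colon A \to A$ is an idempotent with $pe = p$, and consequently $p(1_A - e) = 0$, so $1_A - e$ factors through the kernel inclusion $k \colon \ker(p) \to A$. A standard verification in an additive category with kernels (and images of idempotents) shows that the maps $(k, s) \colon \ker(p) \oplus B \to A$ and its inverse $(1_A - e, p) \colon A \to \ker(p) \oplus B$ (with $1_A - e$ corestricted through $k$) exhibit $A$ as a biproduct $\ker(p) \oplus B$. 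In particular $\ker(p)$ is a retract of $A$.

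Finally I would appeal to the closure properties of $\overline{\ca{B}}$: by Street's characterization (already used in the proof of Theorem~\ref{VMOD_TK_OBJECTS_THM}), $\overline{\ca{B}}$ is the closure of $\ca{B}$ under absolute colimits in $\Prs{B}$, and splittings of idempotents are absolute colimits. Hence any retract of an object of $\overline{\ca{B}}$ again lies in $\overline{\ca{B}}$. Since $A \in \overline{\ca{B}}$ and $\ker(p)$ is a retract of $A$, we conclude $\ker(p) \in \overline{\ca{B}}$, which is precisely the defining condition for $\ca{B}$ to be tame.

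The argument is essentially a one-step reduction, and no real obstacle arises: the only conceptual ingredient beyond Lemma~\ref{CAUCHY_OBJECTS_FGP_LEMMMA} is the (well-known) stability of Cauchy completions under retracts, and the additive-category identity expressing $A$ as $\ker(p) \oplus B$ whenever $p$ is a split epimorphism is routine. The mild care needed is to observe that the biproduct decomposition, which is the only nontrivial manipulation, takes place in the additive cocomplete category $\Prs{B}_0$, where both the kernel and the splitting of the idempotent $1_A - e$ exist.
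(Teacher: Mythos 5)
Your proof is correct and follows essentially the same route as the paper's: invoke Lemma~\ref{CAUCHY_OBJECTS_FGP_LEMMMA} to see that $B$ is projective, split the epimorphism $p$, deduce that $\ker(p)$ is a retract of $A$, and conclude via closure of the Cauchy completion under retracts. Your write-up merely fills in the biproduct decomposition that the paper leaves implicit.
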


\begin{proof}
 Let $\ca{B}$ be a small $\ca{V}$-category. Let $p \colon A \rightarrow B$ be an epimorphism, where $A$ and $B$ are in the Cauchy completion of $\ca{B}$. From Lemma~\ref{CAUCHY_OBJECTS_FGP_LEMMMA} we know that $\ca{B}$ is projective. Thus $p$ is a split epimorphism, and it follows that the kernel of $p$ is a retract of $A$. Since Cauchy complete categories are closed under retracts, we find that $\ca{B}$ is tame.
\end{proof}

\begin{cor}\label{MODULE_CATS_TAME_COR}
The cosmos of (graded) modules over a commutative ring $R$ is tame.
\end{cor}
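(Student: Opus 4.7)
The plan is to invoke Proposition~\ref{TAME_IF_FGP_UNIT_PROP}, which reduces the claim to verifying that the unit object of each cosmos is projective in the sense defined just before that proposition.

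For $\ca{V}=\Mod_R$, the unit is $R$ itself, which is free of rank one; the representable functor $\Hom_R(R,-)$ is naturally isomorphic to the forgetful functor $\Mod_R \to \Set$, which is exact and in particular preserves epimorphisms. Hence $R$ is projective and Proposition~\ref{TAME_IF_FGP_UNIT_PROP} applies.

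For the cosmos $\ca{V}$ of $A$-graded $R$-modules (with $A$ an abelian group), the unit is $R$ concentrated in degree $0$. I would identify the underlying-set functor of the representable $\ca{V}_0(R,-)$ with the degree-zero component functor $M\mapsto M_0$ on graded modules, since a graded homomorphism out of $R$ (sitting in degree $0$) is determined by its value on $1\in R$, which must land in $M_0$. Because kernels, cokernels, and direct sums of graded modules are computed componentwise in each degree, the functor $M \mapsto M_0$ is exact, and in particular preserves epimorphisms. Thus the unit is projective and Proposition~\ref{TAME_IF_FGP_UNIT_PROP} again applies.

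There is no genuine obstacle here: the corollary is essentially a bookkeeping step collecting two easy verifications. The only subtlety worth flagging is that one must treat the $\Mod_R$ case and the graded case separately, since in the graded case ``projective'' is meant in the enriched sense specified by Definition (the one just before Lemma~\ref{CAUCHY_OBJECTS_FGP_LEMMMA}), namely projectivity of $I$ as an object of the underlying ordinary category $\ca{V}_0$, and not the usual notion of projective $R$-module in any single degree.
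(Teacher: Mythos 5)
Your proof is correct and takes essentially the same route as the paper: both reduce the claim to Proposition~\ref{TAME_IF_FGP_UNIT_PROP} by checking that the unit object is projective in the sense of the definition preceding it. The paper simply cites its earlier observation that the unit of the cosmos of ($A$-graded) $R$-modules is finitely presentable and projective, whereas you spell out the verification by identifying $\ca{V}_0(I,-)$ with the forgetful functor (resp.\ the degree-zero component functor), which preserves epimorphisms since these are computed degreewise.
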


\begin{proof}
 We already observed that the unit of the cosmos of graded $R$-modules is finitely presentable and projective.
\end{proof}

Note that Proposition~\ref{TAME_IF_FGP_UNIT_PROP} is not applicable to the cosmos of differential graded $R$-modules because its unit is not projective. It turns out that the differential graded categories we care about most are all tame (see Corollary~\ref{DGA_TAME_COR}), but it is open whether or not \emph{all} differential graded categories are tame.

\subsection{The recognition theorem for abelian cosmoi with DAG} We prove the following theorem in the next section. Before doing that we investigate some of its consequences.

\begin{thm}\label{ABELIAN_RECOGNITION_THM}
 Let $\ca{V}$ be an abelian cosmos with a dense autonomous generator $\ca{X}$ and with a finitely presentable unit object. Let $\ca{A}$, $\ca{B}$ be small $\ca{V}$-categories and $w \colon \ca{A} \rightarrow \Prs{B}$ a $\ca{V}$-functor whose image is contained in $\overline{\ca{B}}$. Assume that $\ca{A}$ is Cauchy complete, that $\ca{B}$ is tame, and that $w$ satisfies the conditions:
\begin{enumerate}
\item[i)] 
 The functor $w_0 \colon \ca{A}_0 \rightarrow (\Prs{B})_0$ is faithful and reflects isomorphisms;
\item[ii)] 
 For each object $B\in\ca{B}$, the category $\el \bigl(V (\ev_B w)_0\bigr)$ of elements of the functor $V(\ev_B w)_0 \colon \ca{A}_0 \rightarrow \Set$ is cofiltered (cf.\ Theorem~\ref{DENSE_RECOGNITION_THM}); and
\item[iii)]
 If $f$ is a morphism of $\ca{A}_0$ for which the cokernel of $w_0(f)$ is in $\overline{\ca{B}}$, then the cokernel of $f$ exists and is preserved by $w_0$.
\end{enumerate}
 Then the $w$-component of the unit of the Tannakian adjunction is an equivalence. Moreover, the comonad $L(w)$ preserves finite limits.
\end{thm}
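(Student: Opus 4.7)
The approach is to reduce to Theorem~\ref{DENSE_RECOGNITION_THM} by verifying its three hypotheses. Hypotheses i) and ii) of Theorem~\ref{DENSE_RECOGNITION_THM} coincide with i) and ii) of the present theorem, so the substance is to deduce its hypothesis iii)---existence and $w_0$-preservation of colimits of $w$-rigid diagrams in $\ca{A}_0$---from what we are given. Granted that, both the unit of the Tannakian adjunction being an equivalence and $L(w)$ preserving finite limits are immediate consequences of Theorem~\ref{DENSE_RECOGNITION_THM}.

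The finite case is direct. Since $\ca{V}$ is $\Ab$-enriched and $\ca{A}$ is Cauchy complete, $\ca{A}_0$ admits finite direct sums, and any finite colimit in an additive category is the cokernel of a morphism between finite direct sums. For a finite $w$-rigid diagram $D\colon \ca{D}\to\ca{A}_0$ the resulting morphism $f$ has, by $w$-rigidity, cokernel in $\overline{\ca{B}}$ after applying $w_0$, so hypothesis iii) of the present theorem produces the cokernel of $f$ in $\ca{A}_0$ and ensures it is preserved by $w_0$.

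For an arbitrary $w$-rigid diagram $D\colon\ca{D}\to\ca{A}_0$ with $c=\colim w_0 D\in\overline{\ca{B}}$, the plan is to exhibit $c$ as the cokernel of a morphism between \emph{finite} direct sums of objects in the image of $D$, so that the previous paragraph applies. Because the unit of $\ca{V}$ is finitely presentable, Lemma~\ref{CAUCHY_OBJECTS_FGP_LEMMMA} makes $c$ finitely presentable in $(\Prs{B})_0$; finite generation of $c$ then supplies $d_1,\ldots,d_n\in\Ob(\ca{D})$ such that the canonical map $q\colon\bigoplus_{i} w_0 D(d_i)\twoheadrightarrow c$ is an epimorphism between objects of $\overline{\ca{B}}$. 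Tameness of $\ca{B}$ forces $K=\ker q$ to lie in $\overline{\ca{B}}$, whence $K$ is itself finitely presentable (so finitely generated). Using the flatness hypothesis ii), a finite generating family for $K$ can be realized as $w_0$-images of morphisms from finitely many $D(e_j)$'s, assembling into a morphism $g\colon\bigoplus_j D(e_j)\to\bigoplus_i D(d_i)$ in $\ca{A}_0$ whose $w_0$-image has cokernel $c$. The finite case then yields the cokernel of $g$ in $\ca{A}_0$, which one verifies is a colimit of $D$ preserved by $w_0$.

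The main obstacle will be this flat-lifting step: expressing chosen generators of $K\subseteq\bigoplus_i w_0 D(d_i)$---which are a priori morphisms in $\Prs{B}$ from Cauchy objects---as $w_0$ of honest morphisms in $\ca{A}_0$. This is where cofilteredness of the categories of elements $\el(V(\ev_B w)_0)$ is essential: it should provide, for each such generating morphism, a common refinement through an object of $\ca{A}$, while faithfulness of $w_0$ pins down the lift uniquely. Once this is done, all hypotheses of Theorem~\ref{DENSE_RECOGNITION_THM} are in place, and both conclusions of the present theorem follow.
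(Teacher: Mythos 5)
Your overall strategy---verify hypothesis iii) of Theorem~\ref{DENSE_RECOGNITION_THM} for arbitrary $w$-rigid diagrams and then quote that theorem---is not the route the paper takes, and the reduction relocates rather than solves the real difficulty. The paper goes through Corollary~\ref{FULLY_FAITHFUL_COR}: it only shows that the Yoneda embedding factors through the subcategory $\ca{C}$ of Proposition~\ref{REPLETE_IMAGE_PROP} (via Lemma~\ref{ADDITIVE_FULLY_FAITHFUL_LEMMA}) and that every $X\in\ca{C}$ with $L_w(X)\in\overline{\ca{B}}$ is representable; it never establishes existence of colimits of general $w$-rigid diagrams in $\ca{A}_0$, precisely because that is the harder statement.

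The gap in your argument is the passage from the presentation $c\cong\mathrm{coker}(w_0 g)$ back to a colimit of $D$ in $\ca{A}_0$, and it occurs twice. First, to produce $g$ you must realize a jointly epimorphic family onto the kernel $K\subseteq\bigoplus_i w_0D(d_i)$ as $w_0$-images of morphisms of $\ca{A}_0$; such a generator is a morphism of $\Prs{B}$ between objects in the image of $w$, and none of i)--iii) says that such a morphism lifts along $w_0$. Cofilteredness of $\el\bigl(V(\ev_B w)_0\bigr)$ is used in the paper only to prove that $L_w$ is left exact; it does not lift individual morphisms, and faithfulness gives uniqueness of a lift but not existence. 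Second, even granting $g$, declaring $\mathrm{coker}(g)$ to be $\colim D$ requires structure maps $D(d)\to\mathrm{coker}(g)$ for \emph{every} object $d$ of $\ca{D}$, not just the finitely many $d_i$; these are again morphisms $w_0D(d)\to w_0(\mathrm{coker}(g))$ in $\Prs{B}$ that must be lifted to $\ca{A}_0$. Both liftings are exactly the content of condition ii) of Lemma~\ref{ADDITIVE_FULLY_FAITHFUL_LEMMA}, whose verification---via the kernel of $Yh$ in $\Prs{A}$, left exactness of $L_w$, tameness, Lemma~\ref{QUOTIENT_OF_REPRESENTABLE_LEMMA}, and hypothesis iii)---is the technical heart of the paper's proof and is absent from your sketch. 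Your finite case is fine, and your use of Lemma~\ref{CAUCHY_OBJECTS_FGP_LEMMMA} and tameness to place $K$ in $\overline{\ca{B}}$ is correct, but the morphism-lifting problem is where the theorem actually lives.
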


 If we further specialize to the cosmos of differential graded $R$-modules or the cosmos of $R$-modules, we obtain the following recognition results. For the cosmos of abelian groups, a similar result was proved by K.\ Szlach\'anyi (see \cite[Corollary~6.4]{SZLACHANYI}).

\begin{thm}\label{DGA_THM}
 Let $R$ be a commutative ring, $B$ a commutative differential graded $R$-algebra, and let $\ca{V}$ be the cosmos of differential graded $R$-modules. Let $\ca{A}$ be a small Cauchy complete differential graded $R$-linear category, equipped with a differential graded $R$-linear functor $w \colon \ca{A} \rightarrow \Mod_B$ into the category of differential graded left $B$-modules such that $w(A)$ is dualizable for all $A \in \ca{A}$. Suppose the following conditions are satisfied, where $Z_0 \colon \Mod_B \rightarrow \Set$ denotes the functor which sends a differential graded $B$-module to its set of cycles of degree $0$:
\begin{enumerate}
\item[i)] 
 The functor $w_0 \colon \ca{A}_0 \rightarrow (\Mod_B)_0$ is faithful and reflects isomorphisms;
\item[ii)] 
 The category $\el\bigr(Z_0(w_0)\bigl)$ of elements of $Z_0(w_0)$ is cofiltered; and
\item[iii)]
 If the cokernel of $w_0(f)$ lies in the Cauchy completion of $B$, then the cokernel of $f$ exists and is preserved by $w_0$.
\end{enumerate}
 Then the $w$-component of the unit of the Tannakian adjunction is an equivalence of categories. Moreover, the comonad $L(w) \colon \Mod_{B} \rightarrow \Mod_{B}$ preserves finite limits, that is, the corresponding differential graded $B$-$B$-bimodule is flat as a differential graded right $B$-module.
\end{thm}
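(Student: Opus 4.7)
The plan is to derive Theorem~\ref{DGA_THM} as a direct specialization of Theorem~\ref{ABELIAN_RECOGNITION_THM} applied to the cosmos $\ca{V}$ of differential graded $R$-modules and the one-object dg-category $\ca{B}$ corresponding to $B$. Thus the work reduces to verifying the hypotheses that are part of the ``cosmos data'' in Theorem~\ref{ABELIAN_RECOGNITION_THM}, and then checking that the three numbered conditions in the two theorems really say the same thing in the dg setting.

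First I would confirm the cosmos-level hypotheses. The cosmos $\ca{V}$ is evidently abelian, and the unit object $R$ (concentrated in degree zero) is finitely presentable since $\ca{V}(R,-) = Z_0$ preserves filtered colimits of dg-modules (both kernels defining $Z_0$ and filtered colimits are computed degree-wise in abelian groups, where filtered colimits are exact). A dense autonomous generator $\ca{X}$ is given by the essentially small full subcategory consisting of dg-modules that are finitely generated and free as graded $R$-modules; closure under tensor and dual is immediate from $R[n]\otimes R[m]\cong R[n+m]$ and $R[n]^\vee\cong R[-n]$, together with compatibility with finite direct sums and mapping cones, and $\Set$-density follows because every dg-module is canonically a colimit of such free ``cell'' modules indexed by its underlying set. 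Tameness of the one-object dg-category $B$ is the one substantive dg-specific input and is supplied by the separately proved Corollary~\ref{DGA_TAME_COR}.

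Next I would match the hypotheses. Since $\ca{B}$ has a unique object, the evaluation functor $\ev_B$ is the identity on $\Prs{B}$, so $V(\ev_B w)_0 = V(w)_0 = Z_0(w_0)$, whereby condition ii) of Theorem~\ref{DGA_THM} is precisely condition ii) of Theorem~\ref{ABELIAN_RECOGNITION_THM}. Conditions i) and iii) translate verbatim once one observes that ``the cokernel of $w_0(f)$ lies in the Cauchy completion of $B$'' is exactly ``the cokernel of $w_0(f)$ is (isomorphic to) a dualizable dg-$B$-module,'' the Cauchy completion of $B$ being $\overline{\ca{B}}$ by Example~\ref{CAUCHY_COMPLETION_SMALL_PROJECTIVE_EXAMPLE} adapted to the dg setting. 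Since $\ca{A}$ is assumed Cauchy complete and its values under $w$ are dualizable, Theorem~\ref{ABELIAN_RECOGNITION_THM} applies and gives both the equivalence and the fact that $L(w)$ preserves finite limits.

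The final clause about flatness follows from the general description of the Tannakian biadjunction in $\Mod(\ca{V})$: the comonad $L(w)$ on $\Prs{B}\simeq\Mod_B$ is the composite of a left adjoint $L_w$ with its right adjoint $\widetilde{w}$, and by Theorem~\ref{TANNAKIAN_BIADJUNCTION_VMOD_THM} together with the bimodule description in Section~\ref{OUTLINE_TK_BIADJUNCTION_SECTION}, this comonad is precisely tensoring with a dg $B$-$B$-bimodule $H$ on the right. The right adjoint $\widetilde{w}$ always preserves limits, so preservation of finite limits by $L(w)$ is equivalent to preservation of finite limits by $H\otimes_B -$ in the right-action variable, which is flatness of $H$ as a right dg-$B$-module. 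The main obstacle is purely the tameness input (Corollary~\ref{DGA_TAME_COR}); once that is in hand, every other step is a routine translation between the general cosmos language and the dg-module language.
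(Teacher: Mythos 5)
Your proposal is correct and follows essentially the same route as the paper: specialize Theorem~\ref{ABELIAN_RECOGNITION_THM} by exhibiting a dense autonomous generator of bounded degreewise finitely generated free dg-modules, noting the unit is finitely presentable, invoking Corollary~\ref{DGA_TAME_COR} for tameness, and observing that $Z_0(w_0)=V(\ev_\ast w)_0$ so that conditions i)--iii) match. The only quibble is that the identification of $\overline{B}$ with the dualizable dg-$B$-modules should be cited from Proposition~\ref{AUTONOMOUS_CAUCHY_PROP} rather than by ``adapting'' Example~\ref{CAUCHY_COMPLETION_SMALL_PROJECTIVE_EXAMPLE}, since the unit of the dg cosmos is not projective and the finitely-generated-projective characterization does not carry over naively.
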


\begin{thm}\label{COMMUTATIVE_RING_THM}
 Let $R$ be a commutative ring, $B$ an $R$-algebra, and let $\ca{V}$ be the category $\Mod_R$ of $R$-modules. Let $\ca{A}$ be a small additive $R$-linear category, equipped with an $R$-linear functor $w \colon \ca{A} \rightarrow \Mod_B$ into the category of left $B$-modules such that $w(A)$ is finitely generated and projective for all $A \in \ca{A}$. Suppose the following conditions are satisfied:
\begin{enumerate}
\item[i)] 
 The functor $w_0$ is faithful and reflects isomorphisms;
\item[ii)] 
 The category $\el(w_0)$ of elements of $w_0$ is cofiltered; and
\item[iii)]
 If the cokernel of $w_0(f)$ is finitely generated and projective, then the cokernel of $f$ exists and is preserved by $w_0$.
\end{enumerate}
 Then the $w$-component of the unit of the Tannakian adjunction is an equivalence of categories. Moreover, the comonad $L(w) \colon \Mod_{B} \rightarrow \Mod_{B}$ preserves finite limits, that is, the corresponding $B$-$B$-bimodule is flat as a right $B$-module.
\end{thm}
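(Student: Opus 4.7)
The plan is to derive Theorem~\ref{COMMUTATIVE_RING_THM} by verifying the hypotheses of the general abelian recognition theorem, Theorem~\ref{ABELIAN_RECOGNITION_THM}, for $\ca{V}=\Mod_R$. First I would confirm that the cosmos $\Mod_R$ satisfies the standing assumptions on $\ca{V}$: it is abelian, the finitely generated free $R$-modules form a dense autonomous generator, and the unit object $R$ is finitely presentable. The $R$-algebra $B$, regarded as a one-object $\Mod_R$-category $\ca{B}$, has presheaf category $\Prs{B} \simeq \Mod_B$, and by Example~\ref{CAUCHY_COMPLETION_SMALL_PROJECTIVE_EXAMPLE} its Cauchy completion $\overline{\ca{B}}$ is the full subcategory of finitely generated projective left $B$-modules---which is exactly where $w$ lands by assumption. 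Tameness of $\ca{B}$ follows from Corollary~\ref{MODULE_CATS_TAME_COR}.

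Next I would handle the Cauchy-completeness requirement on $\ca{A}$. For the cosmos $\Mod_R$, Cauchy completion of a small $\ca{V}$-category amounts to adjoining finite direct sums (already present since $\ca{A}$ is additive) and splitting idempotents; since $w$ lands in the idempotent-complete category $\Mod_B$, it extends uniquely along the Karoubi envelope to a functor $w^{\natural}\colon\ca{A}^{\natural}\rightarrow\Mod_B$. Each of conditions i), ii) and iii) transfers to this extension by a routine argument: faithfulness and reflection of isomorphisms are preserved by restricting along retractions; every object of $\ca{A}^{\natural}$ is a retract of one in $\ca{A}$, from which cofilteredness of $\el(w^{\natural}_0)$ follows from cofilteredness of $\el(w_0)$; and a similar retract argument deals with cokernels of morphisms whose image under $w^{\natural}_0$ has a finitely generated projective cokernel. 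So we may assume from now on that $\ca{A}$ is already Cauchy complete.

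It then remains to match the three hypotheses. Condition i) is identical on both sides. For ii), the one-object category $\ca{B}$ has a single evaluation functor $\ev_B \colon \Prs{B} \rightarrow \Mod_R$, namely the forgetful functor; composing with $V \colon \Mod_R \rightarrow \Set$ yields the underlying-set functor $\Mod_B \rightarrow \Set$, so cofilteredness of $\el\bigl(V(\ev_B w)_0\bigr)$ is exactly cofilteredness of $\el(w_0)$ in the sense introduced for functors landing in categories of modules. For iii), the hypothesis of the general theorem demands that the cokernel of $w_0(f)$ lie in $\overline{\ca{B}}$, which we have just identified with the finitely generated projective left $B$-modules, matching iii) here. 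An application of Theorem~\ref{ABELIAN_RECOGNITION_THM} then delivers both the equivalence and the fact that the comonad $L(w)$ preserves finite limits; since $L(w) \colon \Mod_B \rightarrow \Mod_B$ is given by tensoring on the left with the $B$-$B$-bimodule that represents it, preservation of finite limits translates directly into flatness of this bimodule as a right $B$-module. I do not anticipate a substantial obstacle here; the only mildly nontrivial step is the Cauchy completion reduction, which is a standard closure argument under retracts.
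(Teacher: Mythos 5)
Your overall strategy---reduce to Theorem~\ref{ABELIAN_RECOGNITION_THM} by checking that $\Mod_R$ is an abelian cosmos with dense autonomous generator and finitely presentable unit, that $B$ is tame, that $\overline{B}$ is the category of finitely generated projective $B$-modules, and that conditions i)--iii) match---is the right one and agrees with the paper. The identification of $V(\ev_B w)_0$ with $w_0$ followed by the underlying-set functor, and the translation of finite-limit preservation of $L(w)$ into right flatness of the bimodule, are both fine.

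The gap is in your treatment of Cauchy completeness. You propose to pass to the Karoubi envelope $\ca{A}^{\natural}$, transfer the hypotheses, and then ``assume from now on that $\ca{A}$ is already Cauchy complete.'' This is not a harmless reduction: the conclusion of the theorem is that the unit $N \colon \ca{A} \rightarrow \Rep\bigl(L(w)\bigr)$ is an equivalence of categories, and $\Rep\bigl(L(w)\bigr)$ is idempotent complete, so the conclusion \emph{forces} idempotents in $\ca{A}$ to split. Your argument would only yield an equivalence $\ca{A}^{\natural} \simeq \Rep\bigl(L(w)\bigr)$, and deducing the statement for $\ca{A}$ from this requires knowing that $\ca{A} \hookrightarrow \ca{A}^{\natural}$ is essentially surjective, i.e.\ that idempotents in $\ca{A}$ already split---which is precisely the point you have deferred rather than proved. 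The repair is short and is what the paper does: for an idempotent $e$ on $A \in \ca{A}$, the cokernel of $w_0(\id_A - e)$ is the image of $w_0(e)$, a direct summand of the finitely generated projective module $w(A)$, hence itself finitely generated projective; condition iii) then produces a cokernel of $\id_A - e$ in $\ca{A}$, which is a splitting of $e$. Since $\ca{A}$ is additive it has finite direct sums and is tensored over the finitely generated free modules, so Proposition~\ref{CAUCHY_CHARACTERIZATION_FGP_PROP} shows $\ca{A}$ is already Cauchy complete, and Theorem~\ref{ABELIAN_RECOGNITION_THM} applies directly. (As a secondary remark, your claimed ``routine'' transfer of condition iii) to $\ca{A}^{\natural}$ is also not entirely immediate and would need an argument comparing the cokernel of $f \colon (A,e) \rightarrow (A',e')$ with that of $f + (\id_{A'} - e')$ in $\ca{A}$; once one notices that iii) already splits idempotents, this detour becomes unnecessary.)
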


 We conclude with a few remarks about the necessity of the conditions in Theorem~\ref{COMMUTATIVE_RING_THM}. It is clear that conditions i) and iii) are necessary conditions. Condition ii) implies that $L(w)$ is a flat coalgebroid. The question whether or not the converse is true is open: is the forgetful functor from Cauchy $L$-comodules to $B$-modules flat (in the sense that its category of elements is cofiltered) if $L$ is a flat coalgebroid acting on $B$? This is for example the case if the Cauchy $L$-comodules generate the category of all $L$-comodules. A lot of the examples of Hopf algebroids studied in algebraic topology are \emph{Adams Hopf algebroids}, and their categories of Cauchy comodules form generators (see \cite[Proposition~2.3.3]{HOVEY}). As far as the author knows it is an open question wether or not there are flat Hopf algebroids for which the Cauchy comodules don't form a generator.

\subsection{Proof of Theorem~\ref{ABELIAN_RECOGNITION_THM}} We prove Theorem~\ref{ABELIAN_RECOGNITION_THM} using a series of lemmas.

\begin{lemma}\label{COCONE_EPI_LEMMA}
Let $\ca{C}$ be a cocomplete unenriched category and let $\ca{D}$ be small unenriched category with finite coproducts. If the colimit $C$ of a diagram $D \colon \ca{D} \rightarrow \ca{C}$ is finitely presentable, then some morphism in the colimiting cocone is an epimorphism.
\end{lemma}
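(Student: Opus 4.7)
The plan is to combine finite presentability of $C$ with the existence of finite coproducts in $\ca{D}$, by first cutting the colimit down to a finite sub-colimit and then absorbing that sub-colimit into a single object of the form $D(d)$.

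First, I would write $C$ as a filtered colimit of its finite sub-colimits. For each finite subcategory $\ca{D}' \subseteq \ca{D}$, let $C_{\ca{D}'} = \colim (D|_{\ca{D}'})$; the poset of finite subcategories is filtered since a union of two finite subcategories is finite, and by the standard fact that every colimit is the filtered colimit of its finite sub-colimits (see \cite[Proposition~2.13.7]{BORCEUX}, already invoked in the proof of Proposition~\ref{COSMOI_WITH_DAG_ARE_LFP_PROP}), the tautological maps $C_{\ca{D}'} \rightarrow C$ exhibit $C$ as $\colim_{\ca{D}'} C_{\ca{D}'}$. Since $C$ is finitely presentable, $\Hom(C,-)$ preserves this filtered colimit, so $\id_C$ factors as $C \xrightarrow{\alpha} C_{\ca{D}'_0} \rightarrow C$ for some finite subcategory $\ca{D}'_0 \subseteq \ca{D}$; in particular, the canonical map $C_{\ca{D}'_0} \rightarrow C$ is a split epimorphism.

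Now I would use the finite coproducts in $\ca{D}$. Letting $d_1, \ldots, d_n$ be the objects of $\ca{D}'_0$ and setting $d = d_1 \sqcup \cdots \sqcup d_n \in \ca{D}$, the $\ca{D}$-coproduct injections $\iota_i \colon d_i \rightarrow d$ induce a morphism $\phi \colon \coprod_i D(d_i) \rightarrow D(d)$ in $\ca{C}$. The cocone identity $\kappa_d \circ D(\iota_i) = \kappa_{d_i}$ means that $\kappa_d \circ \phi$ restricts on each summand to $\kappa_{d_i}$, so it coincides with the canonical map $\coprod_i D(d_i) \rightarrow C$, which in turn factors as $\coprod_i D(d_i) \twoheadrightarrow C_{\ca{D}'_0} \rightarrow C$. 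The first arrow here is epic since it realises $C_{\ca{D}'_0}$ as a quotient of the coproduct, and the second is the split epi produced above; hence $\kappa_d \circ \phi$ is epic, which forces $\kappa_d$ itself to be epic.

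The only non-routine verification is the compatibility identifying the two expressions for the canonical morphism $\coprod_i D(d_i) \rightarrow C$; this follows at once from the cocone axioms and requires no hypothesis on $\ca{C}$ beyond the assumed cocompleteness. In particular, no image factorizations or assumptions about filtered colimits of monomorphisms enter the argument.
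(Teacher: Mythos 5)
Your proof is correct and follows essentially the same route as the paper's: present $C$ as a filtered colimit of finite sub-colimits, use finite presentability to split one structure map $C_{\ca{D}'_0}\rightarrow C$, and then use the coproduct in $\ca{D}$ to bundle the resulting jointly epimorphic finite family of cocone legs into the single leg $\kappa_d$. Your detour through the canonical epimorphism $\coprod_i D(d_i)\twoheadrightarrow C_{\ca{D}'_0}$ is just a more explicit phrasing of the paper's observation that the family $(\kappa_{d_i})$ is collectively epimorphic.
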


\begin{proof}
 Let $\bigl(C,(\kappa_i)_{i \in \ca{D}}\bigr)$ be the colimit of $D$. Since $\ca{C}$ is cocomplete, this colimit can be computed as filtered colimit of the colimits of the finitely generated subcategories of $\ca{D}$ (see \cite[Proposition~2.13.7]{BORCEUX}). The identity of $C$ factors through one of the structure maps of this filtered diagram because $C$ is finitely presentable. But these structure maps are induced by a finite family $(\kappa_{i_k})_{k=1,\ldots, n}$ of structure maps of the original diagram $D$. This finite family is in particular collectively epimorphic. Let $j=\sum_{k=0}^n i_k \in \ca{D}$. If $f\kappa_j=g\kappa_j$, then we also have $f\kappa_{i_k}=g\kappa_{i_k}$ for all $k$. Thus the structure map $\kappa_j$ is an epimorphism. 
\end{proof}

\begin{lemma}\label{QUOTIENT_OF_REPRESENTABLE_LEMMA}
 Let $\ca{V}$ be an $\Ab$-enriched cosmos with dense autonomous generator $\ca{X}$ and with finitely presentable unit object. Let $w \colon \ca{A} \rightarrow \Prs{B}$ be a $\ca{V}$-functor with image contained in $\overline{\ca{B}}$.

 Assume further that $\ca{A}$ is $\ca{X}$-tensored and that $\ca{A}$ has finite coproducts. If $G \in \Prs{A}$ is a presheaf such that $L_w(G)$ is finitely presentable, then there exists an object $C \in \ca{A}$ and a morphism $y \colon \ca{A}(-,C) \rightarrow G$ such that $L_w(y)$ is an epimorphism.
\end{lemma}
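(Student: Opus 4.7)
The plan is to present $G$ as a conical colimit of representables over a small indexing category with finite coproducts, apply the cocontinuous functor $L_w$, and then invoke Lemma~\ref{COCONE_EPI_LEMMA} to extract a cocone leg that is an epimorphism. The natural candidate for the indexing category is $\ca{A}/G$, the full subcategory of $\Prs{A}_0/G$ spanned by objects $y \colon \ca{A}(-,A) \to G$ with $A \in \ca{A}$. This category is small because $\ca{A}$ is small. Since $\ca{A}$ is small and $\ca{X}$-tensored, Theorem~\ref{CONICAL_COLIMIT_THM} presents $G$ as the conical colimit of the domain functor $D \colon \ca{A}/G \to \Prs{A}$.

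The first real step is to verify that $\ca{A}/G$ has finite coproducts as an ordinary category. The key input here is that $\ca{V}$ is $\Ab$-enriched, and hence so is $\ca{A}$; in any $\Ab$-enriched category finite coproducts are biproducts, so for any $A_1,A_2 \in \ca{A}$ the hom-object $\ca{A}(X, A_1 + A_2)$ coincides with $\ca{A}(X, A_1) \oplus \ca{A}(X, A_2)$ in $\ca{V}$. Since coproducts in $\Prs{A}$ are computed pointwise, the Yoneda embedding sends $A_1 + A_2$ to the coproduct $\ca{A}(-,A_1) \oplus \ca{A}(-,A_2)$ in $\Prs{A}$. Consequently, the coproduct of two objects $y_i \colon \ca{A}(-, A_i) \to G$ formed in $\Prs{A}_0/G$ is again representable, with underlying object $\ca{A}(-, A_1 + A_2)$, which establishes that $\ca{A}/G$ is closed under finite coproducts.

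The final step is to push the presentation through $L_w$ and apply the cocone lemma. Since $L_w$ is a left adjoint it preserves the conical colimit, so $L_w(G)$ is the conical colimit of $L_w \circ D \colon \ca{A}/G \to \Prs{B}$; up to the canonical isomorphism $L_w Y \cong w$, this diagram has value $w(A)$ at $y \colon \ca{A}(-,A) \to G$ and cocone leg $L_w(y)$ there. Because $L_w(G)$ is finitely presentable in $\Prs{B}_0$ by hypothesis and $\ca{A}/G$ is small with finite coproducts, Lemma~\ref{COCONE_EPI_LEMMA} produces some $y \colon \ca{A}(-,C) \to G$ for which $L_w(y)$ is an epimorphism, as required.

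The main obstacle to watch for is the distinction between enriched and unenriched finite coproducts: we need $\ca{A}(-, A_1 + A_2)$ to be the coproduct of the representables in the ordinary sense, not merely enriched. This is where the $\Ab$-enrichment is essential, since it forces finite coproducts in $\ca{A}$ to be biproducts and hence preserved by each representable $\ca{A}(X,-)$, independently of any further tensor or cotensor structure on $\ca{A}$.
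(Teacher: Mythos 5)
Your proof is correct and follows essentially the same route as the paper's: present $G$ as the conical colimit of $\ca{A}\slash G$ via Theorem~\ref{CONICAL_COLIMIT_THM}, observe that $\ca{A}\slash G$ has finite coproducts because the $\Ab$-enrichment makes finite coproducts absolute and hence preserved by the Yoneda embedding, push the colimit through the left adjoint $L_w$, and conclude with Lemma~\ref{COCONE_EPI_LEMMA}. Your added detail on why the coproduct of representables over $G$ is again representable is a correct elaboration of the step the paper treats in one sentence.
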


\begin{proof}
 From Theorem~\ref{CONICAL_COLIMIT_THM} we know that the morphisms $y \colon \ca{A}(-,C) \rightarrow G$ form a colimiting cocone on the domain functor $\ca{A}\slash G \rightarrow \Prs{A}_0$. Since $L_w$ is a left adjoint, it follows that the $L_w(y)$ form a colimiting cocone in $\Prs{B}_0$. We will apply Lemma~\ref{COCONE_EPI_LEMMA} to this particular colimit. Note that $\ca{A}\slash G$ has finite coproducts: they exist in $\ca{A}$, and the Yoneda embedding preserves them because our category is enriched in abelian groups, so that finite coproducts are absolute colimits. The conclusion follows from Lemma~\ref{CAUCHY_OBJECTS_FGP_LEMMMA}.
\end{proof}

\begin{prop}
 If $\ca{V}$ is an abelian cosmos, then $\Prs{A}_0$ is abelian for every small $\ca{V}$-category $\ca{A}$.
\end{prop}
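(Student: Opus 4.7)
The plan is to reduce the abelian category axioms for $\Prs{A}_0$ to their pointwise counterparts in $\ca{V}_0$. Since $\ca{V}$ is an abelian cosmos, it is in particular $\Ab$-enriched, so its hom-objects carry abelian group structures, which lift to the $\ca{V}$-natural transformations of $\Prs{A}$ and make $\Prs{A}_0$ an $\Ab$-enriched category. Because $\ca{V}$ is a cosmos, $\Prs{A}$ is both tensored and cotensored over $\ca{V}$, so conical $\ca{V}$-(co)limits in $\Prs{A}$ coincide with ordinary (co)limits in $\Prs{A}_0$; combining this with the standard fact that weighted (co)limits in $\Prs{A}$ are computed pointwise (see \cite[\S 3.3]{KELLY_BASIC}), we see that $\Prs{A}_0$ is complete and cocomplete and that each evaluation functor $\mathrm{ev}_A \colon \Prs{A}_0 \to \ca{V}_0$ preserves all small (co)limits.

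Applying the abelian structure of $\ca{V}_0$ componentwise then gives a zero object, finite biproducts, and all kernels and cokernels in $\Prs{A}_0$, so $\Prs{A}_0$ is at least an additive category with kernels and cokernels. What remains is to verify that every monomorphism is the kernel of its cokernel and every epimorphism is the cokernel of its kernel. The key observation is that a morphism $\alpha \colon F \to G$ in $\Prs{A}_0$ is a monomorphism (respectively epimorphism) if and only if each component $\alpha_A$ is one in $\ca{V}_0$: this follows from the fact that equalizers (respectively coequalizers) in $\Prs{A}_0$ are computed pointwise, combined with the fact that the family $\{\mathrm{ev}_A\}_{A \in \ca{A}}$ is jointly conservative.

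Granted this, the conclusion follows at once. If $\alpha$ is a monomorphism, then for each $A \in \ca{A}$ the component $\alpha_A$ is mono in $\ca{V}_0$ and therefore, by the abelian hypothesis on $\ca{V}_0$, coincides with the kernel of $\mathrm{coker}(\alpha_A)$; since taking the cokernel of $\alpha$ and then its kernel is computed pointwise, these identifications assemble into the statement that $\alpha$ is the kernel of its cokernel in $\Prs{A}_0$. The dual argument, obtained by swapping ``kernel'' with ``cokernel'' throughout, shows that every epimorphism is the cokernel of its kernel. No step is genuinely difficult; the only subtlety is making sure enriched (co)limits are identified correctly with ordinary (co)limits in $\Prs{A}_0$, which is what the tensor/cotensor remark at the start takes care of.
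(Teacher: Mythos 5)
Your proof is correct and follows essentially the same route as the paper's: establish $\Ab$-enrichment of $\Prs{A}_0$ from the abelian structure of $\ca{V}$, and then deduce the abelian axioms (zero object, biproducts, kernels and cokernels, and the mono/epi normality conditions) from the fact that limits and colimits in $\Prs{A}_0$ are computed pointwise. The paper states this very tersely; your additional care in identifying conical enriched (co)limits with ordinary ones via (co)tensors and in detecting monos and epis pointwise just makes explicit what the paper leaves implicit.
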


\begin{proof}
 Since $\ca{V}$ is abelian, the forgetful functor $\ca{V}_0(I,-)$ naturally factors through the category $\Ab$ of abelian groups. It follows that the underlying categories of $\ca{V}$-categories are $\Ab$-enriched, and that the underlying functors of $\ca{V}$-functors are additive. Moreover, limits and colimits in the underlying category $\Prs{A}_0$ of a presheaf category $\Prs{A}$ are computed pointwise, so it has a zero object, every monomorphism is the kernel of its cokernel and every epimorphism is the cokernel of its kernel. 
\end{proof}

 Using this fact we can prove the following lemma, which is a generalization of \cite[Proposition~5.10]{SZLACHANYI}.

\begin{lemma}\label{ADDITIVE_FULLY_FAITHFUL_LEMMA}
 Let $\ca{V}$ be an abelian cosmos with a dense autonomous generator $\ca{X}$ and with a finitely presentable unit object. Let $\ca{A}$ be an $\ca{X}$-tensored category with finite direct sums, and let $w \colon \ca{A}\rightarrow \Prs{B}$ be a $\ca{V}$-functor whose image is contained in $\overline{\ca{B}}$, and such that $L_w$ preserves finite limits (cf.\ \cite{KELLY_FINLIM}). Assume that $w$ satisfies the conditions:
\begin{enumerate}
\item[i)]
 The functor $w_0 \colon \ca{A}_0 \rightarrow \Prs{B}_0$ is faithful.
\item[ii)]
 A morphism $f \colon w(B) \rightarrow w(A)$ is in the image of the functor $w_0$ if (and only if) there is an object $C \in \ca{A}$ together with morphisms $h\colon C \rightarrow B$, $g \colon C \rightarrow A$ such that $w(h)$ is an epimorphism and $f w(h)=w(g)$.
\end{enumerate}
 Then the Yoneda embedding of $\ca{A}$ factors through the category $\ca{C}$ from Proposition~\ref{REPLETE_IMAGE_PROP}. Consequently, the $w$-component of the unit of the Tannakian adjunction is fully faithful.
\end{lemma}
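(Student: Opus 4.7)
The plan is to invoke Corollary~\ref{FULLY_FAITHFUL_COR}: it suffices to show that for every $A \in \ca{A}$, the representable presheaf $YA$ lies in the subcategory $\ca{C} \subseteq \Prs{A}$ of Proposition~\ref{REPLETE_IMAGE_PROP}, i.e.\ that the component $\pi_{YA} \colon YA \to EJYA$ of the unit of $J \dashv E$ is invertible in $\Prs{A}$. Because $\ca{X}$ is a dense autonomous generator of $\ca{V}$ and $\ca{A}$ is $\ca{X}$-tensored, for every $X \in \ca{X}$ and $C \in \ca{A}$ the tensor $X \odot YC$ in $\Prs{A}$ is again representable, isomorphic to $Y(X^\vee \odot C)$; combined with $\Set$-density of $\ca{X}$, this reduces invertibility of $\pi_{YA}$ to showing that the underlying map $(\pi_{YA})_D \colon \ca{A}_0(D,A) \to (EJYA)(D)_0$ is a bijection for every $D \in \ca{A}$. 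Injectivity follows from hypothesis~i) together with the fact that $L_w \pi_{YA}$ is invertible, which holds by Proposition~\ref{REPLETE_IMAGE_PROP} since $L_w$ preserves finite limits and hence equalizers of $L_w$-cosplit pairs.

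For surjectivity, fix $y \colon YD \to EJYA$ and form the pullback
\[
 \xymatrix{P \ar[r]^-{p_2} \ar[d]_-{p_1} & YA \ar[d]^-{\pi_{YA}} \\ YD \ar[r]_-{y} & EJYA}
\]
in $\Prs{A}$. Since $L_w$ preserves finite limits and $L_w \pi_{YA}$ is invertible, $L_w$ carries this to a pullback in which $L_w p_1$ is invertible; hence $L_w P \cong w(D) \in \overline{\ca{B}}$ is finitely presentable in $\Prs{B}_0$ by Lemma~\ref{CAUCHY_OBJECTS_FGP_LEMMMA}. Lemma~\ref{QUOTIENT_OF_REPRESENTABLE_LEMMA} then produces $D' \in \ca{A}$ and a morphism $z \colon YD' \to P$ with $L_w z$ an epimorphism. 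Let $h \colon D' \to D$ and $g \colon D' \to A$ be the morphisms in $\ca{A}$ produced by Yoneda from $p_1 z$ and $p_2 z$; then $w(h) = L_w p_1 \circ L_w z$ is an epimorphism (via the canonical isomorphism $\alpha_w \colon w \cong L_w Y$). Defining $f \colon w(D) \to w(A)$ to be $(L_w \pi_{YA})^{-1} \circ L_w y$, commutativity of the pullback yields $f \circ w(h) = w(g)$, so hypothesis~ii) delivers $k \colon D \to A$ in $\ca{A}$ with $w(k) = f$.

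It remains to check that $\pi_{YA} \circ Yk$ and $y$ are equal as morphisms $YD \to EJYA$, not merely that they have the same image under $L_w$. I would pass across the adjunction $J \dashv E$ to the corresponding morphisms $\tilde{y}, \tilde{y}_1 \colon JYD \to JYA$ in $\Prs{B}_{L_w \widetilde{w}}$, with $\tilde{y}_1 = JYk$, and apply the faithful forgetful functor $V$. The triangle identity $\varepsilon_{JYA} \circ J\pi_{YA} = \id$ together with $L_w = VJ$ shows that $V\varepsilon_{JYA}$ is a retraction of $L_w \pi_{YA}$ and hence equals $(L_w \pi_{YA})^{-1}$; consequently $V\tilde{y} = (L_w \pi_{YA})^{-1} \circ L_w y = f = w(k) = V\tilde{y}_1$, so $\tilde{y} = \tilde{y}_1$ by faithfulness of $V$, whence $y = \pi_{YA} \circ Yk$. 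The main obstacle I anticipate is keeping the two adjunctions $L_w \dashv \widetilde{w}$ and $J \dashv E$ straight while threading $\alpha_w$ and the identification $VJ = L_w$ through the argument; once this bookkeeping is settled, Corollary~\ref{FULLY_FAITHFUL_COR} yields the claim that the $w$-component of the unit of the Tannakian adjunction is fully faithful.
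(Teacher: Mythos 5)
Your argument is correct, and it reaches the conclusion by a somewhat different assembly than the paper's, although the engine is the same: both proofs form the pullback of the unit component $\pi_{YA}$ of $J\dashv E$ against a map out of a representable, note that $L_w$ carries it to a pullback of an isomorphism so that the corner is finitely presentable by Lemma~\ref{CAUCHY_OBJECTS_FGP_LEMMMA}, extract an epimorphism from a representable via Lemma~\ref{QUOTIENT_OF_REPRESENTABLE_LEMMA}, and feed the result into hypothesis~ii). The paper then only shows $\pi_{YA}$ is a \emph{split monomorphism}: it runs this construction for every $x\colon YB\to EJYA$, checks via faithfulness of $w_0$ that the resulting $Yk_x$ assemble into a cocone on $\ca{A}\slash EJYA$, uses Theorem~\ref{CONICAL_COLIMIT_THM} to glue them into a retraction $k$ of $\pi_{YA}$, and concludes by closure of $\ca{C}$ under retracts. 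You instead prove $\pi_{YA}$ is invertible outright, reducing by $\ca{X}$-tensoredness and $\Set$-density to bijectivity on underlying elements at each representable, and closing the surjectivity check ($y=\pi_{YA}\cdot Yk$) by passing through the adjunction $J\dashv E$ and the faithfulness of the forgetful functor $V$, with the triangle identity giving $V\varepsilon_{JYA}=(L_w\pi_{YA})^{-1}$. Your route spares Theorem~\ref{CONICAL_COLIMIT_THM} and the retract step but pays for it with the adjunction bookkeeping; the paper's route never has to verify that each $x$ factors as $\pi_{YA}\cdot Yk_x$, only the single identity $k\cdot\pi_{YA}=\id$. One small slip: the representable presheaf $X\odot YC$ is $Y(X\odot C)$, not $Y(X^{\vee}\odot C)$ (the dual appears only when converting the tensor into a cotensor); this does not affect the reduction since $\ca{X}$ is closed under duals.
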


\begin{proof}
 From Corollary~\ref{FULLY_FAITHFUL_COR} we know that we only have to check that the Yoneda embedding factors through the full subcategory $\ca{C}$ defined in Proposition~\ref{REPLETE_IMAGE_PROP}. 

 To do this it suffices to show that the $\ca{A}(-,A)$-component of the unit of the adjunction $J \dashv E \colon \Prs{A} \rightarrow \Prs{B}_{L_w \widetilde{w}}$ is a split monomorphism. For the sake of brevity we denote this morphism by $f \colon \ca{A}(-,A) \rightarrow F$. From Proposition~\ref{REPLETE_IMAGE_PROP} we know that $L_w(f)$ is an isomorphism. 

 Let $x \colon \ca{A}(-,B) \rightarrow F$ be an arbitrary morphism in $\Prs{A}$ whose domain is representable, that is, an object of $\ca{A}\slash F$. Consider the pullback diagram
\[
\xymatrix{ G \ar[r]^-{x^\prime} \ar[d]_{f^\prime} & \ca{A}(-,A) \ar[d]^f \\
\ca{A}(-,B) \ar[r]_-{x} & F}
\]
 and note that $L_w(f^\prime)$ is an isomorphism because $L_w$ preserves pullbacks. Since $L_w \bigl(\ca{A}(-,B)\bigr) \cong w(B)$, we know that $L_w G$ lies in the Cauchy completion of $\ca{B}$. It is in particular finitely presentable. From Lemma~\ref{QUOTIENT_OF_REPRESENTABLE_LEMMA} it follows that there exist $C \in \ca{A}$ and $y \colon \ca{A}(-,C) \rightarrow G$ such that $L_w(y)$ is an epimorphism. Since the Yoneda embedding is fully faithful, there exist morphisms $h\colon C\rightarrow B$ and $g \colon C \rightarrow A$ such that $\ca{A}(-,h)=f^\prime y$ and $\ca{A}(-,g)=x^\prime y$. The above observation that $L_w(f^\prime)$ is an isomorphism implies that $L_w \bigl( \ca{A}(-,h)\bigr)=L_w Y(h)$ is an epimorphism. Commutativity of the above diagram shows that the equality
\[
L_w(f)^{-1} L_w(x) L_w Y(h)=L_w Y(g)
\]
 holds. Using the fact that $L_w Y \cong w$ and condition ii) we find that there is a morphism $k_x \colon B \rightarrow A$ such that $L_w Y(k_x)=L_w(f)^{-1} L_w (x)$. Since $w_0$ is faithful, so is $(L_w Y)_0$. Hence the morphisms $\ca{A}(-,k_x) \colon \ca{A}(-,B) \rightarrow \ca{A}(-,A)$ form a cocone on the domain functor $\ca{A} \slash F \rightarrow \Prs{A}_0$. From Theorem~\ref{CONICAL_COLIMIT_THM} it follows that there exists a unique morphism $k \colon F \rightarrow \ca{A}(-,A)$ such that $k \circ x=\ca{A}(-,k_x)$ for every $x \colon \ca{A}(-,B) \rightarrow F$. In particular, for $x=f\colon \ca{A}(-,A) \rightarrow F$ we get $k f = \ca{A}(-,k_f)$, and thus
\[
L_w (k) L_w(f)=L_w Y(k_f)=L_w(f)^{-1} L_w (f)=\id.
\]
 The morphism $kf$ is in the image of the Yoneda embedding, so from the fact that $(L_w Y)_0 \cong w_0$ is faithful we conclude that $kf=\id$. This shows that $\ca{A}(-,A)$ is a retract of $F$, hence that $\ca{A}(-,A)$ lies in $\ca{C}$.
\end{proof}

\begin{proof}[Proof of Theorem~\ref{ABELIAN_RECOGNITION_THM}]
 We check that the conditions from Corollary~\ref{FULLY_FAITHFUL_COR} are satisfied.  Since the objects in $\ca{X}$ are dualizable, tensors with $X$ are absolute colimits. Thus $\ca{A}$ is $\ca{X}$-tensored (cf.\ \cite{STREET_ABSOLUTE}).
 
 As in the proof of Theorem~\ref{DENSE_RECOGNITION_THM}, we conclude from condition ii) that $L_w$ preserves finite limits in the sense of \cite{KELLY_FINLIM}, so it is in particular exact. To show that the Yoneda embedding of $\ca{A}$ factors through $\ca{C}$ we have to check that condition ii) of Lemma~\ref{ADDITIVE_FULLY_FAITHFUL_LEMMA} holds. We prove this with an argument inspired by \cite[Lemma 6.2]{SZLACHANYI}. 

 Let $f\colon w(B) \rightarrow w(A)$ be a morphism such that there exists an object $C \in \ca{A}$ together with morphisms $h\colon C \rightarrow B$ and $g \colon C \rightarrow A$ such that $w(h)$ is an epimorphism and $f w(h)=w(g)$. We have to show that $f$ is in the image of $w$.

 Let $k\colon K \rightarrow YC$ be the kernel of $Yh$ in $\Prs{A}$. This kernel is preserved by $L_w$, and since $\ca{B}$ is tame we know that $L_w K$ is finitely presentable. By Lemma~\ref{QUOTIENT_OF_REPRESENTABLE_LEMMA} it follows that there is an object $C^\prime \in \ca{A}$ together with a morphism $l \colon YC^{\prime} \rightarrow K$ such that $L_w(l)$ is an epimorphism. Let $r\colon C^{\prime} \rightarrow C$ be the unique morphism in $\ca{A}$ with $Y(r)=kl$. The fact that $w\cong L_w Y$ implies that the sequence
\[
 \xymatrix{wC^{\prime} \ar[r]^-{w(r)} & wC \ar[r]^-{w(h)} & wB \ar[r] &0}
\]
 is exact. By iii) it follows that the cokernel $K^{\prime}$ of $r$ exists in $\ca{A}$, and is preserved by $w$. Thus the comparison morphism $K^{\prime} \rightarrow B$ gets sent to an isomorphism by $w$, and from i) it follows that $h$ is a cokernel of $r$ in $\ca{A}$. From the definition of $r$ it follows easily that $w(g) \cdot w(r)=0$, thus that $gr=0$. Hence there exists a morphism $g^\prime \colon B \rightarrow A$ such that $g^\prime h=g$. We have $w(g^\prime) \cdot w(h)=w(g)=f \cdot w(h)$, and $w(h)$ is an epimorphism by assumption. Thus $f=w(g^\prime)$. This shows that the conditions of Lemma~\ref{ADDITIVE_FULLY_FAITHFUL_LEMMA} are indeed satisfied, hence that every representable presheaf on $\ca{A}$ lies in $\ca{C}$. 

 It remains to show that for every presheaf $X \in\ca{C}$ with $L_w(X) \in \overline{\ca{B}}$, there exists an object $A \in \ca{A}$ and an isomorphism $X \cong \ca{A}(-,A)$. Fix such a presheaf $X$. By Lemma~\ref{QUOTIENT_OF_REPRESENTABLE_LEMMA} there exists an object $B\in \ca{A}$ and a morphism $x \colon \ca{A}(-,B) \rightarrow X$ such that $L_w(x)$ is an epimorphism. Let $y \colon G \rightarrow \ca{A}(-,B)$ be its kernel in $\Prs{A}$. Since $L_w$ preserves finite limits, $L_w(G)$ is the kernel of the epimorphism $L_w(x)$. But $\ca{B}$ is tame, hence $L_w G$ lies in the Cauchy completion of $\ca{B}$. It is therefore finitely presentable by Lemma~\ref{CAUCHY_OBJECTS_FGP_LEMMMA}. From Lemma~\ref{QUOTIENT_OF_REPRESENTABLE_LEMMA} we conclude that there is an $A \in \ca{A}$ and $z \colon \ca{A}(-,A) \rightarrow G$ such that $L_w(z)$ is an epimorphism. The composite $yz$ is of the form $\ca{A}(-,f)$ for a unique $f\colon A \rightarrow B$. Therefore the sequence
\[
\xymatrix{L_w Y(A) \ar[r]^-{L_w Y(f)} & L_w Y(B) \ar[r]^-{L_w(x)} & L_w(X) \ar[r]  &0 \rlap{.}}
\]
 in $\Prs{B}$ is exact.

 We claim that $x\colon \ca{A}(-,B) \rightarrow X$ is the cokernel of $Y(f)$ in $\ca{C}$. Indeed, we have $L_w=VJ$ by definition of the comparison functor (see Definition~\ref{COMPARISON_FUNCTOR_DFN}), $V$ creates colimits and the restriction of $J$ to $\ca{C}$ is an equivalence (see Proposition~\ref{REPLETE_IMAGE_PROP}).
 
 Moreover, by iii) the cokernel $C$ of $f$ in $\ca{A}_0$ exists and is preserved by $w$. We get a comparison morphism $X \rightarrow \ca{A}(-,C)$, and both $L_w (X)$ and $L_w Y(C) \cong w(C)$ are cokernels of $w(f)$, so this comparison morphism is sent to an isomorphism by $L_w$. The fact that $L_w=VJ$ implies that the restriction of $L_w$ to $\ca{C}$ reflects isomorphisms. Thus the comparison morphism in question is itself an isomorphism, showing that $X \cong \ca{A}(-,C)$. This concludes the proof of essential surjectivity.
\end{proof}

\begin{prop}\label{CAUCHY_CHARACTERIZATION_FGP_PROP}
 Let $\ca{V}$ be an $\Ab$-enriched cosmos with dense autonomous generator $\ca{X}$ and finitely presentable projective unit object. Then a small $\ca{V}$-category $\ca{B}$ is Cauchy complete if and only if $\ca{B}$ is $\ca{X}$-tensored, has finite direct sums, and all idempotents in $\ca{B}$ split.
\end{prop}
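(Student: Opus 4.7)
The plan is to treat the two implications separately. For the forward direction, I would show that each of the three types of structure, when computed in $\Prs{B}$, lands in the Cauchy completion $\overline{\ca{B}}$, and hence lies in $\ca{B}$ by Cauchy completeness. Concretely: for $X \in \ca{X}$ and $B \in \ca{B}$, the presheaf $X \otimes \ca{B}(-,B)$ satisfies $\Prs{B}\bigl(X \otimes \ca{B}(-,B),-\bigr) \cong [X, \ev_B(-)]$, and since $X$ has a dual the functor $[X,-] \cong X^{\vee}\otimes -$ is cocontinuous, so $X \otimes \ca{B}(-,B) \in \overline{\ca{B}}$. Finite direct sums of representables are Cauchy because finite products/coproducts of cocontinuous functors are again cocontinuous in the $\Ab$-enriched setting. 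Finally, the splitting of an idempotent on $\ca{B}(-,B)$ in $\Prs{B}$ is a retract of a Cauchy object, hence Cauchy, and transporting back via Yoneda gives a splitting of the original idempotent in $\ca{B}$.

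For the backward direction, I would fix $F \in \overline{\ca{B}}$ and argue that $F$ is representable. Since $\ca{B}$ is $\ca{X}$-tensored, Theorem~\ref{CONICAL_COLIMIT_THM} identifies $F$ with the conical colimit of the domain functor $D \colon \ca{B}\slash F \rightarrow \Prs{B}_0$. Because $\ca{B}$ has finite direct sums and the Yoneda embedding of any $\Ab$-enriched category preserves them, the slice category $\ca{B}\slash F$ has finite coproducts. By Lemma~\ref{CAUCHY_OBJECTS_FGP_LEMMMA}, the assumption that the unit $I$ is finitely presentable and projective forces $F$ to be both finitely presentable and projective in $\Prs{B}_0$. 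Applying Lemma~\ref{COCONE_EPI_LEMMA} to the colimiting cocone on $D$ then produces some structure map $\ca{B}(-,B) \rightarrow F$ which is an epimorphism in $\Prs{B}_0$.

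Projectivity of $F$ allows this epimorphism to split, so $F$ becomes a retract of the representable $\ca{B}(-,B)$. Under Yoneda this retract diagram transports to a splitting-data for an idempotent $e \colon B \rightarrow B$ in $\ca{B}$, and since idempotents in $\ca{B}$ split by hypothesis, the chosen splitting $B'$ satisfies $\ca{B}(-,B') \cong F$. Thus every Cauchy object of $\ca{B}$ is representable, i.e., $\ca{B}$ is Cauchy complete.

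The main obstacle I anticipate is the backward direction, specifically the verification that the hypotheses assemble correctly so that Lemmas~\ref{CAUCHY_OBJECTS_FGP_LEMMMA} and~\ref{COCONE_EPI_LEMMA} apply. In particular, one must check carefully that projectivity of the unit $I$ in $\ca{V}$ really does transfer to projectivity of Cauchy objects in the unenriched presheaf category $\Prs{B}_0$ (so that the epimorphism from a representable genuinely splits), and that the Yoneda embedding into $\Prs{B}$ preserves the finite direct sums of $\ca{B}$ so that the slice $\ca{B}\slash F$ inherits the finite coproducts needed to invoke Lemma~\ref{COCONE_EPI_LEMMA}. Once these points are secured, both implications follow cleanly.
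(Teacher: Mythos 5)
Your proposal is correct and follows essentially the same route as the paper: the backward direction is identical (conical colimit via Theorem~\ref{CONICAL_COLIMIT_THM}, finite coproducts in $\ca{B}\slash F$, an epimorphism from a representable via Lemma~\ref{COCONE_EPI_LEMMA}, splitting by projectivity from Lemma~\ref{CAUCHY_OBJECTS_FGP_LEMMMA}, then idempotent splitting), and your forward direction just spells out explicitly the fact the paper cites, namely that tensors with dualizable objects, finite direct sums in the $\Ab$-enriched setting, and idempotent splittings are absolute colimits and hence present in any Cauchy complete category. The two verification points you flag are exactly the ones the paper handles with Lemma~\ref{CAUCHY_OBJECTS_FGP_LEMMMA} and the absoluteness of finite direct sums, so no gap remains.
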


\begin{proof}
 First note that any Cauchy complete category is $\ca{X}$-tensored, because the objects in $\ca{X}$ are dualizable. Since $\ca{V}$ is $\Ab$-enriched, finite coproducts (direct sums) are preserved by any $\ca{V}$-functor. Thus they are absolute colimits, and any Cauchy complete category has absolute colimits (see \cite{STREET_ABSOLUTE}). Idempotents in a Cauchy complete category are always split (cf.\ \cite[Proposition~5.25]{KELLY_BASIC}).
 
 Conversely, assume that $\ca{B}$ is $\ca{X}$-tensored, has finite direct sums, and that idempotents in $\ca{B}$ split. Let $X \in \Prs{B}$ be an object of the Cauchy completion. We have to show that $X$ is representable. Since idempotents in $\ca{B}$ split, it suffices to show that $X$ is a retract of a representable functor. From Theorem~\ref{CONICAL_COLIMIT_THM} we know that $X$ is the conical colimit of the diagram $\ca{B} \slash X$ of representable functors over $X$. This diagram has finite coproducts because $\ca{B}$ has finite direct sums. By Lemma~\ref{COCONE_EPI_LEMMA} it follows that there is an epimorphism $p \colon \ca{B}(-,B) \rightarrow X$. But $X$ is projective by Lemma~\ref{CAUCHY_OBJECTS_FGP_LEMMMA}, so this epimorphism is split. Thus $X$ is a retract of $\ca{B}(-,B)$.
\end{proof}

\begin{proof}[Proof of Theorem~\ref{COMMUTATIVE_RING_THM}]
 We let $\ca{X}$ be the full monoidal subcategory of $\Mod_R$ consisting of finitely generated free modules, which is clearly a dense autonomous generator. Since $\ca{A}$ is additive, it is $\ca{X}$-tensored: the tensor product of $A\in \ca{A}$ with $R^n$ is simply the $n$-fold direct sum $\oplus_{i=1}^n A$. Condition~iii) implies that idempotents in $\ca{A}$ split. Thus $\ca{A}$ is Cauchy complete by Proposition~\ref{CAUCHY_CHARACTERIZATION_FGP_PROP}. 
 
 All $R$-linear categories are tame by Remark~\ref{MODULE_CATS_TAME_COR}. The Cauchy completion of $B^{\op}$, considered as a one-object $\ca{V}$-category, is the full subcategory of $\Mod_B$ consisting of finitely generated projective modules. Thus the image of $w(A)$ is an object of the Cauchy completion of $B^{\op}$, and it makes sense to speak of the $w$-component of the unit of the Tannakian adjunction. The remaining conditions are precisely the conditions in Theorem~\ref{ABELIAN_RECOGNITION_THM}.
\end{proof}

\subsection{Proof of Theorem~\ref{DGA_THM}}\label{PROOF_OF_DGA_THM_SECTION}
 Dealing with categories enriched in differential graded $R$-modules is more complicated, because the unit object is not projective. In particular, not every epimorphism between objects in the Cauchy completion of a small differential graded $R$-linear category is split. Therefore we need a different method to show that a kernel of such an epimorphism again lies in the Cauchy completion. Our proof relies on the existence of additional categorical structure on the category, which is always present in one of the main cases of interest. 
 
\begin{prop}\label{AUTONOMOUS_CAUCHY_PROP}
 Let $\ca{B}$ be an autonomous monoidal $\ca{V}$-category. Then $X \in \Prs{B}$ lies in the Cauchy completion of $\ca{B}$ if and only if it has a left and a right dual under the Day convolution tensor product on $\Prs{B}$. Moreover, $X$ has a left dual if and only if it has a right dual.
\end{prop}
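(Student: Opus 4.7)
The strategy is to combine the biclosed Day-convolution monoidal structure on $\Prs{B}$ with the strong monoidality of the Yoneda embedding $Y \colon \ca{B} \to \Prs{B}$, and Street's characterization \cite{STREET_ABSOLUTE} of Cauchy completions as closures under absolute colimits. The internal homs for Day convolution are given by the formulas
\[
 [G,H]_R(A) \cong \Prs{B}\bigl(G, H(A \otimes -)\bigr) \quad \text{and} \quad [G,H]_L(A) \cong \Prs{B}\bigl(G, H(- \otimes A)\bigr),
\]
characterized by $\Prs{B}(F \otimes G, H) \cong \Prs{B}(F, [G,H]_R)$ and $\Prs{B}(G \otimes F, H) \cong \Prs{B}(F, [G,H]_L)$ respectively. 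The unit of Day convolution is the representable $I_{\Prs{B}} = Y(I_{\ca{B}})$, and by Yoneda $\Prs{B}(I_{\Prs{B}}, -) \cong \ev_{I_{\ca{B}}}$, which is cocontinuous because colimits in $\Prs{B}$ are pointwise. Setting $A = I_{\ca{B}}$ in the formulas above yields the key identity $\Prs{B}(X, -) \cong \ev_{I_{\ca{B}}} \circ [X,-]_R \cong \ev_{I_{\ca{B}}} \circ [X,-]_L$.

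First I would dispatch the easy direction. If $X$ has a right dual $X^\vee$ in $\Prs{B}$, then $- \otimes X \dashv - \otimes X^\vee$ and uniqueness of right adjoints identifies $[X,-]_R \cong - \otimes X^\vee$; the latter is a left adjoint and hence cocontinuous, so $\Prs{B}(X,-) \cong \ev_{I_{\ca{B}}} \circ [X,-]_R$ is cocontinuous, placing $X$ in $\overline{\ca{B}}$. A symmetric argument via $[X,-]_L$ deals with left duals.

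For the converse I would invoke autonomy of $\ca{B}$. Each $B \in \ca{B}$ has both a left dual ${}^\vee B$ and a right dual $B^\vee$; strong monoidal functors preserve duals, so since $Y$ is strong monoidal, each representable $Y(B)$ has both duals $Y({}^\vee B)$ and $Y(B^\vee)$ in $\Prs{B}$. By \cite{STREET_ABSOLUTE}, $\overline{\ca{B}}$ is the closure of representables under absolute colimits. It therefore suffices to establish that objects in $\Prs{B}$ admitting both a left and a right dual are closed under absolute colimits. This is the technical heart of the argument: for an absolute colimit $X = \colim_i X_i$ with each $X_i$ right dualizable, one shows that the representing object $\lim_i X_i^\vee$ (which exists thanks to absoluteness of the colimit) carries a natural right-dual structure for $X$; tensors, internal homs, and the triangle identities all commute appropriately with the colimit because of absoluteness. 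The argument for left duals is symmetric.

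The ``moreover'' assertion is then immediate: by the previous two paragraphs, both ``$X$ has a left dual'' and ``$X$ has a right dual'' are equivalent to $X \in \overline{\ca{B}}$, and thus to each other. The principal technical obstacle lies in the closure of dualizable objects under absolute colimits in the biclosed (non-symmetric) setting, where the unit/counit data for left and right duals must be tracked separately and compatibility with the colimit structure verified with care; if needed, one can make this concrete by unfolding Street's description of absolute colimits into operations such as splittings of idempotents and, in the relevant enriched contexts, finite biproducts, and checking closure case by case.
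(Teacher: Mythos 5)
Your proof is correct and its overall architecture coincides with the paper's: the easy direction is the cocontinuity of $\Prs{B}(X,-)\cong \ev_U\circ[X,-]_R\cong \ev_U(-\otimes X^\vee)$ (with $U$ the unit of $\ca{B}$), and the converse combines autonomy of $\ca{B}$, strong monoidality of the Yoneda embedding, and Street's description of $\overline{\ca{B}}$ as the closure of the representables under absolute colimits. The one place you genuinely diverge is the step you yourself flag as the technical heart, namely closure of the (left or right) dualizable presheaves under absolute colimits. The paper never constructs the dual of a colimit: it observes that $X$ has a right dual if and only if the canonical $\ca{V}$-natural map $[X,I]_r\otimes A\to[X,A]_r$ is invertible, and since both sides are functorial in $X$ and absolute colimits are preserved by \emph{every} functor, the class of objects at which a natural transformation between functors is invertible is automatically closed under absolute colimits --- no unit/counit bookkeeping is needed. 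Your alternative, exhibiting the corresponding weighted limit of the duals $X_i^\vee$ as a dual of $\colim_i X_i$ and verifying the triangle identities, does work in the biclosed setting, but it is more laborious and is precisely the part of your sketch that remains an assertion; moreover, your proposed fallback of unfolding absolute colimits into idempotent splittings and finite biproducts is not available over a general cosmos, where the absolute weights are exactly the Cauchy presheaves and need not be generated by those two operations. I would adopt the invertibility-of-a-natural-transformation criterion, which turns the technical heart into a formality.
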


\begin{proof}
 This follows from a very general result about autonomous pseudomonoids in a monoidal bicategory (see \cite[Proposition~4.6]{LOPEZ_FRANCO}). We provide a more elementary proof that only works for the monoidal bicategory $\Mod(\ca{V})$.
 
 We first prove that objects in the Cauchy completion have both duals. Since $\ca{B}$ is autonomous and the Yoneda embedding is strong monoidal for the Day convolution tensor product, it follows that all representable presheaves have both duals. The Day convolution monoidal structure is closed, that is, there exist left and right internal hom objects characterized by
 \[
  \Prs{B}(A,[B,C]_\ell) \cong \Prs{B}(A\otimes B, C) \quad \text{and} \quad 
  \Prs{B}(A,[B,C]_r) \cong  \Prs{B}(B\otimes A, C) \rlap{\smash{.}}
 \]
Therefore, an object $B$ has a right dual $B^\vee$ if and only if the $\ca{V}$-natural transformation
 \[
  [B,I]_r \otimes A \rightarrow [B,A]_r
 \]
 which corresponds to
 \[
  B\otimes [B,I]_r \otimes A \rightarrow A
 \]
 under the adjunction $B\otimes- \dashv [B,-]_r$ is an isomorphism. Both sides depend contravariantly on $B$, so it follows that the subcategory of right dualizable objects is closed under absolute colimits. Similarly we find that the subcategory of left dualizable objects is closed under absolute colimits. Since the Cauchy completion is the closure of the representable functors under absolute colimits (see \cite{STREET_ABSOLUTE}), all objects in the Cauchy completion have a left and a right dual.

 Conversely, suppose that $X$ has a right dual. We have to show that the functor $\Prs{B}(X,-)$ is cocontinuous. Note that the unit of the Day convolution tensor product is represented by the unit $U$ of $\ca{B}$. Using the definition of right duals and the Yoneda lemma we find that
\[
 \Prs{B}(X,Y) \cong \Prs{B}\bigl(X\otimes \ca{B}(-,U),Y\bigr) \cong \Prs{B}\bigl(\ca{B}(U,-), X^\vee \otimes Y\bigr) \cong \ev_U(X^\vee \otimes Y)\smash{\rlap{.}}
\]
 Evaluation in $U$ is certainly cocontinuous, and tensoring with $X^\vee$ is a left adjoint because $\Prs{B}$ is closed. Thus $X \in \overline{\ca{B}}$, and therefore also has a left dual because all objects of the Cauchy completion do. The case of an object with a left dual is proved similarly, using the isomorphism $\ca{B}(-,U)\otimes X \cong X$.
\end{proof}

 In the proof of Proposition~\ref{AUTONOMOUS_CAUCHY_PROP} we have seen that an object in a monoidal closed category is dualizable if and only if a map between two objects built out of tensor products and internal hom-objects is invertible. Thus, if we have a functor between two monoidal closed categories which preserves tensor products and internal hom-objects (strictly or up to coherent isomorphism), and reflects isomorphisms, we find that an object in the domain is dualizable if and only if its image is.
 
 \begin{example}\label{DUALIZABLE_DGM_EXAMPLE}
 Fix a commutative ring Let $B$ be a commutative differential graded $R$-algebra, and let $U_\ast B$ be its underlying graded algebra. The forgetful functor from differential graded $B$-modules to graded $U_\ast B$-modules reflects isomorphisms and preserves tensor products and internal hom-objects strictly. Thus an object $X$ in the category of differential graded $B$-modules is dualizable if and only if its underlying graded module $U_\ast X$ is dualizable.
 \end{example}

\begin{cor}\label{DGA_TAME_COR}
 All commutative differential graded $R$-algebras $B$ are tame.
\end{cor}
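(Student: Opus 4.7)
The plan is to reduce the claim to the corresponding tameness of graded $U_\ast B$-modules, which follows from Corollary~\ref{MODULE_CATS_TAME_COR}, by using the forgetful functor $U_\ast$ from differential graded $B$-modules to graded $U_\ast B$-modules as the bridge between the two worlds. The key bookkeeping fact is the characterization of the Cauchy completion $\overline{B}$ in both settings as the subcategory of dualizable modules.

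First I would identify $\overline{B}$ with the full subcategory of dualizable differential graded $B$-modules. Since $B$ is commutative, the one-object $\ca{V}$-category it determines is a commutative monoid in $\Vcat$, so it is symmetric monoidal, and it is autonomous because the only object is the unit (automatically self-dual via the unit isomorphism). Proposition~\ref{AUTONOMOUS_CAUCHY_PROP} therefore identifies $\overline{B} \subseteq \Prs{B}$ with the dualizable objects for the Day convolution tensor product, which on the category of differential graded $B$-modules is $\otimes_B$. Combining this with Example~\ref{DUALIZABLE_DGM_EXAMPLE} yields the characterization I will rely on: $X \in \overline{B}$ if and only if $U_\ast X \in \overline{U_\ast B}$.

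Next, given an epimorphism $p \colon X \to Y$ in $\Prs{B}$ with $X, Y \in \overline{B}$, I would observe that finite limits and colimits of differential graded $B$-modules are computed on the underlying graded level, so $U_\ast$ is exact. In particular $U_\ast p$ is an epimorphism in the category of graded $U_\ast B$-modules whose source and target lie in $\overline{U_\ast B}$ by the previous paragraph. Corollary~\ref{MODULE_CATS_TAME_COR} (tameness of the graded cosmos) then ensures that $\ker(U_\ast p) \in \overline{U_\ast B}$. Since $U_\ast(\ker p) \cong \ker(U_\ast p)$, the characterization in the first step gives $\ker p \in \overline{B}$, which is exactly the tameness of $B$.

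The main conceptual point, and the only place where anything nontrivial is invoked, is the passage through Proposition~\ref{AUTONOMOUS_CAUCHY_PROP}: without the autonomous pseudomonoid structure one cannot detect membership in $\overline{B}$ from the underlying graded picture, and Proposition~\ref{TAME_IF_FGP_UNIT_PROP} is unavailable because the unit of the differential graded cosmos is not projective. Everything else is a routine exactness argument for $U_\ast$.
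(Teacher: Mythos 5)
Your argument is correct and follows essentially the same route as the paper's proof: identify Cauchy objects with dualizable modules via Proposition~\ref{AUTONOMOUS_CAUCHY_PROP} and Example~\ref{DUALIZABLE_DGM_EXAMPLE}, then transport the kernel computation along the exact functor $U_\ast$ and invoke tameness of the graded cosmos (your citation of Corollary~\ref{MODULE_CATS_TAME_COR} versus the paper's of Proposition~\ref{TAME_IF_FGP_UNIT_PROP} is immaterial, as the former is an immediate consequence of the latter).
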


\begin{proof}
 We use the notation introduced in Example~\ref{DUALIZABLE_DGM_EXAMPLE}. Note that any commutative algebra in a cosmos $\ca{V}$ is an autonomous symmetric monoidal $\ca{V}$-category. Indeed, the single object is the unit, and the unit of a monoidal category is always dualizable. Therefore, Proposition~\ref{AUTONOMOUS_CAUCHY_PROP} tells us that an object lies in the Cauchy completion of $B$ (or $U_\ast B$) if and only if it is dualizable. From Example~\ref{DUALIZABLE_DGM_EXAMPLE} it follows that $X$ lies in the Cauchy completion of $B$ if and only if $U_\ast X$ lies in the Cauchy completion of $U_\ast B$.

 The functor $U_\ast$ from differential graded $B$-modules to graded $U_\ast B$-modules is exact. Thus $U_\ast$ sends epimorphisms to epimorphisms and kernels to kernels. The conclusion follows from the fact that $U_\ast B$ is tame (see Proposition~\ref{TAME_IF_FGP_UNIT_PROP}). 
\end{proof}

\begin{proof}[Proof of Theorem~\ref{DGA_THM}]
 The subcategory of differential graded $R$-modules $X$ with the property that $X_i$ is finitely generated free for all $i$ and $X_i=0$ for all but finitely many $i \in \mathbb{Z}$ forms a dense autonomous generator of the cosmos $\ca{V}$ of differential graded $R$-modules, and the unit object of $\ca{V}$ is finitely presentable. This is not hard to see directly, but it will also follow from Propositions~\ref{DAG_FOR_MODULES_PROP} and \ref{HOPF_ALGEBRA_FOR_DGM_PROP}. Moreover, $B$ is a tame $\ca{V}$-category by Corollary~\ref{DGA_TAME_COR}.
 
 The Cauchy completion of $B^{\op}$, considered as a one-object $\ca{V}$-category, is the full subcategory of $\Mod_B$ consisting of dualizable modules (see Proposition~\ref{AUTONOMOUS_CAUCHY_PROP}). Thus the image of $w(A)$ is an object of the Cauchy completion of $B^{\op}$, and it makes sense to speak of the $w$-component of the unit of the Tannakian adjunction. Note that $Z_0(w_0)$ is precisely the composite of $(\ev_\ast w)_0 \colon (\Mod_B)_0 \rightarrow \ca{V}_0$ with the forgetful functor $V \colon \ca{V}_0 \rightarrow \Set$, so the conditions i)-iii) are precisely the conditions i)-iii) in Theorem~\ref{ABELIAN_RECOGNITION_THM}.
\end{proof}

\subsection{Examples of tame categories}
 In this section we will generalize the arguments used in Section~\ref{PROOF_OF_DGA_THM_SECTION} to give a large class of examples of tame categories. Specifically, we shall prove the following result.
 
 \begin{thm}\label{AUTONOMOUS_MONOIDAL_TAME_THM}
 Let $\ca{V}$ be an $\Ab$-enriched cosmos such that all small autonomous monoidal $\ca{V}$-categories are tame. Let $H$ be a cocommutative Hopf algebra in $\ca{V}$. Then all small autonomous monoidal $\Mod_H$-categories are tame.
\end{thm}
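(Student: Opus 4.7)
The plan is to mimic the argument of Corollary~\ref{DGA_TAME_COR}: I would transfer tameness from the base cosmos $\ca{V}$ to the cosmos $\Mod_H$ by means of a well-behaved forgetful functor, exploiting the characterization of Cauchy objects by Day-dualizability furnished by Proposition~\ref{AUTONOMOUS_CAUCHY_PROP}.

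First I would verify the following properties of the forgetful functor $U \colon \Mod_H \rightarrow \ca{V}$, which follow from cocommutativity of the comultiplication and existence of an antipode: (a) $U$ is strong symmetric monoidal, because the tensor product in $\Mod_H$ uses the diagonal action and agrees with the $\ca{V}$-tensor product on underlying objects; (b) $U$ is conservative, since an $H$-linear map is invertible exactly when its underlying $\ca{V}$-morphism is; (c) $U$ is exact, as it creates all limits and colimits; and (d) $U$ preserves internal homs, since the internal hom $[M,N]$ in $\Mod_H$ has underlying $\ca{V}$-object $[UM,UN]$ equipped with an $H$-action defined via the antipode.

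Next, given a small autonomous monoidal $\Mod_H$-category $\ca{B}$, change of base along $U$ produces a small $\ca{V}$-category $U_*\ca{B}$ with the same objects and hom-objects $U\bigl(\ca{B}(b,b')\bigr)$. Because $U$ is strong symmetric monoidal, the evaluation and coevaluation morphisms witnessing autonomy in $\ca{B}$ descend, exhibiting $U_*\ca{B}$ as an autonomous monoidal $\ca{V}$-category. Post-composition with $U$ (well-defined because $U$ preserves internal homs) yields a change-of-base functor
\[
U_{\ca{B}} \colon \Prs{B} \longrightarrow \Prs{U_*\ca{B}}
\]
from the $\Mod_H$-enriched presheaves on $\ca{B}$ to the $\ca{V}$-enriched presheaves on $U_*\ca{B}$. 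Since limits, colimits, ends and coends in enriched presheaf categories are computed pointwise, and since $U$ preserves each of these as well as tensors and internal homs, I would check that $U_{\ca{B}}$ is itself exact, conservative, strong symmetric monoidal for Day convolution, and preserves internal homs.

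Finally, let $p \colon A \rightarrow B$ be an epimorphism in the category of $\Mod_H$-presheaves on $\ca{B}$ with $A, B \in \overline{\ca{B}}$. By Proposition~\ref{AUTONOMOUS_CAUCHY_PROP} both $A$ and $B$ are Day-dualizable; applying the strong monoidal $U_{\ca{B}}$ yields dualizable objects $U_{\ca{B}} A$ and $U_{\ca{B}} B$, which by another application of Proposition~\ref{AUTONOMOUS_CAUCHY_PROP} lie in $\overline{U_*\ca{B}}$. Exactness of $U_{\ca{B}}$ identifies $U_{\ca{B}}(\ker p)$ with $\ker(U_{\ca{B}} p)$, which lies in $\overline{U_*\ca{B}}$ by the assumed tameness of $U_*\ca{B}$, and is in particular dualizable. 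Since Day-dualizability of an object $X$ is detected by invertibility of the canonical comparison $[X,I]\otimes Y \rightarrow [X,Y]$ (cf.\ the proof of Proposition~\ref{AUTONOMOUS_CAUCHY_PROP}), and $U_{\ca{B}}$ both preserves this comparison and reflects isomorphisms, $\ker p$ is itself Day-dualizable and hence lies in $\overline{\ca{B}}$, proving tameness. I expect the main obstacle to be the careful bookkeeping in the third step, tracking the coend formula for Day convolution and the end formula for the internal hom through the change-of-base functor.
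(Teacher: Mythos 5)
Your proposal is correct and follows essentially the same route as the paper: base change along the forgetful functor $U \colon \Mod_H \rightarrow \ca{V}$ (symmetric strong monoidal, continuous, cocontinuous, strong closed, conservative), transfer of the epimorphism and its kernel to $\Prs{U_\ast\ca{B}}$, tameness of the autonomous $\ca{V}$-category $U_\ast\ca{B}$, and reflection of membership in the Cauchy completion via the Day-dualizability criterion of Proposition~\ref{AUTONOMOUS_CAUCHY_PROP}. The only cosmetic difference is that you re-derive the reflection step from the comparison morphism for duals, whereas the paper packages it as Proposition~\ref{CAUCHY_COMPLETION_BASE_CHANGE_PROP} — but that proposition is proved in exactly the way you describe.
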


 In order to prove this, we will use a base change functor between bicategories of modules induced by a monoidal functor between cosmoi. Namely, we consider base change along the forgetful functor $\Mod(H) \rightarrow \ca{V}$. This functor preserves tensor products, internal hom-objects, limits and colimits, and reflects isomorphisms.
 
 In general, if $F \colon \ca{V} \rightarrow \ca{V}^{\prime}$ is a strong monoidal functor between cosmoi, we can use it to turn $\ca{V}$-categories into $\ca{V}^{\prime}$-categories. Note that this procedure may change the associated underlying unenriched category: this happens whenever the triangle consisting of $F$ and the two canonical forgetful functors fails to be commutative. Thus our perspective that a $\ca{V}$-category is an abstract structure which can be used to \emph{construct} an underlying unenriched category is very relevant when considering base change functors. Note that one of the main examples we consider---the base change functor from differential graded modules to graded modules which forgets the differential---does not commute with the canonical forgetful functors. 
 
 Base change of $\ca{V}$-categories is functorial both for $\ca{V}$-functors (see \cite{EILENBERG_KELLY}) and for modules (see \cite{VERITY} and \cite{CRUTTWELL}).

\begin{dfn}[Base change for $\ca{V}$-modules]\label{BASE_CHANGE_DFN}
 Let $F \colon \ca{V} \rightarrow \ca{V}^{\prime}$ be a symmetric strong monoidal functor. Let $\ca{A}$ be a $\ca{V}$-category. The $\ca{V}^{\prime}$-category $F_\ast \ca{A}$ has the same objects as $\ca{A}$, with hom-objects given by $F\bigl(\ca{A}(a,b)\bigr)$. For a module $M \colon \ca{A} \xslashedrightarrow{} \ca{B}$, we define a module $F_\ast M \colon F_\ast \ca{A} \xslashedrightarrow{} F_\ast \ca{B}$ by $(b,a) \mapsto F\bigl(M(b,a)\bigr)$, with evident action of $F_\ast \ca{A}$ and $F_\ast \ca{B}$.
\end{dfn}

\begin{prop}\label{BASE_CHANGE_PROP}
 If $F \colon \ca{V} \rightarrow \ca{V}^{\prime}$ is symmetric strong monoidal and cocontinuous, then the assignments in Definition~\ref{BASE_CHANGE_DFN} extend to a strong monoidal pseu\-do\-func\-tor $F_\ast \colon \Mod(\ca{V}) \rightarrow \Mod(\ca{V}^{\prime})$. If, in addition, $F$ is continuous and strong closed (that is, it preserves internal hom-objects), then $F_\ast$ preserves right liftings.
\end{prop}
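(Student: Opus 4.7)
The plan is to check that $F_\ast$ defined on objects and 1-cells (as in Definition~\ref{BASE_CHANGE_DFN}) extends naturally to 2-cells and comes with coherent compositor and unitor isomorphisms. First I would define $F_\ast \alpha$ on a 2-cell $\alpha \colon M \Rightarrow M^\prime$ to be the $\ca{V}^\prime$-natural transformation whose $(b,a)$-component is $F(\alpha_{b,a})$; naturality is automatic from functoriality of $F$. The main construction is the compositor. Using the defining coend
\[
N \odot M (c,a) = \int^{b \in \ca{B}} N(c,b) \otimes M(b,a) \rlap{,}
\]
the hypothesis that $F$ is cocontinuous (hence preserves the coend) together with the strong monoidal structure of $F$ (hence commutes with $\otimes$ up to coherent isomorphism) produces an iso
\[
F_\ast N \odot F_\ast M (c,a) = \int^{b} F\bigl(N(c,b)\bigr) \otimes^\prime F\bigl(M(b,a)\bigr) \;\cong\; F\Bigl(\int^b N(c,b) \otimes M(b,a)\Bigr) = F_\ast(N \odot M)(c,a) \rlap{.}
\]
The unitor is obtained similarly from the Yoneda isomorphism $\ca{A}(-,-)\odot M \cong M$ plus $F_\ast \ca{A}(-,-) = F(\ca{A}(-,-))$. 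After that one checks the pseudofunctor axioms (associativity pentagon and unit triangles); both reduce to iterated uses of the strong monoidal coherence of $F$ together with Fubini for coends, so they are formal. For the strong monoidal structure, the key point is that the monoidal product on $\Mod(\ca{V})$ is induced by the tensor product of $\ca{V}$-categories, whose hom-objects are tensor products in $\ca{V}$; hence $F_\ast(\ca{A} \otimes \ca{B}) \cong F_\ast \ca{A} \otimes^\prime F_\ast \ca{B}$ via the monoidal constraints of $F$, and the coherence hexagons follow from those for $F$.

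For the second statement, I would start by recording the standard formula for right liftings in $\Mod(\ca{V})$: for modules $N \colon \ca{B} \xslashedrightarrow{} \ca{C}$ and $P \colon \ca{A} \xslashedrightarrow{} \ca{C}$, the right lifting $\mathrm{Rift}_N P \colon \ca{A} \xslashedrightarrow{} \ca{B}$ is given by the end
\[
\mathrm{Rift}_N P (b,a) = \int_{c \in \ca{C}} \bigl[N(c,b), P(c,a)\bigr] \rlap{.}
\]
Applying $F_\ast$ and using that $F$ is continuous (so preserves the end) and strong closed (so preserves internal homs) yields
\[
F_\ast \mathrm{Rift}_N P (b,a) \cong \int_c \bigl[F N(c,b), FP(c,a)\bigr]^\prime = \mathrm{Rift}_{F_\ast N} F_\ast P (b,a) \rlap{,}
\]
where $[-,-]^\prime$ denotes the internal hom in $\ca{V}^\prime$. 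It remains to check that this isomorphism is compatible with the evaluation 2-cells exhibiting the universal property of right liftings; this is a routine Yoneda argument using the fact that the bijections defining the end and the strong closed structure are natural in all variables.

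The main conceptual obstacle, and essentially the only nontrivial thing, is keeping the various coherence isomorphisms under control: verifying the pseudofunctor axioms for $F_\ast$ and its monoidal strength forces one to combine coend calculus with the strong monoidal constraints of $F$, and each axiom expands into a sizable commutative diagram. I expect these to reduce by standard coend manipulations to coherence axioms for $F$ itself, so the work is routine but bookkeeping-heavy rather than containing a genuine new idea.
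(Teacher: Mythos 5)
Your proposal is correct and follows essentially the same route as the paper: the paper likewise deduces pseudofunctoriality from cocontinuity applied to the coend defining composition, cites the strong monoidal structure of $F$ for the monoidal constraints (referring to Cruttwell for the coherence details you work out by hand), and preserves right liftings by observing that the end formula $\int_{c}[N(c,b),P(c,a)]$ is built from limits and internal hom-objects, both preserved by hypothesis. The only difference is one of exposition — you spell out the compositor, unitor, and coherence checks that the paper delegates to a reference.
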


\begin{proof}
 Since composition of modules is defined in terms of colimits, it follows that $F_\ast$ is a pseudofunctor. It is strong monoidal because $F$ is (cf.\ \cite[\S~7.3.3]{CRUTTWELL}). The right lifting of $M \colon \ca{A} \xslashedrightarrow{} \ca{C}$ along $J \colon \ca{B} \xslashedrightarrow{} \ca{C}$ in $\Mod(\ca{V})$ is given by the coend
 \[
 J\slash M (b,a) = \int_{\ca{C}}[J(b,c),M(c,a)]
 \]
 in $\ca{V}$. Since $F$ preserves limits and internal hom-objects, it also preserves ends (which are defined in terms of limits and internal hom-objects). In particular, it preserves the above end which is again defined in terms of internal hom-objects.
\end{proof}

Let $\ca{B}$ be a small $\ca{V}$-category. Note that the category $\Prs{B}$ can be thought of as the hom-category $\Mod(\ca{V})(\ca{I},\ca{B})$. Thus $F_\ast$ induces a functor $(F_\ast)_{\ca{I},\ca{B}} \colon \Prs{B}_0 \rightarrow \mathcal{P}{F_\ast \ca{B}}_0$.

\begin{cor}\label{DAY_CONVOLUTION_PRESERVED_COR}
 Let $F \colon \ca{V} \rightarrow \ca{V}^{\prime}$ be symmetric strong monoidal, continuous, cocontinuous, and strong closed. Let $\ca{B}$ be a monoidal $\ca{V}$-category. Then the induced functor $\Prs{B}_0 \rightarrow \mathcal{P}{F_\ast \ca{B}}_0$ is strong monoidal and strong closed for the Day convolution tensor product (that is, it preserves both the left and right internal hom-objects). Furthermore, it is continuous and cocontinuous, and it reflects isomorphisms if $F$ does.
\end{cor}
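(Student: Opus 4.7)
The plan is to interpret each of the claimed compatibilities at the level of the bicategory of modules, where they become consequences of Proposition~\ref{BASE_CHANGE_PROP}. Observe first that the functor $(F_\ast)_{\ca{I},\ca{B}} \colon \Prs{B}_0 \rightarrow \mathcal{P}{F_\ast \ca{B}}_0$ is simply the hom-functor of $F_\ast$ between the hom-categories $\Mod(\ca{V})(\ca{I},\ca{B})$ and $\Mod(\ca{V}^{\prime})(\ca{I},F_\ast \ca{B})$, since the unit $\ca{V}$-category is sent by $F_\ast$ to the unit $\ca{V}^{\prime}$-category (because $F$ is strong monoidal).

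For the strong monoidal claim, recall that a monoidal $\ca{V}$-category $\ca{B}$ is precisely a pseudomonoid in the monoidal bicategory $\Mod(\ca{V})$, whose multiplication is the module $\otimes_\ast \colon \ca{B}\otimes \ca{B} \xslashedrightarrow{} \ca{B}$ induced by the tensor $\ca{V}$-functor. The Day convolution on $\Prs{B}$ is then the induced monoidal structure on $\Mod(\ca{V})(\ca{I},\ca{B})$, with tensor product of $F,G \in \Prs{B}$ given by $\otimes_\ast \odot (F\otimes G)$. Since $F_\ast$ is a strong monoidal pseudofunctor by Proposition~\ref{BASE_CHANGE_PROP}, it sends this pseudomonoid to the pseudomonoid corresponding to $F_\ast \ca{B}$ (using that $F_\ast$ commutes with the external tensor of $\ca{V}$-categories up to the monoidal coherence isomorphism coming from $F$), and hence the induced functor on hom-categories preserves the convolution tensor product strongly.

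For the strong closed claim, the essential point is that the left and right Day convolution internal homs $[B,C]_\ell$ and $[B,C]_r$ are right liftings in $\Mod(\ca{V})$: concretely, $[B,C]_\ell$ is the right lifting of the module $C \colon \ca{I} \xslashedrightarrow{} \ca{B}$ through the module $\otimes_\ast \odot (B\otimes -) \colon \ca{B} \xslashedrightarrow{} \ca{B}$, and analogously on the other side. Since $F_\ast$ preserves right liftings by Proposition~\ref{BASE_CHANGE_PROP} and it also preserves the formation of $\otimes_\ast \odot (B\otimes -)$ by the strong monoidal structure of the first paragraph, it preserves both internal homs. The continuity and cocontinuity of the induced functor follow because both limits and colimits in presheaf categories are computed pointwise in $\ca{V}$ (resp.\ $\ca{V}^{\prime}$), and $F$ preserves them by hypothesis; similarly, reflection of isomorphisms is pointwise in the base, and so is inherited from $F$.

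The technically delicate step, and the one I expect to be the main obstacle, is identifying the Day internal homs as right liftings (rather than right extensions) and matching the bicategorical formula with the end formula $[B,C]_\ell(b) = \int_{b_1,b_2} [B(b_1) \otimes \ca{B}(b,b_1\otimes b_2), C(b_2)]$; one has to keep track of the distinction between left and right liftings carefully to match the two sides of the convolution. Once this identification is made, the conclusion is immediate from the preservation results for composition and right liftings in Proposition~\ref{BASE_CHANGE_PROP}.
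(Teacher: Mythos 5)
Your proof follows essentially the same route as the paper's: the Day convolution tensor product is expressed as a composite built from the monoidal structure of $\Mod(\ca{V})$ and the internal homs as right liftings, both of which are preserved by $F_\ast$ via Proposition~\ref{BASE_CHANGE_PROP}, with continuity, cocontinuity and reflection of isomorphisms handled pointwise. The ``technically delicate step'' you flag is precisely the content of the result of Day and Street that the paper cites, so no further work is needed there.
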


\begin{proof}
 The internal hom-objects for the Day convolution tensor are defined in terms of right liftings in the bicategory of modules (see \cite[Proposition~6]{DAY_STREET}), and the Day convolution tensor product of $A$ and $B$ is given by the composite
 \[
 I \cong I \otimes I \xslashedrightarrow{A\otimes B} \ca{B} \otimes \ca{B} \xslashedrightarrow{m_\ast} \ca{B}
 \]
 where $m$ denotes the monoidal structure on $\ca{B}$. This formula is given in terms of the monoidal structure on $\Mod(\ca{V})$, so it is preserved up to isomorphism by $F_\ast$. 
 
 It remains to check that $F_\ast$ is continuous and cocontinuous, which follows immediately from the fact that limits and colimits in functor categories are computed pointwise. 
 
 A morphism of modules is an isomorphism if and only if all its components are. By definition of $F_\ast$ it follows that $(F_\ast)_{\ca{I},\ca{B}}$ reflects isomorphisms if $F$ does.
\end{proof}

\begin{prop}\label{CAUCHY_COMPLETION_BASE_CHANGE_PROP}
 Let $F \colon \ca{V} \rightarrow \ca{V}^{\prime}$ be symmetric strong monoidal, continuous, cocontinuous, strong closed, and conservative (that is, it reflects isomorphisms). Let $\ca{B}$ be an autonomous $\ca{V}$-category. Then an object $X \in \Prs{B}$ lies in the Cauchy completion of $\ca{B}$ if and only if $F_\ast X$ lies in the Cauchy completion of $F_\ast \ca{B}$.
\end{prop}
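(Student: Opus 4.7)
The plan is to reduce membership in the Cauchy completion on both sides to the local condition of having a dual under Day convolution, and then to transport this condition across $F_\ast$. The crucial input is Proposition~\ref{AUTONOMOUS_CAUCHY_PROP}, which trades the global colimit-preservation requirement defining Cauchy objects for the existence of a (left or right) dual in the Day convolution monoidal structure on $\Prs{B}$.

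First I would verify that $F_\ast \ca{B}$ is again autonomous as a $\ca{V}^{\prime}$-category. Since $F$ is symmetric strong monoidal, base change sends monoidal $\ca{V}$-categories to monoidal $\ca{V}^{\prime}$-categories: the coherence isomorphisms $F(A)\otimes F(B)\cong F(A\otimes B)$ identify $F_\ast \ca{B}\otimes F_\ast \ca{B}$ with $F_\ast(\ca{B}\otimes \ca{B})$, and pushing forward the tensor of $\ca{B}$ gives the one on $F_\ast \ca{B}$. Any strong monoidal functor preserves duals, so the autonomy of $\ca{B}$ is inherited by $F_\ast \ca{B}$. Therefore Proposition~\ref{AUTONOMOUS_CAUCHY_PROP} applies on both sides, and the statement reduces to showing that $X\in \Prs{B}$ has a dual under Day convolution if and only if $F_\ast X$ does under the Day convolution on $\mathcal{P}(F_\ast \ca{B})$.

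Second, Corollary~\ref{DAY_CONVOLUTION_PRESERVED_COR} shows that the induced functor $G=(F_\ast)_{\ca{I},\ca{B}}\colon \Prs{B}_0 \rightarrow \mathcal{P}(F_\ast \ca{B})_0$ is strong monoidal and strong closed for Day convolution, and it is conservative because $F$ is. As observed in the discussion following Proposition~\ref{AUTONOMOUS_CAUCHY_PROP}, the existence of a dual for an object in a monoidal closed category is equivalent to the invertibility of a specific canonical morphism built from tensor products and internal hom-objects; any strong monoidal strong closed functor transports this morphism on the nose, and since $G$ is also conservative, it both preserves and reflects dualizability. Combining this with the first step completes the argument.

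I expect the main obstacle to be purely bookkeeping in the first step: one has to produce the autonomous $\ca{V}^{\prime}$-category structure on $F_\ast \ca{B}$ from the one on $\ca{B}$ while tracking the coherence isomorphisms of $F$, and then verify that a dual of $b\in \ca{B}$ remains a dual of $b$ viewed as an object of $F_\ast \ca{B}$. Once this is set up, the proposition follows by a clean assembly of Proposition~\ref{AUTONOMOUS_CAUCHY_PROP}, Corollary~\ref{DAY_CONVOLUTION_PRESERVED_COR}, and the general fact about strong monoidal strong closed conservative functors reflecting dualizability.
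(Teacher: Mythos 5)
Your proposal is correct and follows essentially the same route as the paper: reduce Cauchy-completeness membership to dualizability under Day convolution via Proposition~\ref{AUTONOMOUS_CAUCHY_PROP}, then transfer dualizability along the induced functor on presheaf categories using Corollary~\ref{DAY_CONVOLUTION_PRESERVED_COR} (strong monoidal gives preservation, conservativity plus the internal-hom characterization of duals gives reflection). The only difference is that you spell out the preliminary check that $F_\ast \ca{B}$ is autonomous, which the paper uses implicitly; that is a harmless and correct addition.
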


\begin{proof}
 By Corollary~\ref{DAY_CONVOLUTION_PRESERVED_COR}, the induced functor $F_\ast \colon \Prs{B}_0 \rightarrow \mathcal{P}{F_\ast \ca{B}}_0$ is strong monoidal, continuous and cocontinuous, strong closed, and conservative. Since left and right dualizable objects can be characterized in terms of an isomorphism between left and right internal hom-objects, it follows that $X\in \Prs{B}$ is left (right) dualizable if $F_\ast X$ is. Conversely, if $X$ is left (right) dualizable, then so is $F_\ast X$, because $F_\ast$ is strong monoidal. From Proposition~\ref{AUTONOMOUS_CAUCHY_PROP} it follows that $X$ lies in the Cauchy completion of $\ca{B}$ if and only if $F_\ast X$ lies in the Cauchy completion of the autonomous $\ca{V}^{\prime}$-category $F_\ast \ca{B}$.
\end{proof}

\begin{proof}[Proof of Theorem~\ref{AUTONOMOUS_MONOIDAL_TAME_THM}]
 Let $\ca{B}$ be a small autonomous monoidal $\Mod_H$-category. The forgetful functor $U \colon \Mod_H \rightarrow \ca{V}$ is symmetric strong monoidal, continuous, cocontinuous, strong closed, and it reflects isomorphisms. 
 
 We now want to show that $\ca{B}$ is tame. Thus let $p \colon X \rightarrow Y$ be an epimorphism between objects of the Cauchy completion of $\ca{B}$, and let $k \colon Z \rightarrow X$ be its kernel. Since $U_\ast$ is continuous and cocontinuous, $U_\ast k$ is the kernel of $U_\ast p$ and $U_\ast p$ is an epimorphism. By Proposition~\ref{CAUCHY_OBJECTS_FGP_LEMMMA}, both $U_\ast X$ and $U_\ast Y$ lie in the Cauchy completion of $U_\ast \ca{B}$. But $U_\ast \ca{B}$ is a tame $\ca{V}$-category (it is autonomous, so tame by assumption). Thus $U_\ast Z$ lies in the Cauchy completion of $U_\ast \ca{B}$. Proposition~\ref{CAUCHY_COMPLETION_BASE_CHANGE_PROP} implies that $Z$ lies in the Cauchy completion of $\ca{B}$, which shows that $\ca{B}$ is indeed tame.
\end{proof}

\begin{rmk}
 Theorem~\ref{AUTONOMOUS_MONOIDAL_TAME_THM} applies in particular to commutative algebras in $\Mod_H$, thought of as one-object symmetric monoidal $\Mod_H$-categories. The unique object of such a category is the unit, which is always dualizable.
\end{rmk}

The following proposition shows that our recognition theorem for ablian cosmoi with dense autonomous generator is applicable to the cosmos $\Mod_H$ if it is applicable to $\ca{V}$ and $H$ is dualizable.

\begin{prop}\label{DAG_FOR_MODULES_PROP}
 If $\ca{V}$ is an abelian cosmos with a dense autonomous generator $\ca{X}$ and $H \in \ca{V}$ is a dualizable Hopf algebra, then $\Mod_H$ is an abelian cosmos with dense autonomous generator. If the unit of $\ca{V}$ is finitely presentable, then so is the unit of $\Mod_H$.
\end{prop}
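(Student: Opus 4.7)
The plan is to first observe that $\Mod_H$ inherits the required cosmos structure via standard facts about categories of modules, and then exhibit an explicit dense autonomous generator; the main obstacle will be $\Set$-density of the latter. Let $U \colon \Mod_H \rightarrow \ca{V}$ denote the forgetful functor and $F = H \otimes -$ its left adjoint. Monadicity of $U$ gives completeness, cocompleteness, faithfulness, and conservativity of $U$, as well as abelianness of $\Mod_H$. The symmetric monoidal closed structure on $\Mod_H$ is inherited from $\ca{V}$ via the Hopf algebra structure on $H$, with the antipode entering into the definition of duals and internal homs; this makes $U$ symmetric strong monoidal and continuous/cocontinuous.

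To construct the DAG, first enlarge $\ca{X}$ by replacing it with the closure of $\ca{X} \cup \{H\}$ under tensor products and duals; the result is still a DAG of $\ca{V}$, so we may assume $H \in \ca{X}$ from the outset. Define $\ca{X}_H \subseteq \Mod_H$ to be the full subcategory on those modules $M$ with $UM \in \ca{X}$. Each such $M$ is dualizable in $\Mod_H$: the dual $(UM)^{\vee}$ in $\ca{V}$ carries an $H$-action built from the antipode, and strong monoidality of $U$ lifts the $\ca{V}$-evaluation and coevaluation to $\Mod_H$. By the same token, $\ca{X}_H$ is closed under tensor and duals, and essential smallness is immediate since the set of $H$-actions on a fixed $X \in \ca{X}$ embeds into $\ca{V}(H \otimes X, X)$.

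The main obstacle is proving $\Set$-density of $\ca{X}_H$. I would show that the nerve $N \colon \Mod_H \rightarrow [\ca{X}_H^{\op}, \Set]$ is fully faithful. Given a natural transformation $\alpha \colon N(M) \Rightarrow N(N)$, restrict it to the full subcategory $F(\ca{X}) \subseteq \ca{X}_H$ (note that $UFX = H \otimes X \in \ca{X}$ because $H, X \in \ca{X}$) and use the adjunction $\Mod_H(FX,-) \cong \ca{V}(X, U-)$ together with $\Set$-density of $\ca{X}$ in $\ca{V}$ to obtain a unique $\phi_0 \colon UM \rightarrow UN$ in $\ca{V}$ recovering $\alpha$ on $F(\ca{X})$. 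The critical step is the $H$-equivariance of $\phi_0$. For this, invoke naturality of $\alpha$ against morphisms $\bar g \colon FX \rightarrow FB$ in $F(\ca{X})$ corresponding under the adjunction to arbitrary $g \colon X \rightarrow H \otimes B$; unwinding, this naturality produces the equality
\[
 \phi_0 \circ \varepsilon_M \circ (H \otimes \tilde f) \circ g = \varepsilon_N \circ (H \otimes \phi_0) \circ (H \otimes \tilde f) \circ g
\]
for all $B \in \ca{X}$, $\tilde f \colon B \rightarrow UM$, and $g \colon X \rightarrow H \otimes B$, where $\varepsilon_M, \varepsilon_N$ denote the action maps $FU(-) \rightarrow (-)$. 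Varying $g$ and applying $\Set$-density of $\ca{X}$ in $\ca{V}$ cancels $g$; using that $H \otimes UM$ is the canonical conical colimit of $H \otimes B$ over $\ca{X} \slash UM$ (by Theorem~\ref{CONICAL_COLIMIT_THM} applied to $UM$, together with cocontinuity of $H \otimes -$), varying $\tilde f$ then cancels $H \otimes \tilde f$, yielding the $H$-linearity $\phi_0 \circ \varepsilon_M = \varepsilon_N \circ (H \otimes \phi_0)$. Hence $\phi_0$ lifts to a morphism $\phi \colon M \rightarrow N$ in $\Mod_H$. To verify $N(\phi) = \alpha$ on all of $\ca{X}_H$, use that for $K \in \ca{X}_H$ the counit $\varepsilon_K \colon FUK \rightarrow K$ is an epimorphism in $\Mod_H$ ($U\varepsilon_K$ is split epi via $\eta_{UK}$ and $U$ reflects epis by faithfulness), with $FUK \in F(\ca{X}) \subseteq \ca{X}_H$ since $UK \in \ca{X}$; naturality of $\alpha$ along $\varepsilon_K$ reduces the desired equality $\alpha(f) = \phi \circ f$ for $f \colon K \to M$ to its composition with $\varepsilon_K$, which holds by the construction of $\phi$ on $F(\ca{X})$.

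For the final statement, dualizability of $H$ gives $\ca{V}(H,-) \cong \ca{V}(I, H^{\vee} \otimes -)$, and since $H^{\vee} \otimes -$ preserves filtered colimits (being a left adjoint) and $\ca{V}(I,-)$ does by hypothesis, $H$ is finitely presentable in $\ca{V}$. The set $\Mod_H(I, M)$ is the equalizer in $\Set$ of two maps $\ca{V}(I, UM) \rightrightarrows \ca{V}(H, UM)$ encoding the $H$-equivariance of a morphism $I \rightarrow M$. Both functors preserve filtered colimits in $M$, using that $U$ does (the monad $H \otimes -$ being a left adjoint), and equalizers commute with filtered colimits in $\Set$; hence $\Mod_H(I,-)$ preserves filtered colimits, so $I$ is finitely presentable in $\Mod_H$.
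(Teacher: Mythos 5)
Your proof is correct, but it takes a genuinely different route from the paper on the one point where there is real work to do, namely $\Set$-density. The paper disposes of this in one line by citing Day's theorem that for a cocontinuous (hence ``strictly linear'') monad the free algebras $H\otimes X$, $X\in\ca{X}$, form a $\Set$-dense subcategory of the category of algebras; the dense autonomous generator is then the closure of these free modules under tensor products and duals. You instead prove the density statement from scratch: you reconstruct a morphism from a natural transformation on the nerve by restricting to the free modules $F(\ca{X})$, and you establish $H$-equivariance by exploiting naturality against Kleisli morphisms $FX\to FB$ and then cancelling first the $g$'s (density of $\ca{X}$ in $\ca{V}$) and then the maps $H\otimes\tilde f$ (cocontinuity of $H\otimes-$ applied to the canonical colimit presenting $UM$). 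This is in effect a self-contained proof of the special case of Day's theorem that is needed, and it has the merit of making visible exactly where the linearity of the monad enters; the cost is length, and your choice of generator ($\ca{X}_H$, all modules with underlying object in the enlarged $\ca{X}$) is a priori larger than the paper's, though density of the larger full subcategory is automatic once the smaller one is dense. Two small points: the reference to Theorem~\ref{CONICAL_COLIMIT_THM} is not quite the right one (that theorem concerns presheaf categories; what you actually use is just that $\Set$-density of $\ca{X}$ in $\ca{V}$ exhibits every object as the canonical conical colimit of objects of $\ca{X}$), and, like the paper, you leave implicit that $H$ must be cocommutative for $\Mod_H$ to be a \emph{symmetric} monoidal closed category. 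The finite-presentability argument is the same as the paper's, specialized from a general module $A$ to the unit.
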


\begin{proof}
 Clearly $\Mod_H$ is abelian. The category $\Mod_H$ is the category of algebras for the cocontinuous monad $H\otimes -$. Any cocontinuous monad is strictly linear in the sense of \cite[Definition~1.8]{DAY_LINEAR}. Thus the objects $H\otimes X$, $X \in \ca{X}$ form a $\Set$-dense subcategory of $\Mod_H$ (see \cite[Theorem~1.3]{DAY_LINEAR}). They are dualizable because their underlying objects are, hence their closure under tensor products and duals forms a dense autonomous generator.
 
 The hom-sets of $\Mod_H$ are given by the equalizer
 \[
 \xymatrix{\Mod_H(A,B) \ar[r] & \ca{V}(A,B) \ar@<0.5ex>[r] \ar@<-0.5ex>[r] & \ca{V}(H\otimes A,B)}
 \]
 of sets. If $I \in \ca{V}$ is locally presentable, then so is $H$ (see Lemma~\ref{CAUCHY_OBJECTS_FGP_LEMMMA}). Since finite limits of sets commute with filtered colimits it follows that $A \in \Mod_H$ is finitely presentable if its underlying object is (recall from Proposition~\ref{COSMOI_WITH_DAG_ARE_LFP_PROP} that $\ca{V}$ is locally presentable as a closed category).
\end{proof}

The following result shows that Corollary~\ref{DGA_TAME_COR} is a special case of Theorem~\ref{AUTONOMOUS_MONOIDAL_TAME_THM}. A related result can be found in \cite[Theorem~18]{PAREIGIS_DG}, where it is shown that there exists a (non-commutative) Hopf algebra in the category of \emph{ungraded} $R$-modules whose comodules are differential graded $R$-modules. This is not quite sufficient for our purposes, because we really want to get a \emph{symmetric} monoidal category of modules over a Hopf algebra.

\begin{prop}\label{HOPF_ALGEBRA_FOR_DGM_PROP}
 There is a dualizable cocommutative Hopf algebra $H$ in the cosmos of graded $R$-modules such that $\Mod_H$ is the cosmos of differential graded $R$-modules.
\end{prop}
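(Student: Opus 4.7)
The plan is to take $H$ to be the exterior algebra on a primitive generator in degree $-1$ and verify that its category of modules recovers the cosmos of differential graded $R$-modules. Explicitly, I would let $H$ be the graded $R$-module which is free of rank one in degrees $0$ and $-1$, with basis elements $1$ and $d$; give it the algebra structure determined by $d^{2}=0$; the coalgebra structure determined by $\varepsilon(1)=1$, $\varepsilon(d)=0$, $\Delta(1)=1\otimes 1$ and $\Delta(d)=d\otimes 1+1\otimes d$; and the antipode $S(1)=1$, $S(d)=-d$. Since $H$ is free of finite rank as a graded $R$-module it is immediately dualizable in the cosmos of graded $R$-modules.

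The first step is to check that this defines a cocommutative Hopf algebra in the \emph{symmetric} monoidal category of graded $R$-modules, where the symmetry is the Koszul braiding $x\otimes y\mapsto (-1)^{|x||y|}y\otimes x$. All of the axioms reduce to verifications on the generator $d$; cocommutativity follows because $\tau(d\otimes 1+1\otimes d)=1\otimes d+d\otimes 1$ (the graded symmetry introduces no sign here because $|1|=0$), and the Hopf axiom $\mu\cdot(S\otimes 1)\cdot\Delta=\eta\varepsilon$ reduces to $S(d)+d=0$. This part is entirely routine.

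The second step is to construct a symmetric strong monoidal isomorphism between the category $\Mod_{H}$ of $H$-modules in graded $R$-modules and the cosmos of differential graded $R$-modules. On objects, an $H$-module structure on a graded $R$-module $M$ is determined by the action of $d$, which is a degree $-1$ endomorphism $d_{M}\colon M\to M$, and the associativity axiom for the action (together with $d^{2}=0$ in $H$) is exactly $d_{M}^{2}=0$. A morphism of $H$-modules is by definition a graded morphism commuting with the action of $d$, i.e.\ a chain map. The crucial computation is for the tensor product: using the coaction $\Delta(d)=d\otimes 1+1\otimes d$ together with the Koszul braiding in the definition of the $H$-module structure on $M\otimes N$ gives
\[
d\cdot(m\otimes n)=d_{M}(m)\otimes n+(-1)^{|d|\,|m|}m\otimes d_{N}(n)=d_{M}(m)\otimes n+(-1)^{|m|}m\otimes d_{N}(n),
\]
which is the usual graded Leibniz rule, so that the tensor product of $H$-modules coincides with the tensor product of DG modules. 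Internal hom-objects exist in $\Mod_{H}$ because $H$ is dualizable (the internal hom in $\Mod_{H}$ is computed using $S$ as the usual one in $\Mod$-categories over a Hopf algebra), and an analogous sign computation shows that it agrees with the internal hom of DG modules; limits and colimits agree because the forgetful functor from $\Mod_{H}$ to graded modules creates both, as does the forgetful functor from DG modules to graded modules.

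The main obstacle is purely bookkeeping: making sure that the Koszul sign conventions used in the symmetric braiding of graded $R$-modules, in the definition of the tensor product and internal hom of $H$-modules via $\Delta$ and $S$, and in the standard DG conventions, are all mutually consistent so that one obtains an \emph{equality} (not merely an equivalence) of symmetric monoidal closed categories. Once the signs are aligned, the remaining verifications are formal, and dualizability of $H$ has already been observed from its being free of finite rank in each degree.
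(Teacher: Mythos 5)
Your proposal is correct and follows essentially the same route as the paper: the same exterior Hopf algebra $E(t)$ with $|t|=-1$, primitive comultiplication, antipode $t\mapsto -t$, and the same identification of $H$-modules with differential graded modules via the Leibniz-rule computation. The only difference is that where you propose to verify the internal hom-objects agree by a direct sign computation, the paper shortcuts this by noting that a monoidal category admits at most one closed structure, so agreement of the tensor products already forces agreement of the internal homs.
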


\begin{proof}
 Here we think of graded modules in the topologists sense, that is, we don't allow summation of elements of different degrees. The symmetry on the cosmos of graded $R$-modules is given by $a\otimes b \mapsto (-1)^{\lvert a \rvert \lvert b \rvert} b\otimes a$.
 
 The Hopf algebra $H$ is the exterior algebra $E(t)$ on a single generator of degree $-1$ (or $1$ if one wants the differentials to increase the degree by one). The underlying graded module of $H$ is given by $R \oplus R\cdot t$, with $\lvert t \rvert =-1$, so it is indeed dualizable. The multiplication satisfies $t^2=0$. The comultiplication is given by $\Delta(t)=t\otimes 1 + 1\otimes t$, and the antipode is given by $\nu(t)=-t$. In order to check that the multiplication and the comultiplication are compatible one has to use the fact that $(t\otimes 1)(1\otimes t)=-(1\otimes t)(t\otimes 1)$, which follows from the grading conventions of the symmetry in the category of graded $R$-modules. It is not hard to check that left modules of $H$ are differential graded $R$-modules, and that the tensor product of modules corresponds to the tensor product of differential graded $R$-modules with the usual differential $d(x\otimes y)=d(x)\otimes y + (-1)^{\lvert x \rvert} x\otimes y$. The induced symmetry inherits the usual sign conventions from the symmetry in the cosmos of graded $R$-modules. Since there is at most one closed structure for any monoidal category, it follows that $\Mod_H$ is equivalent to the cosmos of differential graded $R$-modules.
\end{proof}
\section{Tannakian duality for bialgebras and Hopf algebras}\label{BIALGEBRAS_HOPF_ALGEBRAS_SECTION}

\subsection{Multiplicative structures on \texorpdfstring{$\Mod(\ca{V})$}{Mod(V)}}
 The symmetric monoidal structure of $\ca{V}$ induces the structure of a symmetric monoidal bicategory on $\Mod(\ca{V})$. The tensor product of two $\ca{V}$-categories $\ca{A}$ and $\ca{A}^{\prime}$ has objects the pairs $(a,a^{\prime})$ of objects of $\ca{A}$ and of $\ca{A}^{\prime}$, with hom-objects given by the tensor product in $\ca{V}$. The tensor product of modules is given by $M\otimes N \bigl((b,b^{\prime}),(a,a^{\prime})\bigr)=M(b,a)\otimes N(b^{\prime},a^{\prime})$.
 
 Implicitly the interaction between the Tannakian adjunction and this monoidal structure has been studied extensively for certain cosmoi $\ca{V}$, in particular for the cosmos of vector spaces. For example, it is well-known that the coalgebra $L(w)$ of a fiber functor $w$ is a bialgebra if $w$ is a strong monoidal functor (tensor functor). A bialgebra is simply a monoid in the category of monoidal functors, and there is a monoidal structure on the category of fiber functors for which a suitably weakened notion of monoids gives the strong monoidal functors. These weak monoids are called \emph{pseudomonoids}. 

 The result about bialgebras and monoidal fiber functors therefore tells us that the Tannakian adjunction lifts to the categories of pseudomonoids. We will see that this is a consequence of the fact that the Tannakian adjunction is compatible with the tensor product on either side. The bicategorical interpretation of Tannaka duality makes this compatibility between the monoidal structure and the Tannakian biadjunction evident.

 Let $\ca{M}$ be the bicategory $\Mod(\ca{V})$ of $\ca{V}$-modules. To simplify the notation we will use capital letters $A, B \ldots$ for $\ca{V}$-categories and lowercase letters $f,g, \ldots$ (and ordinary arrows) for modules in the remainder of this section. Let $B$ be a monoidal $\ca{V}$-category. The tensor product and unit of $B$ induce modules $m \colon B\otimes B \rightarrow B$ and $u \colon I \rightarrow B$. Note that $m$ and $u$ are maps because they are modules induced by $\ca{V}$-functors (see Remark~\ref{VMOD_MAPS_RMK}). Then the bicategory $\Map(\ca{M},B)$ is a monoidal bicategory, with tensor product $w \bullet w^{\prime}$ of $w$ and $w^{\prime}$ given by
\[
 \xymatrix{ A\otimes A^\prime \ar[r]^-{w\otimes w^{\prime}}  & B \otimes B \ar[r]^-{m} & B}\smash{\rlap{.}}
\]
 On the other hand, for any map pseudomonoid $B$ (that is, a pseudomonoid whose multiplication and unit are given by left adjoint 1-cells), the category of 1-cells $B \rightarrow B$ comes equipped with a convolution monoidal structure. The convolution product $f \star g$ of two 1-cells $f,g \colon B \rightarrow B$ is given by
 \[
  \xymatrix{ B \ar[r]^-{\overline{m}} & B\otimes B \ar[r]^-{f\otimes g} & B\otimes B \ar[r]^-{m} & B}\smash{\rlap{.}}
 \]
 The convolution product lifts to the category $\Comon(B)$ of comonads on $B$. Recall that the left biadjoint of the Tannakian adjunction is given by the 2-functor
 \[
  L \colon \Map(\ca{M}, B) \rightarrow \Comon(B)  
 \]
 which sends an object $w \colon A \rightarrow B$ to the comonad $L(w)=w \cdot \overline{w}$, with comultiplication and counit induced by the unit and counit of the adjunction $w \dashv \overline{w}$.

 Since the tensor product $\otimes \colon \ca{M} \times \ca{M} \rightarrow \ca{M}$ is a pseudofunctor, we have an invertible 2-cell
 \[
  \def\objectstyle{\scriptstyle}
  \def\labelstyle{\scriptscriptstyle}
\def\twocellstyle{\scriptscriptstyle}
  \xymatrix{
  && A \otimes A \ar[rd]^{w \otimes w^{\prime}} \\
 B \ar[r]^-{\overline{m}} & B\otimes B 
 \ar[ru]^{\overline{w}\otimes \overline{w^{\prime}}} 
 \ar@/_/[rr]_{w \cdot \overline {w} \otimes w^{\prime} \cdot \overline{w^{\prime}} }
 \xtwocell[0,2]{}\omit{<-2>\cong}
 && B\otimes B \ar[r]^-{m} & B 
  } \smash{\rlap{.}}
 \]
 A pseudofunctor sends maps to maps, so the composite on the top is $L(m.w\otimes w^{\prime})$. The composite on the bottom is $L(w) \star L(w^{\prime})$ by definition of the convolution monoidal structure. In other words, we have shown that the left adjoint of the Tannakian adjunction preserves tensor products up to isomorphism: $L(w \bullet w^{\prime}) \cong L(w) \star L(w^{\prime})$.

 Suppose that $L$ is left biadjoint to $R$, and that $L$ is compatible with the monoidal structure up to equivalence $\phi \colon LA \otimes LB \rightarrow L(A\otimes B)$. Then the composite
 \[
  \xymatrix@!C=75pt{RA\otimes RB \ar[r]^-{\eta_{RA\otimes RB}} & RL(RA\otimes RB)  \ar[r]^{R(\phi^{-1})} & R(LRA \otimes LRB) \ar[r]^-{R(\varepsilon_A \otimes \varepsilon_B)} & R(A\otimes B) }
 \]
 endows the right biadjoint $R$ with a weak monoidal structure, where $\eta$ and $\varepsilon$ denote the unit and the counit of the biadjunction. Thus the biadjunction lifts to a biadjunction between the respective bicategories of pseudomonoids (this is a generalization of results from \cite{DAY_STREET} and \cite{MCCRUDDEN_BALANCED}, see Appendix~\ref{MONOIDAL_BIADJUNCTION_APPENDIX}). Once we check that our functor $L$ is indeed compatible with the tensor product, we get the desired Tannakian theorem for bialgebras.
 
\begin{example}
 For $B=I$, the unit $\ca{V}$-category, we have $\ca{M}(I,I)=\ca{V}$ and both the convolution monoidal structure and the monoidal structure given by composition coincide with the original monoidal structure on $\ca{V}$. A comonad is precisely a coalgebra, and a pseudomonoid in $\Comon(I)$ is precisely a bialgebra (recall that $\Comon(I)$ only has identity 2-cells, so a pseudomonoid is simply a monoid). On the other hand, a pseudomonoid in $\Map(\ca{M},B)$ is precisely a monoidal $\ca{V}$-category $A$ equipped with a strong monoidal $\ca{V}$-functor $w \colon A \rightarrow \overline{I}$. The Cauchy completion $\overline{I}$ of $I$ is the full subcategory $\ca{V}^d$ of $\ca{V}$ consisting of objects with duals. Thus we do get the desired relationship between monoidal categories with a monoidal fiber functor on the one hand and bialgebras in $\ca{V}$ on the other.
\end{example}

 The problem is that the notion of ``compatibility'' with the monoidal structure is quite complicated. The definition of a strong monoidal homomorphism between monoidal bicategories can be found in \cite[pp.\ 15-18]{GORDON_POWER_STREET}. In addition to a pseudonatural equivalence $\star \cdot L\times L \Rightarrow L \cdot \bullet$, we need to define three invertible modifications which are subject to two axioms. In our case, the situation is a bit simpler because the target bicategory only has identity 2-cells. This means in particular that all the modifications must be identity modifications, and defining them boils down to checking that their domain and codomain are actually equal. 
 
 Even this is a daunting task in the case $\ca{M}=\Mod(\ca{V})$, because the composition of 1-cells is given by a colimit formula. Instead of proving this directly for $\Mod(\ca{V})$, we use a theorem from \cite{GORDON_POWER_STREET} which tells us that $\Mod(\ca{V})$ is biequivalent to a much stricter structure called a \emph{Gray monoid} (see Definition~\ref{GRAY_MONOID_DFN}). The tensor product in a Gray monoid is strictly associative on objects, which will greatly simplify our calculations. In other words: we work with the seemingly more complicated notion of a Gray monoid instead of the particular monoidal bicategory $\Mod(\ca{V})$ to make our life simpler, not because we want to give a more general result. In the following sections we state our main theorems on lifting Tannakian biadjunctions. The terminology will be made precise in later sections.
 
\subsection{Monoidal 2-categories}\label{MONOIDAL_OUTLINE_SECTION}
 In general, if we start with a monoidal 2-category $\ca{M}$, then any map pseudomonoid induces a monoidal structure on the domain and target of the Tannakian biadjunction (see Propositions~\ref{CONVOLUTION_COMONAD_PROP} and \ref{SLICE_MONOIDAL_2CAT_PROP} respectively). In Section~\ref{TANNAKIAN_BIADJUNCTION_GRAY_SECTION}, we will prove that the Tannakian biadjunction lifts to the categories of pseudomonoids.

 This is a formal consequence of the fact that the left adjoint is endowed with a strong monoidal structure. Day and Street showed in \cite[Proposition~2]{DAY_STREET} that the right biadjoint of a strong monoidal pseudofunctor inherits the structure of a weak monoidal pseudofunctor. In Appendix~\ref{MONOIDAL_BIADJUNCTION_APPENDIX} we prove the following refinement of their result.
 
  \begin{cit}[Proposition~\ref{MONOIDAL_BIADJUNCTION_PROP}]
 Let $T \colon \ca{M} \rightarrow \ca{N}$ be a strong monoidal left biadjoint between  monoidal 2-categories, with right biadjoint $H$. Then $H$ can be endowed with the structure of a weak monoidal pseudofunctor, and the unit and counit with the structure of weak monoidal pseudonatural transformations, in such a way that the invertible modifications $\alpha$ and $\beta$ that replace the triangle identities become monoidal modifications.
 \end{cit}

 This result makes the following corollary plausible. It is a categorification of the well-known result that monoidal adjunctions between monoidal categories lift to the respective categories of monoids.

 \begin{cit}[Corollary~\ref{LIFT_TO_PSEUDOMONOIDS_COR}]
 Let $T \colon \ca{M} \rightarrow \ca{N}$ be a strong monoidal left biadjoint between two monoidal 2-categories, with right biadjoint $H$. If both $H$ and $T$ are normal, that is, they preserve identities strictly, then the biadjunction lifts to a biadjunction
 \[
 \xymatrix{ \PsMon(\ca{N}) \ar[dd]_{U} \rrtwocell<5>^{\PsMon(T)}_{\PsMon(H)}{`\perp} && \PsMon(\ca{M}) \ar[dd]^{U} \\ \\
           \ca{N} \rrtwocell<5>^{T}_{H}{`\perp} && \ca{M}}
 \]
 between the 2-categories of pseudomonoids. The underlying morphisms of the unit and the counit are given by the unit and the counit of the biadjunction $T \dashv H$.
 \end{cit}

Thus, in order to prove the following theorem we only need to show that the left biadjoint of the Tannakian adjunction is strong monoidal.

\begin{thm}\label{TANNAKIAN_ADJUNCTION_MONOIDAL_THM}
 Let $\ca{M}$ be a Gray monoid with Tannaka-Krein objects, and let $B$ be a map pseudomonoid in $\ca{M}$. Then the Tannakian biadjunction lifts to a biadjunction
 \[
 \xymatrix{ \mathbf{MonComon}(B) \ar[dd]_{U} \rrtwocell<5>{`\perp} && \PsMon\bigl(\Map(\ca{M},B) \bigr) \ar[dd]^{U} \\ \\
 \Comon(B) \rrtwocell<5>^{L(-)}_{*!<0pt,2pt>+{\Rep(-)}}{`\perp} && \Map(\ca{M},B) }
 \]
 between the category of monoidal comonads and the 2-category of pseudomonoids in $\Map(\ca{M},B)$. The latter is precisely the 2-category of map pseudomonoids in $\ca{M}$ equipped with a strong monoidal map to $B$.

 The underlying morphisms of the unit and the counit of the lifted adjunction are equal to the unit and the counit of the Tannakian biadjunction.
\end{thm}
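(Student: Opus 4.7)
The plan is to deduce Theorem~\ref{TANNAKIAN_ADJUNCTION_MONOIDAL_THM} from the general principle encapsulated in Corollary~\ref{LIFT_TO_PSEUDOMONOIDS_COR}: a normal strong monoidal biadjunction between monoidal 2-categories automatically lifts to the 2-categories of pseudomonoids. To put this principle to work in our situation I need to establish four things: (a) both $\Comon(B)$ and $\Map(\ca{M},B)$ carry canonical structures of monoidal 2-categories; (b) the 2-functor $L$ of Proposition~\ref{LEFT_BIADJOINT_2FUNCTOR_PROP} carries the structure of a strong monoidal 2-functor; (c) both $L$ and $\Rep(-)$ are normal; and (d) the 2-categories of pseudomonoids on either side are identified with $\MonComon(B)$ and with the 2-category of map pseudomonoids in $\ca{M}$ equipped with a strong monoidal map to $B$.

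Step (a) is what Propositions~\ref{CONVOLUTION_COMONAD_PROP} and \ref{SLICE_MONOIDAL_2CAT_PROP} are intended to supply. The convolution tensor product on $\Comon(B)$ is $c \star c^\prime \defl m \cdot (c\otimes c^\prime) \cdot \overline{m}$; this makes sense precisely because $B$ is a \emph{map} pseudomonoid, so that the multiplication $m$ has a chosen right adjoint $\overline{m}$. The tensor product on $\Map(\ca{M},B)$ is $w\bullet w^\prime \defl m \cdot (w\otimes w^\prime)$; this is again a map, since composites and tensor products of maps are maps. Working in a Gray monoid sidesteps the need to chase the associator of $\ca{M}$. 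Step (c) is then straightforward: $L$ preserves identities strictly by its explicit definition on 1-cells in Proposition~\ref{LEFT_BIADJOINT_2FUNCTOR_PROP}, and $\Rep(-)$ is normal by the final sentence of Proposition~\ref{REP_2_FUNCTOR_PROP}, where we chose $T^{-1}(\rho_{\id})=\id$. Step (d) is essentially a definition chase: $\Comon(B)$ has only identity 2-cells, so a pseudomonoid there is just a monoid for the convolution product, which is precisely a monoidal comonad; and a pseudomonoid in $\Map(\ca{M},B)$ unwinds to a map pseudomonoid in $\ca{M}$ together with a strong monoidal map to $B$, as described in the outline preceding the theorem.

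The main work, and the main obstacle, will be step (b). The pseudonatural equivalence $L(w)\star L(w^\prime) \simeq L(w\bullet w^\prime)$ will be constructed by observing that $\overline{m\cdot(w\otimes w^\prime)} \cong (\overline{w}\otimes\overline{w^\prime}) \cdot \overline{m}$, since right adjoints compose contravariantly and $\otimes$ is a 2-functor; substituting into the definitions of $L$ and of $\star$ then exhibits the desired isomorphism. The three invertible modifications required by the definition of a strong monoidal pseudofunctor (the associativity constraint and the two unit constraints) are forced to be identities because $\Comon(B)$ has only identity 2-cells; consequently the two coherence hexagons of \cite{GORDON_POWER_STREET} reduce to equalities of 1-cells, which can be verified by bookkeeping with the mate calculus in string-diagram form as introduced in Section~\ref{STRING_DIAGRAMS_SECTION}. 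Once step (b) is in place, Corollary~\ref{LIFT_TO_PSEUDOMONOIDS_COR} applies verbatim to yield the lifted biadjunction, and the identifications in step (d) rewrite its domain and codomain in the stated form. The assertion that the underlying morphisms of the lifted unit and counit coincide with those of the original biadjunction is part of the conclusion of Corollary~\ref{LIFT_TO_PSEUDOMONOIDS_COR}.
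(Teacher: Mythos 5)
Your proposal is correct and takes essentially the same route as the paper: the paper's proof of this theorem is exactly the combination of Proposition~\ref{LEFT_ADJ_MONOIDAL_PROP} (your step (b), with the strong monoidal structure on $L$ given by the Gray interchange and the modifications forced to be identities, verified via the mate calculus of Lemma~\ref{INTERCHANGE_MATE_LEMMA}) with Corollary~\ref{LIFT_TO_PSEUDOMONOIDS_COR}, using the normality of $\Rep(-)$ recorded in Proposition~\ref{REP_2_FUNCTOR_PROP}. Your steps (a) and (d) are likewise supplied by Propositions~\ref{CONVOLUTION_COMONAD_PROP} and \ref{SLICE_MONOIDAL_2CAT_PROP} and the identification of pseudomonoids preceding the theorem, just as you indicate.
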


These results provide a conceptual explanation for the fact that the coalgebra associated to a strong monoidal fiber functor inherits the structure of a (not necessarily commutative) bialgebra. A similar fact holds for braided and symmetric monoidal fiber functors.

\subsection{Braiding}\label{BRAIDING_OUTLINE_SECTION}
If $\ca{M}$ is also endowed with a braiding, and $B$ is a braided map pseudomonoid, then the monoidal structure on $\Comon(B)$ and $\Map(\ca{M},B)$ are braided, and we get the following results, whose proofs follow the same pattern as the proofs for the compatibility with the monoidal structure described in Section~\ref{MONOIDAL_OUTLINE_SECTION}.

\begin{cit}[Corollary~\ref{LIFT_TO_BRAIDED_PSEUDOMONOIDS_COR}]
 Let $T \colon \ca{M} \rightarrow \ca{N}$ be a braided strong monoidal left biadjoint between braided monoidal 2-categories, with right biadjoint $H$. If both $H$ and $T$ are normal, then the biadjunction lifts to a biadjunction
 \[
 \xymatrix{ \BrPsMon(\ca{N}) \ar[dd]_{U} \rrtwocell<5>^{\BrPsMon(T)}_{\BrPsMon(H)}{`\perp} && \BrPsMon(\ca{M}) \ar[dd]^{U} \\ \\
           \ca{N} \rrtwocell<5>^{T}_{H}{`\perp} && \ca{M}}
 \]
 between the 2-categories of braided pseudomonoids. The underlying morphisms of the unit and the counit are given by the unit and the counit of the biadjunction $T \dashv H$.
 \end{cit}

\begin{thm}\label{TANNAKIAN_ADJUNCTION_BRAIDED_THM}
 Let $\ca{M}$ be a braided Gray monoid with Tannaka-Krein objects, and let $B$ be a braided map pseuodomonoid in $\ca{M}$. Then the Tannakian biadjunction lifts to a biadjunction
 \[
 \xymatrix{ \mathbf{BrMonComon}(B) \ar[dd]_{U} \rrtwocell<5>{`\perp} && \BrPsMon\bigl(\Map(\ca{M},B) \bigr) \ar[dd]^{U} \\ \\
 \Comon(B) \rrtwocell<5>^{L(-)}_{*!<0pt,2pt>+{\Rep(-)}}{`\perp} && \Map(\ca{M},B) }
 \]
 between the category of braided monoidal comonads and the 2-category of braided pseudomonoids in $\Map(\ca{M},B)$. The latter is precisely the 2-category of braided map pseudomonoids in $\ca{M}$ equipped with a braided strong monoidal map to $B$.

 The underlying morphisms of the unit and the counit of the lifted adjunction are equal to the unit and the counit of the Tannakian biadjunction.
\end{thm}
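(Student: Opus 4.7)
The plan is to reduce Theorem~\ref{TANNAKIAN_ADJUNCTION_BRAIDED_THM} to the unbraided analogue Theorem~\ref{TANNAKIAN_ADJUNCTION_MONOIDAL_THM} together with the generic lifting result Corollary~\ref{LIFT_TO_BRAIDED_PSEUDOMONOIDS_COR}. Concretely, I would first verify that whenever $B$ is a braided map pseudomonoid in a braided Gray monoid $\ca{M}$, the 2-categories $\Comon(B)$ and $\Map(\ca{M},B)$ inherit \emph{braided} monoidal structures refining the monoidal structures already produced for Theorem~\ref{TANNAKIAN_ADJUNCTION_MONOIDAL_THM}. On the $\Comon(B)$ side, the convolution product $f\star g$ is braided by conjugating the braiding $c_{B,B}\colon B\otimes B\to B\otimes B$ with the multiplication $m$ and its right adjoint $\overline{m}$; the hexagon axioms reduce to the hexagons of the braiding on $\ca{M}$ together with the braided pseudomonoid axioms for $B$. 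On the $\Map(\ca{M},B)$ side, the braiding on $w\bullet w'=m\cdot(w\otimes w')$ is the pseudonatural transformation induced by whiskering $c_{B,B}$ and the braiding of $\ca{M}$ with the 1-cells $w,w'$.

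Second, I would upgrade the strong monoidal structure on $L$ (established in the proof of Theorem~\ref{TANNAKIAN_ADJUNCTION_MONOIDAL_THM}) to a \emph{braided} strong monoidal structure. The structural isomorphism
\[
L(w)\star L(w')\;\cong\;L(w\bullet w')
\]
arises from the pseudofunctoriality of $\otimes$ applied to $w\dashv\overline{w}$ and $w'\dashv\overline{w'}$; the braiding compatibility then says that conjugating this isomorphism with the convolution braiding on the left and with the braiding of $\bullet$ on the right gives back the same 2-cell. Using string diagram notation as in Section~\ref{STRING_DIAGRAMS_SECTION}, both sides unfold to the same pasting built from $c_{B,B}$, $m$, $\overline{m}$, the units/counits of $w,w',\overline{w},\overline{w'}$ and the braiding of $\ca{M}$; the required equality reduces to naturality of $c$ together with the braided pseudomonoid axioms for $B$.

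Third, with $L$ now a normal braided strong monoidal left biadjoint, Corollary~\ref{LIFT_TO_BRAIDED_PSEUDOMONOIDS_COR} produces the desired lifted biadjunction
\[
\BrPsMon(L)\dashv \BrPsMon(\Rep(-))\colon \BrPsMon\bigl(\Comon(B)\bigr)\longrightarrow \BrPsMon\bigl(\Map(\ca{M},B)\bigr),
\]
with underlying unit and counit equal to those of the Tannakian biadjunction. To finish, one identifies the two bicategories of braided pseudomonoids with the ones appearing in the statement: $\BrPsMon\bigl(\Map(\ca{M},B)\bigr)$ consists of braided map pseudomonoids in $\ca{M}$ equipped with a braided strong monoidal map to $B$, which is immediate from the description of 1-cells and 2-cells in $\Map(\ca{M},B)$. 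Similarly, $\BrPsMon\bigl(\Comon(B)\bigr)$ is by definition $\BrMonComon(B)$, since $\Comon(B)$ has only identity 2-cells so a braided pseudomonoid structure reduces to a strict one.

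The main obstacle is the coherence check in the second step: verifying that the strong monoidal constraint on $L$ really intertwines the two braidings, rather than some twisted variant. This is a 2-cell equation in a Gray monoid involving several adjunction units/counits and two braiding 2-cells; the calculation is in principle straightforward in string diagram notation but has many moving parts. A convenient shortcut is to exploit that $\Comon(B)$ is locally discrete, so it suffices to check equality of underlying 1-cells, which collapses the braided coherence to a single identity of 2-cells in $\ca{M}(B,B)$ that follows from the hexagons for $c$ and the braided map pseudomonoid axioms for $B$.
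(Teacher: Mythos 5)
Your proposal follows essentially the same route as the paper: the braided monoidal structures on $\Map(\ca{M},B)$ and $\Comon(B)$ are Propositions~\ref{SLICE_BRAIDED_PROP} and \ref{HOM_CATEGORY_BRAIDED_PROP}, the braiding on $L$ is Proposition~\ref{LEFT_ADJ_BRAIDED_PROP} (whose key input is the mate computation of Lemma~\ref{PSEUDOFUNCTOR_MATE_LEMMA}, which your appeal to naturality of the braiding is gesturing at), and the theorem then follows from Corollary~\ref{LIFT_TO_BRAIDED_PSEUDOMONOIDS_COR} exactly as you say. Your observation that local discreteness of $\Comon(B)$ collapses the braided coherence for $L$ to a single equation of 2-cells in $\ca{M}(B,B)$ is precisely the simplification the paper exploits.
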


\subsection{Syllepsis and symmetry}\label{SYLLEPSIS_OUTLINE_SECTION}
In the world of monoidal 2-categories, there is a notion lying between a braiding and a symmetry called a \emph{syllepsis}. Similarly to how a symmetric monoidal category is a braided monoidal category subject to an additional equation between certain 1-cells, a symmetric monoidal 2-category is a sylleptic monoidal 2-category subject to one additional equation between certain 2-cells. We are mainly interested in the case of symmetric monoidal 2-categories $\ca{M}$, but it is clear from what we just said that we need a good understanding of the sylleptic monoidal case first.

We will show that if $\ca{M}$ is sylleptic (symmetric) and $B$ is a symmetric map pseudomonoid, then $\Comon(B)$ is symmetric (it is, after all, a 1-category) and $\Map(\ca{M},B)$ is a sylleptic (symmetric) 2-category.

\begin{cit}[Corollary~\ref{LIFT_TO_SYMMETRIC_PSEUDOMONOIDS_COR}]
 Let $T \colon \ca{M} \rightarrow \ca{N}$ be a sylleptic strong monoidal left biadjoint between symmetric monoidal 2-categories, with right biadjoint $H$. If both $H$ and $T$ are normal, then the biadjunction lifts to a biadjunction
 \[
 \xymatrix{ \SymPsMon(\ca{N}) \ar[dd]_{U} \rrtwocell<5>^{\SymPsMon(T)}_{\SymPsMon(H)}{`\perp} && \SymPsMon(\ca{M}) \ar[dd]^{U} \\ \\
           \ca{N} \rrtwocell<5>^{T}_{H}{`\perp} && \ca{M}}
 \]
 between the 2-categories of symmetric pseudomonoids. The underlying morphisms of the unit and the counit are given by the unit and the counit of the biadjunction $T \dashv H$. 
 \end{cit}

\begin{thm}\label{TANNAKIAN_ADJUNCTION_SYLLEPTIC_THM}
 Let $\ca{M}$ be a sylleptic (or symmetric) Gray monoid with Tannaka-Krein objects, and let $B$ be a symmetric map pseudomonoid in $\ca{M}$. Then the Tannakian biadjunction lifts to a biadjunction
 \[
 \xymatrix{ \mathbf{SymMonComon}(B) \ar[dd]_{U} \rrtwocell<5>{`\perp} && \SymPsMon\bigl(\Map(\ca{M},B) \bigr) \ar[dd]^{U} \\ \\
 \Comon(B) \rrtwocell<5>^{L(-)}_{*!<0pt,2pt>+{\Rep(-)}}{`\perp} && \Map(\ca{M},B) }
 \]
 between the category of symmetric monoidal comonads and the 2-category of symmetric pseudomonoids in $\Map(\ca{M},B)$. The latter is precisely the 2-category of symmetric map pseudomonoids in $\ca{M}$ equipped with a symmetric strong monoidal map to $B$.

 The underlying morphisms of the unit and the counit of the lifted adjunction are equal to the unit and the counit of the Tannakian biadjunction.
\end{thm}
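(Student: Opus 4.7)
The plan is to parallel the proofs of Theorems~\ref{TANNAKIAN_ADJUNCTION_MONOIDAL_THM} and \ref{TANNAKIAN_ADJUNCTION_BRAIDED_THM}: we upgrade the monoidal / braided structure on the left biadjoint $L$ to a sylleptic (resp.\ symmetric) strong monoidal structure, and then invoke the formal transfer result Corollary~\ref{LIFT_TO_SYMMETRIC_PSEUDOMONOIDS_COR}. Since both $L$ and $\Rep(-)$ were arranged to be normal in Sections~\ref{TK_ADJ_2CAT_SECTION}, the hypotheses of that corollary hold as soon as we have the monoidal compatibility.

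First I would recall from the proof of Theorem~\ref{TANNAKIAN_ADJUNCTION_BRAIDED_THM} the braided strong monoidal structure on $L$, namely the canonical equivalence $L(w)\star L(w^\prime)\simeq L(w\bullet w^\prime)$ coming from pseudofunctoriality of $\otimes$ in $\ca{M}$, together with the modification witnessing compatibility with the braidings of $B$ and $\ca{M}$. To promote this to a sylleptic strong monoidal structure I only need to verify one additional axiom: that the syllepsis component, viewed as an invertible $3$-cell in the coherence data, is sent to an identity by $L$. Since the target $\Comon(B)$ has only identity $2$-cells, this axiom collapses to an equation of $2$-cells in $\Comon(B)$ which follows immediately from the syllepsis on $\ca{M}$ and the assumption that $B$ is a \emph{symmetric} (hence sylleptic) map pseudomonoid. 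In the symmetric case the further symmetry axiom is again automatic on the target side.

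Next I would apply Corollary~\ref{LIFT_TO_SYMMETRIC_PSEUDOMONOIDS_COR} to the (normal) sylleptic strong monoidal left biadjoint $L\colon \Map(\ca{M},B)\to \Comon(B)$. This produces a lifted biadjunction
\[
\SymPsMon(L)\dashv \SymPsMon\bigl(\Rep(-)\bigr)
\]
between the $2$-categories of symmetric pseudomonoids, whose unit and counit have underlying $1$-cells equal to those of the original Tannakian biadjunction. It then remains to identify the two sides with the categories appearing in the statement. On the $\Map(\ca{M},B)$ side, $\SymPsMon\bigl(\Map(\ca{M},B)\bigr)$ unwinds by definition of the convolution-free tensor $\bullet$ to the $2$-category of symmetric map pseudomonoids $A$ in $\ca{M}$ equipped with a symmetric strong monoidal map $A\to B$. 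On the $\Comon(B)$ side, because $\Comon(B)$ has only identity $2$-cells, a symmetric pseudomonoid there is just a commutative monoid for the convolution product $\star$, i.e.\ a symmetric monoidal comonad on $B$, yielding $\SymMonComon(B)$.

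The main obstacle I anticipate is the verification of the additional syllepsis axiom for $L$ in the first step. Although it is a $2$-cell equation and therefore, in principle, straightforward once all the data are written down, it requires carefully combining the syllepsis $2$-cells of $\ca{M}$ with the symmetry of the multiplication $m\colon B\otimes B\to B$ and the pseudonatural equivalence witnessing pseudofunctoriality of $\otimes$. The cleanest way to carry this out is in the Gray-monoid string calculus from Section~\ref{STRING_DIAGRAMS_SECTION}, imitating the verifications already deferred to Section~\ref{TANNAKIAN_BIADJUNCTION_GRAY_SECTION} for the braided and monoidal cases; the axioms reduce to a short manipulation of a string diagram involving two copies each of $w\dashv\overline w$, $w^\prime\dashv\overline{w^\prime}$, the braiding/syllepsis of $\ca{M}$, and the symmetry of $B$.
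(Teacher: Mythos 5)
Your proposal is correct and follows the paper's own route: upgrade the braided strong monoidal structure on $L$ from Proposition~\ref{LEFT_ADJ_BRAIDED_PROP} to a sylleptic one and then apply Corollary~\ref{LIFT_TO_SYMMETRIC_PSEUDOMONOIDS_COR}, identifying the two sides as you do. The only remark is that the ``main obstacle'' you anticipate is in fact vacuous: the syllepsis axiom for $L$ is an equation between two parallel 2-cells in $\Comon(B)$, which has no nonidentity 2-cells, so it holds with no string-diagram computation whatsoever --- this one-line observation is exactly the paper's proof of Proposition~\ref{LEFT_ADJ_SYLLEPTIC_PROP}.
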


\subsection{Autonomous pseudomonoids and Hopf monoidal comonads}
 We want to show that the left adjoint of the Tannakian adjunction on $\Mod(\ca{V})$ sends autonomous categories to Hopf algebroids if the fiber functor lands in modules for a commutative algebra in $\ca{V}$. In order to deal with the more general fiber functors landing in the category of presheaves of a small $\ca{V}$-category, we need to use the concept of a Hopf monoidal comonad (see Definition~\ref{HOPF_COMONAD_DFN}).
 
 There is a structural way to characterize autonomous monoidal $\ca{V}$-categories in terms of the bicategory $\Mod(\ca{V})$: they correspond to autonomous pseudomonoids in $\Mod(\ca{V})$ (see Definition~\ref{AUTONOMOUS_DEFINITION} and Proposition~\ref{VCAT_AUTONOMOUS_PROP}). The notion of an autonomous pseudomonoid in a monoidal bicategory involves 1-cells which are not maps, so it can't be detected directly in the 2-category of maps. Therefore the analysis of duals requires more work than the analysis of the various monoidal structures discussed above. In Section~\ref{HOPF_MONOIDAL_COMONADS_SECTION} we will prove the following result.
 
 \begin{thm}\label{HOPF_COMONAD_THM}
 Let $A$ and $B$ be autonomous map pseudomonoids in a Gray monoid $\ca{M}$, and let $(w,\psi,\psi_0)\colon A \rightarrow B$ be a strong monoidal map. Then the induced comonad $L(w)=w \cdot \overline{w}$ is a Hopf monoidal comonad.
\end{thm}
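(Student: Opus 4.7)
The plan is to establish the theorem in two steps: first that $L(w)$ acquires a canonical structure of monoidal comonad on $B$, and second that the resulting fusion 2-cells are invertible. For the first step I would appeal directly to Theorem~\ref{TANNAKIAN_ADJUNCTION_MONOIDAL_THM}: since $(w,\psi,\psi_0)$ is a strong monoidal map between the map pseudomonoids $A$ and $B$, it is a pseudomonoid in $\Map(\ca{M},B)$, and the normal pseudofunctor $L$ lifts to the 2-category of pseudomonoids. Unpacked in string-diagram form, the lax-monoidal coherence 2-cell $\mu \colon L(w)\star L(w)\Rightarrow L(w)$ is built from $\psi$, the unit $\eta_w$, and the counit $\varepsilon_w$ of the adjunction $w\dashv \overline{w}$, and the unit-coherence 2-cell is built similarly from $\psi_0$.

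For the second step, I first have to name the fusion (Hopf) 2-cells whose invertibility is the condition for $L(w)$ to be Hopf monoidal in the sense of Section~\ref{HOPF_MONOIDAL_COMONADS_SECTION}. These are built from the multiplication $m_B$ of $B$, the comultiplication $w\eta_w\overline{w}$ of $L(w)$, and the lax-monoidal structure $\mu$ just constructed. The plan is to write down these 2-cells explicitly in string-diagram notation, then construct candidate inverses using the autonomous structure. An autonomous pseudomonoid is characterized (cf.\ the result of L\'opez~Franco cited in the proof of Proposition~\ref{AUTONOMOUS_CAUCHY_PROP}) by the existence of "dual" 1-cells and associated evaluation and coevaluation 2-cells for the multiplication, which play the role of an antipode; so both $A$ and $B$ supply such data. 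Because $(w,\psi,\psi_0)$ is strong monoidal, the evaluation and coevaluation of $A$ correspond, under whiskering by $w$ and $\overline{w}$, to those of $B$, so either autonomous structure yields the same inverse.

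The verification that the constructed candidate really is an inverse to the fusion operator is then a string-diagram computation using: the triangle identities for $w\dashv\overline{w}$; the triangle identities for the evaluation/coevaluation data that witness autonomy of $A$ (or $B$); coherence of $\psi$ and $\psi_0$; and the comonoid axioms for $L(w)$. Formally this is the same computation that, applied to a bialgebra $H$ in $\Vect_k$, produces the inverse of the Galois map $a\otimes b \mapsto a_{(1)}\otimes a_{(2)}b$ out of the antipode.

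The main obstacle will be bookkeeping rather than any new mathematical idea. The 2-cells involved assemble several layers of coherence (the adjunction $w\dashv\overline{w}$, the strong monoidal structure of $w$, the autonomous data on $A$, the lifted monoidal structure on $L(w)$) and, because $\ca{M}$ is only a Gray monoid, one must also drag the interchange isomorphisms through each step; these are invertible and cause no essential difficulty, but organising the string-diagram manipulations so that each verification is a manageable local reduction will be the delicate part. Deferring the detailed diagrammatic argument to Section~\ref{HOPF_MONOIDAL_COMONADS_SECTION}, where the notation for Hopf monoidal comonads is fixed, will make the proof both shorter and more transparent.
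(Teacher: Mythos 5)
Your first step is fine and agrees with the paper: the monoidal comonad structure on $L(w)$ comes for free from the fact that $L$ is strong monoidal and $(w,\psi,\psi_0)$ is a pseudomonoid in $\Map(\ca{M},B)$. For the second step, however, your route diverges from the paper's in a way that hides the real difficulty. The paper does not construct an inverse to the fusion operators directly; it shows that $w$ is strong left and right coclosed (i.e.\ that the two mates of $\psi^{-1}$ along $\overline{w}A$ resp.\ $A\overline{w}$ are invertible) and then invokes Proposition~\ref{COCLOSED_IMPLIES_HOPF_PROP}, which rests on an external result of Chikhladze--Lack--Street. The right-coclosed case is then obtained from the left one by passing to $\ca{M}^{\rev}$.

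The genuine gap in your proposal is the sentence ``Because $(w,\psi,\psi_0)$ is strong monoidal, the evaluation and coevaluation of $A$ correspond, under whiskering by $w$ and $\overline{w}$, to those of $B$, so either autonomous structure yields the same inverse.'' This is precisely where all the work is, and it is not a consequence of the invertibility of $\psi$ plus triangle identities. What is actually needed is that $w$ admits a right bidual in the lax slice $\ca{M}\slash_\ell B$ in two different ways --- as $w\cdot d_A$ (Proposition~\ref{PRESERVATION_OF_DUALS_PROP}, strong monoidal maps preserve dualizations) and as $d_B\cdot\dual{\overline{w}}$ (Proposition~\ref{MAPS_AND_DUALS_PROP}) --- together with the explicit comparison 2-cell between these two biduals (Proposition~\ref{UNIQUENESS_LAX_SLICE_PROP}) and the naturally-Frobenius normalization $\dual{A}=A$, $d_A=\id$, $e_A=\overline{j}\cdot p$ of Proposition~\ref{FROBENIUS_PROP}. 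Extracting from this the invertibility of the specific 2-cell $\tau$ of Lemma~\ref{HOPF_LEMMA}, and then of the coclosedness mates, is the content of Appendix~\ref{TECHNICAL_LEMMA_APPENDIX} and of the displayed computation in the proof of Theorem~\ref{HOPF_COMONAD_THM}; none of it reduces to the ``bookkeeping'' you defer. (Incidentally, the characterization of autonomous pseudomonoids you want is Definition~\ref{AUTONOMOUS_DEFINITION} and Proposition~\ref{FROBENIUS_PROP}, not the L\'opez~Franco result quoted in Proposition~\ref{AUTONOMOUS_CAUCHY_PROP}, which concerns dualizable objects of $\Prs{B}$.) Without an argument supplying Lemma~\ref{HOPF_LEMMA} or an equivalent, your candidate inverse cannot be shown to compose correctly with the fusion operator.
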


\begin{cor}
 Let $\ca{V}$ be a cosmos. The left adjoint of the neutral Tannakian biadjunction
 \[
  L \colon \Vcat \slash \ca{V}^d \rightarrow  \Coalg(\ca{V})
 \]
 (where $\ca{V}^d=\overline{\ca{I}}$ denotes the sub-$\ca{V}$-category of dualizable objects in $\ca{V}$) sends (symmetric) autonomous monoidal categories to (commutative) Hopf algebras.
\end{cor}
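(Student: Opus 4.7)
The plan is to specialize the general framework of Theorems~\ref{TANNAKIAN_ADJUNCTION_MONOIDAL_THM}, \ref{TANNAKIAN_ADJUNCTION_SYLLEPTIC_THM}, and \ref{HOPF_COMONAD_THM} to the case $B = \ca{I}$, the unit $\ca{V}$-category, in the Gray monoid biequivalent to $\Mod(\ca{V})$. First I would note that the unit $\ca{V}$-category $\ca{I}$ is automatically a symmetric map pseudomonoid (its multiplication and unit modules are the canonical isomorphisms $\ca{I} \otimes \ca{I} \cong \ca{I}$ and $\ca{I} \cong \ca{I}$, which are clearly left adjoints), and moreover it is autonomous, since the single object, being the unit, is always self-dual. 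Under the identification $\Comon(\ca{I}) = \Coalg(\ca{V})$ and $\overline{\ca{I}} = \ca{V}^d$, the neutral Tannakian biadjunction of the statement is precisely the Tannakian biadjunction associated to $B = \ca{I}$.

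Next I would apply Theorem~\ref{TANNAKIAN_ADJUNCTION_MONOIDAL_THM} for the monoidal case and Theorem~\ref{TANNAKIAN_ADJUNCTION_SYLLEPTIC_THM} for the symmetric case. These theorems produce lifts of the Tannakian biadjunction to the 2-categories of (symmetric) pseudomonoids. Applied to a (symmetric) strong monoidal fiber functor $w\colon \ca{A} \to \ca{V}^d$, this gives that $L(w)$ carries the structure of a (commutative) monoidal comonad on $\ca{I}$. The key identification to make explicit is that a monoidal comonad on $\ca{I}$ in $\Mod(\ca{V})$ is the same thing as a bialgebra in $\ca{V}$, and a symmetric monoidal comonad on $\ca{I}$ is a commutative bialgebra; this is a direct unwinding of the definitions, since the convolution monoidal structure on $\ca{M}(\ca{I},\ca{I}) = \ca{V}$ coincides with the given monoidal structure of $\ca{V}$.

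The last step invokes Theorem~\ref{HOPF_COMONAD_THM}. If $\ca{A}$ is an autonomous (symmetric) monoidal $\ca{V}$-category with a (symmetric) strong monoidal functor $w\colon \ca{A} \to \ca{V}^d$, then by Proposition~\ref{VCAT_AUTONOMOUS_PROP} both $\ca{A}$ and $\ca{I}$ correspond to autonomous map pseudomonoids in $\Mod(\ca{V})$, and $w$ becomes a strong monoidal map between them. Theorem~\ref{HOPF_COMONAD_THM} then asserts that the induced comonad $L(w) = w \cdot \overline{w}$ is a Hopf monoidal comonad.

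The principal step requiring care is the translation between the formal notion of a Hopf monoidal comonad on $\ca{I}$ and the classical notion of a Hopf algebra in $\ca{V}$; since Hopf monoidal comonads are defined via invertibility of a certain fusion 2-cell (see Section~\ref{HOPF_MONOIDAL_COMONADS_SECTION}), one needs to verify that, when the underlying comonad is already identified with a bialgebra $H$, this invertibility condition is exactly the classical invertibility of the fusion operator $H \otimes H \to H \otimes H$, $x \otimes y \mapsto x_{(1)} \otimes x_{(2)} y$, which in $\ca{V}$ corresponds to the existence of an antipode. Once this identification is in place, the three ingredients combine directly to give the statement.
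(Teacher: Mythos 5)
Your proposal is correct and follows essentially the same route as the paper: reduce to $B=\ca{I}$, apply Theorem~\ref{HOPF_COMONAD_THM} (together with the monoidal/symmetric lifting theorems for the bialgebra structure), and then translate invertibility of the fusion 2-cell for the Hopf monoidal comonad into the classical fusion-operator criterion for an antipode. The paper makes your final translation step precise by identifying a module $\ca{I}\xslashedrightarrow{}\ca{I}$ with the cocontinuous endofunctor $L(w)\otimes-$ of $\ca{V}$ and citing Brugui\`eres--Lack--Virelizier for the equivalence between invertible fusion operators and the existence of an antipode, which is exactly the verification you flag as the step requiring care.
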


\begin{proof}
 By Theorem~\ref{HOPF_COMONAD_THM}, the comonad $L(w) \colon \ca{I} \xslashedrightarrow{} \ca{I}$ is a Hopf monoidal if the domain of $w$ is autonomous monoidal. Moreover, a module $M \colon \ca{I} \xslashedrightarrow{} \ca{I}$ can be identified with the cocontinuous $\ca{V}$-functor $\ca{V} \rightarrow \ca{V}$. Thus $L(w)\otimes -$ is a Hopf monoidal comonad, and the conclusion follows from \cite[Remark~5.6]{BRUGUIERES_LACK_VIRELIZIER} (the fusion operators for the monoidal comonad $L(w)\otimes -$ are precisely the fusion operators of the bialgebra $L(w)$).
\end{proof}

 In Section~\ref{AFFINE_GROUPOIDS} we will show that the Hopf monoidal comonads on a  symmetric monoidal $\ca{V}$-category with one object (that is, on a commutative algebra in $\ca{V}$) correspond to Hopf algebroids in the case where $\ca{M}=\Mod(\ca{V})$. We obtain the desired relation between autonomous symmetric monoidal categories and Hopf algebroids as a corollary. Since it is our main case of interest, we will discuss this consequence first.

\section{Affine groupoids over commutative rings}\label{AFFINE_GROUPOIDS}
 We relate the results about Hopf monoidal comonads from Section~\ref{HOPF_MONOIDAL_COMONADS_SECTION} to the concrete situation where $\ca{M}$ is a Gray monoid equivalent to $\Mod(\ca{V})$ for some cosmos $\ca{V}$. The base pseudomonoid $B$ will be a monoidal $\ca{V}$-category with a single object, that is, a commutative algebra (or commutative monoid) in $\ca{V}$. Further specializing this to the case where $\ca{V}$ is the cosmos of $R$-modules for some commutative ring $R$ we will get the desired result about Tannaka duality for affine groupoids over $R$.

\subsection{Monoidal morphisms in \texorpdfstring{$\Mod(\ca{V})$}{V} and cospans in \texorpdfstring{$\CommAlg(\ca{V})$}{CommAlg(V)}}
Once we introduce the notion of Hopf monoidal comonads (see Definition~\ref{HOPF_COMONAD_DFN}), it will be easy to see that Hopf monoidal comonads in the category of cospans in a finitely cocomplete category $\ca{E}$ are exactly the groupoids internal to $\ca{E}^{\op}$ (see Example~\ref{GROUPOIDS_ARE_HOPF_COMONADS_EXAMPLE}). Taking $\ca{E}=\CommAlg(\ca{V})$, the category of commutative algebras in $\ca{V}$ we find that Hopf algebroids in the usual sense correspond to Hopf monoidal comonads in the category of cospans of $\ca{E}$. Thus, in order to relate the Hopf monoidal comonads in $\Mod(\ca{V})$ to groupoids, it suffices to relate $\Mod(\ca{V})$ to the bicategory of cospans in $\ca{E}$.

\begin{prop}\label{PROFUNCTOR_COSPAN_PROP}
Let $\ca{V}$ be a cosmos, and let $\ca{N}$ be the bicategory with objects the commutative algebras in $\ca{V}$, morphisms the symmetric monoidal morphisms in $\Mod(\ca{V})$ between these algebras, thought of as symmetric pseudomonoids in $\Mod(\ca{V})$, and 2-cells the monoidal 2-cells between them. Then $\ca{N}$ is biequivalent to the bicategory of cospans in the category of commutative algebras in $\ca{V}$ and algebra homomorphisms between them.
\end{prop}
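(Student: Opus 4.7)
The plan is to construct an explicit pseudofunctor $F \colon \cospan\bigl(\CommAlg(\ca{V})\bigr) \to \ca{N}$ which is the identity on objects, and to verify that it is fully faithful on hom-categories and essentially surjective on 1-cells. Since a commutative algebra $A$ in $\ca{V}$ is precisely a one-object symmetric monoidal $\ca{V}$-category and therefore a symmetric pseudomonoid in $\Mod(\ca{V})$, both bicategories have the same 0-cells. A module $M \colon A \xslashedrightarrow{} B$ between such one-object $\ca{V}$-categories is simply an object of $\ca{V}$ equipped with commuting $A$- and $B$-actions, i.e.\ an $A\otimes B$-module in $\ca{V}$. My pseudofunctor $F$ sends a cospan $A \xrightarrow{f} C \xleftarrow{g} B$ to the underlying bimodule of $C$, with left $A$-action via $f$ and right $B$-action via $g$, equipped with the symmetric monoidal structure whose coherence $\phi$ comes from the multiplication of $C$ and whose unit coherence $\phi_0$ comes from the unit of $C$.

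The core technical step is to unwind the axioms of a symmetric monoidal morphism $(M,\phi,\phi_0) \colon A \to B$ in the Gray monoid $\ca{N}$ and identify them term by term with the data of a commutative algebra $C$ in $\ca{V}$ together with algebra homomorphisms $f \colon A \to C$ and $g \colon B \to C$. Concretely: the associativity and unit axioms for $\phi$ and $\phi_0$ translate into the commutative monoid axioms on $M$; commutativity of the symmetry hexagon for $(M,\phi,\phi_0)$ with respect to the symmetries of $A$ and $B$ forces the multiplication on $M$ to be commutative; and the compatibility of $\phi$ with the left $A$-action (respectively right $B$-action) says exactly that the canonical maps $A \to M$ and $B \to M$ (obtained from $\phi_0$ together with the actions) are multiplicative. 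This part of the argument is essentially a bookkeeping exercise once one carefully writes out the pseudomonoid structure on a one-object $\ca{V}$-category, and I expect to present the correspondence via a short string-diagram computation.

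For 2-cells, a monoidal 2-cell $(M,\phi,\phi_0) \Rightarrow (M',\phi',\phi'_0)$ is a morphism of bimodules $\alpha \colon M \to M'$ commuting with $\phi$ and $\phi_0$; under the identification above, this is precisely a commutative algebra homomorphism $C \to C'$ commuting with the two structure maps, i.e.\ a map of cospans. Finally, I will show that $F$ is pseudofunctorial by exhibiting coherent isomorphisms relating composition on the two sides. On $\ca{N}$, the composite of $M\colon A \xslashedrightarrow{} B$ and $N\colon B \xslashedrightarrow{} D$ is computed as the coend $N\odot M = N\otimes_B M$ in $\ca{V}$, which inherits its commutative algebra structure from those of $M$ and $N$; on the cospan side, the composite of cospans is the pushout $M\sqcup_B N$ in $\CommAlg(\ca{V})$. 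The comparison map is the evident one, and it is invertible because pushouts in $\CommAlg(\ca{V})$ are computed by the relative tensor product in $\ca{V}$.

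The main obstacle is the bookkeeping verification that the monoidal-morphism axioms for $(M,\phi,\phi_0)$ are \emph{exactly} equivalent to making $M$ into a commutative algebra under $A\otimes B$, with no additional structure and no missing axioms; in particular one must verify that the symmetry axiom in $\ca{N}$ (which uses the symmetries of $A$, $B$, and of $\Mod(\ca{V})$) is equivalent to commutativity of the multiplication on $M$. Essential surjectivity on 1-cells and fully faithfulness on 2-cells then follow formally from this dictionary, completing the biequivalence.
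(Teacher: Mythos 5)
Your proposal is correct and follows essentially the same route as the paper: identify the hom-category $\ca{N}(A,B)$ with commutative $A\otimes B$-algebras (equivalently cospans $A\to C\leftarrow B$), and match composition of modules (the coend, i.e.\ $-\otimes_B-$) with the pushout of cospans in $\CommAlg(\ca{V})$. The only difference is presentational: where you propose to unwind the symmetric-monoidal-morphism axioms by hand, the paper short-circuits that bookkeeping by citing the mates correspondence of Example~\ref{BRAIDED_PSEUDOMONOIDS_IN_HOM_EXAMPLE}, under which symmetric monoidal morphisms $A\to B$ are exactly commutative monoids for the convolution product on $A\otimes B$-modules.
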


\begin{lemma}\label{COMMUTATIVE_ALGEBRA_LEMMA}
Let $\ca{V}$ be a cosmos. Then $\CommAlg(\ca{V})$ has pushouts given by the tensor product of commutative algebras. The initial object is given by the unit object $I$ of $\ca{V}$. In particular, $\ca{E}=\CommAlg(\ca{V})$ is finitely cocomplete.
\end{lemma}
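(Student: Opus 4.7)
The plan is to establish three things in order: that $I$ is initial in $\CommAlg(\ca{V})$, that pushouts exist (realized as relative tensor products), and that this implies finite cocompleteness. Most of the work is standard and formal; the only point requiring care is the construction of the multiplication on the coequalizer that defines a pushout.

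First I would check that the unit object $I$, equipped with its canonical commutative algebra structure (multiplication $I \otimes I \xrightarrow{\cong} I$ and unit $\id_I$), is initial. For any commutative algebra $A$ with unit $u_A \colon I \to A$ and multiplication $m_A$, an algebra morphism $f \colon I \to A$ must satisfy $f = f \cdot \id_I = m_A \cdot (f \otimes f) \cdot (\id_I \otimes u_I)^{-1}$ and must commute with units; these conditions force $f = u_A$, and one verifies directly that $u_A$ is indeed a morphism of algebras (compatibility with multiplication is the unit axiom for $A$). Uniqueness is then immediate.

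Next, given a span $B \xleftarrow{f} A \xrightarrow{g} C$ in $\CommAlg(\ca{V})$, I would construct the pushout as the relative tensor product $B \otimes_A C$, defined in $\ca{V}$ as the coequalizer of the parallel pair
\[
\xymatrix{B \otimes A \otimes C \ar@<0.5ex>[rr]^-{B \otimes g \cdot \tau} \ar@<-0.5ex>[rr]_-{f \otimes C} & & B \otimes C}
\]
where $\tau$ denotes the appropriate symmetry making these be the two $A$-actions. The key technical step, and the main obstacle, is endowing this coequalizer $q \colon B \otimes C \to B \otimes_A C$ with a commutative algebra structure. Because $\ca{V}$ is closed, $X \otimes -$ preserves colimits for every $X \in \ca{V}$; iterating this, $q \otimes q$ is itself (up to isomorphism) the reflexive coequalizer of the diagram obtained by tensoring the above pair with $B \otimes C$ and then with $B \otimes_A C$. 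The multiplication $m_B \otimes m_C$ on $B \otimes C$ (precomposed with the middle-four-interchange built from the symmetry) coequalizes the relevant pair because $f$ and $g$ are algebra maps and $B, C$ are commutative, so it descends uniquely to a multiplication on $B \otimes_A C$. Associativity, unitality (with unit $B \otimes C \xleftarrow{u_B \otimes u_C} I$ composed with $q$), and commutativity all follow by the uniqueness clause in the universal property of the same coequalizers.

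The universal property is then straightforward: an algebra morphism $B \otimes_A C \to D$ corresponds, via the coequalizer in $\ca{V}$, to a morphism $B \otimes C \to D$ that equalizes the two $A$-actions, and by commutativity of $D$ such a morphism is precisely a pair of algebra morphisms $B \to D$, $C \to D$ agreeing on $A$. Taking $A = I$ recovers binary coproducts as $B \otimes C$; taking the unique span out of $I$ gives the initial object itself. Since a category with an initial object and pushouts has all finite colimits, this concludes the proof. The only nontrivial verification, as indicated, is the construction of the algebra structure on the coequalizer, which hinges essentially on the closedness of $\ca{V}$.
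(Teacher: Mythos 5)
Your proof is correct and follows exactly the route the paper intends: the paper's own proof consists only of the remark that the statement is well known for $\ca{V}=\Mod_R$ and follows in general by a straightforward diagram chase, and your argument (the unit $I$ as initial object, the pushout realized as the relative tensor product $B\otimes_A C$ with the commutative algebra structure descending to the coequalizer because $\ca{V}$ is closed and hence $X\otimes-$ preserves colimits, then finite cocompleteness from an initial object plus pushouts) is precisely that diagram chase written out in full. The only blemish is notational: the two legs of your parallel pair $B\otimes A\otimes C\rightrightarrows B\otimes C$ should be the composites of $B\otimes g$ and of $f\otimes C$ (suitably permuted by the symmetry) with the multiplications of $C$ and $B$ respectively, rather than the bare tensored maps, but this does not affect the argument.
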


\begin{proof}
 This is a well-known fact in the case $\ca{V}=\Mod_R$ for some commutative ring $R$. The general case follows from a straightforward diagram chase.
\end{proof}

\begin{proof}[Proof of Proposition~\ref{PROFUNCTOR_COSPAN_PROP}.]
 Let $A$ and $B$ be commutative algebras in $\ca{V}$. The above lemma can be applied to the monoidal category $\ca{N}(A,B)=\Mod(\ca{V})(A,B)$ with convolution tensor product. This is simply the category of $A\otimes B$-modules, and the convolution tensor product is the coequalizer of the two $A$-actions and the two $B$-actions on the ordinary tensor product, that is, it is the tensor product over $A\otimes B$. Moreover, an application of the calculus of mates shows that a module is a symmetric monoidal morphism from $A$ to $B$ if and only if it is a commutative algebra for the convolution monoidal structure (cf.\ Example~\ref{BRAIDED_PSEUDOMONOIDS_IN_HOM_EXAMPLE}). 

 In other words, the bicategory $\ca{N}$ has objects the commutative algebras, and 1-cells between $A$ and $B$ are precisely the commutative $A\otimes B$-algebras. It is well known that an $A$-algebra $C$ can equivalently be described as an algebra in $\ca{V}$ equipped with a morphism of algebras $A \rightarrow C$. This shows that the category $\ca{N}(A,B)$ can be identified with the category of cospans between $A$ and $B$. 

 From Lemma~\ref{COMMUTATIVE_ALGEBRA_LEMMA} we know that the pushout in the category of commutative algebras is given by tensoring over the common domain of the two homomorphisms. On the other hand, the coend that computes the composition of an $A \otimes B$-module $M$ with a $B\otimes C$-module $N$ is given by the coequalizer of the two maps
\[
\xymatrix{M \otimes B \otimes N \ar@<0.5ex>[r] \ar@<-0.5ex>[r] & M\otimes N } 
\]
 which are given by the two different $B$-actions, that is, composition of modules is also given by tensoring over $B$. Thus the composition of two modules has the same universal property as the composition between the corresponding cospans. This shows that the correspondence between symmetric monoidal modules $A \xslashedrightarrow{} B$ and cospans of commutative algebras between $A$ and $B$ is compatible with compositions, at least up to isomorphism. Since this isomorphism is induced by a universal property we do in fact get a pseudofunctor, which shows that the two bicategories are indeed biequivalent.
\end{proof}

\subsection{Tannaka duality for Hopf algebroids and affine groupoids}
As a corollary, we obtain the desired fact that the left adjoint of the Tannakian adjunction sends autonomous $\ca{V}$-categories to Hopf algebroids. Further specializing to the case of cosmoi of $R$-modules, we obtain our recognition results for categories of representations of affine groupoids.

\begin{cor}\label{TANNAKIAN_ADJUNCTION_GROUPOID_COR}
 Let $\ca{V}$ be a cosmos, and let $B$ be a commutative algebra in $B$. We write $\Cat_B$ for the category of categories internal to $\CommAlg(\ca{V})^{\op}$ whose object of objects is $B$. The morphisms in $\Cat_B$ are the internal functors which are the identity on the object of objects. The Tannakian adjunction gives a biadjunction
 \[
 \xymatrix{
  \Cat_B^{\op} \xtwocell[0,3]{}<5>^{L(-)}_{\Rep(-)}{`\perp} &&&
  {\mathbf{SymMon}\mbox{-}\Vcat \slash\Mod^{d}_B}
 }
 \]
between $\Cat_B^{\op}$ and the category of small symmetric monoidal $\ca{V}$-categories equipped with a symmetric monoidal functor to the $\ca{V}$-category $\Mod_B^d$ of dualizable $B$-modules.
 
 Moreover, the left biadjoint sends symmetric monoidal $\ca{V}$-categories with duals to groupoids internal to $\CommAlg(\ca{V})^{\op}$.
\end{cor}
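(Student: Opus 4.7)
The plan is to obtain the corollary by specializing Theorem~\ref{TANNAKIAN_ADJUNCTION_SYLLEPTIC_THM} to a Gray monoid $\ca{M}$ biequivalent to $\Mod(\ca{V})$, applied to the commutative algebra $B$ viewed as a symmetric map pseudomonoid with one object. The Tannakian biadjunction then takes the form
\[
\xymatrix{ \SymMonComon(B) \rrtwocell<5>^{L(-)}_{\Rep(-)}{`\perp} && \SymPsMon\bigl(\Map(\ca{M},B)\bigr) }
\]
and the task is to identify the two 2-categories involved with those in the statement.

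First I would identify the right-hand side. By Lemma~\ref{MAPS_LEMMA}, the 2-category $\Map(\ca{M},B)$ is biequivalent to $\Vcat \slash \overline{B}$, and $\overline{B}$ is the $\ca{V}$-category $\Mod^d_B$ of dualizable $B$-modules. A symmetric pseudomonoid in $\Map(\ca{M},B)$ consists of a small symmetric monoidal $\ca{V}$-category together with a symmetric strong monoidal $\ca{V}$-functor into $\Mod^d_B$, matching the codomain in the statement.

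Next I would identify $\SymMonComon(B)$ with $\Cat_B^{\op}$. Here I invoke Proposition~\ref{PROFUNCTOR_COSPAN_PROP}: the sub-bicategory of $\Mod(\ca{V})$ on commutative algebras, with symmetric monoidal 1-cells and monoidal 2-cells, is biequivalent to the bicategory $\cospan\bigl(\CommAlg(\ca{V})\bigr)$. Under this biequivalence a symmetric monoidal comonad on $B$---that is, a comonoid in the monoidal category of symmetric monoidal endo-1-cells of $B$ in $\Mod(\ca{V})$ under composition---corresponds to a comonoid in the category of cospans on $B$ under pushout composition. Such a comonoid is precisely an internal category object in $\CommAlg(\ca{V})^{\op}$ whose object of objects is $B$: the underlying cospan provides source and target (the two algebra maps $B \rightrightarrows H$), the comultiplication provides the composition $H \to H \otimes_B H$ (in $\CommAlg(\ca{V})^{\op}$ this is a cospan map going the opposite way, i.e., the composition of arrows), and the counit provides the identity-assigning morphism $H \to B$. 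The comonad axioms translate into associativity and unitality of this composition.

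The last step is to verify the claim about autonomous domains producing groupoids. For this I would apply Theorem~\ref{HOPF_COMONAD_THM}: if $(w,\psi,\psi_0)\colon A \rightarrow B$ is a symmetric strong monoidal map between symmetric autonomous map pseudomonoids in $\ca{M}$, then the induced comonad $L(w)$ is a Hopf monoidal comonad. It remains to note (as indicated in the excerpt just after Theorem~\ref{HOPF_COMONAD_THM}) that a Hopf monoidal comonad in $\cospan\bigl(\CommAlg(\ca{V})\bigr)$ is exactly a groupoid object in $\CommAlg(\ca{V})^{\op}$: the Hopf (fusion) condition provides the antipode, or equivalently inverts the composition map to produce an internal inverse operation on arrows. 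Combining this with the identification above gives the second assertion.

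The main obstacle is the bookkeeping in the second step: one must check that the biequivalence of Proposition~\ref{PROFUNCTOR_COSPAN_PROP}, which is only explicitly a biequivalence of bicategories of 1-cells, transports comonoid structures with respect to composition on one side (i.e., comonads in $\ca{M}$) to comonoid structures with respect to pushout on the other side (i.e., internal categories in $\CommAlg(\ca{V})^{\op}$). This requires tracing through the compatibility between the composition 2-cells in $\Mod(\ca{V})$ and the universal 2-cells of pushouts in $\CommAlg(\ca{V})$, but since the biequivalence of Proposition~\ref{PROFUNCTOR_COSPAN_PROP} was set up precisely so that composition of modules corresponds to pushout of cospans, this reduces to a formal verification.
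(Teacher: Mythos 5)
Your proposal is correct and follows essentially the same route as the paper: specialize Theorem~\ref{TANNAKIAN_ADJUNCTION_SYLLEPTIC_THM} to a commutative algebra $B$ in $\Mod(\ca{V})$, use Proposition~\ref{PROFUNCTOR_COSPAN_PROP} to identify symmetric monoidal comonads on $B$ with comonads in cospans (equivalently, internal categories in $\CommAlg(\ca{V})^{\op}$), identify $\overline{B}$ with $\Mod^d_B$ (Proposition~\ref{AUTONOMOUS_CAUCHY_PROP}), and derive the groupoid statement from Theorem~\ref{HOPF_COMONAD_THM} together with Example~\ref{GROUPOIDS_ARE_HOPF_COMONADS_EXAMPLE}. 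The paper phrases the internal-category identification as ``comonads in cospans are the opposite of monads in spans, and monads in spans are internal categories,'' which is the same observation you spell out directly.
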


\begin{proof}
 Proposition~\ref{PROFUNCTOR_COSPAN_PROP} implies that the domain of the left biadjoint of the Tannakian adjunction is given by the comonads on $B$ in the category of cospans in $\CommAlg(\ca{V})$. This is the opposite of the category of monads in the category of spans in $\CommAlg(\ca{V})^{\op}$, and it is well-known that an internal category is precisely a monad in the category of spans. We get the desired adjunction if we combine this with Theorem~\ref{TANNAKIAN_ADJUNCTION_SYLLEPTIC_THM} (recall from Proposition~\ref{AUTONOMOUS_CAUCHY_PROP} that $\Mod_B^d$ is the Cauchy completion of $B$, considered as an autonomous monoidal $\ca{V}$-category). The fact about the left adjoint landing in internal groupoids follows from Theorem~\ref{HOPF_COMONAD_THM} and Example~\ref{GROUPOIDS_ARE_HOPF_COMONADS_EXAMPLE}.
\end{proof}

\begin{thm}\label{TANNAKA_AFFINE_GROUPOID_THM}
 Let $B$ be a commutative $R$-algebra, let $\ca{A}$ be an additive autonomous symmetric monoidal $R$-linear category, and let $w \colon \ca{A} \rightarrow \Mod_B$ be a symmetric strong monoidal $R$-linear functor. Suppose that
 \begin{enumerate}
\item[i)] 
 the functor $w_0$ is faithful and reflects isomorphisms;
\item[ii)] 
 the category $\el(w_0)$ of elements of $w_0$ is cofiltered; and
\item[iii)]
 if the cokernel of $w_0(f)$ is finitely generated and projective, then the cokernel of $f$ exists and is preserved by $w_0$.
 \end{enumerate}
Then there exists an affine groupoid $G=\Spec(H)$ acting on $\Spec(B)$ and a symmetric strong monoidal equivalence $\ca{A} \simeq \Rep(G)$. This equivalence is compatible with $w$ and the forgetful functor. Moreover, the Hopf algebroid $H$ is given by the coend
\[
 H = \int^{A \in \ca{A}} w(A) \otimes_B w(A)^\vee\rlap{,}
\]
 where the right action on $H$ is induced by the $B$-actions on $w(A)^\vee$, and the left action is induced by the $B$-actions on $w(A)$, and $H$ is flat as a left and as a right $B$-module.
\end{thm}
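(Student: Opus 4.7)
The plan is to assemble this theorem from three essentially independent ingredients developed earlier in the paper: the $R$-linear recognition theorem (Theorem~\ref{COMMUTATIVE_RING_THM}), the symmetric monoidal lifting of the Tannakian biadjunction (Theorem~\ref{TANNAKIAN_ADJUNCTION_SYLLEPTIC_THM} and Corollary~\ref{TANNAKIAN_ADJUNCTION_GROUPOID_COR}), and the analysis of autonomous pseudomonoids via Hopf monoidal comonads (Theorem~\ref{HOPF_COMONAD_THM}). All three have been set up precisely so that they can be applied simultaneously once we check that the hypotheses i)--iii) are exactly what Theorem~\ref{COMMUTATIVE_RING_THM} asks for.

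First, I would apply Theorem~\ref{COMMUTATIVE_RING_THM} to the underlying $R$-linear functor $w \colon \ca{A} \to \Mod_B$ (replacing $\ca{A}$ by a small skeleton if necessary; the hypothesis that $\ca{A}$ is additive and autonomous ensures that the skeleton inherits all the relevant structure). Hypotheses i)--iii) of the present theorem are the same as i)--iii) in Theorem~\ref{COMMUTATIVE_RING_THM}, so we conclude that the unit $N \colon \ca{A} \to \Rep(L(w))$ is an $R$-linear equivalence, and that $L(w)$ is a $B$-$B$-coalgebroid which is flat on both sides (this last claim is the content of the statement that $L(w)$ preserves finite limits as an endofunctor of $\Mod_B$, extracted in the final sentence of Theorem~\ref{COMMUTATIVE_RING_THM}).

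Next, I would invoke the monoidal enhancements. Because $w$ is symmetric strong monoidal, Theorem~\ref{TANNAKIAN_ADJUNCTION_SYLLEPTIC_THM} applied to the symmetric Gray monoid biequivalent to $\Mod(\Mod_R)$ (with $B$ regarded as a symmetric map pseudomonoid) endows $L(w)$ with the structure of a commutative (symmetric monoidal) comonad, and upgrades the unit $N$ to a symmetric strong monoidal morphism. Because $\ca{A}$ is autonomous, Theorem~\ref{HOPF_COMONAD_THM} then promotes $L(w)$ from a symmetric monoidal comonad to a Hopf monoidal comonad. Applying Corollary~\ref{TANNAKIAN_ADJUNCTION_GROUPOID_COR}, which identifies (commutative) Hopf monoidal comonads on $B$ in $\Mod(\Mod_R)$ with groupoids internal to $\CommAlg(R)^{\op}$, we obtain an affine groupoid scheme $G = \Spec(H)$ acting on $\Spec(B)$ with $\Rep(L(w)) \simeq \Rep(G)$ as symmetric monoidal categories over $\Mod_B^d$. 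Composing with $N$ gives the desired symmetric strong monoidal equivalence $\ca{A} \simeq \Rep(G)$ compatible with the forgetful functors.

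It remains to identify $H$ with the stated coend. The underlying $B$-$B$-bimodule of $L(w)$ is, by the construction in Proposition~\ref{LEFT_BIADJOINT_2FUNCTOR_PROP} and the description of maps in $\Mod(\ca{V})$ (Lemma~\ref{MAPS_LEMMA}), given by the composite module $w_\ast \cdot w^\ast$; unwinding this as a coend via the formula defining module composition (Definition~\ref{MODULE_DFN}), and using that $w(A)$ is finitely generated projective so that $w(A)^\vee \cong \Hom_B(w(A),B)$ acts as both the hom-module and the dual, yields precisely $H \cong \int^{A\in \ca{A}} w(A) \otimes w(A)^\vee$ with the stated left/right $B$-actions. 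Flatness of $H$ as a left and right $B$-module is then simply the translation of the flatness of $L(w)$ obtained in the first step. The main obstacle in this plan is bookkeeping rather than conceptual: namely, to verify carefully that the equivalence produced by Theorem~\ref{COMMUTATIVE_RING_THM}, when fed into the symmetric and Hopf enhancements, genuinely lifts to an equivalence in the 2-category $\mathbf{SymMon}\mbox{-}\Vcat\slash \Mod^d_B$ of Corollary~\ref{TANNAKIAN_ADJUNCTION_GROUPOID_COR}, so that the final statement ``symmetric strong monoidal equivalence compatible with $w$ and the forgetful functor'' is fully justified; this reduces to checking that the unit of the lifted biadjunction from Corollary~\ref{LIFT_TO_SYMMETRIC_PSEUDOMONOIDS_COR} has the same underlying 1-cell as the unit of the unenhanced Tannakian biadjunction, which is guaranteed by the last sentence of Theorem~\ref{TANNAKIAN_ADJUNCTION_SYLLEPTIC_THM}.
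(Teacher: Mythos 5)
Your proposal follows essentially the same route as the paper: factor $w$ through $\Mod_B^d$ using strong monoidality, apply Theorem~\ref{COMMUTATIVE_RING_THM} for the equivalence, and feed the result into Corollary~\ref{TANNAKIAN_ADJUNCTION_GROUPOID_COR} (which already packages Theorem~\ref{TANNAKIAN_ADJUNCTION_SYLLEPTIC_THM} and Theorem~\ref{HOPF_COMONAD_THM}), identifying $H$ via Proposition~\ref{LEFT_BIADJOINT_2FUNCTOR_PROP} and Definition~\ref{MODULE_DFN}.

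One small but genuine slip: you assert that the final sentence of Theorem~\ref{COMMUTATIVE_RING_THM} gives flatness of $L(w)$ \emph{on both sides}, but that theorem only yields flatness as a \emph{right} $B$-module (preservation of finite limits by the comonad). The left flatness does not come for free from the recognition theorem; the paper obtains it from the groupoid structure, using that the source and target morphisms of an internal groupoid are isomorphic (via the inverse map), so that $H$ is flat over one unit if and only if it is flat over the other. You have all the needed ingredients in hand once Corollary~\ref{TANNAKIAN_ADJUNCTION_GROUPOID_COR} produces the groupoid, but this extra step should be made explicit rather than attributed to the first stage of the argument.
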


\begin{proof}
 Since $w$ is strong monoidal, $w(A)$ is dualizable for every $A \in \ca{A}$, that is, $w$ factors through the full subcategory of finitely generated projective $B$-modules. Thus we can apply Corollary~\ref{TANNAKIAN_ADJUNCTION_GROUPOID_COR}. The statement about flatness follows from Theorem~\ref{COMMUTATIVE_RING_THM}, Corollary~\ref{TANNAKIAN_ADJUNCTION_GROUPOID_COR}, and the fact that the source and the target morphism of a groupoid are isomorphic, so that one of them is flat if and only if the other is. The explicit description of $H$ in terms of the above coend is a consequence of the definition of $L(w)$ (see Proposition~\ref{LEFT_BIADJOINT_2FUNCTOR_PROP}) and the definition of composition in the bicategory of modules (see Definition~\ref{MODULE_DFN}). 
\end{proof}

\section{The Tannakian biadjunction for Gray monoids}\label{TANNAKIAN_BIADJUNCTION_GRAY_SECTION}
\subsection{String diagrams for Gray monoids}\label{GRAY_MONOIDS_SECTION}
 Mac Lane's coherence theorem tells us that every monoidal category is equivalent to a strictly associative and unital monoidal category. The analogue for monoidal 2-categories is not true: not every monoidal bicategory is equivalent to a strict monoidal 2-category. It is, however, still true that every monoidal bicategory is equivalent to a much stricter structure, called a Gray monoid. The category of (small) 2-categories admits a symmetric monoidal closed structure with internal hom given by the 2-category of 2-functors, pseudonatural transformations and modifications (see\cite[\S~4.8]{GORDON_POWER_STREET}). Its tensor product is called the Gray tensor product. Certain squares that commute strictly in the cartesian product of 2-categories only commute up to invertible 2-cell in the Gray tensor product. A Gray monoid is exactly a monoid in for the Gray tensor product. In order to work with Gray monoids we will need to give a more explicit definition (see \cite[Definition~1]{DAY_STREET}).

\begin{dfn}\label{GRAY_MONOID_DFN}
A \emph{Gray monoid} is a 2-category $\ca{M}$ endowed with the following data:
\begin{enumerate}
 \item[(a)] an object $I$;
 \item[(b)] for every object $A$, two 2-functors $L_A, R_A \colon \ca{M} \rightarrow \ca{M}$ (giving left and right multiplication by the object $A$ respectively) such that for all objects $A, B \in \ca{M}$, we have $L_A(B)=R_B(A)$, and the equations 
 \[
L_I=R_I=\id_{\ca{M}},\quad L_{AB}=L_A L_B,\quad R_{AB}=R_B R_A, \quad \text{and}\quad R_B L_A = L_A R_B  
 \]
 hold, where $AB\defl L_A(B)$. For a morphism $f$ and a 2-cell $\alpha$ we use the abbreviations $fB \defl R_B(f)$, $A^\prime \alpha \defl L_{A^\prime}(\alpha)$, and so on; and
 \item[(c)] for any two morphisms $f \colon A \rightarrow A^{\prime}$ and $g \colon B \rightarrow B^\prime$ an invertible 2-cell
\[
\def\objectstyle{\scriptstyle}
\def\labelstyle{\scriptscriptstyle}
\def\twocellstyle{\scriptscriptstyle}
\xymatrix{AB \ar[r]^-{fB} \ar[d]_{Ag} \xtwocell[1,1]{}\omit{*!<-5pt,0pt>+{c_{f,g}}} & A^\prime B \ar[d]^{A^\prime g} \\
AB^\prime \ar[r]_-{fB^\prime} & A^\prime B^\prime}
\]
\end{enumerate}
 subject to the axioms:
\begin{enumerate}
 \item[(i)] if both $f$ and $g$ are identity arrows, then $c_{f,g}$ is an identity 2-cell;
 \item[(ii)] for all morphisms $f \colon A \rightarrow A^\prime$ and $g \colon B \rightarrow B^\prime$ and all objects $X$, the equalities $Xc_{f,g}=c_{Xf,g}$, $c_{fX,g}=c_{f,Xg}$ and $c_{f,g}X=c_{f,gX}$ hold; 
 \item[(iii)] for all 2-cells $\alpha \colon f \Rightarrow h \colon A \rightarrow A^\prime$ and $\beta \colon g \Rightarrow k \colon B \rightarrow B^\prime$,
\[
\vcenter{\hbox{
\def\objectstyle{\scriptstyle}
\def\labelstyle{\scriptscriptstyle}
\def\twocellstyle{\scriptscriptstyle}
\xymatrix{
& A^\prime B \ar[dr]^-{A^\prime g} \\
AB \drtwocell^{Ag}_{Ak}{*!<-1pt,-1pt>+{A \beta}} \ar[ur]^-{fB} \rrtwocell\omit{<-2>*!<-5pt,0pt>+{c_{f,g}}} & & A^\prime B^\prime  \\
& AB^\prime \urtwocell^{f B^\prime }_{h B^\prime}{*!<-2pt,-2pt>+{\alpha B^\prime}}
}}}
=
\def\objectstyle{\scriptstyle}
\def\labelstyle{\scriptscriptstyle}
\def\twocellstyle{\scriptscriptstyle}
\vcenter{\hbox{\xymatrix{
& A^\prime B \drtwocell^{A^\prime g}_{A^\prime k}{*!<-1pt,1pt>+{A^\prime \beta}} \\
AB \urtwocell^{fB}_{hB}{*!<-1pt,-1pt>+{\alpha B}} \ar[rd]_{Ak} \rrtwocell\omit{<1>*!<-5pt,0pt>+{c_{h,k}}} & & A^\prime B^\prime  \\
& AB^\prime \ar[ur]_-{hB^\prime}
}}}
\]
 \item[(iv)] for all morphisms $f \colon A \rightarrow A^\prime$, $f^\prime \colon A^\prime \rightarrow A^{\prime \prime}$, $g \colon B \rightarrow B^\prime$, and $g^\prime \colon B^\prime \rightarrow B^{\prime \prime}$, the equality
\[
\vcenter{\hbox{
\def\objectstyle{\scriptstyle}
\def\labelstyle{\scriptscriptstyle}
\def\twocellstyle{\scriptscriptstyle}
\xymatrix{
AB \ar[r]^-{fB} \ar[d]_{Ag} \xtwocell[1,1]{}\omit{*!<-5pt,0pt>+{c_{f,g}}} & A^\prime B \ar[d]^{A^\prime g} \xtwocell[1,1]{}\omit{*!<-6pt,0pt>+{c_{f^\prime,g}}} \ar[r]^-{f^\prime B} & A^{\prime\prime} B \ar[d]^{A^{\prime\prime} g} \\
AB^\prime \ar[r]|-{fB^\prime} \ar[d]_{A g^\prime} \xtwocell[1,1]{}\omit{*!<-6pt,0pt>+{c_{f,g^\prime}}} & A^\prime B^\prime \ar[r]|-{f^\prime B^{\prime}} \ar[d]^{A^\prime g^\prime} \xtwocell[1,1]{}\omit{*!<-7pt,0pt>+{c_{f^\prime,g^\prime}}} & A^{\prime \prime} B^\prime \ar[d]^{A^{\prime\prime} g^\prime} \\
AB^{\prime \prime} \ar[r]_-{fB^{\prime\prime}} & A^\prime B^{\prime\prime} \ar[r]^-{f^\prime B^{\prime \prime}} & A^{\prime\prime} B^{\prime\prime} \\
}}}
=
\vcenter{\hbox{
\def\objectstyle{\scriptstyle}
\def\labelstyle{\scriptscriptstyle}
\def\twocellstyle{\scriptscriptstyle}
\xymatrix@=1cm{
AB \ar[r]^-{f^\prime fB} \ar[d]_{A g^\prime g} \xtwocell[1,1]{}\omit{*!<-2pt,5pt>+{c_{f^\prime f,g^\prime g}}} & A^\prime B \ar[d]^{A^\prime g^\prime g} \\
AB^\prime \ar[r]_-{f^\prime f B^\prime} & A^\prime B^\prime
}}}
\]
holds.
\end{enumerate}
\end{dfn}

 The tensor product on a Gray monoid is defined on objects by $A\otimes B \defl AB$, on 1-cells $f \colon A \rightarrow A^{\prime}$, $g \colon B \rightarrow B^{\prime}$ by $f\otimes g \defl A^{\prime} g \cdot fB$, and similarly for 2-cells. Note that this convention is opposite to the one from \cite{DAY_STREET}, but the interchange morphism gives a pseudonatural equivalence between the two. We will frequently use the fact that $c_{f,1}$ and $c_{1,g}$ are identity 2-cells, which is an immediate consequence of axioms i) and iv). 
 
 These axioms become more intuitive once we pass to a reasonable string diagram notation. Let $\ca{M}$ be a Gray monoid, and let $f \colon A \rightarrow A^{\prime}$, $g \colon B \rightarrow B^{\prime}$ be 1-cells in $\ca{M}$. We introduce the notation
\[
c_{f,g}=\vcenter{\hbox{


}}
\]
hold for all morphisms $f \colon A \rightarrow A^\prime$, $g \colon B \rightarrow B^\prime$, and $h \colon C \rightarrow C^\prime$.
\end{lemma}

\begin{proof}
The first equation follows from Axiom (iv) and Axiom (iii) applied to a Gray interchange 2-cell, and the second equation is an immediate consequence from the definition.
\end{proof}

\begin{rmk}
We can interpret this notation as a 2-dimensional projection of surface diagrams for Gray monoids, a tentative notion which is not yet fully developed. In our notation, the information about the different layers of the surface diagram is stored only in the labels. Axiom (iii) for a Gray monoid says that strings (morphisms) and 2-cells in different layers can be moved past each other:
\[
\vcenter{\hbox{


}}
\]
Generally, one has to be careful not to apply topological manipulations unless the objects in question are located in different layers, so the labels are relevant for deciding whether or not a certain move of strings is allowed.
\end{rmk}

The reason for our interest in Gray monoids is the following theorem. It allows us to prove theorems about monoidal bicategories as long as the statements are invariant under biequivalence.

\begin{thm}\label{MON_BICAT_COHERENCE_THM}
 Every monoidal bicategory is biequivalent to a Gray monoid.
\end{thm}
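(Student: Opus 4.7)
The plan is to deduce this result from the general coherence theorem for tricategories due to Gordon, Power and Street, by observing that a monoidal bicategory is exactly the same data as a tricategory with a single object. Under this translation, Gray monoids correspond precisely to one-object Gray-categories (3-categories in which the composition is a 2-functor in each variable separately but the interchange between different variables is only pseudonatural, governed by an invertible modification). Hence once we have the general statement that every tricategory is triequivalent to a Gray-category, restriction to the one-object case yields the claim.

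First I would make the one-object reindexing precise: given a monoidal bicategory $(\ca{M},\otimes,I,\ldots)$, form the tricategory $\Sigma \ca{M}$ with one object $\star$, hom-bicategory $\Sigma\ca{M}(\star,\star)=\ca{M}$, horizontal composition $\otimes$, and associator, unitors and pentagonator/triangulator inherited from the monoidal data of $\ca{M}$. A direct inspection of the axioms of a monoidal bicategory and a tricategory shows that this assignment is a bijection on structures, and likewise for morphisms and equivalences. In particular a biequivalence of monoidal bicategories corresponds to a triequivalence of one-object tricategories.

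Next I would invoke the Gordon–Power–Street coherence theorem, which provides, for every tricategory $\ca{T}$, a Gray-category $\ca{T}^{\mathrm{st}}$ together with a triequivalence $\ca{T}\to \ca{T}^{\mathrm{st}}$. Applied to $\Sigma\ca{M}$, the construction produces a triequivalence to a Gray-category; since the construction is functorial and preserves the number of objects up to equivalence, one may arrange $(\Sigma\ca{M})^{\mathrm{st}}$ to again have a single object. Translating back across the $\Sigma$-correspondence, this one-object Gray-category is a Gray monoid $\ca{M}'$, and the triequivalence becomes a biequivalence of monoidal bicategories $\ca{M}\simeq \ca{M}'$.

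The main obstacle in this argument is of course the Gordon–Power–Street theorem itself, whose proof is long and technical; it proceeds by constructing $\ca{T}^{\mathrm{st}}$ as a kind of free Gray-category on the underlying data of $\ca{T}$, modulo relations imposed by the coherence 3-cells, and then verifying that the canonical comparison is a triequivalence via a tricategorical Yoneda lemma. For the purposes of this paper I would simply cite that result rather than reproduce it; the only work to do here is the bookkeeping of the $\Sigma$-correspondence outlined above, together with the verification that the explicit axioms of Definition~\ref{GRAY_MONOID_DFN} match those of a one-object Gray-category. This last check is immediate: axioms (i)--(iv) for the interchange 2-cells $c_{f,g}$ are exactly the one-object instances of the axioms governing the pseudonaturality of composition in a Gray-category.
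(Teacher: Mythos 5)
Your argument is exactly the one underlying the result the paper cites: the paper's entire proof is a reference to \cite[Theorem~8.1]{GORDON_POWER_STREET}, which is the one-object specialization of the Gordon--Power--Street tricategory coherence theorem that you describe. So your proposal is correct and takes essentially the same route; the $\Sigma$-bookkeeping you outline is the standard translation and needs no further justification here.
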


\begin{proof}
 See \cite[Theorem~8.1]{GORDON_POWER_STREET}.
\end{proof}

\begin{dfn}\label{PSEUDOMONOID_DFN}
 A \emph{pseudomonoid} in a Gray monoid $\ca{M}$ is a monoid for which the associativity law and the unit laws only hold up to coherent invertible 2-cells. More precisely, a pseudomonoid $(B,m,u,\alpha,\lambda,\rho)$ is an object $B \in \ca{M}$, equipped with two morphisms $m \colon BB \rightarrow B$ and $u \colon I \rightarrow B$ and invertible 2-cells
 \[
  \vcenter{\hbox{


}}
\]
 We call $B$ a \emph{map pseudomonoid} if $m$ and $u$ are maps.
\end{dfn}

 A pseudomonoid in a Gray monoid equivalent to the 2-category of categories is simply a monoidal category. The axiom involving the natural isomorphism $\alpha$ is precisely the famous pentagon axiom for a monoidal category.

 \subsection{Compatibility with the monoidal structure} \label{LEFT_ADJOINT_MONOIDAL_SECTION} 
 The goal of this section is to show that the left adjoint of the Tannakian biadjunction can be endowed with the structure of a strong monoidal pseudofunctor if $\ca{M}$ is a Gray monoid and $B$ is a map pseudomonoid. As already mentioned in Section~\ref{MONOIDAL_OUTLINE_SECTION}, this allows us to lift the biadjunction to the categories of pseudomonoids on either side. At the end of this section we can therefore give a proof of Theorem~\ref{TANNAKIAN_ADJUNCTION_MONOIDAL_THM}.
 
 The unit and counit of the resulting biadjunction between pseudomonoids has the same underlying 1-cells as the Tannakian biadjunction, so we can solve the reconstruction and recognition problems for bialgebras (more generally, monoidal comonads) as long as we understand the reconstruction and recognition problem for coalgebras (comonads). In particular, the recognition results proved in Sections~\ref{RECOGNITION_SECTION}, \ref{DAG_RECOGNITION_SECTION}, and \ref{ABELIAN_RECOGNITION_SECTION} extend to the biadjunction between map pseudomonoids.
 
 In order to turn the Tannakian biadjunction into a monoidal biadjunction we first have to endow source and target 2-categories with the structure of a monoidal 2-category.

\begin{prop}\label{SLICE_TENSOR_PRODUCT_PROP}
 Let $\ca{M}$ be a Gray monoid, and let $(B,m,u)$ be a pseudomonoid in $\ca{M}$. Let $v \colon X \rightarrow B$, $v^{\prime} \colon X^{\prime} \rightarrow B$, $w \colon Y \rightarrow B$ and $w^{\prime} \colon Y^{\prime} \rightarrow B$ be objects of the lax slice category $\ca{M}\slash_\ell B$. Let $(a,\alpha)$ and $(a^{\prime},\alpha^{\prime})$ (resp.\ $(b,\beta)$ and $(b^{\prime},\beta^{\prime})$) be morphisms from $v$ to $v^{\prime}$ (resp.\ from $w$ to $w^{\prime}$). Let $\phi \colon (a,\alpha) \Rightarrow (a^{\prime},\alpha^{\prime})$ and $\psi \colon (b,\beta) \Rightarrow (b^{\prime},\beta^{\prime})$ be 2-cells in $\ca{M}\slash_\ell B$.
 Then the assignments $v\bullet w \defl m \cdot Bw \cdot vY$,
\[
 (a,\alpha) \bullet (b,\beta) \defl \vcenter{\hbox{


}}
\]
endows $\bullet$ with the desired structure of a normal pseudofunctor. If $m$ and $u$ are maps this restricts to a pseudofunctor on $\Map(\ca{M},B)$ because any 2-functor preserves maps and invertible 2-cells.
\end{proof}

\begin{prop}\label{SLICE_MONOIDAL_2CAT_PROP}
Let $\ca{M}$ be a Gray monoid and let $(B,m,u)$ be a pseudomonoid in $\ca{M}$. Then $\ca{M}\slash_\ell B$ is a monoidal 2-category with tensor product $\bullet$ and unit $u \colon I \rightarrow B$. The component of the associator at the objects $w \colon X \rightarrow B$, $w^{\prime} \colon X^{\prime} \rightarrow B$, $w^{\prime\prime} \colon X^{\prime\prime} \rightarrow B$ is given by the 2-cell $(\id_{X X^{\prime} X^{\prime\prime}}, \alpha_\bullet)$ where
\[
\alpha_\bullet=\vcenter{\hbox{


}}
\]
 respectively. Each of these gives \emph{strict} natural transformations, and all the modifications in the definition of a monoidal 2-category are identity modifications. If $(B,m,u)$ is a map pseudomonoid, then the structure of monoidal 2-category on $\ca{M} \slash_\ell B$ restricts to $\Map(\ca{M},B)$.
\end{prop}

\begin{proof}
We only need to check is that the domain and codomain 2-cells of the modifications in the definition of a monoidal 2-category (see \cite[Section~2.6]{GORDON_POWER_STREET}) coincide. Then the modifications can indeed be chosen to be identities. We leave the routine calculations to the reader.
\end{proof}

\begin{dfn}
Let $\ca{M}$ be a Gray monoid, and let $(A,p,j)$ and $(B,m,u)$ be map pseudomonoids in $\ca{M}$. The \emph{convolution product} is the functor
\[
\star \colon \ca{M}(A,B)\times \ca{M}(A,B) \rightarrow \ca{M}(A,B)
\]
which is given on objects (1-cells of $\ca{M}$) by $(f,g) \mapsto m \cdot Bg \cdot fA \cdot \overline{p}$ and on morphisms (2-cells of $\ca{M}$) by
\[
\vcenter{\hbox{


}}
\]
 are equal. This shows that the associator is compatible with the comultiplication. Compatibility with the counit can be shown in a similar fashion: one uses the fact that a mate can be moved past the counit of the adjunction. 
 
 Checking that the unit isomorphisms for the tensor product $\star$ are compatible with the comultiplication and the counit is left to the reader.
\end{proof}

\begin{example}\label{MONOIDAL_MORPHISM_EXAMPLE}
 A pseudomonoid in $\ca{M}(A,B)$ is simply a commutative monoid in $\ca{M}(A,B)$, because there are no nonidentity 2-cells. Moreover, such a monoid is precisely a monoidal morphism $A\rightarrow B$. This correspondence is obtained by taking mates under the adjunctions $p \colon A^2 \rightarrow A$ and $j \colon I \rightarrow A$. In particular, monoids in $\Comon(B)$ are precisely monoidal comonads.
\end{example}

\begin{lemma}\label{INTERCHANGE_MATE_LEMMA}
 Let $f \colon A \rightarrow A^{\prime}$ and $g \colon B \rightarrow B^{\prime}$ be maps in a Gray monoid $\ca{M}$. Then the 2-cells
 \[
 \vcenter{\hbox{


}} \quad \text{and} \quad \iota=\id \colon u  \cdot  \overline{u} \rightarrow L(u)
\]
endow the 2-functor $L$ from Proposition~\ref{LEFT_BIADJOINT_2FUNCTOR_PROP} with the structure of a strong monoidal 2-functor where all the necessary modifications are identity modifications.
\end{prop}

\begin{proof}
 We first have to check that $\chi$ is a well-defined 2-natural transformation, that is, that it is an isomorphism of comonads and that it is 2-natural. The latter boils down to showing that the naturality square commutes on the nose because there are no nonidentity 2-cells in the target 2-category. We leave both of these computations to the reader.

 To see that $L$ defines a strong monoidal 2-functor we need to check that the domain and the codomain of the necessary modifications (see \cite[Section~2]{MCCRUDDEN_BALANCED}) coincide. The key observation for this is the fact about mates of the Gray interchange from Lemma~\ref{INTERCHANGE_MATE_LEMMA}. The remaining calculations are fairly straightforward with the string diagram notations introduced in Section~\ref{GRAY_MONOIDS_SECTION} and are left to the reader.
\end{proof}

\begin{proof}[Proof of Theorem~\ref{TANNAKIAN_ADJUNCTION_MONOIDAL_THM}]
 This is an immediate consequence of Proposition~\ref{LEFT_ADJ_MONOIDAL_PROP} and Corollary~\ref{LIFT_TO_PSEUDOMONOIDS_COR}. Recall from Proposition~\ref{REP_2_FUNCTOR_PROP} that the right biadjoint of the Tannakian adjunction is normal.
\end{proof}
\subsection{Braiding}\label{BRAIDING_SECTION}
 The goal of this section is to to show that the left adjoint of the Tannakian biadjunction can be endowed with the structure of a \emph{braided} strong monoidal pseudofunctor if $\ca{M}$ is a braided Gray monoid and $B$ is a braided map pseudomonoid. As already mentioned in Section~\ref{BRAIDING_OUTLINE_SECTION}, this allows us to lift the biadjunction to the categories of braided pseudomonoids on either side. At the end of this section we can therefore give a proof of Theorem~\ref{TANNAKIAN_ADJUNCTION_BRAIDED_THM}.

 We start by recalling the definition of a braiding on a Gray monoid from \cite[Definition~12]{DAY_STREET}. For every braided map pseudomonoid $B$ we endow $\Comon(B)$ and $\Map(\ca{M},B)$ with a braiding in the sense of \cite[\S~3]{MCCRUDDEN_BALANCED}. 

\begin{dfn}
 Let $\ca{M}$ be a Gray monoid. A \emph{braiding} for $\ca{M}$ is a pseudonatural equivalence
 \[
 \def\objectstyle{\scriptstyle}
 \def\labelstyle{\scriptscriptstyle}
 \def\twocellstyle{\scriptscriptstyle}
  \xymatrix{\ca{M}^2 \ar[rr]^{\sigma} \ar[rd]_{\otimes} & \dtwocell\omit{^\rho}& \ca{M}^2 \ar[ld]^{\otimes}\\
  & \ca{M}}
 \]
 (where $\sigma$ denotes the switch 2-functor), together with two modifications with components
 \[
\vcenter{\hbox{
 \def\objectstyle{\scriptstyle}
 \def\labelstyle{\scriptscriptstyle}
 \def\twocellstyle{\scriptscriptstyle}
\xymatrix@!R=5pt{ & A(BC) \ar[r]^{\rho_{A,BC}} & (BC)A \ar@{=}[rd] \\ A(BC) \ar@{=}[ru] \ar[rd]_{\rho_{A,B} C} & \rtwocell\omit{R} & & B(CA)\\
& (BA)C \ar@{=}[r] & B(AC) \ar[ru]_{B \rho_{A,C}}} 
}}
 \]
 and
 \[
\vcenter{\hbox{
 \def\objectstyle{\scriptstyle}
 \def\labelstyle{\scriptscriptstyle}
 \def\twocellstyle{\scriptscriptstyle}
\xymatrix@!R=5pt{ & (AB)C \ar[r]^{\rho_{AB,C}} & C(AB) \ar@{=}[rd] \\ A(BC) \ar@{=}[ru] \ar[rd]_{A \rho_{B,C}} & \rtwocell\omit{S} & & (CA)B\\
& A(CB) \ar@{=}[r] & (AC)B \ar[ru]_{\rho_{A,C} B}} 
}} 
 \]
subject to coherence conditions (see \cite[Appendix~A]{MCCRUDDEN_BALANCED} for details).
\end{dfn}

\begin{rmk}\label{BRAIDING_RMK}
 Recall that the tensor product of $f \colon A \rightarrow A^{\prime}$ and $g \colon B \rightarrow B^{\prime}$ on a Gray monoid is defined by $f\otimes g \defl A^{\prime}g \cdot fB$. The pseudonatural transformation $\rho$ thus consists of invertible 2-cells
 \[
\vcenter{\hbox{


}}
 \]
 and the corresponding equation for $S$ hold.
\end{rmk}

\begin{dfn}\label{BRAIDED_PSEUDOMONOID_DFN}
 Let $\ca{M}$ be a braided Gray monoid. A pseudomonoid $(B,m,u)$ in $\ca{M}$ is \emph{braided} if there is an invertible 2-cell $\gamma \colon m \Rightarrow m \cdot \rho_{B,B}$ (called the \emph{braiding}) subject to two coherence equations (see \cite[p.\ 87]{MCCRUDDEN_BALANCED}).
\end{dfn}

\begin{example}
 Let $\ca{V}$ be a symmetric monoidal category. The monoidal 2-category $\Vcat$ is braided, and the modifications $R$ and $S$ can be taken to be identities (see \cite[p.\ 85]{MCCRUDDEN_BALANCED}). A braided pseudomonoid in $\Vcat$ is precisely a braided monoidal $\ca{V}$-category.
\end{example}

\begin{prop}\label{SLICE_BRAIDED_PROP}
 Let $\ca{M}$ be a braided Gray monoid, and let $B$ be a braided map pseudomonoid. For two objects $v\colon X \rightarrow B$ and $w\colon Y \rightarrow B$ in $\Map(\ca{M},B)$ the 1-cell
 \[
\vcenter{\hbox{


}}
 \]
 together with the 2-cells $\rho$, $R$ and $S$ endow $\Map(\ca{M},B)$ with the structure of a braided monoidal 2-category (see \cite[Appendix~A]{MCCRUDDEN_BALANCED}).
\end{prop}

\begin{proof}
 This is a consequence of Remark~\ref{BRAIDING_RMK}, Definition~\ref{BRAIDED_PSEUDOMONOID_DFN} and the fact that the forgetful 2-functor $\Map(\ca{M},B) \rightarrow \ca{M}$ is injective on 2-cells, that is, all the axioms $R$ and $S$ must satisfy follow from the fact that they are satisfied in $\ca{M}$.
\end{proof}

\begin{prop}\label{HOM_CATEGORY_BRAIDED_PROP}
 Let $\ca{M}$ be a braided Gray monoid, and let $A$ and $B$ be braided map pseudomonoids in $\ca{M}$. Then the 2-cell
 \[
  \vcenter{\hbox{


}}
 \]
 endows $\ca{M}(A,B)$ with the structure of a braided category. This structure lifts to a braiding on the monoidal category $\Comon(B)$.
\end{prop}

\begin{proof}
 The fact that the convolution tensor product is braided if $A$ and $B$ are is a simple generalization of \cite[Example~5]{DAY_STREET}. One can also show commutativity of the required hexagons directly by using Remark~\ref{BRAIDING_RMK}, Definition~\ref{BRAIDED_PSEUDOMONOID_DFN} and Lemma~\ref{PSEUDOFUNCTOR_MATE_LEMMA} below. Using the string diagram calculus it is straightforward to check that the 2-cell is compatible with the comultiplication and counit of its domain and target, that is, that the braiding lifts to the category of comonads.
\end{proof}

\begin{example}\label{BRAIDED_PSEUDOMONOIDS_IN_HOM_EXAMPLE}
 Let $\ca{M}$, $A$ and $B$ be as in Proposition~\ref{HOM_CATEGORY_BRAIDED_PROP}. A braided pseudomonoid in $\ca{M}(A,B)$ is simply a commutative monoid in $\ca{M}(A,B)$, because there are no nonidentity 2-cells. Moreover, such a commutative monoid is precisely a braided monoidal morphism $A\rightarrow B$. This correspondence is obtained by taking mates under the adjunctions $p \colon A^2 \rightarrow A$ and $j \colon I \rightarrow A$.
\end{example}

\begin{lemma}\label{PSEUDOFUNCTOR_MATE_LEMMA}
 Let $F, G \colon \ca{M} \rightarrow \ca{N}$ be pseudofunctors between 2-categories, and let
\[
 \def\objectstyle{\scriptstyle}
 \def\labelstyle{\scriptscriptstyle}
 \def\twocellstyle{\scriptscriptstyle}
\xymatrix{FX \drtwocell\omit{^\theta_f} \ar[d]_{Ff} \ar[r]^{\theta_X} & GX \ar[d]^{Gf} \\
FY \ar[r]_{\theta_Y} & GY}
\]
 be a pseudonatural transformation from $F$ to $G$. Let $f$ be a map in $\ca{M}$. Then the mate of $\theta^{-1}_{f}$ is $\theta_{\overline{f}}$. In particular, if $v \colon X \rightarrow B$ and $w \colon Y \rightarrow B$ are maps in a braided Gray monoid, then the equation
 \[
  \vcenter{\hbox{


}}
 \]
 holds. 
\end{lemma}

\begin{proof}
 One way to prove this is as follows. One can first compose the mate in question with $\theta_{\overline{f}}^{-1}$ and then use pseudonaturality of $\theta$, pseudofunctoriality of $F$, and one of the triangle identities to show that this composite is the identity. 
 
 The second part follows from an application of the first to $F=\otimes$, $G=\otimes \cdot \tau$ and $\theta=\rho$, the braiding.
\end{proof}

\begin{prop}\label{LEFT_ADJ_BRAIDED_PROP}
 Let $\ca{M}$ be a braided Gray monoid and let $B$ be a braided map pseudomonoid in $\ca{M}$. Then the strong monoidal 2-functor
 \[
L \colon \Map(\ca{M},B) \rightarrow \Comon(B) 
 \]
 from Proposition~\ref{LEFT_ADJ_MONOIDAL_PROP} is braided.
\end{prop}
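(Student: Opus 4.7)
Since the target 2-category $\Comon(B)$ has only identity 2-cells, the definition of a braided strong monoidal 2-functor (cf.\ \cite[Appendix~A]{MCCRUDDEN_BALANCED}) collapses to the single equation that for any two objects $v\colon X\rightarrow B$, $w\colon Y\rightarrow B$ of $\Map(\ca{M},B)$, the composite of $\chi_{v,w}$ with $L$ applied to the braiding $\rho^{\bullet}_{v,w}$ from Proposition~\ref{SLICE_BRAIDED_PROP} agrees with the composite of the convolution braiding $\rho^{\star}_{L(v),L(w)}$ from Proposition~\ref{HOM_CATEGORY_BRAIDED_PROP} with $\chi_{w,v}$. All the higher coherence modifications in the definition are forced to be identities, and one need only verify that the two sides of this equation have the same domain and codomain and then that they are equal as 2-cells in $\ca{M}$.

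The plan is to expand both composites as string diagrams in the Gray monoid $\ca{M}$ using the explicit formulas from Propositions~\ref{LEFT_BIADJOINT_2FUNCTOR_PROP},~\ref{LEFT_ADJ_MONOIDAL_PROP},~\ref{SLICE_BRAIDED_PROP} and~\ref{HOM_CATEGORY_BRAIDED_PROP}. On the convolution side, $\rho^{\star}_{L(v),L(w)}$ is built from $\rho_{v,w}$, the braiding $\gamma$ on $B$, its mate $\overline{\gamma^{-1}}$, and whiskerings involving $\rho_{A,A}$, $\rho_{B,B}$, and the unit and counit of $m\dashv\overline{m}$. On the slice side, $L(\rho^{\bullet}_{v,w})$ is obtained by sandwiching the 2-cell of Proposition~\ref{SLICE_BRAIDED_PROP}, which involves $\rho_{v,w}^{-1}$, $\gamma^{-1}$, and $\rho_{X,Y}$, between $\overline{m}$ on the left and $m$ on the right, and then conjugating by the unit and counit of $w\bullet v\dashv \overline{w\bullet v}$ in accordance with the definition of $L$ on 1-cells.

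The key step is to rewrite the occurrences of $\rho_{v,w}^{-1}$ (and its whiskerings) that appear after transporting through $L$ by means of Lemma~\ref{PSEUDOFUNCTOR_MATE_LEMMA}, which identifies the mate of $\rho_{v,w}^{-1}$ under the adjunctions $v\dashv\overline{v}$, $w\dashv\overline{w}$ with the component $\rho_{\overline{v},\overline{w}}$ of the braiding at the right adjoints. Combining this with pseudonaturality of $\rho$ at the maps $m$ and $\overline{m}$, the braiding axioms for $B$ (Definition~\ref{BRAIDED_PSEUDOMONOID_DFN}), the modification axioms for $R$ and $S$ as spelled out in Remark~\ref{BRAIDING_RMK}, and repeated applications of Axiom~(iii) for the Gray monoid (moving strings in different tensor layers past one another), both expressions can be brought to a common normal form.

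The main obstacle is not conceptual but bookkeeping: each side is a sizeable string diagram involving four different sources of 2-cells ($\rho$ of $\ca{M}$, $\gamma$ of $B$, the components $\rho_{v,w}$, and the interchange cells arising from the adjunctions), and making the matching transparent requires a careful choice of normal form. Once Lemma~\ref{PSEUDOFUNCTOR_MATE_LEMMA} is applied, the remainder is routine Gray-monoid calculus of the sort already exercised in the proof of Proposition~\ref{LEFT_ADJ_MONOIDAL_PROP}; following the precedent set there we will leave the explicit string-diagram verification to the reader.
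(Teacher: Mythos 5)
Your proposal is correct and follows essentially the same route as the paper: since $\Comon(B)$ has only identity 2-cells, the braiding modification is forced to be an identity, so one only has to check that the two pasting composites built from $\chi$ and the braidings coincide, and the paper likewise identifies Lemma~\ref{PSEUDOFUNCTOR_MATE_LEMMA} (the mate of $\rho^{-1}_{v,w}$ being $\rho_{\overline{v},\overline{w}}$) as the key observation, leaving the remaining string-diagram bookkeeping to the reader.
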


\begin{proof}
 We again need to check that the modifications can be chosen to be identities, that is, that two pasting composites involving the 2-natural transformation $\chi$ from Proposition~\ref{LEFT_ADJ_MONOIDAL_PROP} and $\rho$ coincide (see \cite[Appendix A, (BHD1)]{MCCRUDDEN_BALANCED}). The key observation for checking this is the equation from Lemma~\ref{PSEUDOFUNCTOR_MATE_LEMMA}.
\end{proof}

\begin{proof}[Proof of Theorem~\ref{TANNAKIAN_ADJUNCTION_BRAIDED_THM}]
 The proof follows the same pattern as the proof of Theorem~\ref{TANNAKIAN_ADJUNCTION_MONOIDAL_THM}. The result follows from Proposition~\ref{LEFT_ADJ_BRAIDED_PROP} and Corollary~\ref{LIFT_TO_BRAIDED_PSEUDOMONOIDS_COR}.
\end{proof}

\subsection{Syllepsis and symmetry}\label{SYLLEPSIS_SECTION}
 The goal of this section is to to show that the left adjoint of the Tannakian biadjunction can be endowed with the structure of a \emph{sylleptic} strong monoidal pseudofunctor if $\ca{M}$ is a sylleptic Gray monoid and $B$ is a symmetric map pseudomonoid. As already mentioned in Section~\ref{SYLLEPSIS_OUTLINE_SECTION}, this allows us to lift the biadjunction to the categories of symmetric pseudomonoids on either side. At the end of this section we can therefore give a proof of Theorem~\ref{TANNAKIAN_ADJUNCTION_SYLLEPTIC_THM}.

 We start by recalling the definition of a syllepsis on a braided Gray monoid from \cite[Definition~15]{DAY_STREET}. For every symmetric map pseudomonoid $B$ we endow $\Comon(B)$ with a symmetry and $\Map(\ca{M},B)$ with a syllepsis in the sense of \cite[\S~4]{MCCRUDDEN_BALANCED}.
 
 A \emph{symmetric} Gray monoid is a Gray monoid equipped with a syllepsis subject to one additional axiom. Thus being symmetric is a \emph{property} of a sylleptic Gray monoid. Therefore there are no further compatibility requirements for morphisms between symmetric Gray monoids, similarly to how a symmetric monoidal functor between symmetric monoidal categories is the same as a braided monoidal functor. Therefore we can discuss the sylleptic and the symmetric case together.

\begin{dfn}
 A \emph{syllepsis} for a braided Gray monoid $\ca{M}$ is an invertible modification $\nu$ from the identity 2-natural transformation on $\otimes$ to
 \[
 \def\objectstyle{\scriptstyle}
 \def\labelstyle{\scriptscriptstyle}
 \def\twocellstyle{\scriptscriptstyle}
 \xymatrix{ && \ca{M}^2 \ar[dd]^{\otimes}\\
 \ca{M}^2 \ar[r]^{\sigma} \ar@/^1pc/[rru]^{\id} \ar@/_1pc/[rrd]_{\otimes} \xtwocell[1,2]{}\omit{\rho} \xtwocell[-1,2]{}\omit{=} & \ca{M} \ar[rd]_{\otimes} \ar[ru]^{\sigma} \rtwocell\omit{\rho} & \\
 && \ca{M}}
 \]
 subject to two equations (see \cite[Appendix A, (SA1) and (SA2)]{MCCRUDDEN_BALANCED}). A braided Gray monoid equipped with a syllepsis is called \emph{sylleptic}. A sylleptic Gray monoid is \emph{symmetric} if the equation
 \[
  \vcenter{\hbox{


}}
 \]
 which shows that the syllepsis of $\ca{M}$ lifts to a 2-cell in $\Map(\ca{M},B)$. It is immediate that it defines a modification and that the desired equations hold because the 2-cell part of the braiding on $\Map(\ca{M},B)$ is identical to the one on $\ca{M}$ (see Proposition~\ref{SLICE_BRAIDED_PROP}). Moreover, it is immediate that $\nu$ is a symmetry on $\Map(\ca{M},B)$ if and only if it is one considered as a syllepsis on $\ca{M}$.
\end{proof}

\begin{prop}\label{HOM_CATEGORY_SYLLEPTIC_PROP}
 Let $\ca{M}$ be a sylleptic Gray monoid. Let $(A,p,j)$ and $(B,m,u)$ be symmetric map pseudomonoids in $\ca{M}$. Then the braiding on $\ca{M}(A,B)$ from Proposition~\ref{HOM_CATEGORY_BRAIDED_PROP} is a symmetry. In particular, the monoidal category $\Comon(B)$ is symmetric. It is also symmetric when thought of as a 2-category with no nonidentity 2-cells.
\end{prop}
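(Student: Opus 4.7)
The plan is to prove the first assertion by showing that the braiding from Proposition~\ref{HOM_CATEGORY_BRAIDED_PROP} squares to the identity; the second and third then follow formally. Fix $1$-cells $f, g \colon A \rightarrow B$ and write $\sigma_{f,g}$ for the braiding $f \star g \Rightarrow g \star f$. The task is to verify the equation $\sigma_{g,f} \cdot \sigma_{f,g} = \id_{f \star g}$.

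First, I would expand the composite $\sigma_{g,f} \cdot \sigma_{f,g}$ as a vertical pasting of two copies of the defining string diagram of Proposition~\ref{HOM_CATEGORY_BRAIDED_PROP}. This presents it as a composite of three layers: a top layer with two applications of the mate $\overline{\gamma_A^{-1}}$ (encoding the symmetric pseudomonoid structure on $A$), a middle layer with $\rho_{f,g}$ followed by $\rho_{g,f}$ (encoding the Gray monoid braiding applied twice), and a bottom layer with two applications of $\gamma_B$ (encoding the symmetric pseudomonoid structure on $B$), all interspersed with instances of $\rho_{A,A}$ and $\rho_{B,B}$.

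Next, I would collapse each layer using the appropriate coherence. For the bottom layer, the symmetric pseudomonoid axiom for $B$ relates the double composite of $\gamma_B$ with $\rho_{B,B}$ inserted in between to $m \cdot \nu_{B,B}$. By Lemma~\ref{PSEUDOFUNCTOR_MATE_LEMMA}, the top layer is the mate of the analogous expression in $A$, so two copies of $\overline{\gamma_A^{-1}}$ compose to the mate of a corresponding $\nu_{A,A}$-expression acting on $\overline{p}$. For the middle layer, the pseudonaturality of $\rho$ and the axioms for a syllepsis (together with the modification conditions $R$ and $S$ from Remark~\ref{BRAIDING_RMK}) express $\rho_{g,f} \cdot \rho_{f,g}$ in terms of $\nu_{A,A}$ and $\nu_{B,B}$ whiskered appropriately between the relevant strings. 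The three $\nu$-contributions then cancel pairwise: the $\nu_{B,B}$ from the bottom layer absorbs the $\nu_{B,B}$-factor of the middle layer on the $B$-side, and analogously for $A$, leaving the identity.

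The main obstacle is bookkeeping: organising this cancellation in the string diagram calculus is delicate because each layer produces its $\nu$-factor in a slightly different position, and aligning them requires repeated use of the Gray interchange axioms together with the coherence equations for $\rho$, $R$, $S$, and $\nu$. For the second assertion, Proposition~\ref{HOM_CATEGORY_BRAIDED_PROP} shows that the braiding on $\Comon(B)$ is the restriction of the braiding on $\ca{M}(B,B)$ (with $A=B$), so it is automatically a symmetry once the first assertion is established. The third assertion is immediate: in a 2-category whose only 2-cells are identities, every modification is necessarily an identity, so the axioms defining a symmetric monoidal 2-category reduce to the $1$-categorical statement that the braiding squares to the identity, which is the content of the first assertion.
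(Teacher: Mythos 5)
Your proposal is correct and follows essentially the same route as the paper: the paper's proof likewise reduces the statement to the symmetric pseudomonoid axiom for $A$ in mated form (two copies of $\overline{\gamma^{-1}}$ interleaved with $\rho_{A,A}$ collapsing to $\nu^{-1}_{A,A}$ acting on $\overline{p}$), the corresponding axiom for $B$, and the modification property of $\nu$ to handle the $\rho_{g,f}\cdot\rho_{f,g}$ layer, after which the $\nu$-factors cancel. The paper states this more tersely, but the ingredients and the cancellation scheme are the same, and your treatment of the second and third assertions is the intended formal deduction.
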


\begin{proof}
 From the definition of symmetric pseudomonoids we find that the equation
 \[
  \vcenter{\hbox{


}}
 \]
 holds. This, together with the facts that $B$ is symmetric and that $\nu$ is a modification, implies that the braiding from Proposition~\ref{HOM_CATEGORY_BRAIDED_PROP} is a symmetry.
\end{proof}

\begin{prop}\label{LEFT_ADJ_SYLLEPTIC_PROP}
 Let $\ca{M}$ be a sylleptic Gray monoid and let $B$ be a symmetric map pseudomonoid in $\ca{M}$. Then the braided strong monoidal 2-functor
 \[
L \colon \Map(\ca{M},B) \rightarrow \Comon(B)  
 \]
 from Proposition~\ref{LEFT_ADJ_BRAIDED_PROP} is sylleptic. 
\end{prop}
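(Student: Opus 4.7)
The plan is to verify the single syllepsis axiom for $L$ as a sylleptic strong monoidal $2$-functor, namely axiom (SHA1) of \cite[Appendix~A]{MCCRUDDEN_BALANCED}. No additional data is required beyond the braided structure supplied by Proposition~\ref{LEFT_ADJ_BRAIDED_PROP}; the proof amounts to checking an equation between two pasting composites involving $\chi_{v,w}$ from Proposition~\ref{LEFT_ADJ_MONOIDAL_PROP}, the syllepsis on $\Map(\ca{M},B)$, and the syllepsis on $\Comon(B)$.

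The main simplification I will exploit is that $\Comon(B)$ has only identity $2$-cells, so its syllepsis---as noted at the end of Proposition~\ref{HOM_CATEGORY_SYLLEPTIC_PROP}---is the trivial modification. One side of the axiom therefore collapses to an identity, and the task reduces to showing that the pasting composite built from $\nu_{X,Y}$ (the syllepsis component in $\ca{M}$, which by the proof of Proposition~\ref{SLICE_SYLLEPTIC_PROP} is also the component of the syllepsis on $\Map(\ca{M},B)$ at maps $v \colon X \to B$ and $w \colon Y \to B$), together with $\chi_{v,w}$, $\chi_{v,w}^{-1}$, and the appropriate whiskers, equals the identity $2$-cell on $L(v) \star L(w)$ in $\ca{M}(B,B)$.

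The key technical input I plan to use is an extension of Lemma~\ref{PSEUDOFUNCTOR_MATE_LEMMA} to the syllepsis. Since $\nu$ is an invertible modification between pseudonatural transformations, the mate-pair calculus still applies, so the mate of $\nu_{X,Y}$ along the adjunctions $vY \dashv \overline{vY}$ and $Bw \dashv \overline{Bw}$ should be expressible again in terms of $\nu$ on dual objects. Combined with the mate formula for $\rho$ already recorded in Lemma~\ref{PSEUDOFUNCTOR_MATE_LEMMA}, and with the equation relating $\gamma$ to $\nu_{B,B}$ that expresses the fact that $B$ is symmetric (compare the calculation in the proof of Proposition~\ref{HOM_CATEGORY_SYLLEPTIC_PROP}), the required equation reduces to a string-diagram manipulation whose only nontrivial ingredients are the two syllepsis axioms for $\nu$ in $\ca{M}$ and the triangle identities for the adjunctions $v \dashv \overline{v}$ and $w \dashv \overline{w}$.

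The main obstacle will be the bookkeeping: after expanding $L(v \bullet w)$ through $\chi_{v,w}$, the resulting composite of $1$-cells in $\ca{M}(B,B)$ is long and the pasting diagram for (SHA1) involves several overlapping whiskers of $\nu$ and mates of $\rho$. I expect that once the diagrams are drawn carefully in the string-diagram notation of Section~\ref{STRING_DIAGRAMS_SECTION}, no genuinely new categorical ingredient beyond those used in Propositions~\ref{LEFT_ADJ_BRAIDED_PROP} and \ref{HOM_CATEGORY_SYLLEPTIC_PROP} will be needed, and the verification will follow the same formal pattern as its braided counterpart, leaving the proof---in the spirit of the preceding propositions---to the reader.
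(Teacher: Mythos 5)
Your proposal misses the one observation that makes this proposition immediate, and as a result sets off on a computation that is both unnecessary and not actually the statement the axiom asks for. Being sylleptic is a \emph{property} of a braided monoidal 2-functor: axiom (SHA1) is an equation between two pasting composites of 2-cells \emph{in the codomain} $\Comon(B)$. Every ingredient of both pastings --- the image under the 2-functor $L$ of the syllepsis of $\Map(\ca{M},B)$, the components of the braiding modification of $L$ (which are identities by Proposition~\ref{LEFT_ADJ_BRAIDED_PROP}), and the syllepsis of $\Comon(B)$ --- is a 2-cell of $\Comon(B)$, and $\Comon(B)$ has no nonidentity 2-cells. Two parallel 2-cells in a locally discrete 2-category are automatically equal, so the axiom holds vacuously. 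That is the entire proof in the paper. You noticed that \emph{one} side collapses to an identity because the syllepsis on $\Comon(B)$ is trivial, but the same reasoning applies to the other side, so there is nothing left to check.

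The concrete gap is in your claimed reduction: you say the task reduces to showing that a pasting built from $\nu_{X,Y}$, $\chi_{v,w}$ and its inverse ``equals the identity 2-cell on $L(v)\star L(w)$ in $\ca{M}(B,B)$.'' That conflates 2-cells of $\Comon(B)$ with 2-cells of the Gray monoid $\ca{M}$. The pasting in (SHA1) is formed after applying $L$, which sends every 2-cell of $\Map(\ca{M},B)$ to an identity of $\Comon(B)$; no equation of 2-cells upstairs in $\ca{M}$ is asserted or needed, and the equation you propose to verify there (via a mate calculus for $\nu$ and the symmetry of $B$) is not established by your sketch and is not required for the result. You appear to have pattern-matched on the braided case (Proposition~\ref{LEFT_ADJ_BRAIDED_PROP}), where a genuine check \emph{is} needed --- but only because there one must verify that the source and target \emph{1-cells} of the would-be identity modification coincide; the sylleptic axiom introduces no new data, only an equation of 2-cells, which is free.
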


\begin{proof}
 By definition of a sylleptic monoidal 2-functor we only have to check that an equality between two 2-cells in $\Comon(B)$ holds (see \cite[Appendix A, (SHA1)]{MCCRUDDEN_BALANCED}). This is obviously the case because $\Comon(B)$ doesn't have nonidentity 2-cells.
\end{proof}

\begin{proof}[Proof of Theorem~\ref{TANNAKIAN_ADJUNCTION_SYLLEPTIC_THM}]
 The proof again follows the same pattern as the proof of Theorem~\ref{TANNAKIAN_ADJUNCTION_MONOIDAL_THM}. Specifically, the result is a consequence of Proposition~\ref{LEFT_ADJ_SYLLEPTIC_PROP} and Corollary~\ref{LIFT_TO_SYMMETRIC_PSEUDOMONOIDS_COR}.
\end{proof}

\subsection{Biduality and autonomous Gray monoids}
 So far we have not talked about the relationship between antipodes on a bialgebra and the existence of duals in the category of representations. More precisely, we would like to show that the left adjoint of the Tannakian adjunction sends autonomous categories to Hopf algebroids.
 
 In \cite{BRUGUIERES_VIRELIZIER}, the notion of a Hopf monad was introduced and in \cite{BRUGUIERES_LACK_VIRELIZIER} an equivalent characterization of Hopf monads that doesn't reference antipodes was provided. It was shown that a comonoidal monad is Hopf if two associated natural transformations (called the \emph{fusion operators}) are invertible. This definition never refers to the actual objects of the category, so it is more suitable for our purposes. In this section we will show that $L(w)$ is a Hopf monoidal comonad if the domain $A$ of $w$ is autonomous. In Section~\ref{AFFINE_GROUPOIDS} we used this fact to show that Hopf monoidal comonads on a monoidal $\ca{V}$-category with one object (that is, on an algebra in $\ca{V}$) correspond to Hopf algebroids in the case where $\ca{M}=\Mod(\ca{V})$.
 
 In order to give a ``formal,'' $2$-categorical proof of the fact that $L(w)$ is Hopf monoidal whenever the domain of $w$ is autonomous we need to give a formal definition of autonomous pseudomonoid in a $2$-category, that is, we want to talk about the fact that objects have duals without actually referring to any objects. Such a definition was given in \cite{DUALIZATIONS_ANTIPODES}. 
 
 There is a guiding principle in higher category theory due to Baez and Dolan \cite[p.~12]{BAEZ_DOLAN}, called the \emph{microcosm principle}, which says that usually an algebraic structure can be put on an object of an $n$-category if the $n$-category in question has the corresponding categorified structure. For example, to talk about a monoid in a $1$-category $\ca{M}$ we need a monoidal structure on $\ca{M}$. To define a monoidal category, we secretly use the fact that the cartesian product equips the bicategory of categories with the structure of a monoidal bicategory. From this point of view it should not be surprising that we need to talk about duals in a monoidal bicategory before we can give a formal definition of autonomous pseudomonoids.
 
 \begin{dfn}\label{BIDUAL_DFN}
 Let $\ca{M}$ be a Gray monoid, and let $A$, $B$ be two objects of $\ca{M}$. We say that $B$ is a right bidual of $A$ and $A$ is a left bidual of $B$ if there exist morphisms $n \colon I \rightarrow BA$ and $e \colon AB \rightarrow I$ and invertible 2-cells $\eta \colon 1_B \rightarrow Be \cdot nB$ and $\varepsilon \colon eA \cdot An \rightarrow 1_A$ such that the equations
 \[
\vcenter{\hbox{


}} =1_e
 \]
 hold. The morphisms $n$ and $e$ are called the unit and counit of the bidual situation.
 \end{dfn}
 
 This is a categorification of the notion of a right dual object in a monoidal category: the triangle identities only hold up to invertible 2-cell, and these 2-cells satisfy certain coherence conditions. It turns out that the coherence conditions are automatically satisfied in the following sense: in the situation of Definition~\ref{BIDUAL_DFN}, if $\eta$ and $\varepsilon$ are invertible but don't satisfy the desired equations, then we can replace $\varepsilon$ by a different invertible 2-cell $\varepsilon^\prime \colon eA \cdot An \rightarrow I$ such that $n$, $e$, $\eta$ and $\varepsilon^\prime$ do exhibit $B$ as a right bidual of $A$.

\begin{prop}\label{BIDUAL_BIADJUNCTION_PROP}
 For any bidual situation $(A,B,n,e,\eta,\varepsilon)$, the functor 
\[
 \xymatrix{\ca{M}(A\otimes X,Y) \ar[r] & \ca{M}(X,B \otimes Y) }
\]
 given by $f \mapsto Bf \cdot nX$ is an equivalence of categories, with inverse $g \mapsto eY \cdot Ag$.
\end{prop}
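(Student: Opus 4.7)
The plan is to construct explicit pseudonatural isomorphisms exhibiting $F \colon \ca{M}(A\otimes X, Y) \to \ca{M}(X, B\otimes Y)$ sending $f \mapsto Bf \cdot nX$, and $G \colon \ca{M}(X, B\otimes Y) \to \ca{M}(A\otimes X, Y)$ sending $g \mapsto eY \cdot Ag$, as mutually inverse equivalences. The key ingredients are the invertible 2-cells $\eta$, $\varepsilon$ from the bidual situation together with the Gray interchange 2-cells $c_{-,-}$ provided by Definition~\ref{GRAY_MONOID_DFN}(c).

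First I would unwind $GF(f) = eY \cdot ABf \cdot AnX$ for $f \colon A\otimes X \to Y$. The Gray interchange applied to $e \colon AB \to I$ and $f \colon AX \to Y$ yields an invertible 2-cell $c_{e,f} \colon f \cdot eAX \Rightarrow eY \cdot ABf$; whiskering its inverse on the right by $AnX$ and pasting with $f \cdot \varepsilon X$ produces an invertible 2-cell
\[
\alpha_f \colon GF(f) = eY \cdot ABf \cdot AnX \Rightarrow f \cdot eAX \cdot AnX \Rightarrow f \cdot 1_{AX} = f\rlap{.}
\]
Symmetrically, for $g \colon X \to BY$, the interchange $c_{n,g} \colon BAg \cdot nX \Rightarrow nBY \cdot g$ applied to $n \colon I \to BA$ and $g$, composed with $\eta^{-1}Y \cdot g$, gives
\[
\beta_g \colon FG(g) = BeY \cdot BAg \cdot nX \Rightarrow BeY \cdot nBY \cdot g \Rightarrow 1_{BY} \cdot g = g\rlap{.}
\]

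The remaining task is to verify that $\alpha$ and $\beta$ are natural in $f$ and $g$ respectively. Naturality reduces to compatibility of the Gray interchange with vertical composition of 2-cells, which is exactly Axiom (iii) of Definition~\ref{GRAY_MONOID_DFN} (and is transparently manifested by the two ``Reidemeister move'' identities for the string diagram calculus of Section~\ref{GRAY_MONOIDS_SECTION}: an interchange crossing may be slid past any 2-cell living in an adjacent region). Since $\alpha_f$ and $\beta_g$ are invertible by construction, this gives the claimed equivalence of categories. I expect no serious obstacle here; the only subtlety is bookkeeping the various whiskerings and the correct direction of the interchange 2-cells, which the string diagram notation handles cleanly. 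Note that the triangle axioms for $(\eta,\varepsilon)$ from Definition~\ref{BIDUAL_DFN} are not needed for the equivalence itself — mere invertibility of $\eta$ and $\varepsilon$ suffices — though they would be required to promote this into a specified adjoint equivalence.
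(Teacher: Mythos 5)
Your proof is correct and takes the same route as the paper, whose entire proof is the one-line remark that the natural isomorphisms exhibiting $F$ and $G$ as mutually inverse are given by $\eta$ and $\varepsilon$; you have simply made explicit the whiskerings and the Gray interchange cells needed to assemble $\alpha_f$ and $\beta_g$, and correctly noted that naturality follows from Axiom~(iii) and that the triangle identities are not needed.
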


\begin{proof}
 The natural isomorphisms which exhibit these functors as mutually inverse equivalences are given by $\eta$ and $\varepsilon$.
\end{proof}

\begin{prop}\label{UNIQUENESS_OF_DUALS_PROP}
Right biduals are unique up to equivalence. More precisely, let $(A,B,n,e,\eta,\varepsilon)$ and $(A,B^\prime,n^\prime,e^\prime,\eta^\prime,\varepsilon^\prime)$ be bidual situations in $\ca{M}$. Then the morphisms $B^\prime e \cdot n^\prime B \colon B \rightarrow B^\prime$ and $B e^\prime \cdot nB^\prime \colon B^\prime \rightarrow B$ are mutually inverse equivalences.
\end{prop}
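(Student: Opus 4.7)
The plan is to imitate the standard proof that right adjoints are unique up to canonical isomorphism, adapted to the bicategorical setting using the hom-category equivalences of Proposition~\ref{BIDUAL_BIADJUNCTION_PROP}. Specializing that proposition to $Y = I$ yields, for each of the two bidual situations, an equivalence of categories
\[
 F \colon \ca{M}(A \otimes X, I) \rightarrow \ca{M}(X, B), \quad f \mapsto Bf \cdot nX,
\]
and similarly $F'$ with $B, n$ replaced by $B', n'$, both natural in $X$. Composing $F'$ with a chosen pseudo-inverse of $F$ produces a pseudonatural equivalence $\ca{M}(-, B) \rightarrow \ca{M}(-, B')$ between representable pseudofunctors on $\ca{M}^{\op}$.

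By the bicategorical Yoneda lemma, such a pseudonatural equivalence is induced up to invertible modification by postcomposition with an equivalence $1$-cell $B \rightarrow B'$, determined uniquely up to isomorphism as the image of $\id_B$ under $\ca{M}(B, B) \rightarrow \ca{M}(B, B')$. To identify this $1$-cell explicitly, I would argue as follows. A pseudo-inverse of $F$ at $X = B$ sends $\id_B$ to a $1$-cell $AB \rightarrow I$ isomorphic to $e$: this is the content of the invertible $2$-cell $\eta \colon 1_B \Rightarrow Be \cdot nB$ from Definition~\ref{BIDUAL_DFN}, which says exactly $F(e) \cong \id_B$. Applying $F'$ to $e$ then yields $B'e \cdot n'B = \phi$. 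Hence $\phi$ is an equivalence, and by symmetry so is $\psi = Be' \cdot nB'$; the uniqueness clause of the Yoneda lemma then forces $\psi$ to be a pseudo-inverse of $\phi$, giving the required invertible $2$-cells $\psi \cdot \phi \cong \id_B$ and $\phi \cdot \psi \cong \id_{B'}$.

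The main obstacle is choosing the right level of abstraction. The bicategorical Yoneda argument above is conceptually clean but relies on a precise formulation of representability and the Yoneda lemma for pseudofunctors. As an alternative that stays closer to the ground, one can verify $\psi \cdot \phi \cong \id_B$ directly in string-diagram form. Reading the composite
\[
 \psi \cdot \phi = Be' \cdot nB' \cdot B'e \cdot n'B \colon B \rightarrow B
\]
as a zigzag, the middle segment $nB' \cdot B'e$ admits a Gray-interchange rewriting that places the units and counits into positions where the triangle identities of Definition~\ref{BIDUAL_DFN} (applied to both bidual situations) collapse adjacent zigzags to identities via $\eta^{-1}$ and $\varepsilon'$, producing the required invertible $2$-cell. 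This explicit computation is a mechanical application of the string-diagram calculus of Section~\ref{GRAY_MONOIDS_SECTION}, and $\phi \cdot \psi \cong \id_{B'}$ follows by swapping the roles of the two biduals.
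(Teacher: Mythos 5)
Your proposal is correct, and your ``alternative that stays closer to the ground'' is in fact the paper's proof: the paper simply exhibits the two invertible $2$-cells directly, obtained by Gray-interchanging the middle segment $nB^{\prime}\cdot B^{\prime}e$ of $Be^{\prime}\cdot nB^{\prime}\cdot B^{\prime}e\cdot n^{\prime}B$ so that $B\varepsilon^{\prime}B$ collapses the inner zigzag $Be^{\prime}AB\cdot BAn^{\prime}B$ and $\eta^{-1}$ then kills the remaining $Be\cdot nB$ (and symmetrically $B^{\prime}\varepsilon B^{\prime}$ followed by ${\eta^{\prime}}^{-1}$ for the other composite). One small correction to your phrasing: what does the collapsing is not the swallowtail/triangle \emph{equations} of Definition~\ref{BIDUAL_DFN} but merely the \emph{invertibility} of $\eta$, $\varepsilon$, $\eta^{\prime}$, $\varepsilon^{\prime}$; the coherence axioms play no role in this proposition. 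Your primary route via Proposition~\ref{BIDUAL_BIADJUNCTION_PROP} and the bicategorical Yoneda lemma is a genuinely different and more conceptual argument, and it does correctly identify $B^{\prime}e\cdot n^{\prime}B$ as the image of $e$ (hence, up to the isomorphism $\eta$, of $\id_B$) under the composite hom-equivalence; but it buys this at the cost of two pieces of machinery the paper never sets up, namely the pseudonaturality in $X$ of the equivalences of Proposition~\ref{BIDUAL_BIADJUNCTION_PROP} and a representability/Yoneda statement for pseudofunctors, so the two-line string-diagram computation is the more economical choice here.
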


\begin{proof}
 The two invertible 2-cells
\[
\vcenter{\hbox{


}} 
\]
 give the desired isomorphisms between $(B e^\prime \cdot nB^\prime) \cdot (B^\prime e \cdot n^\prime B)$ and $\id_B$, and between $(B^\prime e \cdot n^\prime B) \cdot (B e^\prime \cdot nB^\prime) $ and $\id_{B^\prime}$.
\end{proof}

\begin{example}\label{DUALS_MODV_EXAMPLE}
 Let $\ca{V}$ be a cosmos. In $\Mod(\ca{V})$, every object $\ca{A}$ has a right bidual, given by $\ca{A}^{\op}$. The unit and counit are given by $\ca{I} \xslashedrightarrow{} \ca{A}^{\op} \otimes \ca{A}$, $(a,b,\ast) \mapsto \ca{A}(a,b)$ and $\ca{A} \otimes \ca{A}^{\op} \xslashedrightarrow{} \ca{I}$, $(\ast, a,b) \mapsto \ca{A}(b,a)$.
\end{example}

 A Gray monoid is called \emph{autonomous} if every object $A$ has a right bidual $\dual{A}$ and a left bidual $\ldual{A}$. If $\ca{M}$ is symmetric, then the left and right dual are equivalent.

 Let $\ca{A}$ be an autonomous symmetric monoidal $\ca{V}$-category. The $\ca{V}$-functor which sends an object to (a choice of) its dual gives an equivalence $d \colon \ca{A}^{\op} \rightarrow \ca{A}$ of categories. This functor is central for the formal definition of an autonomous pseudomonoid in a Gray monoid $\ca{M}$.
 
 Let $(B,m,u)$ be a pseudomonoid in the Gray monoid $\ca{M}$. Even though the lax slice $\ca{M}\slash_{\ell} B$ is not a Gray monoid we can talk about biduals in this monoidal 2-category; we just have to be careful to insert structural isomorphisms and equivalences when necessary. If $(A,\dual{A},n,e,\eta,\varepsilon)$ is a bidual situation in $\ca{M}$, we can ask if it lifts to a bidual situation in $\ca{M}\slash_\ell B$. More precisely, given $g \colon A \rightarrow B$ we can ask for a right bidual $f$ whose domain is $\dual{A}$ and whose structural morphisms extend $n$ and $e$. The data of such a right bidual consists of 2-cells $\pi \colon f\bullet g \circ n \Rightarrow u$ and $\xi \colon u \circ e \Rightarrow g \bullet f$. In order to give a bidual situation, we require that $\eta$ and $\varepsilon$ lift to 2-cells in the lax slice $\ca{M} \slash_\ell B$. We can prove the following result by unraveling the definition of a bidual situation in the lax slice.
 
\begin{prop}\label{DUALIZATIONS_PROP}
 Let $(A,\dual{A},n,e,\eta,\varepsilon)$ be a bidual situation in the Gray monoid $\ca{M}$, let $(B,m,u)$ be a pseudomonoid in $\ca{M}$, and let $\pi \colon f\bullet g \cdot n \Rightarrow u$ and $\xi \colon u \cdot e \Rightarrow g \bullet f$ be 2-cells in $\ca{M}$. Then the 1-cells $(n,\pi)$ and $(e,\xi)$ exhibit $f$ as right bidual of $g$ in the lax slice $\ca{M} \slash_{\ell} B$ if and only if the 2-cells
\[
\vcenter{\hbox{


}}
 \]
 are equal to the identity 2-cell on $f$ and $g$ respectively. 
\end{prop}
 
 Note that the above equations were already present in \cite{DUALIZATIONS_ANTIPODES}, at least for the special case $g=\id_A$. Steve Lack later realized that these give precisely a bidual situation in the lax slice (cf.\ \cite[Section~2.1]{DUALS_INVERT}). There is also a different terminology in the literature: what we call a right bidual of $g$ in $\ca{M} \slash_\ell B$ is called a \emph{left dualization} of $g$ in \cite{DUALIZATIONS_ANTIPODES} and \cite{DUALS_INVERT}.

\begin{dfn}\label{AUTONOMOUS_DEFINITION}
 Let $(B,m,u)$ be a pseudomonoid in a Gray monoid $\ca{M}$. We call $B$ \emph{left autonomous} if the identity $\id_B \colon B \rightarrow B$ has a right bidual $d \colon \dual{B} \rightarrow B$ in $\ca{M} \slash_\ell B$. We call $B$ \emph{right autonomous} if it is left autonomous in $\ca{M}^{\rev}$, the Gray monoid with reversed tensor product, and simply \emph{autonomous} if it is both left and right autonomous.
\end{dfn}

The following proposition shows that this is a sensible definition.

\begin{prop}\label{VCAT_AUTONOMOUS_PROP}
 A monoidal $\ca{V}$-category $\ca{B}$ is (left) autonomous if and only if the corresponding pseudomonoid in $\Mod(\ca{V})$ is (left) autonomous.
\end{prop}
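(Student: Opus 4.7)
The plan is a direct unpacking of Proposition~\ref{DUALIZATIONS_PROP} in the case $\ca{M} = \Mod(\ca{V})$. By Example~\ref{DUALS_MODV_EXAMPLE}, the right bidual of $\ca{B}$ in $\Mod(\ca{V})$ is $\ca{B}^{\op}$ together with the hom-modules $n$ and $e$. Applied to $g = \id_{\ca{B}}$, Proposition~\ref{DUALIZATIONS_PROP} reduces the question of whether $\id_{\ca{B}}$ has a right bidual in $\Mod(\ca{V})\slash_\ell \ca{B}$ to the existence of a module $d \colon \ca{B}^{\op} \xslashedrightarrow{} \ca{B}$ together with 2-cells $\pi$ and $\xi$ satisfying two explicit equations in $\Mod(\ca{V})$.

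Using Proposition~\ref{MODULES_2CATEGORY_PROP}, I would identify $d$ with a $\ca{V}$-functor $D \colon \ca{B}^{\op} \rightarrow \Prs{\ca{B}}$, and then compute directly, using the coend formula of Definition~\ref{MODULE_DFN} together with the description of $\bullet$ from Proposition~\ref{SLICE_TENSOR_PRODUCT_PROP}, that the 2-cells $\pi$ and $\xi$ translate, via the enriched Yoneda lemma, into $\ca{V}$-natural evaluation and coevaluation morphisms exhibiting $D(a)$ as a left dual of $Y(a)$ in the Day convolution monoidal category $(\Prs{\ca{B}}, \otimes_{\mathrm{Day}})$, where $Y$ denotes the Yoneda embedding of $\ca{B}$. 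The two coherence equations of Proposition~\ref{DUALIZATIONS_PROP} become precisely the usual triangle identities.

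Both implications of the equivalence then follow. If $\ca{B}$ is left autonomous, the strong monoidal Yoneda embedding $Y$ sends the left-dualization $\ca{V}$-functor $\ca{B}^{\op} \rightarrow \ca{B}$ to the desired $D$ together with the required duality data. Conversely, given a bidual, the enriched Yoneda lemma applied to the evaluation morphism $D(a) \otimes_{\mathrm{Day}} Y(a) \rightarrow Y(I_{\ca{B}})$ produces a representing object $\ldual{a} \in \ca{B}$ together with an evaluation morphism $\ldual{a} \otimes a \rightarrow I_{\ca{B}}$ in $\ca{B}$; likewise the coevaluation unpacks to a morphism $I_{\ca{B}} \rightarrow a \otimes \ldual{a}$, and the triangle identities in $\Prs{\ca{B}}$ transfer via $Y$ to the corresponding identities in $\ca{B}$, exhibiting $\ldual{a}$ as a left dual of $a$.

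The main obstacle will be the reverse implication, specifically verifying that the pointwise Yoneda extraction yields a genuine $\ca{V}$-functor on $\ca{B}^{\op}$ with values in $\ca{B}$; this uses the $\ca{V}$-functoriality already carried by $D$. Matching the ``left bidual / right bidual'' conventions of Definition~\ref{AUTONOMOUS_DEFINITION} with the ``left dual / right dual'' conventions for monoidal $\ca{V}$-categories also requires care but is routine.
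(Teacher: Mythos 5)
Your translation of the bidual data is the right one: identifying $d$ with a $\ca{V}$-functor $D\colon \ca{B}^{\op}\rightarrow\Prs{B}$, the 2-cells $\pi$ and $\xi$ of Proposition~\ref{DUALIZATIONS_PROP} do unwind, via Yoneda reduction of the relevant coends, to $\ca{V}$-natural evaluation and coevaluation morphisms relating $D(a)$ and $Y(a)$ for the Day convolution on $\Prs{B}$, and the two coherence equations become the triangle identities. The forward implication then goes through exactly as you describe. (For comparison, the paper offers no argument at all here: its proof is a citation of \cite[Proposition~1.6]{DUALIZATIONS_ANTIPODES}, so a self-contained proof would be a genuine addition.)

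The gap is in the reverse implication, and it is not the one you flagged. The enriched Yoneda lemma does not ``produce a representing object'' from the evaluation morphism; it only identifies $D(a)$ with the presheaf $\ca{B}(-\otimes a,I)$ and tells you that \emph{if} that presheaf is representable, then the representing object is a left dual of $a$ in $\ca{B}$. What the duality data actually yields is weaker. Already for $\ca{V}=\Mod_R$, the coevaluation $Y(I)\rightarrow Y(a)\otimes_{\mathrm{Day}}D(a)$ is an element of a coend over $\ca{B}$, hence a finite sum of elementary tensors supported on finitely many objects $y_1,\dots,y_n$ of $\ca{B}$, and the triangle identities then exhibit $D(a)$ as a retract of $\bigoplus_i Y(y_i)$. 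So what you obtain is that $D(a)$ lies in the Cauchy completion $\overline{\ca{B}}$ --- consistent with Proposition~\ref{AUTONOMOUS_CAUCHY_PROP}, which characterises the Day-dualizable presheaves as the \emph{Cauchy} ones, not the representable ones --- and hence only that every object of $\ca{B}$ has a left dual in $\overline{\ca{B}}$. The passage from a dual in $\overline{\ca{B}}$ to a dual in $\ca{B}$ is precisely the content still owed: it is automatic when $\ca{B}$ is Cauchy complete (which covers every use of the proposition in this paper, all of which in any case only need the forward direction), but for a general monoidal $\ca{V}$-category you must either impose that hypothesis or locate an argument in \cite{DUALIZATIONS_ANTIPODES} that excludes non-representable duals. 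By contrast, the functoriality of $a\mapsto\ldual{a}$, which you named as the main obstacle, is routine once representability is known.
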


\begin{proof}
 This is \cite[Proposition~1.6]{DUALIZATIONS_ANTIPODES}.
\end{proof}

 In \cite[Proposition~1.4]{DUALIZATIONS_ANTIPODES} it was shown that for any autonomous map pseudomonoid $(A,p,j)$, the right bidual $d \colon \dual{A} \rightarrow A$ of the identity is an equivalence. Since the right bidual $\dual{A}$ of $A$ is only well-defined up to equivalence, we could simply choose $\dual{A}=A$. By doing this we can find simpler conditions for when a map pseudomonoid is autonomous.

\begin{dfn}\label{NATURALLY_FROBENIUS_DFN}
 Let $\ca{M}$ be a Gray monoid. A map pseudomonoid $(A,p,j)$ is called \emph{naturally Frobenius} if the two mates
 \[
  \phi=\vcenter{\hbox{


}}
 \]
 the morphisms $(n, \pi)$ and $(e,\xi)$ exhibit $1_A \colon A \rightarrow A$ as a right bidual of itself in the lax slice $\ca{M}/_{\ell} A$. A map pseudomonoid $A$ is autonomous if and only if it is naturally Frobenius. 
\end{prop}

\begin{proof}
 The first statement is \cite[Theorem~6.8]{DUALS_INVERT}. It shows in particular that a naturally Frobenius pseudomonoid is left autonomous. Moreover, the right bidual $d \colon \dual{A} \rightarrow A$ of the identity is equal to the identity, so in particular an equivalence. It follows from \cite[Proposition~1.5]{DUALIZATIONS_ANTIPODES} that $A$ is autonomous. The converse can be found in \cite[Proposition~3.1]{STREET_FROBENIUS} in the case where $\ca{M}$ is autonomous; the general case follows from an application of \cite[Corollary~4.4]{DUALS_INVERT} applied to $\ca{M}$ and $\ca{M}^{\rev}$, the Gray monoid with the same underlying 2-category and reversed tensor product. 
\end{proof}

\subsection{Hopf monoidal comonads}\label{HOPF_MONOIDAL_COMONADS_SECTION}
 The second ingredient we need to deal with Hopf algebroids is the notion of a Hopf monoidal comonad. A monoidal comonad on a pseudomonoid $(B,m,u)$ in a Gray monoid $\ca{M}$ is a comonad $c$ on $B$ with a monoidal structure $(c,\phi,\phi_0)$ such that the counit and the comultiplication are monoidal 2-cells. Equivalently, it is a monoid in the monoidal category $\Comon(B)$ under convolution product. We get $\phi$ and $\phi_0$ from the multiplication and unit maps by taking mates under the adjunctions $m \dashv \overline{m}$ and $u \dashv \overline{u}$ respectively (see Example~\ref{MONOIDAL_MORPHISM_EXAMPLE}).

\begin{dfn}\label{HOPF_COMONAD_DFN}
 A monoidal comonad $(c,\phi,\phi_0)$ on a pseudomonoid $(B,m,u)$ is called \emph{left Hopf}, respectively \emph{right Hopf}, if the 2-cells
 \[
 \vcenter{\hbox{


 }}
 \]
 are invertible.
\end{dfn}

 The following example shows that Hopf monoidal comonads in the bicategory of cospans in a finitely cocomplete category $\ca{E}$ are precisely the groupoids internal to $\ca{E}^{\op}$. We have used this fact in Section~\ref{AFFINE_GROUPOIDS} to prove our recognition results for affine groupoids.

\begin{example}\label{GROUPOIDS_ARE_HOPF_COMONADS_EXAMPLE}
 Let $\ca{E}$ be a finitely cocomplete category (for example, the category of $R$-algebras for some commutative ring $R$), and let $\ca{M}=\cospan{\ca{E}}$ be the symmetric monoidal bicategory of cospans in $\ca{E}$. Every object in $\ca{M}$ is a pseudomonoid with multiplication
\[
\def\objectstyle{\scriptstyle}
\def\labelstyle{\scriptscriptstyle}
\def\twocellstyle{\scriptscriptstyle}
\xymatrix@!0@R=20pt@C=50pt{B+B \ar[rd]_-{\nabla} && \ar@{=}[ld] B \\
&B}
\]
and the 2-cell induced by
\[
\def\objectstyle{\scriptstyle}
\def\labelstyle{\scriptscriptstyle}
\def\twocellstyle{\scriptscriptstyle}
\xymatrix@!0@R=20pt@C=50pt{A+A \ar[rd]_-{f+f} && \ar[ld]^-{g+g} B+B \ar[rd]_-{\nabla} && B \ar@{=}[ld] \\
& C+C \ar[rddd]_(0.3){\nabla} && B \ar[dd]^-{g}\\
A+A \ar[rd]_-{\nabla} && \ar@{=}[ld]|{\phantom{\bullet\bullet}} A \ar[rd]_-{f} && \ar[ld]^-{g} B\\
& A \ar[rd]_-{f} && \ar@{=}[ld] C \\
&& C}
\]
 endows every morphism $C \colon A \rightarrow B$ in $\ca{M}$ with a monoidal structure. Note that a comonad 
\[
\def\objectstyle{\scriptstyle}
\def\labelstyle{\scriptscriptstyle}
\def\twocellstyle{\scriptscriptstyle}
\xymatrix@!0@R=20pt@C=50pt{B \ar[rd]_-{s} && \ar[ld]^-{t} B \\
&C}
\] 
 in the category of cospans is precisely a category internal to $\ca{E}^{\op}$. That is, the represented functors $\ca{E}(C,-)$ and $\ca{E}(B,-)$ come equipped with natural maps giving a category with objects $\ca{E}(B,X)$ and morphisms $\ca{E}(C,X)$ for every object $X \in \ca{E}$. For example, source and target of a morphism $f \colon C \rightarrow X$ are given by precomposition with $s$ and $t$ respectively, and the comultiplication of $C$ gives the composition map. Moreover, the monoidal structure discussed above is compatible with the comonad structure. The domain and codomain of the 2-cell that determines if a monoidal comonad in $\ca{M}$ is right Hopf are given by the objects which represent the functors
\[
X \mapsto \{(f,g) \in \ca{E}(C,X)^2 \vert ft=gs \}
\]
 and
\[
X \mapsto \{(f,g) \in \ca{E}(C,X)^2 \vert ft=gt \}
\]
 respectively. A careful analysis of the pushouts involved shows that the 2-cell from Definition~\ref{HOPF_COMONAD_DFN} represents the natural transformation which sends $(f,g)$ to $(g\circ f,g)$. If this map is surjective, then $(\id_{gt},g)$ is in its image, so every morphism has a section. But this only happens if the category represented by $(B,C)$ is in fact a groupoid. Conversely, if every morphism is invertible, the natural transformation above is evidently invertible. Thus right Hopf comonads in $\ca{M}$ are precisely the groupoids internal to $\ca{E}^{\op}$. In particular, for $\ca{E}$ the category of commutative $R$-algebras, we find that Hopf comonads in $\cospan{\ca{E}}$ are precisely affine groupoids acting on a commutative $R$-algebra $B$.
\end{example}

 In order to prove Theorem~\ref{HOPF_COMONAD_THM}, we will show that a strong monoidal morphism $w$ between autonomous pseudomonoids always satisfies the following definition. We can therefore apply the proposition below.

\begin{dfn}
 Let $(w,\psi,\psi_0) \colon A \rightarrow B$ be a strong monoidal map between two pseudomonoids in a Gray monoid $\ca{M}$. Then $w$ is \emph{strong left coclosed}, respectively \emph{strong right coclosed}, if the mates
\[
\vcenter{\hbox{


}}
\]
 of the 2-cell $\psi^{-1} \colon w \cdot p \rightarrow m \cdot w\otimes w$ are invertible.
\end{dfn}

\begin{prop}\label{COCLOSED_IMPLIES_HOPF_PROP}
 If $w\colon A \rightarrow B$ is a strong monoidal map that is both strong right and strong left coclosed, then the induced comonad $L(w)=w \cdot \overline{w}$ is a Hopf monoidal comonad.
\end{prop}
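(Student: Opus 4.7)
The plan is to unpack the fusion operators of the monoidal comonad $L(w)=w\cdot\overline{w}$ in terms of the given strong monoidal structure $\psi$ on $w$, and then to recognise them, via the calculus of mates, as the very 2-cells whose invertibility defines strong coclosedness.

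First I would make the monoidal comonad structure $\phi\colon m\cdot Bc\cdot cB\Rightarrow c\cdot m$ on $c=L(w)$ explicit. By Proposition~\ref{LEFT_ADJ_MONOIDAL_PROP} the 2-functor $L$ is strong monoidal with structure isomorphism $\chi$; combined with Example~\ref{MONOIDAL_MORPHISM_EXAMPLE}, this implies that the monoid structure on $w$ in $\ca{M}(A,B)$ determined by $\psi$ is sent to a monoid structure $\widetilde{\phi}\colon L(w)\star L(w)\Rightarrow L(w)$ in $\Comon(B)$, whose mate under $m\dashv\overline{m}$ is $\phi$. Unravelling the formulas for $\chi$ and for $L$ on 1-cells (Propositions~\ref{LEFT_BIADJOINT_2FUNCTOR_PROP} and \ref{LEFT_ADJ_MONOIDAL_PROP}) yields a string diagram for $\phi$ built entirely from $\psi^{-1}$ together with units and counits of the adjunctions $w\dashv\overline{w}$ and $m\dashv\overline{m}$, interleaved with Gray interchange isomorphisms.

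With this description of $\phi$ in hand, the right fusion operator is obtained by whiskering $\phi$ with $Bc$ on the right and composing with the comultiplication $\delta=w\eta\overline{w}$ of $c$, where $\eta$ is the unit of $w\dashv\overline{w}$; the left fusion operator is defined symmetrically. A unit appearing in $\delta$ then sits adjacent to a counit appearing in the mate defining $\phi$; using the axioms of Definition~\ref{GRAY_MONOID_DFN}, Lemma~\ref{INTERCHANGE_MATE_LEMMA}, and the triangle identities, this pair cancels and the right fusion operator simplifies to a whiskering of the mate of $\psi^{-1}$ under $wA\dashv\overline{w}A$ — exactly the 2-cell whose invertibility is the strong right coclosed hypothesis. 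Likewise, the left fusion operator reduces to a whiskering of the mate of $\psi^{-1}$ under $Bw\dashv B\overline{w}$, and is invertible by strong left coclosedness. Hence $L(w)$ is a Hopf monoidal comonad.

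The principal obstacle is notational bookkeeping: the raw string diagram for each fusion operator contains several stacked units and counits of $w\dashv\overline{w}$ together with copies of $\psi^{-1}$ and Gray interchanges, and the delicate part is to rearrange them, by repeated application of the interchange axioms and Lemma~\ref{INTERCHANGE_MATE_LEMMA}, into the configuration in which the triangle identities visibly produce the coclosed mate. This is routine but lengthy inside the string-diagram formalism of Section~\ref{STRING_DIAGRAMS_SECTION}, and it is essentially the assertion that the compatibility of $L$ with the monoidal structure from Proposition~\ref{LEFT_ADJ_MONOIDAL_PROP} passes through the formation of fusion operators.
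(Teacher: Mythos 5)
Your argument is sound in outline, but it takes a genuinely different route from the paper: the paper's entire proof of this proposition is a one-sentence appeal to \cite[Proposition~4.4]{CHIKHLADZE_LACK_STREET}, applied once to $\ca{M}$ and once to $\ca{M}^{\rev}$ so that only one handedness ever has to be computed. What you propose is, in effect, a reconstruction of the proof of that cited result: unpack the monoid structure on $L(w)$ supplied by $\chi$ and $L(p,\psi)$, take its mate under $m\dashv\overline{m}$ to obtain $\phi$, compose with $\delta=w\eta\overline{w}$, and cancel unit/counit pairs until the fusion operator is exhibited as the coclosedness mate up to invertible cells. The paper's approach buys brevity and delegates the string-diagram bookkeeping; yours buys self-containedness at the cost of carrying it out. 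Two cautions if you write the computation in full. First, the fusion operator will in general only agree with the whiskered coclosedness mate after conjugation by invertible Gray interchanges and structure 2-cells, which is all you need but is worth stating. Second, your left/right bookkeeping is off: the strong left coclosed 2-cell is the mate of $\psi^{-1}$ taken under $wA\dashv\overline{w}A$ together with $w\dashv\overline{w}$, while the strong right coclosed 2-cell is the mate taken under $Aw\dashv A\overline{w}$ together with $w\dashv\overline{w}$ (after inserting the interchange $c_{w,w}$) --- the adjunction $Bw\dashv B\overline{w}$ you name does not enter either definition, and your pairing of the right fusion operator with the adjunction $wA\dashv\overline{w}A$ but with the \emph{right} coclosed hypothesis is internally inconsistent. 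Since the proposition assumes both coclosedness conditions and concludes both Hopf conditions, this slip does not break the argument, but it is precisely the kind of mirror-image confusion that the paper's passage to $\ca{M}^{\rev}$ is designed to avoid.
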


\begin{proof}
 This follows from \cite[Proposition~4.4]{CHIKHLADZE_LACK_STREET} applied to $\ca{M}$ and $\ca{M}^{\rev}$.
\end{proof}

In order to prove that the conditions of the above proposition hold for all strong monoidal maps between autonomous map pseudomonoids, we will use the following lemma whose proof we defer to Appendix~\ref{TECHNICAL_LEMMA_APPENDIX}. It follows from a generalization of the fact that any strong monoidal functor preserves duals.

\begin{cit}[Lemma~\ref{HOPF_LEMMA}]
 Let $A$ and $B$ be autonomous map pseudomonoids in a Gray monoid $\ca{M}$, and let $(w,\psi,\psi_0)\colon A \rightarrow B$ be a strong monoidal map. Then the 2-cell
 \[
  \tau=\vcenter{\hbox{


}}
 \]
 is invertible.
\end{cit}

\begin{proof}[Proof of Theorem~\ref{HOPF_COMONAD_THM}.]
 Let $(w,\psi,\psi_0)$ be a strong monoidal map between two autonomous map pseudomonoids $(A,p,j)$ and $(B,m,u)$. We have to show that the induced comonad $L(w)=w \cdot \overline{w}$ is a Hopf monoidal comonad. We first show that every strong monoidal map $(w,\psi,\psi_0) \colon A \rightarrow B$ is strong right and left coclosed, and then we apply Proposition~\ref{COCLOSED_IMPLIES_HOPF_PROP} to conclude that $L(w)=w \cdot \overline{w}$ is indeed Hopf monoidal. In other words, we only have to show that the two 2-cells
 \[
 \vcenter{\hbox{


}}
 \]
 are invertible. We focus on the left one of these; invertibility of the right one follows by the same reasoning applied to the Gray monoid $\ca{M}^{\rev}$ with reversed tensor product. Indeed, if $(A,p,j,\alpha,\lambda,\rho)$ is an autonomous map pseudomonoid in $\ca{M}$, then $(A,p,j,\alpha^{-1},\rho,\lambda)$ is an autonomous map pseudomonoid in $\ca{M}^{\rev}$, and if $(w,\psi,\psi_0)$ is monoidal map, then $(w,\psi \cdot c_{w,w}^{-1},\psi_0)$ is a monoidal map in $\ca{M}^{\rev}$. 
 
 The equivalence from Proposition~\ref{BIDUAL_BIADJUNCTION_PROP}, applied to the case $X=BA$, $Y=I$ in $\ca{M}^{\rev}$ sends the left one of the above 2-cells to
 \[
  \lambda=\vcenter{\hbox{


 }}
 \]
 where the rightmost 2-cell is invertible by Lemma~\ref{HOPF_LEMMA}. The same lemma implies that the 2-cell below the dashed line is also invertible, and it follows that $\lambda$ is invertible. This shows that $w$ is strong left coclosed.
\end{proof}
\section{Base change}\label{BASE_CHANGE_SECTION}

\subsection{Base change for 2-categories}\label{2CAT_BASE_CHANGE_SECTION}
We investigate the question how the Tannakian biadjunction interacts with base change functors. If $F \colon \ca{M} \rightarrow \ca{M}^{\prime}$ is a pseudofunctor between two 2-categories, then it sends maps to maps and preserves comonads. Thus it induces a functor $F_\ast \colon \Comon(B) \rightarrow \Comon(FB)$ and a pseudofunctor $F_\ast \colon \Map(\ca{M},B) \rightarrow \Map(\ca{M}^{\prime},FB)$. The specified adjoint $\overline{F(w)}$ of $F(w)$ is chosen to be $F(\overline{w})$. 

\begin{prop}\label{BASE_CHANGE_NATURAL_ISO_PROP}
The diagram
\[
\xymatrix{
\Map(\ca{M},B) \ar[d]_{F_\ast} \ar[r]^{L} & \Comon(B) \ar[d]^{F_\ast}\\
\Map(\ca{M}^{\prime},FB) \ar[r]_{L^\prime} & \Comon(FB) 
}
\]
is commutative up to natural isomorphism, given by the structure 2-cell
\[
 F(w.\overline w) \cong F(w) \cdot \overline{F(w)}
\]
of the pseudofunctor $F$. If $F$ is strict, then the above diagram is commutative.
\end{prop}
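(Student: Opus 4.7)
The plan is to define the component of the natural isomorphism at $w \colon A \to B$ to be the invertible compositor $\phi_{w,\overline{w}} \colon F(w) \cdot F(\overline{w}) \to F(w \cdot \overline{w})$ of the pseudofunctor $F$ (or its inverse, depending on orientation). Since $\Comon(FB)$ has only identity 2-cells, ``naturality up to iso'' means that this family of isomorphisms of comonads forms a natural transformation between the ordinary functors $F_\ast \circ L$ and $L' \circ F_\ast$. Thus the task splits into three sub-verifications: that $\phi_{w,\overline{w}}$ is an isomorphism of comonads, that it is natural in $w$, and that it reduces to the identity when $F$ is a 2-functor.

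First I would verify that $\phi_{w,\overline{w}}$ is a morphism of comonads from $L'(F_\ast w) = F(w) \cdot F(\overline{w})$ to $F_\ast(L(w)) = F(w \cdot \overline{w})$. By construction of $F_\ast$ on maps, the chosen right adjoint of $F(w)$ is $F(\overline{w})$, and the unit and counit of this new adjunction are obtained from $F(\eta)$ and $F(\varepsilon)$ by inserting the compositor and unitor 2-cells of $F$. The comultiplication of $L'(F_\ast w)$ is the whiskering of this new unit by $F(w)$ and $F(\overline{w})$, while the comultiplication of $F_\ast(L(w))$ is the image under $F$ of $w \cdot \eta \cdot \overline{w}$. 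Commuting $\phi_{w,\overline{w}}$ past these two comultiplications reduces, after a short string-diagram chase, to the associativity coherence of $F$; compatibility with counits reduces identically to the unit coherence of $F$.

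Next I would verify naturality against a 1-cell $(a,\alpha) \colon w \to w'$ in $\Map(\ca{M},B)$. The 1-cell $L(a,\alpha)$ is the string-diagram composite of Proposition~\ref{LEFT_BIADJOINT_2FUNCTOR_PROP}, built from $\alpha$, $\alpha^{-1}$, and the mate $\overline{\alpha}$ under the adjunctions $w \dashv \overline{w}$, $w' \dashv \overline{w'}$. The morphism $L'(F_\ast(a,\alpha))$ has the same formal shape, but built with $F(\alpha)$, $F(\alpha)^{-1}$, and the mate under $F(w) \dashv F(\overline{w})$, $F(w') \dashv F(\overline{w'})$. The calculus of mates interacts with a pseudofunctor precisely by insertion of compositor and unitor 2-cells (cf.\ Section~\ref{STRING_DIAGRAMS_SECTION}); pasting $\phi_{w,\overline{w}}$ and $\phi_{w',\overline{w'}}$ on the appropriate sides of $L'(F_\ast(a,\alpha))$ therefore rewrites it to $F(L(a,\alpha)) = F_\ast(L(a,\alpha))$, which gives the desired naturality square.

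Finally, when $F$ is a strict 2-functor the compositor and unitor are identities, so $L'(F_\ast w)$ and $F_\ast(L(w))$ are literally equal as comonads, and substituting into the string-diagram formula for $L(a,\alpha)$ shows $L'(F_\ast(a,\alpha)) = F_\ast(L(a,\alpha))$ on the nose; hence the diagram commutes strictly. The only non-routine step is the bookkeeping in the second paragraph, where the coherence of $F$ with adjunctions must be unwound carefully, but no new idea is required beyond the standard fact that pseudofunctors preserve internal adjunctions and intertwine mates with compositors.
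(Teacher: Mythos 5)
Your proposal is correct and follows the same route the paper intends: the paper's proof is the single sentence ``this is clear from the definition of the left biadjoint,'' and your three sub-verifications (that the compositor $F(w)\cdot F(\overline{w})\cong F(w\cdot\overline{w})$ is a comonad morphism, that it is natural via the interaction of mates with compositors, and that strictness collapses everything to equality) are exactly the details being left to the reader. No gap.
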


\begin{proof}
This is clear from the definition of the left biadjoint of the Tannakian adjunction.
\end{proof}

On the other hand, if $F$ is only lax or oplax, then it does not preserve maps in general, so it doesn't induce any kind of functor on $\Map(\ca{M},B)$.

A different kind of base change involves a map $f \colon B \rightarrow B^{\prime}$ in $\ca{M}$. Composition with $f$ clearly induces a 2-functor $\Map(\ca{M},f) \colon \Map(\ca{M},B) \rightarrow \Map(\ca{M},B^{\prime})$.

\begin{prop}
 The assignment which sends a comonad $c$ on $B$ to
 \[
 fc\overline{f} \colon B^{\prime} \rightarrow B^\prime \smash{\rlap{,}} 
 \]
 with comultiplication and counit given by
 \[
\vcenter{\hbox{


}}
 \]
 respectively defines a functor $f_\ast \colon \Comon(B) \rightarrow \Comon(B^{\prime})$.
\end{prop}

\begin{proof}
 The comonad axioms are easily proved using string diagrams, and functoriality follows from the fact that whiskering with a 1-cell preserves vertical composition of 2-cells.
\end{proof}

\subsection{Base change and monoidal structures}
If $F \colon \ca{M} \rightarrow \ca{M}^{\prime}$ is a strong monoidal pseudofunctor between two Gray monoids, then it lifts to the categories of map pseudomonoids (see \cite[Proposition~5]{DAY_STREET}). In particular, it preserves monoidal morphisms between map pseudomonoids, which tells us that it induces a pseudofunctor
\[
 F_\ast \colon \MonComon(B) \rightarrow \MonComon(FB)
\]
for any map pseudomonoid $B$. Similarly, it induces a pseudofunctor
\[
 F_\ast \colon \PsMon\bigl(\Map(\ca{M},B)\bigr) \rightarrow \PsMon \bigl( \Map(\ca{M}^{\prime},FB) \bigr)
\]
since objects of $\PsMon\bigl(\Map(\ca{M},B)\bigr)$ are precisely map pseudomonoids equipped with a strong monoidal map to $B$. Clearly these pseudofunctors are compatible with the pseudofunctors of the same name introduced in Section~\ref{2CAT_BASE_CHANGE_SECTION}, in the sense that the diagrams
\[
\vcenter{\hbox{
 \xymatrix{ \PsMon\bigr(\Map(\ca{M},B)\bigl) \ar[d]_U \ar[r]^{F_\ast} & \PsMon\bigr(\Map(\ca{M}^{\prime},FB)\bigl) \ar[d]^{U} \\
 \Map(\ca{M},B) \ar[r]_{F_\ast} & \Map(\ca{M}^{\prime}, FB)
 }}}
\]
and
\[
\vcenter{\hbox{
 \xymatrix{ \MonComon(B) \ar[d]_U \ar[r]^{F_\ast} & \MonComon(FB) \ar[d]^{U} \\
 \Comon(B) \ar[r]_{F_\ast} & \Comon(FB)
 }}} 
\]
commute. Thus the natural isomorphism from Proposition~\ref{BASE_CHANGE_NATURAL_ISO_PROP} induces a natural isomorphism $UF_\ast L \rightarrow ULF_\ast$. The goal of this section is to show that there exists a lift of this natural isomorphism to the category of monoidal comonads.

\begin{prop}\label{MONOIDAL_BASE_CHANGE_PROP}
 Let $F \colon \ca{M} \rightarrow \ca{M}^{\prime}$ be a strong monoidal pseudofunctor between Gray monoids, and let $B\in \ca{M}$ be a map pseudomonoid. Then the natural isomorphism $F_\ast L \Rightarrow L F_\ast$ from Proposition~\ref{BASE_CHANGE_NATURAL_ISO_PROP} lifts to the category of monoidal comonads.
\end{prop}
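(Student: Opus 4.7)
The plan is to reduce the statement to checking that the natural isomorphism $\tau_w \colon F(w \cdot \overline{w}) \cong F(w) \cdot \overline{F(w)}$ of Proposition~\ref{BASE_CHANGE_NATURAL_ISO_PROP} (which comes from the pseudofunctoriality constraint of $F$ together with the choice $\overline{F(w)} \defl F(\overline{w})$) is compatible with the convolution multiplication and unit which endow $L(w)$ and $LF_\ast(w)$ with their structures as monoids in $\Comon(B)$ and $\Comon(FB)$, whenever $w$ carries a strong monoidal structure $(\psi,\psi_0)$.

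First, I would recall from Example~\ref{MONOIDAL_MORPHISM_EXAMPLE} that a monoidal comonad on a map pseudomonoid is precisely a monoid in the convolution monoidal category of comonads, so it suffices to produce an isomorphism of monoids. The strong monoidal structure on $L$ from Proposition~\ref{LEFT_ADJ_MONOIDAL_PROP} endows $L(w)$ with a canonical monoid structure whose multiplication is the convolution product $\chi_{w,w} \colon L(w) \star L(w) \xrightarrow{\sim} L(w \bullet w)$ postcomposed with $L$ applied to the strong monoidal structure map $(\id,\psi) \colon w \bullet w \rightarrow w$ in $\Map(\ca{M},B)$ (which corresponds under mates to a map $w \bullet w \to w$ in the sense used there), and analogously for the unit via $\psi_0$. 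Because $F$ is a strong monoidal pseudofunctor of Gray monoids, the image $Fw$ inherits a strong monoidal structure $(F\psi, F\psi_0)$ (composed with the monoidal constraints of $F$), and by the same recipe $LF_\ast(w) = L(Fw)$ is a monoid in $\Comon(FB)$.

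Next, I would verify that under $\tau_w$ the two monoid structures correspond. Unwinding the definition of $\chi$ from Proposition~\ref{LEFT_ADJ_MONOIDAL_PROP}, which is built purely out of the interchange 2-cells $c_{-,-}$ and the unit/counit of the adjunction $m \dashv \overline{m}$, the key compatibility reduces to showing that the square
\[
\xymatrix@C=12pt{
F L(w) \star F L(w) \ar[d] \ar[rr]^-{\chi^{\prime}} && F\bigl(L(w)\bigr) \star F\bigl(L(w)\bigr) \ar[d]^{F(\chi_{w,w})} \\
L(Fw) \star L(Fw) \ar[r]_-{\chi_{Fw,Fw}} & L(Fw \bullet Fw) \ar[r]_-{\sim} & F\bigl(L(w \bullet w)\bigr)
}
\]
commutes, where the left vertical is built from $\tau$ and the monoidal constraints of $F$, and the bottom-right identification uses $F$ being strong monoidal on maps into the pseudomonoid $B$ (so that $F(w \bullet w) \cong Fw \bullet Fw$ is natural in the arguments). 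Once this hexagon and the analogous triangle for the unit $\iota$ are established, postcomposing with $L$ applied to $\psi$ and $\psi_0$ (and using that $F$ preserves these strong monoidal structure 2-cells) gives the desired compatibility between $F_\ast$ applied to the monoid structure on $L(w)$ and the monoid structure on $L(Fw)$.

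The main obstacle will be the verification of the hexagon above: both $\chi$ and the monoidal constraints of the pseudofunctor $F$ are built from interchange 2-cells and adjunction data, so the diagram unfolds into a sizeable string-diagram identity. The cleanest route will be to strictify wherever possible---observing that the calculation is invariant under replacing $F$ by a strictly monoidal 2-functor biequivalent to it, and that the coherence axioms for a strong monoidal pseudofunctor imply naturality of its constraints with respect to the relevant interchange 2-cells---thereby reducing the verification to an identity that follows directly from two applications of axiom (iv) of Definition~\ref{GRAY_MONOID_DFN} together with Lemma~\ref{INTERCHANGE_MATE_LEMMA} on how mates interact with the Gray interchange. The triangle for the unit $\iota=\id$ is essentially automatic from the normalization $\overline{F(u)}\defl F(\overline{u})$.
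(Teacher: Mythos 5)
Your proposal is correct and takes essentially the same route as the paper: both reduce the claim to checking that the base-change isomorphism is compatible with $\chi$ and $\iota$, and both dispatch this by invoking Gurski's coherence theorem to replace $F$ by a Gray functor, together with the choices $\overline{Fm}=F\overline{m}$ and $\overline{Fu}=F\overline{u}$. The only difference is one of order: the paper strictifies at the outset, after which the natural isomorphism becomes the identity and your hexagon collapses to a triviality (Gray functors preserve interchange cells on the nose), so no applications of axiom (iv) or Lemma~\ref{INTERCHANGE_MATE_LEMMA} are actually needed.
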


\begin{proof}
 From \cite[Theorem~11.3.1]{GURSKI} we know that any strong monoidal functor between monoidal bicategories can be replaced by an equivalent Gray functor. Thus, if $(B,m,u)$ is a map pseudomonoid, so is $(FB,Fm,Fu)$. Recall that we chose the adjoints of $Fm$ and $Fu$ to be equal to $F \overline{m}$ and $F \overline{u}$ respectively. Any Gray-functor is in particular a strict 2-functor, so the diagram
 \[
\vcenter{\hbox{
 \xymatrix{ \PsMon\bigr(\Map(\ca{M},B)\bigl) \ar[d]_U \ar[r]^{F_\ast} & \PsMon\bigr(\Map(\ca{M}^{\prime},FB)\bigl) \ar[d]^{U} \\
 \Map(\ca{M},B) \ar[r]_{F_\ast} & \Map(\ca{M}^{\prime}, FB)
 }}}  
 \]
 is commutative. Moreover, any Gray functor preserves the interchange 2-cells. It follows immediately from the definition of $\chi_{v,w}$ (see Proposition~\ref{LEFT_ADJ_MONOIDAL_PROP}) that the identity natural transformation commutes strictly with $\chi$, and from our choice of adjoint of $Fu$ it also clear that it preserves $\iota$ strictly. Thus the identity natural transformation is monoidal, and therefore lifts to the categories of pseudomonoids.
\end{proof}

If $F \colon \ca{M} \rightarrow \ca{M}^{\prime}$ is a braided or sylleptic strong monoidal pseudofunctor, then we get similar lifts to the 2-categories of braided or symmetric pseudomonoids on the one hand and to the categories of braided or symmetric monoidal comonads.

\begin{prop}\label{BRAIDED_BASE_CHANGE_PROP}
 Let $F \colon \ca{M} \rightarrow \ca{M}^{\prime}$ be a braided or sylleptic strong monoidal pseudofunctor, and let $B \in \ca{M}$ be a braided or symmetric map pseudomonoid. Then the natural isomorphism $F_\ast L \Rightarrow L F_\ast$ from Proposition~\ref{BASE_CHANGE_NATURAL_ISO_PROP} lifts to the category of braided or symmetric monoidal comonads.
\end{prop}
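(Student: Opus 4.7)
The plan is to adapt the strategy of the proof of Proposition~\ref{MONOIDAL_BASE_CHANGE_PROP}. As in that proof, we first reduce to the case where $F$ is suitably strict, and then verify that the identity 2-cell, viewed as the underlying 2-cell of $F_\ast L \Rightarrow L F_\ast$, respects the braiding (and syllepsis) on the nose.

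First, I would invoke a coherence theorem for braided (respectively sylleptic) monoidal bicategories to replace $F$ by an equivalent braided (sylleptic) Gray functor between braided (sylleptic) Gray monoids, so that the braiding and syllepsis on both $\ca{M}$ and $\ca{M}^{\prime}$ are preserved strictly. This is the analogue of the appeal to \cite[Theorem~11.3.1]{GURSKI} in the monoidal case; granting such a strictification, the unit object, the tensor product, the interchange 2-cells, and the pseudonatural equivalence $\rho$ (together with the modifications $R$, $S$) are all preserved on the nose, and likewise the syllepsis $\nu$ when applicable.

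Next, I would verify that the identity 2-cell $F_\ast L \Rightarrow L F_\ast$ is compatible with the braidings. By Proposition~\ref{MONOIDAL_BASE_CHANGE_PROP} it is already a monoidal natural transformation, so it only remains to check one equation involving the braiding. On the $\Map$-side, the braiding on $\Map(\ca{M},B)$ is built (in Proposition~\ref{SLICE_BRAIDED_PROP}) out of $\rho_{v,w}$ and $\gamma^{-1}$; since $F_\ast$ is strict braided and sends the chosen left adjoints $m,u$ of $B$ to the chosen left adjoints of $FB$, both pieces are preserved identically. On the $\Comon$-side, the braiding on $\Comon(B)$ from Proposition~\ref{HOM_CATEGORY_BRAIDED_PROP} is built from $\rho_{\overline{w},\overline{w}}$, $\gamma$, and the unit/counit of $m\dashv\overline{m}$; each of these is again preserved strictly, using Lemma~\ref{PSEUDOFUNCTOR_MATE_LEMMA} to control the mate under the strictified adjunctions. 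In the sylleptic case, the only additional datum is the 2-cell $\nu_{X,Y}$, which by our choice of strictification is also preserved strictly; the axiom (SHA1) of \cite[Appendix~A]{MCCRUDDEN_BALANCED} then follows automatically, since both sides become the same 2-cell once $\nu$ is strict.

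The main obstacle will be the strictification step: a fully worked out coherence theorem for \emph{braided} or \emph{sylleptic} strong monoidal pseudofunctors between Gray monoids is more delicate than the purely monoidal version cited in the previous proof. If such a strictification is not available off the shelf, the alternative (and fallback) is a direct check: exhibit invertible modifications expressing how $F$ fails to preserve the braiding and syllepsis strictly, and show that when one pastes these into the definitions of $\chi$ and $\iota$ from Proposition~\ref{LEFT_ADJ_MONOIDAL_PROP}, the resulting coherence axioms (BHD1) and (SHA1) reduce to axioms already satisfied by a braided/sylleptic monoidal pseudofunctor. This is routine but notationally heavy, and string-diagram arguments in the spirit of Lemma~\ref{INTERCHANGE_MATE_LEMMA} and Lemma~\ref{PSEUDOFUNCTOR_MATE_LEMMA} should suffice to make it mechanical.
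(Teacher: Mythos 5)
Your primary strategy has a genuine obstruction: it rests on a strictification theorem for \emph{braided} (or sylleptic) strong monoidal pseudofunctors between Gray monoids, and the paper explicitly notes that no such result is known --- this is precisely why the author does \emph{not} repeat the argument of Proposition~\ref{MONOIDAL_BASE_CHANGE_PROP} here. You anticipate this and offer a fallback of pasting in the failure-of-strictness modifications and checking (BHD1) and (SHA1) by hand, but you never carry this out, and in doing so you miss the structural fact that makes the whole verification unnecessary.

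The decisive observation is that the target of the Tannakian biadjunction is degenerate: $\Comon(B)$ has no nonidentity 2-cells, so a braided monoidal comonad is just a commutative monoid in the ordinary braided monoidal category $\bigl(\Comon(B),\star\bigr)$, and commutativity is a \emph{property} of a monoid, not extra structure on its morphisms. Hence the forgetful functor $U_2 \colon \BrMonComon(B) \rightarrow \MonComon(B)$ is a full subcategory inclusion, and the natural isomorphism already lifted to $\MonComon(FB)$ in Proposition~\ref{MONOIDAL_BASE_CHANGE_PROP} lifts along $U_2$ for free --- there is literally no braiding compatibility condition to check on its components. The sylleptic/symmetric case is likewise immediate, since for map pseudomonoids in a 2-category with only identity 2-cells there is no distinction between braided and symmetric, so $\SymMonComon(B) \rightarrow \BrMonComon(B)$ is an equality of categories. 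Your proposed component-by-component check of how $F_\ast$ interacts with $\rho$, $\gamma$, and $\nu$ is therefore aimed at the wrong target: that kind of verification is what is needed to show the \emph{functor} $L$ is braided (Proposition~\ref{LEFT_ADJ_BRAIDED_PROP}), not to lift this particular natural transformation.
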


\begin{proof}
 To the author's knowledge there are currently no strictification results for braided or sylleptic strong monoidal pseudofunctors, so we can't prove this in the same way we proved Proposition~\ref{MONOIDAL_BASE_CHANGE_PROP}.
 
 Luckily, the target category of the Tannakian biadjunction is fairly degenerate: braided monoidal comonads are precisely the commutative monoids in the braided monoidal category $\Comon(B)$ under the convolution tensor product. This is a \emph{full} subcategory of the category of monoids. That is, a monoidal natural transformation between braided strong monoidal morphisms is automatically braided, there is no additional coherence condition required. Thus the forgetful functor from braided monoidal comonads to comonads factors as
 \[
  \xymatrix{ \BrMonComon(B) \ar[r]^-{U_2} & \MonComon(B) \ar[r]^-{U_1} & \Comon(B)}
 \]
 where $U_2$ is fully faithful. In Proposition~\ref{MONOIDAL_BASE_CHANGE_PROP} we have shown that the desired natural transformation can be lifted along $U_1$, which concludes the proof in the braided case.

 The case of a sylleptic or symmetric strong monoidal pseudofunctor $F$ is even easier: in a symmetric monoidal bicategory with no nonidentity 2-cells, there is no distinction between braided and symmetric pseudomonoids. In our case this means that a monoidal morphism between symmetric map pseudomonoids is symmetric if and only if it is braided. Therefore the forgetful functor
\[
 U_3 \colon \SymMonComon(B) \rightarrow \BrMonComon(B)
\]
is an equality categories.
\end{proof}

Next we investigate the base change along a map $f \colon B \rightarrow B^{\prime}$ in $\ca{M}$.

\begin{prop}
 Let $(f,\psi,\psi_0) \colon B \rightarrow B^{\prime}$ be a strong monoidal map between map pseudomonoids in $\ca{M}$. Then $\Map(\ca{M},f) \colon \Map(\ca{M},B) \rightarrow \Map(\ca{M},B^{\prime})$ is a strong monoidal 2-functor, with structural 1-cells in $\Map(\ca{M},B^{\prime})$ given by the identity on objects and 2-cell part
 \[
 \vcenter{\hbox{


}}
\]
 respectively.
\end{prop}

\begin{proof}
 We use the notation for monoidal morphisms between monoidal 2-categories introduced in \cite[\S~2]{MCCRUDDEN_BALANCED}. From the definition of the monoidal structure on $\Map(\ca{M},B)$ and the definition of strong monoidal maps it follows that $\chi$ is a strict 2-natural transformation, and that the modifications $\omega$, $\zeta$, $\kappa$ can be chosen to be identities.
\end{proof}

\begin{prop}
 Let $(f,\psi,\psi_0) \colon B \rightarrow B^{\prime}$ be a strong monoidal map between map pseudomonoids in $\ca{M}$. Then the natural transformations
 \[
  \vcenter{\hbox{


}}
 \]
 endow $f_\ast \colon \Comon(B) \rightarrow \Comon(B^{\prime})$ with the structure of a strong monoidal functor.
\end{prop}

\begin{proof}
 We leave the lengthy computation involving string diagrams to the reader. The desired equalities all follow from the definition of a strong monoidal map.
\end{proof}

\begin{prop}
 Let $f \colon B \rightarrow B^{\prime}$ be a strong monoidal map between map pseudomonoids in $\ca{M}$. Then the diagram
 \[
  \xymatrix{\Map(\ca{M},B) \ar[r]^{L} \ar[d]_{\Map(\ca{M},f)} & \Comon(B) \ar[d]^{f_\ast} \\
   \Map(\ca{M},B^{\prime}) \ar[r]_{L} & \Comon(B^{\prime})}
 \]
 of monoidal 2-functors commutes. Consequently, the lifts of these functors to the 2-categories of pseudomonoids commute.
\end{prop}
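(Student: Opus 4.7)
The plan is to prove the equality in two stages: first on underlying 2-functors, then on monoidal structures.

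For the first stage, I would fix the convention that the adjoint of a composite of maps $f \cdot w$ is $\overline{w} \cdot \overline{f}$, with unit and counit obtained as the standard composites of the units and counits of $f \dashv \overline{f}$ and $w \dashv \overline{w}$. With this choice, starting from $w \colon X \rightarrow B$, the lower-left path gives $L(f \cdot w) = (f \cdot w) \cdot (\overline{w} \cdot \overline{f}) = f \cdot w \cdot \overline{w} \cdot \overline{f}$, which coincides on the nose with $f_\ast L(w)$. The comultiplication of $L(f \cdot w)$ is obtained by inserting the unit of the composite adjunction in the middle; by our choice, this unit factors as $\overline{w} \cdot \eta_f \cdot w$ composed with $\eta_w$ (appropriately whiskered), which upon expansion is exactly the string diagram appearing in the definition of the comultiplication of $f_\ast L(w)$. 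The same observation works for the counit, and essentially the same diagram chase shows that the two composites agree on 1-cells $(a,\alpha) \colon w \rightarrow w'$ and on 2-cells, using Proposition~\ref{LEFT_BIADJOINT_2FUNCTOR_PROP} and the definition of $f_\ast$. Hence the diagram of underlying 2-functors commutes strictly.

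For the second stage, I would compare the strong monoidal structures of $L \cdot \Map(\ca{M}, f)$ and $f_\ast \cdot L$. The 2-natural transformation $\chi$ for the top row of the diagram is the one of Proposition~\ref{LEFT_ADJ_MONOIDAL_PROP}; postcomposing with $\Map(\ca{M},f)$ whiskers this 2-cell with $f$ and $\overline{f}$ on both sides of the ``cup'' in the middle, and its monoidal constraint $\psi$ gets inserted in the appropriate place by Proposition~\ref{MONOIDAL_BASE_CHANGE_PROP}. On the other hand, the monoidal structure of $f_\ast$ inserts $\psi$ and $\overline{\psi^{-1}}$ at the top and bottom of the string diagram, while $\chi$ supplies the rest. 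Using Lemma~\ref{INTERCHANGE_MATE_LEMMA} (the interchange of mates) and the monoidal map axioms for $(f,\psi,\psi_0)$ one verifies that the two resulting string diagrams coincide; the analogous check for the unit constraint $\iota$ is immediate since $\psi_0$ appears in exactly the same position in both composites. The higher modifications of monoidal pseudofunctor structure are identities on both sides.

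The main obstacle will be the bookkeeping in the monoidal compatibility check: each of $\chi$, $\psi$, and $\overline{\psi^{-1}}$ must appear at compatible heights in a fairly intricate string diagram, and the equality hinges on the naturality of the Gray interchange for maps as encoded by Lemma~\ref{INTERCHANGE_MATE_LEMMA}. The calculation is not conceptually difficult but is notationally heavy, and is best carried out with the surface-diagram notation of Section~\ref{GRAY_MONOIDS_SECTION}. Once strict commutativity as monoidal 2-functors is established, the final assertion about lifts is automatic: applying the 2-functor $\PsMon(-)$ (respectively $\mathbf{Mon}(-)$) preserves strict commutative diagrams of strong monoidal 2-functors, so the induced diagram on $\PsMon(\Map(\ca{M},-))$ and $\mathbf{MonComon}(-)$ commutes strictly as well.
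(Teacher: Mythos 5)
Your proof is correct and follows essentially the same route as the paper, which simply observes that the diagram commutes on underlying 2-functors (your composite-adjoint convention makes this literal) and then checks that the monoidal constraints agree by unwinding the definitions of the structures on $L$, $\Map(\ca{M},f)$ and $f_\ast$. One small slip: the placement of $\psi^{-1}$ in the constraint of $\Map(\ca{M},f)$ comes from the proposition establishing that $\Map(\ca{M},f)$ is strong monoidal, not from Proposition~\ref{MONOIDAL_BASE_CHANGE_PROP} (which concerns base change along a pseudofunctor of Gray monoids), but this does not affect the argument.
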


\begin{proof}
 It is clear that the diagram commutes on the level of 2-functors. Thus we only need to check that the monoidal structure of the two composites coincides. This is not hard to see from the definition of the monoidal structure of $L$, $\Map(\ca{M},f)$ and $f_\ast$ respectively.
\end{proof}

\begin{prop}
 If $\ca{M}$ is a braided Gray monoid, and $f \colon B \rightarrow B^{\prime}$ is a braided strong monoidal map between braided map pseudomonoids, then the identity modification endows $\Map(\ca{M},f)$ with a braiding, and $f_\ast \colon \Comon(B) \rightarrow \Comon(B^{\prime})$ is a braided strong monoidal functor.
 
 If, in addition, $\ca{M}$ is sylleptic and $B$, $B^{\prime}$ are symmetric, then $\Map(\ca{M},f)$ is sylleptic (and $f_\ast$ is symmetric as a braided functor between symmetric monoidal categories). 
 
 The diagram
 \[
  \xymatrix{\Map(\ca{M},B) \ar[r]^{L} \ar[d]_{\Map(\ca{M},f)} & \Comon(B) \ar[d]^{f_\ast} \\
   \Map(\ca{M},B^{\prime}) \ar[r]_{L} & \Comon(B^{\prime})}
 \]
 commutes in the category of braided (resp.\ sylleptic) strong monoidal 2-functors. Consequently, the lifts of these functors to the 2-categories of braided (resp.\ symmetric) pseudomonoids commute.
\end{prop}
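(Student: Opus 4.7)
The plan is to mirror the approach of the preceding proposition: the monoidal structures on $\Map(\ca{M},f)$ and $f_\ast$ are already in place, so the task reduces to verifying the additional 2-cell coherences that encode the braiding (and, in the sylleptic case, the syllepsis). Throughout, the key input is that $f$ is assumed to be a braided strong monoidal map, i.e.\ there is an equation of 2-cells relating its monoidal structure 2-cell $\psi$ to the braidings $\gamma$ on $B$ and $\gamma'$ on $B'$.

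For the first assertion, I would recall from Proposition~\ref{SLICE_BRAIDED_PROP} that the braiding on $\Map(\ca{M},B)$ at objects $v,w$ is built from $\rho_{X,Y}$ in $\ca{M}$ together with the 2-cells $\rho^{-1}_{v,w}$ and $\gamma^{-1}$. Postcomposing with $f$ sends $\gamma^{-1}$ to the 2-cell $f\gamma^{-1}$; the hypothesis that $f$ is braided monoidal is precisely the equation that identifies this with $\gamma'^{-1}$ whiskered on both sides by $f$ and $f$, modulo the structure 2-cell $\psi$, and pseudonaturality of $\rho$ (Remark~\ref{BRAIDING_RMK}) takes care of the $\rho^{-1}_{v,w}$ component. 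Hence the identity modification supplies the braiding on $\Map(\ca{M},f)$. For the sylleptic part, the syllepsis $\nu$ on $\Map(\ca{M},B)$ is inherited on the nose from $\ca{M}$ (see Proposition~\ref{SLICE_SYLLEPTIC_PROP}), so whiskering with $f$ preserves it automatically.

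Next, to promote $f_\ast$ to a braided strong monoidal functor I would unpack the braiding on $\Comon(B)$ from Proposition~\ref{HOM_CATEGORY_BRAIDED_PROP}: it is assembled from mates of $\rho_{B,B}$ and from $\gamma$. Applying $f_\ast$ to a comonad $c$ produces $fc\overline{f}$, and I need to show that the associated braiding 2-cell on $f_\ast c \star f_\ast c'$ coincides with the image under $f_\ast$ of the braiding on $c \star c'$. This boils down to a string-diagram equation in which Lemma~\ref{PSEUDOFUNCTOR_MATE_LEMMA} is used to commute the mates of $\rho$ past $f$ and $\overline{f}$, while the braided monoidal axiom for $f$ exchanges $\gamma$ with $\gamma'$. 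In the sylleptic situation $\Comon(B)$ is actually symmetric (it has no nontrivial 2-cells, Proposition~\ref{HOM_CATEGORY_SYLLEPTIC_PROP}), so any braided strong monoidal functor is automatically symmetric and no extra coherence is required. The square itself commutes as a diagram of braided (respectively sylleptic) strong monoidal 2-functors because the underlying monoidal square already commutes by the preceding proposition, and the identity 2-natural transformation between the two composites trivially preserves the braidings and syllepses.

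The main obstacle will be the string-diagram bookkeeping in the second paragraph: one has to compare the monoidal structure 2-cells of $f_\ast$ built out of $\psi,\psi^{-1},\psi_0,\psi_0^{-1}$ and the interchange cells with the mates of $\rho_{B,B}$ and $\rho_{B',B'}$ that define the braidings on $\Comon(B)$ and $\Comon(B')$, and to reduce the resulting equation to the braided monoidality axiom of $f$. Once this verification is carried out, the final clause is immediate: applying Corollary~\ref{LIFT_TO_BRAIDED_PSEUDOMONOIDS_COR} (respectively Corollary~\ref{LIFT_TO_SYMMETRIC_PSEUDOMONOIDS_COR}) to both horizontal 2-functors $L$ and to the pair $(\Map(\ca{M},f),f_\ast)$ lifts the commutative square to the 2-categories of braided (respectively symmetric) pseudomonoids, as required.
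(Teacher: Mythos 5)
Your proposal is correct and follows essentially the same route as the paper: the braiding on $\Map(\ca{M},f)$ is the identity modification, whose well-definedness reduces to the defining equation of a braided strong monoidal map; the braided structure on $f_\ast$ is a string-diagram check using Lemma~\ref{PSEUDOFUNCTOR_MATE_LEMMA} and pseudonaturality of $\rho$; the sylleptic case is automatic because $\Map(\ca{M},f)$ is the identity on 2-cells and $\Comon(B)$ has no nonidentity 2-cells; and the square commutes because the underlying monoidal square does.
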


\begin{proof}
 In order to check that the identity modification gives a braiding on the 2-functor $\Mod(\ca{M},f)$, we only need to check that the domain and the codomain of the modification coincide. The defining diagram can be found in \cite[p.~86]{MCCRUDDEN_BALANCED}. The desired equality follows from the defining equation of a braided strong monoidal map. We leave the details to the reader.
 
 A similar computation (using pseudonaturality of $\rho$ and Lemma~\ref{PSEUDOFUNCTOR_MATE_LEMMA}) shows that $f_\ast \colon \Comon(B) \rightarrow \Comon(B^{\prime})$ is symmetric.
 
 Now suppose that $\ca{M}$ is sylleptic. Since $B$ and $B^{\prime}$ are symmetric, the domain and codomain of $\Map(\ca{M},f)$ inherit a syllepsis. Moreover the braiding on $\Map(\ca{M},f)$ is the identity, so it suffices to show that $\Map(\ca{M},f)$ preserves the syllepsis. This follows immediately from the fact that $\Map(\ca{M},f)$ is the identity on 2-cells.
\end{proof}

\begin{rmk}
 If we only want to show that the diagram
 \[
  \xymatrix{\Map(\ca{M},B) \ar[r]^{L} \ar[d]_{\Map(\ca{M},f)} & \Comon(B) \ar[d]^{f_\ast} \\
   \Map(\ca{M},B^{\prime}) \ar[r]_{L} & \Comon(B^{\prime})}  
 \]
 lifts to various categories of pseudomonoids, we could use the fact that the 2-functor $\Map(\ca{M},f)$ has an evident lift to pseudomonoids if $f$ is a morphism of pseudomonoids, namely the 2-functor given by composition with $f$. The advantage of showing that $\Map(\ca{M},f)$ is a strong monoidal 2-functor is that it allows us to apply it to all kinds of structures defined using only the language of monoidal 2-categories.
\end{rmk}

\subsection{Base change for cosmoi}
We can further specialize this to the case where $\ca{M}$ is a Gray monoid equivalent to $\Mod(\ca{V})$ for some cosmos $\ca{V}$. In that case, we have given a characterization of the 2-category $\Map(\ca{M},B)$ in terms of the 2-category of $\ca{V}$-categories (see Lemma~\ref{MAPS_LEMMA}). Thus, in order to transfer the above results to the case of $\ca{V}$-categories, we first need to show that the equivalence described in Lemma~\ref{MAPS_LEMMA} is compatible with the symmetric monoidal structure on $\Vcat$ and $\Mod(\ca{V})$. 

\begin{prop}\label{COMPANION_FUNCTOR_MONOIDAL_PROP}
 Let $\ca{V}$ be a cosmos and let $G \colon \Vcat \rightarrow \Mod(\ca{V})$ be the pseudofunctor which sends a $f \colon \ca{A} \rightarrow \ca{B}$ to $\ca{B}(-,f-) \colon \ca{A} \xslashedrightarrow{} \ca{B}$. Then $G$ is strict symmetric monoidal.
\end{prop}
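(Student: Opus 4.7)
The plan is to unpack the definitions of the tensor products on both sides and observe that they coincide strictly when restricted via $G$, reducing the entire statement to a sequence of bookkeeping checks.

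First I would verify that $G$ is strictly monoidal on 0-cells and 1-cells. On objects this is immediate: the tensor product $\ca{A} \otimes \ca{B}$ of $\ca{V}$-categories is defined identically in $\Vcat$ and in $\Mod(\ca{V})$, and $G$ is the identity on objects. For 1-cells, given $\ca{V}$-functors $f \colon \ca{A} \rightarrow \ca{A}'$ and $g \colon \ca{B} \rightarrow \ca{B}'$, the module $G(f\otimes g) \colon \ca{A}\otimes\ca{B} \xslashedrightarrow{} \ca{A}'\otimes\ca{B}'$ evaluated at $\bigl((a',b'),(a,b)\bigr)$ is $(\ca{A}'\otimes\ca{B}')\bigl((a',b'),(fa,gb)\bigr) = \ca{A}'(a',fa)\otimes\ca{B}'(b',gb)$, which is exactly $G(f)\otimes G(g)$ by the definition of the tensor product of modules recalled in the introduction to Section~\ref{BIALGEBRAS_HOPF_ALGEBRAS_SECTION}. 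Hence one may take the pseudonatural equivalence $\chi_{\ca{A},\ca{B}} \colon G(\ca{A})\otimes G(\ca{B}) \rightarrow G(\ca{A}\otimes\ca{B})$ to be the identity, and similarly for the unit comparison $\iota \colon I \rightarrow G(I)$.

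Second, I need to check that the pseudofunctoriality constraints of $G$ are compatible with these identity comparisons. Recall from Remark~\ref{VMOD_MAPS_RMK} that the compositor $G(g) \cdot G(f) \cong G(gf)$ comes from the enriched Yoneda lemma applied to $\int^{b\in \ca{B}} \ca{B}(-,fb)\otimes \ca{C}(-,gb) \cong \ca{C}(-,gfb)$, and the unitor $G(\id_\ca{A}) = \ca{A}(-,-) \cong \id_{\ca{A}}$ is the identifying isomorphism for the representable module. Compatibility of $\chi = \id$ with these constraints amounts to showing that the Yoneda isomorphism for $\ca{A}\otimes\ca{B}$ is (up to canonical associator of $\otimes$ in $\ca{V}$) the tensor product of the Yoneda isomorphisms for $\ca{A}$ and $\ca{B}$, which is a direct unwinding of how coends in a product $\ca{V}$-category factor. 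The three coherence modifications in the definition of a monoidal pseudofunctor (see \cite[\S~2]{MCCRUDDEN_BALANCED}) then all have the same domain and codomain, so they can be chosen to be identities; verifying the two coherence axioms reduces to routine diagram chases involving the tensor-hexagon in $\ca{V}$.

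Third, for the symmetry, note that in both $\Vcat$ and $\Mod(\ca{V})$ the symmetry at $(\ca{A},\ca{B})$ is induced by the swap $\ca{A}\otimes\ca{B} \to \ca{B}\otimes\ca{A}$ combined with the symmetry of $\ca{V}$ on hom-objects. Since $G$ is the identity on objects and its value on the swap $\ca{V}$-functor is (by the computation in the first step) exactly the module implementing the swap in $\Mod(\ca{V})$, the symmetry is preserved on the nose; the braided/sylleptic/symmetric coherence modifications thus become identities. The main obstacle is purely bookkeeping: one must carefully match the conventions of $\otimes$ as defined component-wise on $\ca{V}$-functors against its definition as $A'g \cdot fB$ in a Gray monoid, and invoke Theorem~\ref{MON_BICAT_COHERENCE_THM} to transport along the biequivalence between $\Mod(\ca{V})$ and its Gray-monoid replacement, so that the equalities above become meaningful equations of monoidal-pseudofunctor data rather than mere isomorphisms.
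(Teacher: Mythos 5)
Your route is genuinely different from the paper's, and as written it has a gap at its centre. The paper's proof is essentially one line: it invokes Shulman's construction of the symmetric monoidal bicategory structure on $\Mod(\ca{V})$ (cited as \cite[Theorem~5.1]{SHULMAN}), in which the coherence constraints of $\Mod(\ca{V})$ are \emph{defined} to be the images under $G$ of the coherence constraints of $\Vcat$; strictness of $G$ is then tautological. You instead take the monoidal structure on $\Mod(\ca{V})$ as independently given by the component-wise formula and try to verify strictness by hand. That is a legitimate strategy in principle, and your computation that $G(f\otimes g)=G(f)\otimes G(g)$ on the nose is correct, but the assertion that ``the three coherence modifications all have the same domain and codomain, so they can be chosen to be identities'' is precisely the content that needs proof, not a consequence of the 1-cell computation. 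A symmetric monoidal bicategory carries an associator pseudonatural equivalence, a pentagonator, unit modifications, a braiding with its two hexagon modifications, and a syllepsis, and whether $G$ matches all of these \emph{strictly} depends entirely on which representatives one chose when defining them on $\Mod(\ca{V})$ (the associator there involves re-bracketing coends, which is only canonically isomorphic to, not equal to, the image of the $\Vcat$ associator unless one arranges it so). Your ``routine diagram chases'' are exactly the several pages of verification that Shulman's theorem exists to package; without either citing it or actually carrying them out, the argument is incomplete.

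A secondary problem is the final step: invoking Theorem~\ref{MON_BICAT_COHERENCE_THM} to transport everything along a biequivalence with a Gray monoid works against you here, since transport along a biequivalence preserves monoidal pseudofunctors only up to equivalence and would generally destroy strictness. The statement is about $G$ landing in $\Mod(\ca{V})$ itself, and no Gray-monoid replacement is needed or wanted. The cleanest repair of your proof is simply to replace the hand verification by the observation the paper makes: adopt the monoidal structure on $\Mod(\ca{V})$ obtained by transport along $G$, at which point there is nothing left to check.
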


\begin{proof}
 On way to construct the symmetric monoidal structure on $\Mod(\ca{V})$ is to define coherence constraints on $\Mod(\ca{V})$ to be the images of the coherence constraints in $\Vcat$ under the pseudofunctor $G$ (see \cite[Theorem~5.1]{SHULMAN}). From this construction it is obvious that $G$ is strict symmetric monoidal.
\end{proof}

 The following definition first appeared in \cite{EILENBERG_KELLY}. A detailed exposition can also be found in \cite[\S~4]{CRUTTWELL}.

\begin{dfn}[Base change for $\ca{V}$-functors]
 Let $F \colon \ca{V} \rightarrow \ca{V}^{\prime}$ be a cocontinuous symmetric strong monoidal functor, and let $w \colon \ca{A} \rightarrow \ca{B}$ be a $\ca{V}$-functor. We define the $\ca{V}^{\prime}$-functor $F_\ast w$ between the $\ca{V}^{\prime}$-categories $F_\ast \ca{A}$ and $F_\ast \ca{B}$ (see Definition~\ref{BASE_CHANGE_DFN}) by $F_\ast w(a)=w(a)$ and $(F_\ast w)_{a,a^{\prime}}=F(w_{a,a^{\prime}})$.
\end{dfn}

 This base change functor is obviously compatible with the symmetric monoidal structure on $\Vcat$. Thus it lifts to the category of small symmetric monoidal $\ca{V}$-categories. If the base change functor is cocontinuous, it can be extended to a base change pseudofunctor $\Mod(\ca{V}) \rightarrow \Mod(\ca{V})$ (see Definition~\ref{BASE_CHANGE_DFN}). Since autonomous monoidal $\ca{V}$-categories can be detected by the fact that they are autonomous map pseudomonoids in $\Mod(\ca{V})$ (see Proposition~\ref{VCAT_AUTONOMOUS_PROP}), it follows that cocontinuous base change functors lift to the 2-category of autonomous symmetric monoidal $\ca{V}$-categories.

\begin{prop}\label{CAUCHY_COMPARISON_FUNCTOR_PROP}
 Let $\ca{V}$ and $\ca{V}^{\prime}$ be cosmoi, let $F \colon \ca{V} \rightarrow \ca{V}^{\prime}$ be a cocontinuous symmetric strong monoidal functor, and let $B$ be a commutative monoid in $\ca{V}$. Write $\Mod_B^d$ for the category of dualizable $B$-modules. Then there is a canonical fully faithful symmetric strong monoidal $\ca{V}^{\prime}$-functor $i \colon F_\ast \Mod^d_B \rightarrow \Mod^d_{FB}$, which sends an object $M$ to $FM$ with the evident $FB$-action.
\end{prop}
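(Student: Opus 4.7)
The plan is to construct the functor $i$ explicitly, verify the $\ca{V}^{\prime}$-functoriality and the symmetric strong monoidal structure, and then deduce fully faithfulness from the behavior of $F$ on dualizable objects. On objects, I would set $i(M) = FM$, equipping it with the $FB$-module structure given by the composite $FB \otimes FM \xrightarrow{\cong} F(B \otimes M) \xrightarrow{F(\mu_M)} FM$, where the isomorphism is the strong monoidal comparison. Because $F$ is strong monoidal, $F(M^\vee)$ equipped with the analogous actions provides a dual for $FM$ in $\Mod_{FB}$, so $i(M)$ indeed lies in $\Mod^d_{FB}$. On hom-objects, the action is defined by applying $F$ to $\Mod^d_B(M,N) \in \ca{V}$ followed by the comparison map to $\Mod^d_{FB}(FM, FN)$ induced by the universal property of the equalizer that defines the latter. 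Compatibility with composition and identities is then routine, following from functoriality of $F$ and the fact that the comparison with the monoidal constraints of $F$ respects the equalizer presentation.

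For the symmetric monoidal structure, I would exploit the fact that $M \otimes_B N$ arises as a reflexive coequalizer in $\ca{V}$ between iterated tensor products, and that the unit of $\Mod^d_B$ is $B$ itself. Since $F$ is symmetric strong monoidal and cocontinuous, it preserves both such coequalizers and the structural isomorphisms, yielding natural isomorphisms $F(M \otimes_B N) \cong FM \otimes_{FB} FN$ and $F(B) \cong FB$. These provide the required strong monoidal constraints on $i$, and the coherence axioms follow directly from those for $F$. Symmetry of $i$ is immediate from symmetry of $F$'s constraints.

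The substantive step is fully faithfulness. For dualizable $M$, the internal hom $[M,N]_B$ is canonically isomorphic to $M^\vee \otimes_B N$, because dualizability of $M$ makes $M \otimes_B -$ left adjoint to $M^\vee \otimes_B -$. Applying $F$, and using that $F$ preserves duals (by strong monoidality) together with the coequalizer expressing $\otimes_B$ (by cocontinuity), I obtain a chain of natural isomorphisms
\begin{equation*}
F(\Mod^d_B(M,N)) \cong F(M^\vee \otimes_B N) \cong F(M^\vee) \otimes_{FB} FN \cong (FM)^\vee \otimes_{FB} FN \cong \Mod^d_{FB}(FM,FN).
\end{equation*}
It then remains to identify this composite with the comparison map defining $i$ on hom-objects.

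The main obstacle I expect is precisely this last identification: ensuring that the ``abstract'' hom-object map coming from the equalizer defining $\Mod^d_{FB}(FM,FN)$ agrees, after applying $F$, with the ``explicit'' isomorphism built out of duals and $\otimes_B$. This is a coherence verification rather than a deep difficulty, but it requires tracking the strong monoidal constraints of $F$ across both the coequalizer presentation of $\otimes_B$ and the adjunction witnessing duality. Once settled, fully faithfulness follows immediately, and the other clauses of the proposition have already been handled by the constructions above.
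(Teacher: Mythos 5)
Your proposal is correct, and the key identity it turns on --- that for dualizable $M$ the hom-object is $M^\vee \otimes_B N$, which $F$ preserves because it is built from colimits and tensor products --- is exactly the identity the paper uses. The difference is organizational, and it matters for how much work is left at the end. You take the equalizer presentation of $\Mod^d_{FB}(FM,FN)$ as primitive, define $i$ on hom-objects by the universal property of that equalizer, and then must separately (a) produce the chain of isomorphisms $F(\Mod^d_B(M,N)) \cong (FM)^\vee \otimes_{FB} FN \cong \Mod^d_{FB}(FM,FN)$ and (b) check that this chain agrees with the comparison map --- the coherence verification you rightly flag as the main obstacle. The paper instead takes $M^\vee \otimes_B N$ (with composition given by coevaluation) as \emph{the} presentation of the hom-object of $\Mod^d_B$ from the outset; then $i_{M,N} \colon F(M^\vee \otimes_B N) \rightarrow (FM)^\vee \otimes_{FB} FN$ is an isomorphism directly from cocontinuity and strong monoidality of $F$, functoriality reduces to compatibility with coevaluations, and fully faithfulness is automatic because every hom-map is invertible by construction. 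Your route is sound but pays for the equalizer presentation twice: once because $F$ is only cocontinuous (so the equalizer, a limit, is not obviously preserved and you must argue around it via duals), and once in the final identification. Choosing the colimit-friendly presentation of the hom-object up front, as the paper does, dissolves both issues.
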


\begin{proof}
 The hom-object in $\ca{V}$ between two dualizable $B$-modules $M$ and $N$ is given by the underlying object in $\ca{V}$ of the $B$-module $M^{\vee}\otimes_B N$, and the composition morphisms are given by coevaluation maps. Since $F$ preserves colimits and tensor products, we get an isomorphism
 \[
 i_{M,N} \colon  F(M^{\vee} \otimes_B N) \rightarrow (FM)^{\vee} \otimes_{FB} FN
 \]
 in $\ca{V}^{\prime}$. It is not hard to check that these isomorphisms give the desired $\ca{V}^{\prime}$-functor $i \colon F_\ast \Mod^d_B \rightarrow \Mod^d_{FB}$, and that it is symmetric strong monoidal.
\end{proof}

\begin{thm}
 Let $F \colon \ca{V} \rightarrow \ca{V}^{\prime}$ be a cocontinuous symmetric strong monoidal functor, and let $B$ be a commutative monoid in $\ca{V}$. Then the diagram
 \[
 \xymatrix{ \mathbf{SymMon}\mbox{-}\Vcat \slash \Mod^d_{B} \ar[r]^-{L} \ar[d]_{iF_\ast} & \mathbf{Bialg}(\ca{V},B) \ar[d]^{F_\ast} \\
 \mathbf{SymMon}\mbox{-}\ca{V}^{\prime}\mbox{-}\Cat \slash \Mod^d_{FB} \ar[r]_-{L} & \mathbf{Bialg}(\ca{V}^{\prime},FB)
  }
 \]
 commutes up to natural isomorphism, where $\mathbf{SymMon}\mbox{-}\Vcat$ denotes the category of small symmetric monoidal $\ca{V}$-categories and $\mathbf{Bialg}(\ca{V},B)$ denotes the category of bialgebroids acting on $B$. Similarly, the diagram
 \[
  \xymatrix{ \mathbf{AutSymMon}\mbox{-}\Vcat \slash \Mod^d_B \ar[d]_{iF_\ast} \ar[r]^-{L} & \mathbf{Hopf}(\ca{V},B) \ar[d]^{iF_\ast} \\  
  \mathbf{AutSymMon}\mbox{-}\ca{V}^{\prime}\mbox{-}\Cat \slash \Mod^d_{FB} \ar[r]_-{L} & \mathbf{Hopf}(\ca{V}^{\prime},FB)
  }
 \]
 commutes up to natural isomorphism, where $\mathbf{AutSymMon}\mbox{-}\Vcat$ denotes the category of autonomous symmetric monoidal $\ca{V}$-categories and $\mathbf{Hopf}(\ca{V},B)$ denotes the category of Hopf algebroids acting on $B$.
\end{thm}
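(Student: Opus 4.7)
The strategy is to assemble the theorem from three pieces of machinery already developed: (i) base change along $F$ at the level of modules, (ii) base change along the monoidal map coming from $i$, and (iii) the fact that the Tannakian biadjunction is insensitive to whether we view a fiber functor as landing in $\ca{B}$ or in $\overline{\ca{B}}$. First, I would promote $F$ to a symmetric strong monoidal pseudofunctor $F_\ast\colon \Mod(\ca{V}) \rightarrow \Mod(\ca{V}^{\prime})$ (Proposition~\ref{BASE_CHANGE_PROP} together with the fact that the monoidal structures on the two module bicategories are defined pointwise from those on $\ca{V}$ and $\ca{V}^{\prime}$; this is explicit in Definition~\ref{BASE_CHANGE_DFN}). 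Replacing both source and target by biequivalent Gray monoids via Theorem~\ref{MON_BICAT_COHERENCE_THM} then lets me apply Propositions~\ref{MONOIDAL_BASE_CHANGE_PROP} and~\ref{BRAIDED_BASE_CHANGE_PROP}, which guarantee that the natural isomorphism $F_\ast L \cong L F_\ast$ lifts to the 2-categories of (symmetric) monoidal comonads.

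Next, I would translate between $\ca{V}$-functors and modules using the strict symmetric monoidal pseudofunctor $G$ of Proposition~\ref{COMPANION_FUNCTOR_MONOIDAL_PROP}: under this translation, a symmetric strong monoidal $\ca{V}$-functor $w\colon \ca{A} \rightarrow \Mod_B^d$ corresponds to a symmetric monoidal map $A \rightarrow B$ in $\Mod(\ca{V})$ (using $\overline{B}=\Mod_B^d$ from Proposition~\ref{AUTONOMOUS_CAUCHY_PROP} and Lemma~\ref{MAPS_LEMMA}). Applying $F_\ast$ to this monoidal map produces a symmetric monoidal map $F_\ast A \rightarrow F_\ast B$ in $\Mod(\ca{V}^{\prime})$; composing with the fully faithful symmetric strong monoidal $\ca{V}^{\prime}$-functor $i\colon F_\ast \Mod_B^d \rightarrow \Mod_{FB}^d$ from Proposition~\ref{CAUCHY_COMPARISON_FUNCTOR_PROP} presents the bottom-left entry of the diagram, $i F_\ast w\colon F_\ast \ca{A} \rightarrow \Mod_{FB}^d$. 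Since $i$ is fully faithful, the induced map $F_\ast B \rightarrow FB$ in $\Mod(\ca{V}^{\prime})$ is an equivalence of pseudomonoids on the common underlying Cauchy completion. By the compatibility of $L$ with base change along a strong monoidal map $f\colon B \rightarrow B^{\prime}$ established in the preceding propositions of Section~\ref{BASE_CHANGE_SECTION}, postcomposition with $i$ commutes with $L$ up to canonical isomorphism; equivalently, $L(iF_\ast w)$ and $L(F_\ast w)$ determine the same bialgebroid on $FB$ (cf.\ Remark~\ref{REP_EQUIVALENT_R_RMK} for the analogous statement for $\Rep(-)$).

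Composing these two isomorphisms yields a natural isomorphism $L \circ i F_\ast \cong F_\ast \circ L$ as symmetric strong monoidal 2-functors, and Theorem~\ref{TANNAKIAN_ADJUNCTION_SYLLEPTIC_THM} together with Proposition~\ref{BRAIDED_BASE_CHANGE_PROP} shows that this lifts to the categories of symmetric monoidal comonads, which are precisely the bialgebroids acting on $B$ (respectively $FB$) by the discussion preceding Corollary~\ref{TANNAKIAN_ADJUNCTION_GROUPOID_COR}. This establishes the first diagram. For the second diagram, I would combine this with Theorem~\ref{HOPF_COMONAD_THM}: when $\ca{A}$ is autonomous, both $L(w)$ and $L(iF_\ast w)$ are Hopf monoidal comonads, and since $F_\ast$ preserves the autonomous structure (both the duals in $F_\ast \ca{A}$ and the Hopf monoidal structure, by Theorem~\ref{AUTONOMOUS_MONOIDAL_TAME_THM}-style reasoning applied to the strong monoidal $F_\ast$), the isomorphism from the first diagram automatically respects antipodes. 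Under the identification from Corollary~\ref{TANNAKIAN_ADJUNCTION_GROUPOID_COR}, this translates into the desired commutative square of Hopf algebroids.

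The main obstacle will be the second step: verifying rigorously that postcomposing a fiber functor with the fully faithful $\ca{V}^{\prime}$-functor $i$ leaves the associated bialgebroid unchanged up to canonical isomorphism. In module terms this amounts to showing that if $f\colon \ca{B} \rightarrow \ca{B}^{\prime}$ is fully faithful and essentially surjective onto its replete image in $\overline{\ca{B}^{\prime}}$ (which $i$ is, by construction), then the induced map on endomodules $f_\ast \colon \Mod(\ca{V}^{\prime})(\ca{B}, \ca{B}) \rightarrow \Mod(\ca{V}^{\prime})(\ca{B}^{\prime}, \ca{B}^{\prime})$ identifies the two comonads $L(w)$ and $L(if w)$. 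All other steps reduce to direct applications of the base change propositions, but the handling of the Cauchy completion interface — making sure the coherent isomorphisms assemble into a single natural isomorphism of strong monoidal (and, in the autonomous case, Hopf-preserving) functors — will require some bookkeeping with the interchange 2-cells described in Section~\ref{GRAY_MONOIDS_SECTION}.
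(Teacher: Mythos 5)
Your plan is essentially sound and uses the same core machinery as the paper (Gray-monoid strictification, Propositions~\ref{MONOIDAL_BASE_CHANGE_PROP} and~\ref{BRAIDED_BASE_CHANGE_PROP}, the comparison functor $i$ of Proposition~\ref{CAUCHY_COMPARISON_FUNCTOR_PROP}), but it diverges from the paper in two places worth flagging. First, for the Hopf square the paper does essentially nothing: since a morphism of bialgebroids between Hopf algebroids automatically commutes with antipodes, $\mathbf{Hopf}(\ca{V}^{\prime},FB)$ is a \emph{full} subcategory of $\mathbf{Bialg}(\ca{V}^{\prime},FB)$, so the second diagram is an immediate corollary of the first once one knows both composites land in Hopf algebroids. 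Your route through Theorem~\ref{HOPF_COMONAD_THM} and ``$F_\ast$ preserves the autonomous structure'' reaches the same conclusion but does extra work; the phrase ``automatically respects antipodes'' is really the fullness statement in disguise, and you should say so explicitly rather than appeal to preservation of duals. Second, you frame the Cauchy-completion interface as a comparison of comonads, i.e.\ showing that $L(iF_\ast w)$ and $L(F_\ast w)$ ``determine the same bialgebroid on $FB$'' after postcomposing with the fully faithful $i$. The paper avoids this entirely by handling the interface \emph{before} applying $L$: the equivalence $\Vcat\slash\overline{\ca{B}} \simeq \Map(\ca{M},B)$ is by construction the composite of $G$ with composition by $Gj^{-1}$ (and similarly with $G^{\prime}k^{-1}$ downstairs), and the single identity $i\cdot F_\ast j = k$ makes the relevant square of slice 2-categories commute on the nose up to the formal pseudofunctoriality isomorphisms. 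This is where your anticipated ``bookkeeping'' lives, and the paper's organization — splitting the large rectangle into squares (1)--(3) plus two formal squares comparing ``compose then apply a pseudofunctor'' with ``apply then compose'' — is the cleanest way to discharge it; I would recommend adopting that decomposition rather than trying to prove a standalone statement about fully faithful functors and endomodule comonads, which is harder than what is actually needed.
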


\begin{proof}
 Since $\mathbf{Hopf}(\ca{V}^{\prime},FB)$ is a full subcategory of $\mathbf{Bialg}(\ca{V}^{\prime},FB)$, it clearly suffices to show that the first of the two diagrams commutes up to natural isomorphism. Let $\ca{M}$ and $\ca{M}^{\prime}$ be symmetric Gray monoids equivalent to $\Mod(\ca{V})$ and $\Mod(\ca{V}^{\prime})$ respectively. Let $G$ and $G^{\prime}$ denote the strict symmetric monoidal pseudofunctors from Proposition~\ref{COMPANION_FUNCTOR_MONOIDAL_PROP}. 
 
 Recall that $\Mod^d_B=\overline{B}$ and $\Mod^d_{FB}=\overline{FB}$ (see Proposition~\ref{AUTONOMOUS_CAUCHY_PROP}). Moreover, the inclusion $ j \colon B \rightarrow \overline{B}$ induces a $\ca{V}^{\prime}$-functor $F_\ast j \colon FB=F_\ast B \rightarrow F_\ast \overline{B}$. It is clear from the construction of $i$ (see Proposition~\ref{CAUCHY_COMPARISON_FUNCTOR_PROP}) that the diagram
 \[
  \xymatrix{FB \ar[rr]^{k} \ar[rd]_{F_\ast j} && \overline{FB} \\
  & F_\ast\overline{B} \ar[ru]_{i}}
 \]
 is commutative, where $k\colon FB \rightarrow \overline{FB}$ denotes the natural inclusion. From \cite[\S~5]{KELLY_BASIC} we know that an inclusion of $\ca{V}$-categories induces an equivalence in $\Mod(\ca{V})$ if and only if the two categories have the same Cauchy completion. Thus $Gj$, $G^{\prime} i$ $G^{\prime}F_\ast j$ and $G^{\prime}k$ are equivalences in $\ca{M}$ and $\ca{M}^{\prime}$ respectively. Moreover, the equivalence $\Vcat \slash \overline{B} \simeq \Map(\ca{M},B)$ from Lemma~\ref{MAPS_LEMMA} is precisely the composite
 \[
  \xymatrix@C=50pt{\Vcat \slash \overline{B} \ar[r]^-G & \Map(\ca{M},\overline{B}) \ar[r]^-{\Map(\ca{M},Gj^{-1})} & \Map(\ca{M},B)}\smash{\rlap{,}}
 \]
 and similarly for $\ca{V}^{\prime}\mbox{-}\Cat \slash \overline{FB} \simeq \Map(\ca{M}^{\prime},FB)$. It remains to show that the lifts to symmetric pseudomonoids of the pseudofunctors and 2-functors in the diagram
 \[
  \xymatrix{
   \Vcat \slash \overline{B} \ar[r]^-{G} \ar[d]_{F_\ast} \ar@{}[rd]|{(1)} &
   \Map(\ca{M},\overline{B}) \ar[d]^{F_\ast} \ar[r]^-{Gj^{-1}} &
   \Map(\ca{M},B) \ar@{}[rdd]|{(3)} \ar[r]^-{L} \ar[d]^{F_\ast} &
   \Comon(B) \ar[dd]^{F_\ast} \\
   \ca{V}^{\prime}\mbox{-}\Cat \ar[d]_{i} \slash F_\ast \overline{B} \ar[r]^-{G^{\prime}} &
   \Map(\ca{M}^{\prime},F_\ast \overline{B}) \ar[d]^{G^{\prime}i} \ar[r]^-{G^{\prime}F_\ast j^{-1}} \ar@{}[rd]|{(2)} & 
   \Map(\ca{M}^{\prime},FB) \ar@{=}[d] \\
   \ca{V}^{\prime}\mbox{-}\Cat \slash \overline{FB} \ar[r]^-{G^{\prime}} & 
   \Map(\ca{M}^{\prime},\overline{FB}) \ar[r]^-{G^{\prime} k^{-1}} &
   \Map(\ca{M}^{\prime},FB) \ar[r]^-{L} &
   \Comon(FB) }
 \]
 commute up to pseudonatural or 2-natural equivalence, where we used the abbreviation $f$ for a the functor that is given by composing with $f$. Diagram~(1) is a commutative diagram of symmetric strong monoidal pseudofunctors. Indeed, the symmetric monoidal structure of the base change functor on modules is defined to be the image under $G^{\prime}$ of the symmetric monoidal structure of the base change functor  $\Vcat \rightarrow \ca{V}^{\prime}\mbox{-}\Cat$. (This construction is analogous to how the symmetric monoidal structure on $\Mod(\ca{V})$ is defined by transfer along $G$, cf.\ Proposition~\ref{COMPANION_FUNCTOR_MONOIDAL_PROP} and \cite[Theorem~5.1]{SHULMAN}.) Diagram~(2) commutes up to isomorphism because $i.F_\ast j=k$, and Diagram~(3) commutes by Proposition~\ref{BRAIDED_BASE_CHANGE_PROP}.
 
 To see that the lifts to symmetric pseudomonoids of the unlabeled diagrams commute, first note that $G^{\prime} F_\ast j=F_\ast Gj$. Thus both these diagrams compare the operations of first composing with a morphism and then applying a pseudofunctor to applying a pseudofunctor and then composing with the image of the morphism in question. Therefore they commute up to pseudonatural isomorphism.
\end{proof}

\appendix

\section{Density in cosmoi with dense autonomous generator} \label{DENSITY_IN_COSMOI_WITH_DAG_APPENDIX}

In Section~\ref{DAG_RECOGNITION_SECTION} we frequently used the fact that for a cosmos with dense autonomous generator $\ca{X}$, the notion of $\Set$-density and $\ca{V}$-density coincide in a lot of important cases. Our proof relies on the following concept.

\subsection{Representations of monoidal categories}\label{REPRESENTATIONS_SECTION}
 Let $\ca{V}$ be a cosmos with dense autonomous generator $\ca{X}$ (see Definition~\ref{DAG_DFN}). To each $\ca{X}$-tensored $\ca{V}$-category $\ca{A}$ we can associate an ordinary category endowed with an action of $\ca{X}_0$. Such a category with an action of a monoidal category is called a \emph{$\ca{X}_0$-actegory} (in \cite{MCCRUDDEN_REPR_COALGEBROIDS}) or \emph{$\ca{X}_0$-representation} (in \cite{GORDON_POWER}). An $\ca{X}_0$-representation is an ordinary category $\ca{L}$, together with a functor $-\odot - \colon \ca{X}_0 \times \ca{L} \rightarrow \ca{L}$ and natural isomorphisms $l\colon L \rightarrow I \odot L$ and $a \colon X \odot (X^\prime \odot L) \rightarrow (X\otimes X^\prime) \odot L$ for all $L\in \ca{L}$, subject to certain coherence conditions (details can be found in \cite[\S~2]{GORDON_POWER} or \cite[\S~3]{MCCRUDDEN_REPR_COALGEBROIDS}). Since we assume that $\ca{X}_0$ is $\Set$-dense, the assignment which sends an $\ca{X}$-tensored $\ca{V}$-category $\ca{A}$ to the $\ca{X}_0$-representation $\ca{A}_0$, with action given by the tensor functor $-\odot- \colon \ca{X}_0 \times \ca{A}_0 \rightarrow \ca{A}_0$ is in fact a fully faithful 2-functor (see \cite[Theorem~3.4]{GORDON_POWER}). This means that giving a $\ca{V}$-functor $F \colon \ca{A} \rightarrow \ca{A}^\prime$ between $\ca{X}$-tensored $\ca{V}$-categories is the same as giving an ordinary functor $F_0 \colon \ca{A}_0 \rightarrow \ca{A}^\prime_0$, together with morphisms $\widehat{F} \colon X\odot F_0 A \rightarrow F_0(X\odot A)$ making the diagrams
\[
\vcenter{\xymatrix{
F_0 A \ar[rd]_{F_0 l} \ar[r]^{l^\prime} & I \odot F_0 A \ar[d]^{\widehat{F}}\\
& F_0(I\odot A)
}}
\quad\mathrm{and}\quad
\vcenter{\xymatrix{ X \odot (X^\prime \odot F_0 A) \ar[r]^{a^\prime}  \ar[d]_{X \odot \widehat{F}} & (X\otimes X^\prime) \odot F_0 A \ar[dd]^{\widehat{F}} \\
X\odot F_0(X^\prime\odot A) \ar[d]_{\widehat{F}} &\\
F_0\bigl(X\odot(X^\prime \odot A)\bigr) \ar[r]_{F_0 a} & F_0\bigl((X\otimes X^\prime) \odot A\bigr)
}}
\]
commutative. The arrows $l$ and $a$ correspond to the canonical isomorphisms $\id \cong [I,-]$ and $[X,[X^\prime,-]] \cong [X\otimes X^\prime,-]$ under the $\ca{V}$-natural isomorphisms which define the respective tensors, and $\widehat{F}$ is given by the map of the same name introduced in Section~\ref{WEIGHTED_COLIMIT_SECTION}. Moreover, tensors with objects in $\ca{X}$ are absolute colimits (see \cite{STREET_ABSOLUTE}), so the morphisms $\widehat{F} \colon X\odot FA \rightarrow F(X\odot A)$ are isomorphisms. Still under the assumption that $\ca{X}_0$ is $\Set$-dense and that $\ca{A}$, $\ca{A}^\prime$ are $\ca{X}$-tensored, giving a $\ca{V}$-natural transformation $\alpha \colon F \Rightarrow F^\prime \colon \ca{A} \rightarrow \ca{A}^\prime$ is the same as giving an ordinary natural transformation $\alpha \colon F_0 \Rightarrow F^\prime_0$ such that
\[
 \xymatrix{X\odot F_0A \ar[r]^{X\odot \alpha_A} \ar[d]_{\widehat{F}} & X \odot F^\prime_0 A \ar[d]^{\widehat{F^\prime}}\\
F_0(X\odot A) \ar[r]_{\alpha_{X\odot A}} & F^\prime_0 (X\odot A)}
\]
is commutative.

\begin{thm}\label{DENSITY_THM}
 Let $\ca{V}$ be a cosmos which has a dense autonomous generator $\ca{X}$. Let $\ca{A}$ be an $\ca{X}$-tensored $\ca{V}$-category, and let $\ca{C}$ be a $\ca{V}$-category which is cotensored. A $\ca{V}$-functor $K \colon \ca{A} \rightarrow \ca{C}$ is $\ca{V}$-dense if and only if the underlying ordinary functor $K_0 \colon \ca{A}_0 \rightarrow \ca{C}_0$ is $\Set$-dense.
\end{thm}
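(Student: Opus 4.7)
The plan is to apply Kelly's Yoneda-style characterization of density \cite[Theorem~5.1]{KELLY_BASIC}: $K$ is $\ca{V}$-dense iff for all $C, C' \in \ca{C}$ the canonical morphism
\[
\lambda_{C, C'} \colon \ca{C}(C, C') \rightarrow [\ca{A}^{\op}, \ca{V}]\bigl(\ca{C}(K-, C), \ca{C}(K-, C')\bigr)
\]
is an isomorphism in $\ca{V}$, with an analogous $\Set$-valued statement characterizing $\Set$-density of $K_0$. First, since $\ca{X}$ consists of dualizable objects and $\ca{C}$ is cotensored, $\ca{C}$ is also $\ca{X}$-tensored (the cotensor with $X$ is naturally the tensor with $X^{\vee}$), so all the $\ca{V}$-categories involved are $\ca{X}$-tensored and the fully faithful embedding of $\ca{X}$-tensored $\ca{V}$-categories into $\ca{X}_0$-representations from Section~\ref{REPRESENTATIONS_SECTION} applies throughout.

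The central observation is that $\ca{V}$-natural transformations between the representable $\ca{V}$-functors $\ca{C}(K-, C), \ca{C}(K-, C') \colon \ca{A}^{\op} \rightarrow \ca{V}$ coincide with classical $\Set$-natural transformations between their underlying functors $\ca{C}_0(K_0-, C), \ca{C}_0(K_0-, C') \colon \ca{A}_0^{\op} \rightarrow \Set$. Indeed, by the Gordon--Power correspondence, $\ca{V}$-natural transformations of $\ca{V}$-functors out of an $\ca{X}$-tensored $\ca{V}$-category are exactly the $\Set$-natural transformations of underlying functors compatible with the $\ca{X}_0$-action, and any $\Set$-natural transformation between these particular representables is post-composition by a morphism in $\ca{C}_0$, which is automatically $\ca{X}_0$-equivariant because it is induced by the $\ca{V}$-functor $\ca{C}(K-, -)$.

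Granted this identification, the ``only if'' direction is immediate: applying $\ca{V}(I, -)$ to $\lambda_{C, C'}$ gives a bijection $\ca{C}_0(C, C') \cong \VNat \cong \Set\mbox{-}\Nat$, which is the Yoneda-style criterion for $\Set$-density of $K_0$. For the ``if'' direction, since $\ca{X}_0$ is a strong generator of $\ca{V}_0$ (being $\Set$-dense), an isomorphism in $\ca{V}$ is detected by applying $\ca{V}(X, -)$ for every $X \in \ca{X}$; using the closed structure of $\ca{V}$ and cotensors in $\ca{C}$, this reduces checking that $\lambda_{C, C'}$ is an isomorphism to verifying bijectivity of $\ca{C}_0(C, \{X, C'\}) \rightarrow \VNat(\ca{C}(K-, C), \ca{C}(K-, \{X, C'\}))$ for every $X \in \ca{X}$, which follows from $\Set$-density of $K_0$ via the central observation applied at $D = \{X, C'\}$. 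The main obstacle is the central observation itself: one must verify via a diagram chase that the structure isomorphisms $\widehat{K}$ of the $\ca{V}$-functor $K$ force every $\Set$-natural transformation between our specific representables to be automatically $\ca{X}_0$-equivariant, exploiting that such transformations are generated by post-composition with morphisms in $\ca{C}_0$ and that these commute with the coherence data coming from the $\ca{V}$-functoriality of $\ca{C}(K-, -)$.
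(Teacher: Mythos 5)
Your overall strategy is the same as the paper's: reduce both density notions to the Yoneda-style criterion (valid here because $\ca{C}$ is cotensored) and then identify $\ca{V}$-natural transformations $\ca{C}(K-,C) \Rightarrow \ca{C}(K-,D)$ with $\Set$-natural transformations between the underlying functors. The problem is that your justification of this ``central observation'' is circular. You argue that every $\Set$-natural transformation between the underlying representables ``is post-composition by a morphism in $\ca{C}_0$, which is automatically $\ca{X}_0$-equivariant.'' But the statement that every such $\Set$-natural transformation is a post-composition is (up to the faithfulness clause) precisely $\Set$-density of $K_0$ --- one of the two conditions you are trying to relate. In the ``only if'' direction you do not yet know it, so you cannot invoke it; and if you did know it, there would be essentially nothing left to prove. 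For a concrete failure of the auxiliary claim: if $\ca{A}$ is a single object with only its identity, an arbitrary function $\ca{C}_0(KA,C)\rightarrow \ca{C}_0(KA,D)$ is a $\Set$-natural transformation and need not be post-composition by anything.

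What is actually needed --- and what the paper's proof supplies --- is a direct argument that an arbitrary $\Set$-natural family $\beta_A \colon \ca{C}_0(K_0A,C)\rightarrow \ca{C}_0(K_0A,D)$ enriches uniquely to a $\ca{V}$-natural transformation. The component $\xi_A \colon \ca{C}(KA,C)\rightarrow\ca{C}(KA,D)$ in $\ca{V}$ is \emph{not} determined by $\beta_A$ alone: one uses the isomorphisms $\ca{C}(K(X\odot A),C)\cong [X,\ca{C}(KA,C)]$ (tensors with dualizable $X$ are absolute), the components $\beta_{X\odot A}$ for all $X\in\ca{X}$, and the $\Set$-density of $\ca{X}$ in $\ca{V}$ to produce a unique morphism $\xi_A$ in $\ca{V}_0$ with $\ca{V}_0(X,\xi_A)$ prescribed for every $X$; $\ca{V}$-naturality and the compatibility with the $\ca{X}_0$-action are then checked through the Gordon--Power coherence diagrams from Section~\ref{REPRESENTATIONS_SECTION}. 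Note also that even injectivity of the passage from $\VNat$ to $\Set$-natural transformations needs this mechanism, since $\ca{V}_0(I,-)$ need not be faithful. Your ``diagram chase'' placeholder points at the right location but rests on the unjustified (and in general false) post-composition claim, so the proof as written has a genuine gap at its crux.
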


\begin{proof}
 The assumption that $\ca{C}$ is cotensored implies that $K$ is $\ca{V}$-dense if and only if the map $\ca{C}_0(C,D) \rightarrow \VNat\bigl(\ca{C}(K-,C),\ca{C}(K-,D)\bigr)$ which sends $g \colon C \rightarrow D$ to the $\ca{V}$-natural transformation $\ca{C}(K-,g)$ is a bijection of sets (see \cite[\S~5.1]{KELLY_BASIC}). For $C\in \ca{C}$, let $K\slash C$ be the category with objects the morphisms $\phi \colon KA \rightarrow C$, $A \in \ca{A}$, and morphisms $\phi \rightarrow \phi^\prime$ the morphisms in $\ca{A}_0$ which make the evident triangle commute. From \cite[Formula~5.4]{KELLY_BASIC} we know that $K_0$ is $\Set$-dense if and only if each object $C$ is the colimit of the tautological cocone on the functor $V_C \colon K\slash C \rightarrow \ca{C}$ which sends $\phi$ to its domain. We write $S_D$ for the set of cocones on $V_C$ with vertex $D$, and we let $V=\ca{V}_0(I,-) \colon \ca{V}_0 \rightarrow \Set$ be the canonical forgetful functor. Let $\chi \colon \VNat\bigl(\ca{C}(K-,C), \ca{C}(K-,D)\bigr) \rightarrow S_D$ be the map which sends $\alpha$ to the cocone $\chi(\alpha)\defl\bigl(V\alpha_A (\phi)\bigr)_{\phi \in K \slash C}$. The composite
\[
 \xymatrix{ \ca{C}_0(C,D) \ar[r] & \VNat \bigl(\ca{C}(K-,C),\ca{C}(K-,D)\bigr)  \ar[r]^-{\chi} & S_D}
\]
 sends $g$ to the cocone $(g \phi)_{\phi \in K\slash C}$. This composite is a bijection if and only if $C$ is the colimit of the tautological cocone, that is, if and only if $K_0$ is $\Set$-dense. If we can show that $\chi$ is a bijection, then $K_0$ is $\Set$-dense if and only if $K$ is $\ca{V}$-dense, as claimed.

 We now construct an inverse for $\chi$, as follows. Given a cocone $\gamma=(\gamma_\phi)_{\phi \in K\slash C}$, we let
\[
\beta_A \colon \ca{C}_0(KA,C) \rightarrow \ca{C}_0(KA,D)
\]
 be the map with $\beta_A(\phi)=\gamma_\phi$. We write $F, G \colon \ca{A} \rightarrow \ca{V}^{\op}$ for the functors $\ca{C}(K-,C)$ and $\ca{C}(K-,D)$ respectively. Note that we have $VF_0=\ca{C}_0(KA,C)$, and $\beta$ is a natural transformation between the $\Set$-valued functors $VF_0$ and $VG_0$. We first use the density assumption to lift this to a natural transformation $\xi(\gamma) \colon F_0 \rightarrow G_0$ between the underlying ordinary $\ca{V}_0$-valued functors of $F$ and $G$, and we then show that $\xi(\gamma)$ is in fact $\ca{V}$-natural. The tensor of $B$ and $X$ in $\ca{V}^{\op}$ is given by $[X,B]$. Since all $\ca{V}$-functors preserve tensors with objects which have duals (see \cite{STREET_ABSOLUTE}), we get isomorphisms
\[
\xymatrix{F(X\odot A) \ar[r]^-{\widehat{F}} & [X,FA]}\quad\mathrm{and}\quad \xymatrix{G(X\odot A) \ar[r]^-{\widehat{G}} & [X,GA] },
\]
 and the composite $V\widehat{G}_0 \circ \beta_{X\odot A} \circ V \widehat{F}_0^{-1} \colon V[X,\ca{C}(KA,C)]_0 \rightarrow V[X,\ca{C}(KA,D)]_0$ is natural in $X$. Since $\ca{V}_0(X,-)$ is naturally isomorphic to $V[X,-]_0$, it follows by $\Set$-density of $\ca{X}$ in $\ca{V}$ that there is a unique morphism $\xi(\gamma)_A\colon \ca{C}(KA,C) \rightarrow \ca{C}(KA,D)$ in $\ca{V}$ such that
\[
 \xymatrix@C=45pt{\ca{V}_0\bigl(X,\ca{C}(KA,C)\bigr) \ar[d]_{\cong}  \ar[r]^-{\ca{V}_0(X,\xi(\gamma)_A)} & \ca{V}_0\bigl(X,\ca{C}(KA,D)\bigr) \ar[d]^{\cong} \\
 V[X,\ca{C}(KA,C)]_0 \ar[d]_{V\widehat{F}_0^{-1}} \ar[r]^-{V[X,\xi(\gamma)_A]_0} & V[X,\ca{C}(KA,D)]_0 \ar[d]^{V\widehat{G}_0^{-1}}\\
 \ca{C}_0\bigl(K(X\odot A),C\bigr) \ar[r]_-{\beta_{X\odot A}} & \ca{C}_0\bigl(K(X\odot A),D\bigr) }
\]
 is commutative for every $X \in \ca{X}$. Hence part (1) and (3) of the diagram
\[
 \xymatrix@!C=45pt{V[X,[X^{\prime},FA]]_0 \ar[rrrr]^{V[X,[X^\prime,\xi(\gamma)_A]]_0} && \ar@{}[d]|(0.4){(0)} && V[X,[X^{\prime},FA]]_0 \\
 & V[X,F(X\odot A)]_0 \ar[lu]_(0.4){V[X,\widehat{F}]_0} \ar[rr]^{V[X,\xi(\gamma)_{X^\prime \odot A}]_0} &\ar@{}[d]|(0.4){(1)}& V[X,G(X\odot A)]_0 \ar[ru]^(0.4){V[X,\widehat{G}]_0}\\
 & VF_0\bigr(X\odot(X^\prime\odot A)\bigl) \ar[u]^{V\widehat{F}_0} \ar[rr]^{\beta_{X\odot(X^\prime \odot A)}} &\ar@{}[d]|{(2)}& VG_0\bigr(X\odot(X^\prime\odot A)\bigl) \ar[u]_{V\widehat{G}_0}\\
 & VF_0\bigr((X\otimes X^\prime)\odot A)\bigl) \ar[u]^{VF_0 a} \ar[ld]^(0.4){V\widehat{F}_0} \ar[rr]_{\beta_{(X\otimes X^\prime)\odot A}} && VG_0\bigr((X\otimes X^\prime)\odot A\bigl) \ar[u]_{VG_0 a} \ar[rd]_(0.4){V\widehat{G}_0}\\
V[X\otimes X^{\prime},FA]_0 \ar[uuuu]^{V a^\prime} \ar[rrrr]_-{V[X\otimes X^\prime,\xi(\gamma)_A]_0}&&\ar@{}[u]|(0.4){(3)}&& V[X\otimes X^{\prime},GA]_0 \ar[uuuu]_{V a^\prime} }
\]
 are commutative. Part (2) is commutative since $\beta$ is natural, and the two pentagons are instances of the coherence diagrams in Section~\ref{REPRESENTATIONS_SECTION}. The outer diagram is commutative because $a^\prime$ is natural, hence it follows that part (0) is commutative. Since $\ca{X}$ is $\Set$-dense we find that $[X^\prime,\xi(\gamma)_A] \circ \widehat{F}=\widehat{G} \circ \xi(\gamma)_{X^\prime \odot A}$. The considerations in Section~\ref{REPRESENTATIONS_SECTION} therefore imply that $\xi(\gamma)$ is a $\ca{V}$-natural transformation. The second coherence diagram of Section~\ref{REPRESENTATIONS_SECTION} implies that $V\xi(\gamma)_A=\beta_A$ for all objects $A \in \ca{A}$, and it follows that $\chi\bigl(\xi(\gamma)\bigr) = \gamma$. Moreover, if we start with a $\ca{V}$-natural transformation $\alpha \colon F \Rightarrow G$ and construct the $\beta_A$ associated to the cocone $\chi(\alpha)$, we clearly get $\beta_{X\odot A}=V\alpha_{X \odot A}$, that is, $V\xi\bigl(\chi(\alpha)\bigr)_{X\odot A}=V\alpha_{X \odot A}$. Both $\alpha$ and $\xi(\gamma)$ are $\ca{V}$-natural, hence we must have $V[X,\xi\bigl(\chi(\alpha)\bigr)_A]_0=V[X,\alpha_A]_0$, and by density of $\ca{X}$ it follows that $\xi\bigl(\chi(\alpha)\bigr)=\alpha$. In other words, the assignment which sends a cocone $\gamma$ to the $\ca{V}$-natural transformation $\xi(\gamma)$ constructed above gives the desired inverse to $\chi$.
\end{proof}
\section{Monoidal biadjunctions}\label{MONOIDAL_BIADJUNCTION_APPENDIX}

\subsection{Overview}
 It is well-known that if a left adjoint between monoidal categories is strong monoidal, then its right adjoint inherits a weak monoidal structure in such a way that the unit and counit become monoidal natural transformations (see \cite{KELLY_DOCTRINAL}). Moreover, the resulting adjunction lifts to the categories of monoids. There exist similar results for the case of braided and symmetric strong monoidal left adjoints. In this appendix we will see that these results generalize to biadjunctions between monoidal 2-categories whose left adjoint is strong monoidal.
 
 \begin{prop}\label{MONOIDAL_BIADJUNCTION_PROP}
 Let $T \colon \ca{M} \rightarrow \ca{N}$ be a strong monoidal left biadjoint between monoidal 2-categories, with right biadjoint $H$. Then $H$ can be endowed with the structure of a weak monoidal pseudofunctor, and the unit and counit with the structure of weak monoidal pseudonatural transformations, in such a way that the invertible modifications $\alpha$ and $\beta$ that replace the triangle identities become monoidal modifications.
 \end{prop}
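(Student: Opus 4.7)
The plan is to generalize Kelly's classical doctrinal adjunction argument to the bicategorical setting, building every piece of the weak monoidal structure on $H$ and every coherence modification by taking mates of the corresponding data for $T$ under the biadjunction $T \dashv H$ with unit $\eta$, counit $\varepsilon$, and triangle modifications $\alpha$, $\beta$.

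First, I would define the coherence pseudonatural transformation $\chi^H \colon \otimes \cdot (H \times H) \Rightarrow H \cdot \otimes$ as the mate of $\chi^T$, that is, as the pasting
\[
HA \otimes HB \xrightarrow{\eta} HT(HA \otimes HB) \xrightarrow{H(\chi^T)^{-1}} H(THA \otimes THB) \xrightarrow{H(\varepsilon \otimes \varepsilon)} H(A \otimes B),
\]
and define $\iota^H \colon I \rightarrow HI$ analogously from $\iota^T$. The three invertible modifications (replacing the associativity pentagon and the two unit triangles for $H$) are then constructed by pasting together the corresponding modifications $\omega, \gamma, \delta$ of $T$ with pseudonaturality squares for $\eta$ and $\varepsilon$ and one instance each of the triangle modifications $\alpha$ or $\beta$. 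The coherence axioms for these modifications on $H$ follow from the coherence axioms on $T$ by the same pasting argument: since everything was constructed by whiskering $H$-images of $T$'s data and gluing in $\eta$, $\varepsilon$, $\alpha$, $\beta$, formal manipulations move the troublesome pieces to the boundary where the $T$-axioms can be applied.

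Second, I would endow $\eta$ and $\varepsilon$ with weak monoidal structures. For $\varepsilon$, the two required invertible modifications compare $\varepsilon_{A \otimes B} \cdot T\chi^H$ with $\chi^T \cdot (\varepsilon \otimes \varepsilon)$ and $\varepsilon_I \cdot T\iota^H$ with $\iota^T$; both are canonically produced from $\chi^T$, $\iota^T$, and a single triangle identity. For $\eta$, the analogous modifications compare $\chi^H \cdot (\eta \otimes \eta)$ with $\eta_{A \otimes B}$ and $\iota^H$ with $\eta_I$, and here the definition of $\chi^H$, $\iota^H$ as mates makes these canonical 2-cells — again with one triangle identity providing the filler. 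The hexagon and pentagon axioms for each of these monoidal pseudonatural structures reduce, after standard diagram manipulations, to the coherence axioms already established for $H$.

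Third, I would verify that $\alpha \colon \id \cong HT$ and $\beta \colon TH \cong \id$ are monoidal modifications. This amounts to two equations between pasting composites in $\ca{N}$ relating the monoidal structures on the two composites of $\eta$ and $\varepsilon$; both reduce, after using the two triangle identity replacements and pseudonaturality, to formal consequences of the construction.

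The main obstacle is the combinatorial explosion of coherence data rather than any deep conceptual difficulty: each axiom is a two-dimensional analogue of a calculation that in the one-categorical case is automatic. The key organizing principle is that every piece of structure is defined by the mate construction, so $T$'s axioms propagate to $H$'s axioms by pasting. Using the string-diagram calculus from Section~\ref{STRING_DIAGRAMS_SECTION} (extended to monoidal 2-cells as in Section~\ref{TANNAKIAN_BIADJUNCTION_GRAY_SECTION}), each verification becomes a local manipulation moving a triangle identity modification past a pseudonaturality square to expose a $T$-axiom. Invoking the strictification Theorem~\ref{MON_BICAT_COHERENCE_THM} to replace $\ca{M}$ and $\ca{N}$ by Gray monoids and $T$ by a Gray functor further tames the bookkeeping, since then the structural 2-cells of $T$ reduce to interchange data and the whole argument closely parallels the classical one-dimensional case.
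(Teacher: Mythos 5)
Your proposal matches the paper's proof in all essentials: the paper also defines $\chi^H$ and $\iota^H$ as the mates you describe (Notation: $n \cdot H(\chi^T)^{-1} \cdot H(e\otimes e)$ in the paper's $n$/$e$ notation), builds $\omega^H$, $\zeta^H$, $\kappa^H$ and the monoidal structures on the unit and counit from mates of $T$'s data together with the triangle modifications, and begins by strictifying to (cofibrant) Gray monoids so that $T$ and $H$ become 2-functors. The one organizational device the paper adds, which you omit but which is not needed for correctness, is to first replace $\alpha$ by a modification $\alpha'$ satisfying the swallowtail identities so that $T\dashv H$ becomes a genuine pseudoadjunction in the Gray-category $\Gray$, and then to invoke Lack's coherence theorem for pseudoadjunctions; this makes every equation between 3-cells built from $\alpha$, $\beta$ and interchange automatic, replacing the case-by-case pasting verifications you propose to carry out by hand.
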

 
 \subsection{Monoidal biadjunctions and strictification}
 In order to prove this result we apply some strictification theorems. First of all, we can replace our monoidal bicategories by Gray monoids. Moreover, we can make sure that these Gray monoids are cofibrant in the sense of \cite{LACK_GRAY}. This ensures that the 2-categories in question are cofibrant as 2-categories (see \cite[S~9]{LACK_GRAY} and \cite{LACK_2CAT}), which implies that the pseudofunctors $T$ and $H$ are equivalent to 2-functors. By doing this, we are effectively working in the Gray-category $\Gray$ of 2-categories, 2-functors, pseudonatural transformations and modifications. By replacing the modification $\beta$ we can make sure that the biadjunction is in fact a pseudoadjunction in this Gray-category in the sense of 
\cite{LACK_PSEUDOMONADS}.
 
 \begin{lemma}
 Let $\mathbb{G}$ be a Gray category, let $T \colon \ca{M} \rightarrow \ca{N}$ and $H \colon \ca{N} \rightarrow \ca{M}$ be 1-cells, $n \colon \id \Rightarrow HT$ and $e \colon TH \Rightarrow \id$ 2-cells, and let $\alpha \colon eT \cdot Tn \cong T$ and $\beta \colon H \cong He \cdot nH$ be invertible 2-cells. Then there exists an invertible 3-cell $\alpha^{\prime}$ such that $(T,H,n,e,\alpha^{\prime},\beta)$ is a pseudoadjunction in the sense of \cite{LACK_PSEUDOMONADS}.
 \end{lemma}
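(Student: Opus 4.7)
The plan is to adapt the classical lower-dimensional trick for promoting an equivalence $(f,g,\eta,\varepsilon)$ to an adjoint equivalence---where one modifies $\varepsilon$ so that both triangle identities hold---up one dimension, working internally to the Gray category $\mathbb{G}$. Here we keep $(T,H,n,e,\beta)$ fixed and modify the 3-cell $\alpha$ to produce $\alpha'$ which, together with $\beta$, satisfies the swallowtail coherence axioms of a pseudoadjunction. An analogous biadjoint version appears in the literature on tricategorical coherence (compare \cite{GURSKI} and \cite{LACK_PSEUDOMONADS}), and our proof essentially imports that argument into $\mathbb{G}$.

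First I would spell out the two swallowtail axioms that $(T,H,n,e,\alpha,\beta)$ must satisfy to form a pseudoadjunction: each axiom equates two pastings of 3-cells built from whiskered copies of $\alpha$ and $\beta$ together with Gray interchange 3-cells, obtained by contracting a longer zig-zag of units and counits in two different ways. Since $\alpha$ and $\beta$ are invertible but no coherence is yet imposed, the two sides of the first swallowtail differ by a well-defined invertible 3-cell $\sigma$ whose source and target are both the 2-cell $eT\cdot Tn$. I then define $\alpha' \defl \alpha \cdot \sigma^{-1}$ (or $\sigma^{-1}\cdot \alpha$, whichever side makes the formulas work out), which by construction forces the first swallowtail to hold strictly for $(T,H,n,e,\alpha',\beta)$.

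The bulk of the work is to verify that the second swallowtail equation also holds for $\alpha'$, given that the first now holds on the nose. The argument is a 3-cell diagram chase in the hom-bicategories of $\mathbb{G}$: one observes that $\alpha$ and $\beta$ induce, via the usual bicategorical mate correspondence applied inside $\mathbb{G}(\ca{M},\ca{N})$ and $\mathbb{G}(\ca{N},\ca{M})$, an invertible modification between the two pastings appearing in the second swallowtail, and then shows that this invertibility, combined with the strictified first swallowtail, the invertibility of $\beta$, and Gray interchange, forces the second swallowtail to hold identically. The main obstacle will be organizing the resulting pasting so that the cancellations are visible; this is a lengthy but essentially formal manipulation, and can be carried out systematically using surface-diagram notation in $\mathbb{G}$, appealing to bicategorical coherence within each hom-bicategory to handle the associativity and unit isomorphisms that appear along the way.
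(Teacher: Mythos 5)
Your high-level strategy---keep $\beta$, replace $\alpha$ by a corrected cell chosen so that the first swallowtail axiom holds by construction, then argue that the second follows---is the right one-dimension-up version of the adjoint-equivalence trick, but there is a genuine gap at the point where you define $\alpha'$. In Lack's formulation the two pseudoadjunction axioms are equations between 3-cells whose common source and target are built from the unit and counit: each axiom pastes one whiskered copy of $\alpha$, one whiskered copy of $\beta$, and Gray interchange cells into an endomorphism of $e$ (respectively of $n$). The discrepancy $\sigma$ between the two sides of the first axiom is therefore an invertible 3-cell from $e$ to $e$ (suitably whiskered), \emph{not} an endomorphism of the 2-cell $eT\cdot Tn$, so the composite ``$\alpha\cdot\sigma^{-1}$'' does not typecheck. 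To repair this you would have to transport $\sigma$ backwards through the whiskerings and conjugations by which $\alpha$ enters the axiom; such a transport is not automatic (the relevant whiskering map on 3-cells need not be surjective), and constructing a preimage is exactly where $\beta^{-1}$ and a second copy of $\alpha^{-1}$ must be used. The paper sidesteps all of this by exhibiting the corrected cell explicitly: $\alpha'$ is the vertical composite of $TH\alpha^{-1}$, then $T\beta^{-1}T$, then $\alpha$, suitably whiskered --- the direct analogue of the classical formula for the corrected counit of an adjoint equivalence. Producing this (or an equivalent) concrete pasting is the actual content of the lemma, and your proposal never does so.

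A secondary issue concerns the step where the second swallowtail is deduced. The mechanism you invoke (a ``bicategorical mate correspondence'' between the two pastings) is not what makes this work. The 2-categorical model is: if $\varepsilon f\cdot f\eta=1_f$ holds on the nose, then an interchange computation shows the composite $g\varepsilon\cdot\eta g$ is idempotent, and an invertible idempotent is an identity. One dimension up, the same computation (now carrying interchange 3-cells) shows that the discrepancy of the second swallowtail is an invertible idempotent 3-cell, hence trivial. That idempotency argument is the one you need; as written, your outline does not identify it, and since the paper's own proof stops at the explicit formula and ``leaves the verification to the reader,'' the explicit $\alpha'$ together with these two computations is precisely what a complete proof must supply.
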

 
 \begin{proof}
 When working in a hom-2-category of a Gray-category $\mathbb{G}$, we can use a string diagram notation similar to the one introduced for Gray monoids in Section~\ref{GRAY_MONOIDS_SECTION}. The tensor product (composition) of 1-cells will of course only be partially defined, and there is no convenient way to keep track of the name of the 0-cells. We leave it to the reader to check that the 2-cell $\alpha^{\prime}$ given by
 \[
  \alpha^{\prime} \defl \vcenter{\hbox{

\begin{tikzpicture}[y=0.80pt, x=0.8pt,yscale=-1, inner sep=0pt, outer sep=0pt, every text node part/.style={font=\scriptsize} ]
\path[draw=black,line join=miter,line cap=butt,line width=0.500pt]
  (-460.6299,1335.8267) .. controls (-478.8865,1340.3909) and
  (-492.4398,1348.4825) .. (-501.2898,1358.2840)(-509.2536,1369.9194) ..
  controls (-522.6107,1397.0658) and (-506.4028,1430.6827) ..
  (-460.6299,1442.1260)(-513.7795,1318.1102) .. controls (-513.7795,1364.1732)
  and (-496.0630,1388.9763) .. (-460.6299,1406.6929);
\path[draw=black,line join=miter,line cap=butt,line width=0.500pt]
  (-460.6299,1335.8267) .. controls (-425.1969,1353.5433) and
  (-425.1969,1388.9763) .. (-460.6299,1406.6929);
\path[draw=black,line join=miter,line cap=butt,line width=0.500pt]
  (-460.6299,1442.1260) .. controls (-389.7638,1424.4094) and
  (-389.7638,1353.5433) .. (-389.7638,1318.1102);
\path[fill=black] (-461.33331,1335.9177) node[circle, draw, line width=0.500pt,
  minimum width=5mm, fill=white, inner sep=0.25mm] (text3761) {$TH\alpha^{-1}$
  };
\path[fill=black] (-460.62991,1406.6929) node[circle, draw, line width=0.500pt,
  minimum width=5mm, fill=white, inner sep=0.25mm] (text3765) {$T\beta^{-1}T$
  };
\path[fill=black] (-460.62991,1442.126) node[circle, draw, line width=0.500pt,
  minimum width=5mm, fill=white, inner sep=0.25mm] (text3769) {$\alpha$    };
\path[fill=black] (-520.86615,1314.5669) node[above right] (text3773) {$Tn$
  };
\path[fill=black] (-393.3071,1314.5669) node[above right] (text3777) {$eT$
  };
\path[fill=black] (-531.49603,1406.6929) node[above right] (text3781) {$Tn$
  };
\path[fill=black] (-428.74014,1374.8031) node[above right] (text3785) {$THeT$
  };
\path[fill=black] (-485.43307,1385.4331) node[above right] (text4611) {$TnHT$
  };
\path[fill=black] (-492.51968,1364.1732) node[above right] (text4615) {$THTn$
  };

\end{tikzpicture}
}}
 \]
 has the desired  properties.
 \end{proof}

 Thus we can apply the coherence theorem for pseudoadjunctions \cite[Proposition~5.1]{LACK_PSEUDOMONADS}, which implies that any two 3-cells built out of $\alpha$, $\beta$ and the Gray interchange between iterated composites of $T$ and $H$ in the Gray-category $\Gray$ of 2-categories are equal. We thus reduced the problem to proving the following proposition.

 \begin{prop}\label{MONOIDAL_PSEUDOADJUNCTION_PROP}
 Let $(T,H,n,e,\alpha,\beta)$ be a pseudoadjunction in $\Gray$ between two Gray monoids $\ca{M}$ and $\ca{N}$, and let $(\chi,\iota,\omega,\zeta,\rho)$ endow $T$ with the structure of a strong monoidal 2-functor (see \cite[Definition~2]{DAY_STREET}). Then there exists a structure of a weak monoidal 2-functor for $H$ as well as structures of monoidal pseudonatural transformations for $n$ and $e$ in such a way that the modifications $\alpha$ and $\beta$ become monoidal modifications. 
 \end{prop}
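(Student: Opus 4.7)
The plan is to mimic Kelly's classical doctrinal adjunction argument \cite{KELLY_DOCTRINAL} in the setting of a pseudoadjunction in $\Gray$. First, define the pseudonatural transformation $\chi^H \colon \otimes \circ (H\times H) \Rightarrow H \circ \otimes$ by taking the mate of $\chi^{-1}$ under the pseudoadjunction; explicitly, its component at $(A,B)$ is the composite
\[
 HA\otimes HB \xrightarrow{n_{HA\otimes HB}} HT(HA\otimes HB) \xrightarrow{H\chi^{-1}_{HA,HB}} H(THA\otimes THB) \xrightarrow{H(e_A\otimes e_B)} H(A\otimes B)\rlap{.}
\]
Similarly, define $\iota^H \colon I_{\ca{M}} \rightarrow HI_{\ca{N}}$ as the mate of $\iota$. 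The pseudonaturality 2-cells of $\chi^H$ are obtained from those of $\chi^{-1}$, $e$ and from pseudofunctoriality of $H$. Because $T$ is strong monoidal, the modifications $\omega$, $\zeta$, $\rho$ are invertible, so their mates under the adjunction define modifications $\omega^H$, $\zeta^H$, $\rho^H$ with the correct source and target.

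Next, endow $n$ and $e$ with monoidal structure. The structure 2-cell
\[
 \tilde n_{A,B} \colon n_{A\otimes B} \Rightarrow H\chi_{A,B} \cdot \chi^H_{TA,TB} \cdot (n_A \otimes n_B)
\]
is constructed from the pseudonaturality square of $n$ at $\chi_{A,B}$, together with one of the triangle data $\beta$. Analogously, $\tilde e$ is built from the pseudonaturality of $e$ at $\chi^{-1}$ and from $\alpha$. Their unit components $\tilde n_0$, $\tilde e_0$ are defined by an entirely parallel construction using $\iota^H$ and $\iota$.

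The remaining work is to verify a fixed (but long) list of axioms: the two coherence axioms for $(H,\chi^H,\iota^H,\omega^H,\zeta^H,\rho^H)$ to be a weak monoidal pseudofunctor; the four axioms for $(n,\tilde n,\tilde n_0)$ and $(e,\tilde e,\tilde e_0)$ to be monoidal pseudonatural transformations; and the two axioms saying that $\alpha$ and $\beta$ are monoidal modifications. Each of these axioms asserts the equality of two pasting diagrams of 2-cells built from $\chi$, $\iota$, $\omega$, $\zeta$, $\rho$, $n$, $e$, $\alpha$, $\beta$, and the interchange 2-cells of the ambient Gray-category. The calculus of mates translates each such equality into an equivalent equality of 2-cells on the $T$-side of the adjunction, where invertibility of $\chi$ and $\iota$ is repeatedly used. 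Once so translated, each axiom either reduces to one of the five given axioms for the strong monoidal 2-functor $T$, or follows by formal manipulation.

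The main obstacle is the sheer volume of coherence checks; a string-diagram calculus in a hom-2-category of a Gray-category, of the kind set up in Section~\ref{GRAY_MONOIDS_SECTION}, together with the coherence theorem for pseudoadjunctions \cite[Proposition~5.1]{LACK_PSEUDOMONADS} cited just before the proposition, makes each individual check essentially mechanical: every region of the diagram built purely from $\alpha$, $\beta$, $n$, $e$, $T$ and $H$ can be reduced to a canonical form, so that the hard content is isolated in a single step where one invokes the corresponding axiom for $T$. No conceptually new ingredient enters beyond the construction of the mates in the first paragraph.
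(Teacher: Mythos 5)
Your proposal follows essentially the same route as the paper: $\chi^H$ and $\iota^H$ are defined as the adjuncts of $\chi^{-1}$ and $\iota^{-1}$, the coherence modifications $\omega^H,\zeta^H,\rho^H$ as mates of the inverses of $\omega,\zeta,\rho$, the structure cells of $n$ and $e$ from pseudonaturality squares and the triangle data, and the axioms are checked by the calculus of mates together with the coherence theorem for pseudoadjunctions. One small correction: when you expand $\chi^H_{TA,TB}\cdot(n_A\otimes n_B)$ the composites $e_{TA}\cdot Tn_A$ that must be contracted are governed by $\alpha$, not $\beta$ (the paper uses $\alpha$ in the structure cells of both $n$ and $e$); $\beta$ only enters in the coherence modifications $\zeta^H,\kappa^H$ for $H$ itself.
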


 Before proving this, we need to introduce some notation and prove a lemma that will simplify the computations. In $\Gray$, the interchange is given by the pseudonaturality square (see \cite[\S~5.3]{GORDON_POWER_STREET}). Therefore we write
 \[
  \vcenter{\hbox{

\begin{tikzpicture}[y=0.80pt, x=0.8pt,yscale=-1, inner sep=0pt, outer sep=0pt, every text node part/.style={font=\scriptsize} ]
\path[draw=black,line join=miter,line cap=butt,line width=0.500pt]
  (-265.7480,1318.1102) .. controls (-265.7480,1353.5433) and
  (-301.1811,1335.8267) .. (-301.1811,1371.2598)(-301.1811,1318.1102) ..
  controls (-301.1811,1331.4440) and (-296.1635,1337.2514) ..
  (-289.9046,1341.1968)(-277.0246,1348.1732) .. controls (-270.7657,1352.1186)
  and (-265.7480,1357.9260) .. (-265.7480,1371.2598);
\path[fill=black] (-269.29135,1314.5669) node[above right] (text6293)
  {$\alpha_B$     };
\path[fill=black] (-308.26773,1381.8898) node[above right] (text6297)
  {$\alpha_A$     };
\path[fill=black] (-269.29135,1381.8898) node[above right] (text6301) {$Gf$
  };
\path[fill=black] (-308.26773,1314.5669) node[above right] (text6305) {$Ff$
  };
\path[shift={(-25.952917,-0.05209923)},draw=black,miter limit=4.00,draw
  opacity=0.294,line width=1.276pt]
  (-248.0315,1344.6851)arc(0.000:180.000:8.858)arc(-180.000:0.000:8.858) --
  cycle;

\end{tikzpicture}
}}
 \]
 for the $f$-component of a pseudonatural transformation $\alpha \colon F \Rightarrow G$ between 2-functors $F$ and $G$. This notation is justified by the fact that 2-categories, 2-functors, pseudonatural transformations and modifications form the Gray-category $\Gray$. At the same time it allows for the distinction between pseudonaturality squares from a Gray interchange cell internal to some monoid in $\Gray$: in the latter case we omit the small disk indicating the 2-cell.
 
\begin{lemma}\label{MATES_ARE_UNIQUE_LEMMA}
 Let $\ca{A}$ be a 2-category. For $i=1,2$ let $l_i \colon A \rightarrow B$ be an adjoint equivalence in $\ca{A}$, with right adjoint inverse $r_i$. Then the inverses of the unit and the counit exhibit $r_i$ as \emph{left} adjoint of $l_i$. If $\alpha \colon l_1 \Rightarrow l_2$ is invertible, then the two mates
 \[
\vcenter{\hbox{


}}
 \]
 of $\alpha$ coincide.
\end{lemma}

\begin{proof}
 It is an immediate consequence of the triangle identities that the composite of the first mate with the inverse of the second is equal to the identity. Thus the two 2-cells must be equal.
\end{proof}

\begin{proof}[Proof of Proposition~\ref{MONOIDAL_PSEUDOADJUNCTION_PROP}.]
 We list the structure cells and leave it to the reader to check that the necessary axioms (see \cite[Definition~2]{DAY_STREET} and \cite[Definition~3]{DAY_STREET}) hold. We also use the notation introduced there. Note that their choice of tensor product of 1-cells and 2-cells in a Gray monoid differs from ours (for example, $f\otimes g=fB^\prime \cdot Ag$ as opposed to $f\otimes g=A^\prime g \cdot fB$), which means that some of the interchanges appearing in their axioms have to be flipped to adapt the axioms to our convention. As usual we write the tensor product of objects in a Gray monoid simply as concatenation.
 
 We can assume that the pseudonatural transformation
 \[
  \chi_{A,A^\prime} \colon (TA)(TA^\prime) \rightarrow T(AA^\prime)
 \]
 is an adjoint equivalence, with right adjoint inverse $\chi^{-1}_{A,A^{\prime}}$. The unit and counit of this adjoint equivalence are thus invertible modifications, from which we deduce that graphically evident simplifications such as
 \[
  \vcenter{\hbox{


}}
 \]
 are valid. Note that the inverses of the unit and the counit exhibit $\chi^{-1}$ as left adjoint of $\chi$, which implies that they satisfy similar laws in the graphical calculus of pseudonatural transformations. Similarly we assume that a choice of right adjoint inverse equivalence $\iota^{-1} \colon TI \rightarrow I$ for $\iota \colon I \rightarrow TI$ has been made.
 
 We now list the structure cells that turn $H$ into a weak monoidal functor and $n$ and $e$ into monoidal pseudonatural transformations. We leave it to the reader to check that these satisfy all the necessary axioms, and that $\alpha$ and $\beta$ become monoidal natural transformations. The graphical calculus introduced above simplifies these computations considerably. To avoid excessive use of parentheses we write $H_B$ and $T_A$ for the evaluation of the functors $H$ and $T$ on objects, and similarly for maps.
 
 We define the pseudonatural transformation $\chi^H$ to be the adjoint of $e\otimes e \cdot \chi^{-1}$, that is, the composite
\[
 \xymatrix@C=40pt{H_B H_{B^{\prime}} \ar[r]^-{n_{H_B H_{B^{\prime}} }} & H_{T_{H_B H_{B^\prime}}} \ar[r]^-{H_{\chi^{-1}_{H_B,H_{B^{\prime}}}}} & H_{T_{H_B} T_{H_{B^\prime}}} \ar[r]^-{H_{e_B\otimes e_{B^\prime}}} & H_{BB^\prime} } \smash{\rlap{.}}
\]
 The pseudonatural transformation $\iota^H$ is given by
\[
\xymatrix{I \ar[r]^-{n_I} & T_{H_I} \ar[r]^-{H \iota^{-1}} & H_I} \smash{\rlap{.}} 
\]
 We define required modifications $\zeta^H$, $\kappa^H$ by
\[
 \zeta^H_{B} \defl \vcenter{\hbox{


 }}  
 \]
 endow $e \colon TH \Rightarrow \id$ with the structure of a monoidal pseudonatural transformation. One can check that with these choices for $\theta$, the modifications $\alpha$ and $\beta$ become monoidal modifications in the sense of \cite[Definition~3]{DAY_STREET}.
\end{proof}

\begin{cor}\label{LIFT_TO_PSEUDOMONOIDS_COR}
 Let $T \colon \ca{M} \rightarrow \ca{N}$ be a strong monoidal left biadjoint between monoidal 2-categories, with right biadjoint $H$. If both $H$ and $T$ are normal, that is, they preserve identities strictly, then the biadjunction lifts to a biadjunction
 \[
 \xymatrix{ \PsMon(\ca{N}) \ar[dd]_{U} \rrtwocell<5>^{\PsMon(T)}_{\PsMon(H)}{`\perp} && \PsMon(\ca{M}) \ar[dd]^{U} \\ \\
           \ca{N} \rrtwocell<5>^{T}_{H}{`\perp} && \ca{M}}
 \]
 between the categories of pseudomonoids. The underlying morphisms of the unit and the counit are given by the unit and the counit of the biadjunction $T \dashv H$.
 \end{cor}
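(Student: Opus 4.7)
The plan is to deduce this corollary from Proposition~\ref{MONOIDAL_BIADJUNCTION_PROP} together with the observation that a weak monoidal pseudofunctor between monoidal 2-categories canonically induces a 2-functor on the underlying 2-categories of pseudomonoids. As a first step I would apply the strictification discussed at the beginning of Appendix~\ref{MONOIDAL_BIADJUNCTION_APPENDIX}: replace $\ca{M}$ and $\ca{N}$ by biequivalent Gray monoids, replace $T$ and $H$ by 2-functors, and replace $T \dashv H$ by a pseudoadjunction in the Gray-category $\Gray$. Since $T$ and $H$ are assumed normal, this can be done in a way that preserves strict preservation of identities, which will be convenient when recognising that $\PsMon(T)$ and $\PsMon(H)$ are again normal.

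Next I would invoke Proposition~\ref{MONOIDAL_BIADJUNCTION_PROP} to equip $H$ with a weak monoidal structure $(\chi^H, \iota^H, \omega^H, \zeta^H, \kappa^H)$ and to equip the unit $n$ and counit $e$ with the structure of monoidal pseudonatural transformations, in such a way that the invertible modifications $\alpha$ and $\beta$ become monoidal modifications. The main observation is then that a weak monoidal 2-functor $F \colon \ca{M} \rightarrow \ca{N}$ between monoidal 2-categories induces a 2-functor $\PsMon(F) \colon \PsMon(\ca{M}) \rightarrow \PsMon(\ca{N})$: on a pseudomonoid $(A,p,j,\alpha_A,\lambda_A,\rho_A)$ the multiplication of $F(A)$ is $F(p) \cdot \chi^F_{A,A}$ and its unit is $F(j) \cdot \iota^F$, while the three axioms of a pseudomonoid follow precisely from the coherence modifications $\omega^F, \zeta^F, \kappa^F$. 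Applying this to the strong monoidal $T$ and the weak monoidal $H$ produces the desired 2-functors $\PsMon(T)$ and $\PsMon(H)$, which are automatically compatible with the forgetful 2-functors $U$. Normality of $T$ and $H$ ensures that $\PsMon(T)$ and $\PsMon(H)$ send the trivial pseudomonoid to the trivial one.

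The monoidal structure on the pseudonatural transformations $n$ and $e$ then guarantees that each of their components at a pseudomonoid is a morphism of pseudomonoids, and that the pseudonaturality squares lift to pseudonaturality squares in the 2-category of pseudomonoids, yielding pseudonatural transformations $\PsMon(n) \colon \id \Rightarrow \PsMon(H) \cdot \PsMon(T)$ and $\PsMon(e) \colon \PsMon(T) \cdot \PsMon(H) \Rightarrow \id$. Similarly, the monoidality of $\alpha$ and $\beta$ is exactly the assertion that their components at pseudomonoids are morphisms of pseudomonoids, so they lift to invertible modifications $\PsMon(\alpha)$ and $\PsMon(\beta)$. Verifying the two coherence (swallowtail) equations for the lifted pseudoadjunction reduces to the corresponding equations for $\alpha$ and $\beta$ after whiskering with the forgetful 2-functor $U$, which is faithful on 2-cells, so they hold automatically. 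By construction all this structure restricts under $U$ to the original unit, counit and modifications, which is precisely the commutativity of the square displayed in the statement.

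The main obstacle will be bookkeeping: one must check that the weak monoidal structure on $H$ really gives a 2-functor on pseudomonoids (not merely an assignment on objects), which amounts to verifying that a morphism of pseudomonoids is sent to a morphism of pseudomonoids, that the coherence modifications $\omega^H, \zeta^H, \kappa^H$ satisfy the axioms displayed in Definition~\ref{PSEUDOMONOID_DFN}, and that the liftings of $\alpha, \beta$ still satisfy the pseudoadjunction axioms. All of these verifications fit into the graphical calculus developed in the proof of Proposition~\ref{MONOIDAL_PSEUDOADJUNCTION_PROP} and, after strictification, follow from the coherence theorem for pseudoadjunctions in $\Gray$ quoted there.
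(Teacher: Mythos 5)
Your proposal is correct in substance, but it takes a genuinely different route from the paper. Both arguments rest on Proposition~\ref{MONOIDAL_BIADJUNCTION_PROP}, which hands you the weak monoidal structure on $H$ and the monoidal structures on $n$, $e$, $\alpha$, $\beta$; the difference is in how the lift to pseudomonoids is then extracted. You unpack everything by hand: a weak monoidal pseudofunctor induces a pseudofunctor on pseudomonoids, monoidal pseudonatural transformations have components that are pseudomonoid morphisms, monoidal modifications lift, and the swallowtail identities are checked after applying the locally faithful forgetful $U$. The paper instead observes that a pseudomonoid in $\ca{M}$ is precisely a weak monoidal \emph{normal} pseudofunctor $1 \rightarrow \ca{M}$, so that $\PsMon(-)$ is the represented pseudo-3-functor $\mathbb{M}(1,-)$ on the tricategory $\mathbb{M}$ of monoidal 2-categories; the lifted biadjunction is then literally the image of the biadjunction of Proposition~\ref{MONOIDAL_BIADJUNCTION_PROP} under $\mathbb{M}(1,-)$, and the only thing left to check is that pseudo-3-functors preserve biadjunctions, which the paper reduces via coherence to the fact that Gray-functors preserve pseudoadjunctions (Lack's free Gray-category $\Psa$ on a pseudoadjunction). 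Your approach buys explicitness at the cost of nontrivial bookkeeping --- in particular you should say ``pseudofunctor'' rather than ``2-functor'' for $\PsMon(H)$ unless you first strictify, and you should note that normality is what makes the multiplication and unit of $\PsMon(H)(A)$ come out as $H(p)\cdot\chi^H_{A,A}$ and $H(j)\cdot\iota^H$ without extra unit-constraint data (it is not merely about preserving the trivial pseudomonoid); the paper's approach buys all of these verifications for free from representability, which is why it generalises verbatim to the braided and symmetric cases in Corollaries~\ref{LIFT_TO_BRAIDED_PSEUDOMONOIDS_COR} and \ref{LIFT_TO_SYMMETRIC_PSEUDOMONOIDS_COR}.
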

 
 \begin{proof}
 We prove this using the following idea from \cite[Proposition~5]{DAY_STREET} and \cite{MCCRUDDEN_BALANCED}. A pseudomonoid in a monoidal 2-category $\ca{M}$ can be identified with a weak monoidal normal pseudofunctor from the terminal 2-category $1$ to $\ca{M}$ (equivalently, a weak monoidal 2-functor $1 \rightarrow \ca{M}$). Then the lifted biadjunction is simply given by composition with $T$ and $H$.
 
 We can make this argument more precise using the language of tricategories. Let $\mathbb{M}$ be the tricategory of monoidal 2-categories, weak monoidal normal pseudofunctors, monoidal pseudonatural transformations and monoidal modifications. Then the assignment which sends $\ca{M}$ to $\PsMon(\ca{M})$ is the object part of the represented pseudo-3-functor $\mathbb{M}(1,-)$. It is a general fact that pseudo-3-functors preserve biadjunctions; to see this, we notice that the notion of biadjunction is `flexible' in the sense that it only talks about equations between 3-cells, not between 1-cells and 2-cells. Thus we can apply coherence results and reduce the problem to showing that a Gray-functor preserves pseudoadjunctions. Steve Lack showed that there is a Gray category $\Psa$ which is free on a pseudoadjunction, in the sense that pseudoadjunctions in a Gray-category $\mathbb{G}$ correspond bijectively to Gray-functors $\Psa \rightarrow \mathbb{G}$ (see \cite{LACK_PSEUDOMONADS}). A composite of Gray-functors is clearly a Gray-functor, hecne Gray-functors preserve pseudoadjunctions.
 
 Therefore the desired biadjunction between pseudomonoids is simply obtained by applying $\mathbb{M}(1,-)$ to the biadjunction from Proposition~\ref{MONOIDAL_BIADJUNCTION_PROP}.
 \end{proof}

\subsection{Braiding, syllepsis, and symmetry} We will see that a result analogous to Proposition~\ref{MONOIDAL_PSEUDOADJUNCTION_PROP} is true for braided, sylleptic, and symmetric monoidal 2-categories.

\begin{prop}\label{BRAIDED_PSEUDOADJUNCTION_PROP}
In the situation of Proposition~\ref{MONOIDAL_PSEUDOADJUNCTION_PROP}, if $T$ is a braided strong monoidal 2-functor between braided Gray monoids, then the right adjoint inherits a structure of a braided 2-functor in such a way that the pseudonatural transformations $e$ and $n$ and the modifications $\alpha$ and $\beta$ become braided. 

The same is true for biadjunctions between braided monoidal 2-categories whose left biadjoint is braided.
\end{prop}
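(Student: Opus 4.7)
The plan is to mirror the strategy of Proposition~\ref{MONOIDAL_PSEUDOADJUNCTION_PROP}. Since the monoidal structure on $H$ and the monoidal structures on $n$, $e$ were already constructed there, what remains is to equip $H$ with a single additional invertible modification $u^H$ witnessing compatibility with the braiding (the structure cell of a braided monoidal functor in the sense of \cite[\S~3]{MCCRUDDEN_BALANCED}), and then to verify that this modification satisfies the braided axioms and that $n$, $e$, $\alpha$, $\beta$ are braided with respect to it.

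First I would apply a coherence theorem for braided monoidal bicategories to replace $\ca{M}$ and $\ca{N}$ by braided Gray monoids, reduce the strong monoidal 2-functor $T$ to a braided strict (Gray) monoidal 2-functor, and assume (as in the proof of Proposition~\ref{MONOIDAL_PSEUDOADJUNCTION_PROP}) that we work with a pseudoadjunction in the Gray-category of braided 2-categories. The definition of $u^H$ then follows the template used for $\omega^H$, $\zeta^H$, $\kappa^H$: starting from the given modification $u^T$ with components
\[
u^T_{A,A'} \colon T(\rho_{A,A'}) \cdot \chi_{A,A'} \Longrightarrow \chi_{A',A} \cdot \rho_{TA,TA'} ,
\]
one whiskers with $n_{-}\otimes n_{-}$ and $e_{-}\otimes e_{-}$, inserts $\chi$, $\chi^{-1}$ as needed, and uses the pseudoadjunction modifications $\alpha$, $\beta$ to close up the diagram on $H_{B}$ and $H_{B'}$. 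Because $\chi_{H_B,H_{B'}}$ is an adjoint equivalence (as is $\iota$), Lemma~\ref{MATES_ARE_UNIQUE_LEMMA} guarantees that the resulting modification is independent of the choice of mate, so $u^H$ is well-defined.

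The main obstacle will be verifying axioms (BHA1) and (BHA2) of \cite[Appendix~A]{MCCRUDDEN_BALANCED} for $u^H$, and checking that $n$ and $e$ are braided monoidal pseudonatural transformations and that $\alpha$, $\beta$ are braided monoidal modifications. Each verification reduces, after taking mates under the adjoint equivalences $\chi \dashv \chi^{-1}$ and $\iota \dashv \iota^{-1}$, to the corresponding axiom for $T$ together with naturality of Gray interchanges and the pseudoadjunction identities. The string diagram calculus of Section~\ref{GRAY_MONOIDS_SECTION} together with the coherence theorem for pseudoadjunctions \cite[Proposition~5.1]{LACK_PSEUDOMONADS} (which is what legitimized the analogous computations in Proposition~\ref{MONOIDAL_PSEUDOADJUNCTION_PROP}) makes these essentially bookkeeping, though they are lengthy. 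The key structural input is that $R$ and $S$ are modifications, combined with the fact already exploited in Lemma~\ref{PSEUDOFUNCTOR_MATE_LEMMA} that mates of components of pseudonatural transformations along map equivalences behave predictably.

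For the sylleptic case, no additional structure cell is needed on $H$; one only checks the single sylleptic axiom (SHA1) of \cite[Appendix~A]{MCCRUDDEN_BALANCED}, relating $u^H$ with the syllepses on $\ca{M}$ and $\ca{N}$, by taking mates of the corresponding axiom for $T$. The symmetric case is then automatic, since in a symmetric monoidal 2-category being symmetric is a property of a sylleptic structure rather than extra data, and any sylleptic monoidal functor between symmetric monoidal 2-categories is automatically symmetric. Finally, to promote the statement from pseudoadjunctions in the appropriate Gray-category to arbitrary biadjunctions between braided (resp.\ sylleptic) monoidal 2-categories, one applies the strictification procedure at the start of this appendix and the general principle, used in the proof of Corollary~\ref{LIFT_TO_PSEUDOMONOIDS_COR}, that biadjunctions are flexible and are preserved by (coherent) Gray-functors, so the argument transfers.
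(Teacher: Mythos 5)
Your overall strategy is close to the paper's, but one step as written would fail and two others are much more work than necessary. The problematic step is your opening reduction: you propose to ``reduce the strong monoidal 2-functor $T$ to a braided strict (Gray) monoidal 2-functor.'' The paper explicitly notes (in the proof of Proposition~\ref{BRAIDED_BASE_CHANGE_PROP}) that no strictification results for braided or sylleptic strong monoidal pseudofunctors are known, so you cannot appeal to such a theorem. Fortunately you do not need it: the coherence theorem and cofibrant replacement are used only to strictify the \emph{2-categories} and the underlying pseudofunctors (reducing biadjunctions to pseudoadjunctions in $\Gray$), while $T$ remains a braided \emph{strong} monoidal 2-functor with its data $(\chi,\iota,\omega,\zeta,\rho,u)$, exactly as in Proposition~\ref{MONOIDAL_PSEUDOADJUNCTION_PROP}.

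For the rest, the paper takes a shorter route than you do. The braiding $u^H$ on $H$ is not built and verified from scratch: the paper writes it down as a mate-type composite of $H_{u^{-1}}$ with $\chi^H$ and the braiding $\rho$, and cites \cite[Proposition~12]{DAY_STREET} for the fact that this is a braiding for $H$, so axioms (BHA1)--(BHA2) need not be rechecked. More importantly, you are verifying things that are automatic: being braided is a \emph{property} of a monoidal pseudonatural transformation (one compatibility axiom, which must indeed be checked for $n$ and $e$), but \emph{every} monoidal modification between braided monoidal pseudonatural transformations is braided, so there is nothing at all to check for $\alpha$ and $\beta$. Your plan to verify $\alpha$ and $\beta$ by taking mates of the corresponding axioms for $T$ is harmless but superfluous. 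Finally, note that the sylleptic and symmetric cases you discuss belong to the next proposition; for the statement at hand only the braided case is required.
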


\begin{proof}
 As before, using the coherence theorem for monoidal 2-categories and cofibrant replacement we can prove the second part from the first. Let $u$ be a braiding for the strong monoidal 2-functor $T$ (see \cite[Definition~14]{DAY_STREET}). In \cite[Proposition~12]{DAY_STREET} it is proved that the modification $u^H$ given by
 \[
 u^H \defl \vcenter{\hbox{


}}
 \]
 gives a braiding for $H$. A monoidal pseudonatural transformation between braided pseudofunctors is braided if it satisfies a compatibility axiom (that is, being braided is a property of a monoidal pseudonatural transformation, not an additional structure; see \cite[Definition~14]{DAY_STREET}). One can check that $n$ and $e$, endowed with the monoidal structures from Proposition~\ref{MONOIDAL_PSEUDOADJUNCTION_PROP} are braided. Any monoidal modification between braided pseudonatural transformations is braided.
\end{proof}

\begin{cor}\label{LIFT_TO_BRAIDED_PSEUDOMONOIDS_COR}
 Let $T \colon \ca{M} \rightarrow \ca{N}$ be a braided strong monoidal left biadjoint between braided monoidal 2-categories, with right biadjoint $H$. If both $H$ and $T$ are normal, then the biadjunction lifts to a biadjunction
 \[
 \xymatrix{ \BrPsMon(\ca{N}) \ar[dd]_{U} \rrtwocell<5>^{\BrPsMon(T)}_{\BrPsMon(H)}{`\perp} && \BrPsMon(\ca{M}) \ar[dd]^{U} \\ \\
           \ca{N} \rrtwocell<5>^{T}_{H}{`\perp} && \ca{M}}
 \]
 between the 2-categories of braided pseudomonoids. The underlying morphisms of the unit and the counit are given by the unit and the counit of the biadjunction $T \dashv H$.
 \end{cor}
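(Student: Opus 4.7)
The plan is to mimic the argument used to prove Corollary~\ref{LIFT_TO_PSEUDOMONOIDS_COR}, substituting ``braided'' for ``monoidal'' throughout. Recall that there the key observation was that a (normal) pseudomonoid in a monoidal 2-category $\ca{M}$ can be identified with a weak monoidal normal pseudofunctor $1 \to \ca{M}$ from the terminal 2-category, so that $\PsMon(-)$ is represented by $\mathbb{M}(1,-)$ where $\mathbb{M}$ is the tricategory of monoidal 2-categories, weak monoidal normal pseudofunctors, monoidal pseudonatural transformations and monoidal modifications. Applying this represented pseudo-3-functor---which preserves biadjunctions because biadjunctions are ``flexible'' and Gray-functors preserve pseudoadjunctions---to the monoidal biadjunction constructed in Proposition~\ref{MONOIDAL_BIADJUNCTION_PROP} produced the lift.

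First I would observe that the same identification works in the braided setting: a braided pseudomonoid in a braided monoidal 2-category $\ca{M}$ is the same thing as a braided weak monoidal normal pseudofunctor $1\to \ca{M}$, where $1$ is given its unique symmetric monoidal structure. This lets me realize the assignment $\ca{M}\mapsto \BrPsMon(\ca{M})$ as the object part of the represented pseudo-3-functor $\mathbb{M}_{\mathrm{br}}(1,-)$, where $\mathbb{M}_{\mathrm{br}}$ is the tricategory of braided monoidal 2-categories, braided weak monoidal normal pseudofunctors, braided monoidal pseudonatural transformations, and (necessarily braided) monoidal modifications.

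Next I would invoke Proposition~\ref{BRAIDED_PSEUDOADJUNCTION_PROP}: since $T$ is a braided strong monoidal left biadjoint, the right biadjoint $H$ inherits a braiding making it a braided weak monoidal pseudofunctor, and the unit $n$ and counit $e$ become braided monoidal pseudonatural transformations, while $\alpha$ and $\beta$ are automatically braided since any monoidal modification between braided pseudonatural transformations is braided. Normality of $T$ and $H$ is hypothesized, so all four pieces of the biadjunction live in $\mathbb{M}_{\mathrm{br}}$. Thus the biadjunction $T\dashv H$ is a biadjunction in the tricategory $\mathbb{M}_{\mathrm{br}}$, and applying the represented pseudo-3-functor $\mathbb{M}_{\mathrm{br}}(1,-)$ yields the desired biadjunction $\BrPsMon(T)\dashv \BrPsMon(H)$, whose unit and counit by construction have underlying 1-cells given by whiskering with $n$ and $e$.

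The main obstacle will be purely bookkeeping: verifying that the abstract principle ``pseudo-3-functors preserve biadjunctions'' applies cleanly in the braided setting. Concretely one needs to know that $\mathbb{M}_{\mathrm{br}}$ is indeed a tricategory and that representables from it are pseudo-3-functors. Rather than spelling this out, I would appeal (as in the proof of Corollary~\ref{LIFT_TO_PSEUDOMONOIDS_COR}) to the coherence-plus-cofibrant-replacement strategy: replace $\ca{M}$ and $\ca{N}$ by cofibrant braided Gray monoids, replace $T$ and $H$ by braided 2-functors, upgrade the biadjunction to a pseudoadjunction in a braided Gray-category, and then use the universal property of the Gray-category freely generated by a pseudoadjunction (as in \cite{LACK_PSEUDOMONADS}) to transport the structure across the Gray-functor $\mathbb{M}_{\mathrm{br}}(1,-)$, completing the proof.
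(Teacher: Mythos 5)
Your proposal is correct and follows the paper's own argument essentially verbatim: the paper likewise identifies braided pseudomonoids with braided weak monoidal normal pseudofunctors $1 \to \ca{M}$, replaces the tricategory of monoidal 2-categories with that of braided ones, and cites Proposition~\ref{BRAIDED_PSEUDOADJUNCTION_PROP} to place the biadjunction in this tricategory before applying the represented pseudo-3-functor $(1,-)$ as in Corollary~\ref{LIFT_TO_PSEUDOMONOIDS_COR}.
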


\begin{proof}
 The terminal 2-category is braided monoidal in a unique way, and braided normal pseudofunctors $1 \rightarrow \ca{M}$ are precisely braided pseudomonoids in $\ca{M}$ (by definition; see \cite[\S~3]{MCCRUDDEN_BALANCED}). Thus we can prove this result in the same way we proved Corollary~\ref{LIFT_TO_PSEUDOMONOIDS_COR} except that we replace the tricategory $\mathbf{M}$ with the tricategory $\mathbf{B}$ of braided monoidal 2-categories, braided weak monoidal normal pseudofunctors, braided pseudonatural transformations and braided modifications. From Proposition~\ref{BRAIDED_PSEUDOADJUNCTION_PROP} we know that the biadjunction $T \dashv H$ lives in this tricategory.
\end{proof}

\begin{prop}\label{SYLLEPTIC_BIADJUNCTION_PROP}
In the situation of Proposition~\ref{BRAIDED_PSEUDOADJUNCTION_PROP}, if $T$ is a sylleptic strong monoidal 2-functor between braided Gray monoids (see \cite[Definition~16]{DAY_STREET}), then the right adjoint inherits a structure of sylleptic 2-functor. The pseudonatural transformations $n$ and $e$ and the modifications $\alpha$ and $\beta$ are sylleptic.

The same is true for biadjunctions between sylleptic monoidal 2-categories whose left biadjoint is sylleptic.
\end{prop}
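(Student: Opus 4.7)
The plan is to follow the same strictification pattern as in Propositions~\ref{MONOIDAL_PSEUDOADJUNCTION_PROP} and \ref{BRAIDED_PSEUDOADJUNCTION_PROP}. First, using the coherence theorem for sylleptic monoidal bicategories together with cofibrant replacement of the underlying 2-categories, we reduce the statement about sylleptic biadjunctions between arbitrary sylleptic monoidal 2-categories to a statement about pseudoadjunctions $(T,H,n,e,\alpha,\beta)$ in $\Gray$ between sylleptic Gray monoids. Thus the first paragraph of the proof will be essentially the same as the opening reduction of Proposition~\ref{MONOIDAL_PSEUDOADJUNCTION_PROP}, invoking \cite[Proposition~5.1]{LACK_PSEUDOMONADS} to ensure that any two 3-cells built out of $\alpha$, $\beta$, the monoidal coherence data, and Gray interchange are equal.

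The heart of the argument is the explicit construction of a syllepsis $v^H$ for $H$ from the given syllepsis $v$ for $T$. The idea mimics the construction of the braiding $u^H$ in Proposition~\ref{BRAIDED_PSEUDOADJUNCTION_PROP}: we take the dual 2-cell $H(v^{-1})$ and conjugate it by appropriate composites of $n$, $e$, $\alpha$, $\beta$, the braiding $u^H$ constructed in Proposition~\ref{BRAIDED_PSEUDOADJUNCTION_PROP}, and the structure modifications $\chi^H$, $\theta_2$, $\theta_0$ from Proposition~\ref{MONOIDAL_PSEUDOADJUNCTION_PROP}. Concretely, I would first take the mate of $H(v^{-1})$ across the adjoint equivalences $\chi \dashv \chi^{-1}$ occurring at the appropriate slots, and then compose with the structure modifications needed to convert the result into a modification comparing the identity transformation on $\chi^H$ with the relevant composite involving $u^H$ and the braiding $\rho^{\ca{M}}$ of the target Gray monoid.

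Once $v^H$ is defined, I would verify the two axioms (SA1) and (SA2) of \cite[Appendix~A]{MCCRUDDEN_BALANCED} by pasting together the corresponding axioms for $v$, the triangle identities packaged in $\alpha$ and $\beta$, the coherence axioms for $u^H$ proved in Proposition~\ref{BRAIDED_PSEUDOADJUNCTION_PROP}, and the axioms for the monoidal structure on $H$. Because being sylleptic is a property of a braided monoidal pseudofunctor and the left-hand and right-hand sides of (SA1) and (SA2) are composites of modifications built entirely from the pseudoadjunction data and the monoidal coherence data, the coherence result from \cite[Proposition~5.1]{LACK_PSEUDOMONADS} together with the sylleptic coherence for $T$ will reduce each axiom to a diagram chase in $\Gray$ that is invariant under the simplifications from the string diagram calculus introduced in the proof of Proposition~\ref{MONOIDAL_PSEUDOADJUNCTION_PROP}.

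Finally, since being sylleptic for a braided monoidal pseudonatural transformation and for a monoidal modification is a \emph{property} (rather than additional structure), to conclude that $n$, $e$, $\alpha$ and $\beta$ are sylleptic it suffices to check a single equation of 2-cells in each case (compare \cite[Definition~16]{DAY_STREET} and the treatment in Proposition~\ref{BRAIDED_PSEUDOADJUNCTION_PROP} of the braided case). These equations follow from the very definition of $v^H$ together with the axioms of $v$ and the already-established braided-ness of $n$, $e$, $\alpha$, $\beta$. The main obstacle is the construction of $v^H$ itself: although it is a routine diagram chase in principle, the resulting string diagram is large, and care must be taken to place the various instances of $\chi^{\pm 1}$, $\iota^{\pm 1}$, $n$ and $e$ so that the coherence axioms from \cite[Appendix~A]{MCCRUDDEN_BALANCED} reduce cleanly to the sylleptic axioms for $v$ under the coherence theorem for pseudoadjunctions. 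The second part of the proposition, concerning sylleptic biadjunctions between sylleptic monoidal 2-categories, then follows formally by applying the represented pseudo-3-functor $\mathbb{S}(1,-)$ (where $\mathbb{S}$ is the tricategory of sylleptic monoidal 2-categories) exactly as in the proof of Corollary~\ref{LIFT_TO_BRAIDED_PSEUDOMONOIDS_COR}.
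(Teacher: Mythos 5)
Your opening reduction to Gray monoids and pseudoadjunctions in $\Gray$, and your closing paragraph deducing the second statement by applying a represented pseudo-3-functor, both match the pattern of Propositions~\ref{MONOIDAL_PSEUDOADJUNCTION_PROP} and \ref{BRAIDED_PSEUDOADJUNCTION_PROP} and are fine. The central step, however, rests on a misconception. A syllepsis is extra \emph{structure on a braided Gray monoid} (the invertible modification $\nu$, subject to the axioms (SA1) and (SA2) of \cite[Appendix~A]{MCCRUDDEN_BALANCED}); it is \emph{not} extra structure on a functor. By \cite[Definition~16]{DAY_STREET}, a braided monoidal 2-functor between sylleptic Gray monoids is sylleptic when a single equation of 2-cells holds, relating its braiding datum to the syllepses of source and target (the axiom (SHA1) in McCrudden's notation). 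So there is no modification $v^H$ to construct, and the axioms (SA1), (SA2) you propose to verify are the wrong ones: they govern $\nu$, not the functor. What actually has to be checked is that the braiding $u^H$ already built in Proposition~\ref{BRAIDED_PSEUDOADJUNCTION_PROP} satisfies this one compatibility equation; that is precisely \cite[Proposition~15]{DAY_STREET}, which is what the paper invokes.

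Once that is in place, the rest collapses further than you allow for. Since being sylleptic is a property of a braided monoidal 2-functor, a braided monoidal pseudonatural transformation between sylleptic 2-functors is automatically sylleptic, and a braided modification between sylleptic pseudonatural transformations is automatically sylleptic — there is no residual "single equation" to check for $n$, $e$, $\alpha$, $\beta$ beyond the braidedness already established in Proposition~\ref{BRAIDED_PSEUDOADJUNCTION_PROP}. To repair your write-up, replace the construction of $v^H$ and the verification of (SA1)/(SA2) by the verification of (SHA1) for $H$ (or a citation of \cite[Proposition~15]{DAY_STREET}), and delete the claimed equation-checks for the transformations and modifications.
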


\begin{proof}
 The fact that $H$ is sylleptic is proved in \cite[Proposition~15]{DAY_STREET}. Being sylleptic is a property of a braided functor, so any braided pseudonatural transformation between sylleptic 2-functors is sylleptic, and any braided modification between sylleptic pseudonatural transformations is sylleptic.
\end{proof}

\begin{cor}\label{LIFT_TO_SYMMETRIC_PSEUDOMONOIDS_COR}
 Let $T \colon \ca{M} \rightarrow \ca{N}$ be a sylleptic strong monoidal left biadjoint between symmetric monoidal 2-categories, with right biadjoint $H$. If both $H$ and $T$ are normal, then the biadjunction lifts to a biadjunction
 \[
 \xymatrix{ \SymPsMon(\ca{N}) \ar[dd]_{U} \rrtwocell<5>^{\SymPsMon(T)}_{\SymPsMon(H)}{`\perp} && \SymPsMon(\ca{M}) \ar[dd]^{U} \\ \\
           \ca{N} \rrtwocell<5>^{T}_{H}{`\perp} && \ca{M}}
 \]
 between the 2-categories of symmetric pseudomonoids. The underlying morphisms of the unit and the counit are given by the unit and the counit of the biadjunction $T \dashv H$. 
 \end{cor}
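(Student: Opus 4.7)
The plan is to mimic the strategy used in the proofs of Corollary~\ref{LIFT_TO_PSEUDOMONOIDS_COR} and Corollary~\ref{LIFT_TO_BRAIDED_PSEUDOMONOIDS_COR}. The terminal 2-category $1$ carries a unique structure of symmetric monoidal 2-category, and by definition (cf.\ \cite[\S~4]{MCCRUDDEN_BALANCED}) a sylleptic weak monoidal normal pseudofunctor $1 \rightarrow \ca{M}$ is precisely a symmetric pseudomonoid in $\ca{M}$. Thus the assignment $\ca{M} \mapsto \SymPsMon(\ca{M})$ is the object part of the represented pseudo-3-functor $\mathbf{S}(1,-)$, where $\mathbf{S}$ denotes the tricategory of symmetric monoidal 2-categories, sylleptic weak monoidal normal pseudofunctors, sylleptic pseudonatural transformations and monoidal modifications.

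The next step will be to check that the biadjunction $T \dashv H$ lives inside $\mathbf{S}$. The hypotheses tell us that $T$ is sylleptic, and from Proposition~\ref{SYLLEPTIC_BIADJUNCTION_PROP} we know that $H$ inherits the structure of a sylleptic strong monoidal 2-functor in such a way that the unit $n$ and counit $e$ become sylleptic pseudonatural transformations, and the invertible modifications $\alpha$ and $\beta$ are sylleptic. Thus the data of the biadjunction define a biadjunction internal to the tricategory $\mathbf{S}$.

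Finally, I would invoke the general fact that any pseudo-3-functor preserves biadjunctions. As in the earlier corollaries, one justifies this by observing that the notion of biadjunction is ``flexible'' --- it only involves equations between 3-cells, not between lower cells --- so the coherence theorems for tricategories allow us to reduce the problem to the analogous statement for Gray-functors between Gray-categories. Steve Lack's description of the Gray-category $\Psa$ that is free on a pseudoadjunction then gives the preservation result essentially by composition of Gray-functors. Applying $\mathbf{S}(1,-)$ to the biadjunction $T \dashv H$ in $\mathbf{S}$ produces the desired biadjunction between the 2-categories of symmetric pseudomonoids, and the description of the unit and counit on underlying morphisms follows from the fact that evaluation at $\ast \in 1$ recovers the original pseudonatural transformations $n$ and $e$. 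The main obstacle will be bookkeeping: verifying that $\mathbf{S}$ really is a tricategory with enough coherent structure to apply the Gray-strictification argument, but this is no different in character from the braided case handled in Corollary~\ref{LIFT_TO_BRAIDED_PSEUDOMONOIDS_COR}.
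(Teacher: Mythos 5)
Your proposal is correct and follows essentially the same route as the paper: identify symmetric pseudomonoids with sylleptic normal pseudofunctors out of the terminal 2-category, check via Proposition~\ref{SYLLEPTIC_BIADJUNCTION_PROP} that the biadjunction lives in the tricategory of sylleptic monoidal 2-categories, and apply the represented pseudo-3-functor $\mathbf{S}(1,-)$ using the same flexibility/Gray-strictification argument as in Corollaries~\ref{LIFT_TO_PSEUDOMONOIDS_COR} and \ref{LIFT_TO_BRAIDED_PSEUDOMONOIDS_COR}. The only cosmetic difference is your labelling of the 3-cells as ``monoidal modifications'' where the paper says ``sylleptic modifications''; since being sylleptic is a property rather than extra structure at that level, this changes nothing.
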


\begin{proof}
 The terminal 2-category is sylleptic monoidal in a unique way (it is in fact symmetric), and sylleptic normal pseudofunctors $1 \rightarrow \ca{M}$ are precisely symmetric pseudomonoids in $\ca{M}$ (by definition; see \cite[\S~4]{MCCRUDDEN_BALANCED}). We get the result from the same argument we used in Corollaries~\ref{LIFT_TO_PSEUDOMONOIDS_COR} and \ref{LIFT_TO_BRAIDED_PSEUDOMONOIDS_COR} applied to the tricategory $\mathbf{S}$ of sylleptic monoidal 2-categories, sylleptic weak monoidal normal pseudofunctors, sylleptic pseudonatural transformations and sylleptic modifications. From Proposition~\ref{SYLLEPTIC_BIADJUNCTION_PROP} we know that the biadjunction $T \dashv H$ lives in this tricategory.
\end{proof}

\section{A technical lemma}\label{TECHNICAL_LEMMA_APPENDIX}

\subsection{Statement of the lemma}
 In this section we will prove the following lemma which was a key ingredient in our proof of Theorem~\ref{HOPF_COMONAD_THM}.

\begin{lemma}\label{HOPF_LEMMA}
 Let $A$ and $B$ be autonomous map pseudomonoids in a Gray monoid $\ca{M}$, and let $(w,\psi,\psi_0)\colon A \rightarrow B$ be a strong monoidal map. Then the 2-cell
 \[
  \tau=\vcenter{\hbox{


}}
 \]
 is invertible.
\end{lemma}

\subsection{Duals and strong monoidal maps}
 To do this we will use the fact that a strong monoidal map automatically ``preserves duals'': if $f$ is a right bidual of $g$ in the lax slice $\ca{M}\slash_\ell A$, then $w \cdot f$ is a right bidual of $w \cdot g$ in $\ca{M} \slash_\ell B$. Since we are interested in showing that certain 2-cells are invertible, we will record precisely which 2-cells exhibit $w \cdot f$ as right bidual of $w \cdot g$. We will only need the special case of this result where $g=\id_A$, that is, where $A$ is autonomous and the right bidual is $d\colon \dual{A} \rightarrow A$

\begin{prop}\label{PRESERVATION_OF_DUALS_PROP}
 Let $A$ and $B$ be pseudomonoids in a Gray monoid $\ca{M}$, and let $(w,\psi,\psi_0) \colon A \rightarrow B$ be a strong monoidal map. If $A$ is autonomous, then the 2-cells
\[
\vcenter{\hbox{


}}
\]
 exhibit $w \cdot d$ as right bidual of $w$ in the sense of Proposition~\ref{DUALIZATIONS_PROP}.
\end{prop}

\begin{proof}
 It is not hard to check the two conditions from Proposition~\ref{DUALIZATIONS_PROP} directly. The proposition is also a special case of \cite[Theorem~3.1]{DUALS_INVERT} (note that any strong monoidal morphism is Frobenius; this is a simple generalization of \cite[Proposition~3]{DAY_PASTRO}).
\end{proof}

\begin{dfn}
 Let $(A,\dual{A},n_A,e_A,\eta,\varepsilon)$ and $(B,\dual{B},n_B,e_B,\eta,\varepsilon)$ be bidual situations in a Gray monoid $\ca{M}$, and let $w \colon A \rightarrow B$ be a morphism. We write $\dual{w}$ for the composite
 \[
\xymatrix{\dual{B} \ar[r]^-{n_A \dual{B}} & \dual{A} A \dual{B} \ar[r]^-{\dual{A} w \dual{B}} & \dual{A} B \dual{B} \ar[r]^-{\dual{A} e_B} & \dual{A}} \smash{\rlap{.}}
 \]
\end{dfn}

 Proposition~\ref{PRESERVATION_OF_DUALS_PROP} shows that postcomposition with a strong monoidal morphism preserves a bidual situation in the lax slice category. The next proposition concerns precomposition of a bidual situation. As before, we only care about the bidual $d$ of the identity morphism, but we are interested in the 2-cells which give the bidual situation.

\begin{prop}\label{MAPS_AND_DUALS_PROP}
 Let $B$ be an autonomous pseudomonoid in a Gray monoid $\ca{M}$, with right bidual $d \colon \dual{B} \rightarrow B$ of the identity. Let $(A,\dual{A},n_A,e_A,\eta,\varepsilon)$ be a bidual situation in $\ca{M}$ and let $w \colon A \rightarrow B$ be a map. Then the 2-cells
 \[
 \vcenter{\hbox{


}}
 \]
 exhibit $d.\overline{w}^{\circ}$ as a right bidual of $w$ in the sense of Proposition~\ref{DUALIZATIONS_PROP}.
\end{prop}

\begin{proof}
 This is a consequence of \cite[Lemma~4.5]{DUALS_INVERT}.
\end{proof}

 We have shown that for a strong monoidal functor $w \colon A \rightarrow B$ between autonomous pseudomonoids, both $w \cdot d$ and $d \cdot \overline{w}^{\circ}$ are right biduals of $w$ in the lax slice $\ca{M} \slash_\ell B$. From Proposition~\ref{UNIQUENESS_OF_DUALS_PROP} we know that biduals are unique up to equivalence. In the particular case of the monoidal 2-category $\ca{M} \slash_\ell B$, the equivalence has a particularly simple form: the morphism $d \cdot \overline{w}^{\circ} \rightarrow w \cdot d$ is of the form $(\id_{\dual{A}},\vartheta)$ for an invertible 2-cell $\vartheta$. The next proposition gives an explicit description of this 2-cell.

\begin{prop}\label{UNIQUENESS_LAX_SLICE_PROP}
 Let $B$ be a pseudomonoid in the Gray monoid $\ca{M}$, and let $(X,\dual{X},n,e,\eta,\varepsilon)$ be a bidual situation in $\ca{M}$. If $\pi$, $\xi$ exhibit $f \colon \dual{X} \rightarrow B$ as a right bidual of $g \colon X \rightarrow B$ in the lax slice $\ca{M} \slash_\ell B$ and $\pi^\prime$, $\xi^\prime$ exhibit $f^\prime \colon \dual{X} \rightarrow B$ as right bidual of $g$, then the 2-cell
 \[
  \vartheta=\vcenter{\hbox{


}}
 \]
 is invertible.
\end{prop}

\begin{proof}
 From Proposition~\ref{UNIQUENESS_OF_DUALS_PROP}, we know that the composite
\[
 \xymatrix{f \ar[r]^-{\cong} & u \bullet f \ar[r]^-{(n,\pi^\prime) \bullet f} & (f^\prime \bullet g) \bullet f \ar[r]^-{\cong} & f^\prime \bullet (g \bullet f) \ar[r]^-{f^\prime \bullet (e,\xi)} & f^\prime \bullet u \ar[r]^-{\cong} & f^\prime}  
\]
 is an equivalence in $\ca{M}\slash_\ell B$. A morphism $(a,\alpha)$ in the lax slice is an equivalence if and only if $a$ is an equivalence and $\alpha$ is invertible. Unraveling the definitions we find that the 2-cell
\[
\vcenter{\hbox{


}}
 \]
 is invertible. One can see that the above two 2-cells are equal by applying compatibility conditions for mates and units (note that one of the adjunctions in question is the identity map, with unit the identity 2-cell), and a triangle identity for the adjunction $j \dashv \overline{j}$. Moreover, since $B$ is naturally Frobenius (see Definition~\ref{NATURALLY_FROBENIUS_DFN} and Proposition~\ref{FROBENIUS_PROP}) we know that the mate of the associator $\alpha$ is invertible, and we conclude that the 2-cell below the dashed line is invertible. Thus the 2-cell above the dashed line is invertible, too.
 
 The 2-cell above the dashed line is the image of $\tau$ under the equivalence from Proposition~\ref{BIDUAL_BIADJUNCTION_PROP} in the case $X=A$ and $Y=I$ (recall from Proposition~\ref{NATURALLY_FROBENIUS_DFN} that $n_B=\overline{m} \cdot u$). Since equivalences reflect isomorphisms it follows that $\tau$ is invertible. 
\end{proof}

\section{Tannaka duality for pseudomonoidal comonoids}\label{QUANTUM_BIALGEBRAS_SECTION}

\subsection{The 2-category of comonads}
 So far, we considered the category of comonads as a 2-category whose hom-categories are discrete, that is, they have no nonidentity morphisms. But there is a natural definition of 2-cells for comonoids in any monoidal category, in particular for comonads, which are simply comonoids in the monoidal category $\ca{M}(B,B)$ with composition as tensor product.

\begin{dfn}
 Let $\ca{M}$ be a 2-category. Let $f,g \colon c \rightarrow c^{\prime}$ be morphisms of comonads on $B \in \ca{M}$. A 2-cell from $f$ to $g$ is a morphism $\alpha \colon c \rightarrow \id_B$ in $\ca{M}(B,B)$ such that the equation
 \[
  \vcenter{\hbox{


}}
 \]
 from $f\star f^{\prime}$ to $g\star g^{\prime}$ extends the convolution tensor product $\star$ on $\Comon(B)$ (see Proposition~\ref{CONVOLUTION_COMONAD_PROP}) to a monoidal 2-category structure on $\Comon_q(B)$ with the same associator and unit isomorphisms. If $\ca{M}$ is braided or sylleptic, then $\Comon_q(B)$ is braided or symmetric, with braiding given as in Proposition~\ref{HOM_CATEGORY_BRAIDED_PROP}.
\end{prop}

\begin{proof}
 This is simply a matter of checking that the assignment described above does indeed give a 2-functor and that the associator, the unit isomorphisms and the braiding are 2-natural transformations. We leave the details to the reader.
\end{proof}

\begin{rmk}
 Let $\ca{M}$ be a Gray monoid equivalent to $\Mod(\Vect_k)$ for a field $k$. Then the monoidal category $\bigl(\Comon(\ca{I}),\star \bigr)$ is equivalent to the category of $k$-coalgebras. A pseudomonoid in the monoidal 2-category $\bigl(\Comon_q(\ca{I}), \star \bigr)$ whose left and right unit isomorphisms are identities is precisely a dual quasi-bialgebra (see \cite{MAJID} for the notion and \cite[Example 2.3]{MCCRUDDEN_BALANCED} for the statement). A dual quasi-triangular quasi-bialgebra is precisely a braided pseudomonoid in $\Comon_q(\ca{I})$ whose left and right unit isomorphisms are identities (see \cite[Example 3.2]{MCCRUDDEN_BALANCED}).
\end{rmk}

\subsection{Monoidal structure}
 Let $\ca{M}$ be a Gray monoid. The 2-category $\Comon(B)$ is contained in the 2-category $\Comon_q(B)$. We also have a corresponding 2-category $\Map_q(\ca{M},B)$ which contains the 2-category $\Map(\ca{M},B)$ in a similar way, that is, it has the same objects and 1-cells but additional 2-cells. The strong monoidal 2-functor
 \[
  L \colon \Map(\ca{M},B) \rightarrow \Comon(B)
 \]
 extends to a strong monoidal 2-functor
 \[
 L \colon \Map_q(\ca{M},B) \rightarrow \Comon_q(B)  
 \]
 which is braided and sylleptic if $\ca{M}$ is. Thus $L$ lifts to the respective categories of (braided or symmetric) pseudomonoids in $\Map_q(\ca{M},B)$ and $\Comon_q(B)$.

\begin{dfn}
 Let $\ca{M}$ be a 2-category. The 2-category $\Map_q(\ca{M}, B)$ has objects the maps with codomain $B$ and morphisms from $f$ to $g$ the pairs $(a,\alpha)$ where $\alpha$ is an invertible 2-cell $g \cdot a \Rightarrow f$. The 2-cells $\gamma$ from $(a,\alpha)$ to $(b,\beta)$ are the 2-cells $\gamma \colon a \Rightarrow b$, subject to no further conditions (that is, there is no compatibility condition between $\alpha$, $\beta$ and $\gamma$).
\end{dfn}

\begin{prop}
 Let $\ca{M}$ be a Gray monoid and let $(B,m,u)$ be a pseudomonoid in $\ca{M}$. Then the pseudofunctor
 \[
\bullet \colon \Map(\ca{M},B) \times \Map(\ca{M},B) \rightarrow \Map(\ca{M},B)
 \]
 from Proposition~\ref{SLICE_MONOIDAL_2CAT_PROP} extends to a pseudofunctor
 \[
\bullet \colon \Map_q(\ca{M},B) \times \Map_q(\ca{M},B) \rightarrow \Map_q(\ca{M},B)\rlap{\smash{.}}
 \]
 This pseudofunctor, together with the associator and the unit isomorphisms from Proposition~\ref{SLICE_MONOIDAL_2CAT_PROP} endows $\Map_q(\ca{M},B)$ with the structure of a monoidal 2-category. If $\ca{M}$ is braided, sylleptic or symmetric, then the braiding from Proposition~\ref{SLICE_BRAIDED_PROP} and the syllepsis or symmetry from Proposition~\ref{SLICE_SYLLEPTIC_PROP} endow $\Map_q(\ca{M},B)$ with the structure of a braided, sylleptic respectively symmetric 2-category. 
\end{prop}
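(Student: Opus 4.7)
The key observation is that the 2-category $\Map_q(\ca{M},B)$ has the same 0-cells and 1-cells as $\Map(\ca{M},B)$, and the additional 2-cells are simply 2-cells of $\ca{M}$ without any compatibility requirement with the structure 2-cells $\alpha, \beta$ of the 1-cells. Thus the forgetful 2-functor $U \colon \Map_q(\ca{M},B) \rightarrow \ca{M}$, which sends $(a,\alpha)$ to $a$ and is the identity on 2-cells, is faithful on 2-cells. This is the engine that will drive the whole proof: any equation between 2-cells in $\Map_q(\ca{M},B)$ is equivalent to the corresponding equation between 2-cells in $\ca{M}$.

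First I would verify that the definition of $\bullet$ on 2-cells given in Proposition~\ref{SLICE_TENSOR_PRODUCT_PROP}, namely
\[
\phi \bullet \psi \defl (X^{\prime}\psi) \cdot (\phi Y)\rlap{,}
\]
as well as the Gray interchange 2-cell supplying the pseudofunctoriality of $\bullet$, are defined purely in terms of composition and whiskering in $\ca{M}$ and never reference any compatibility between a 2-cell and the structure maps $\alpha, \beta$ of the 1-cells it lives between. Therefore both formulas extend verbatim to $\Map_q(\ca{M},B)$, and the pseudofunctor axioms for the extension amount to equations between 2-cells in $\ca{M}$ that were already verified in Proposition~\ref{SLICE_TENSOR_PRODUCT_PROP}. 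This defines the extended pseudofunctor $\bullet$.

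For the monoidal 2-category structure, the associator and unit 1-cells from Proposition~\ref{SLICE_MONOIDAL_2CAT_PROP} are unchanged; they have identity underlying 1-cell together with a specified structural 2-cell of $\ca{M}$. To show they are pseudonatural transformations for the extended $\bullet$, one must check a naturality square for each new 2-cell, but this square lives in $\Map_q(\ca{M},B)$ and unpacks via $U$ to a naturality equation in $\ca{M}$, which holds by pseudonaturality of the original transformations in the lax slice. The modifications witnessing the monoidal 2-category axioms (pentagon, triangle, etc.) were identity modifications in $\Map(\ca{M},B)$; their defining equations are equations between 2-cells in $\ca{M}$, hence continue to hold in $\Map_q(\ca{M},B)$. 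The same remark applies to the braiding, the modifications $R$ and $S$, and the syllepsis or symmetry: each is built from the underlying data in $\ca{M}$ and each axiom reduces under $U$ to an equation already established in Propositions~\ref{SLICE_BRAIDED_PROP} and \ref{SLICE_SYLLEPTIC_PROP}.

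The main (and essentially only) subtlety will be that the notion of 2-cell in $\Map_q(\ca{M},B)$ is strictly weaker than in $\Map(\ca{M},B)$, so the relevant naturality squares in the definition of pseudonatural transformations must be verified for a larger class of 2-cells. The obstacle is more psychological than technical: because faithfulness of $U$ on 2-cells reduces every axiom to an equation in $\ca{M}$, and all such equations were established in the proofs of the cited propositions without any recourse to the compatibility condition defining $\Map(\ca{M},B)$, no new work is required. One simply notes that each proof in the monoidal and braided cases transcribes directly, using $U$ to translate each required equality to $\ca{M}$ where it has already been verified.
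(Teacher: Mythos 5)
Your proposal is correct and follows essentially the same route as the paper: the paper's proof likewise observes that the domain 2-functor $\Map_q(\ca{M},B) \rightarrow \ca{M}$ is locally fully faithful and that the 2-cell parts of all the structural data coincide with their counterparts in $\ca{M}$, so every required naturality and coherence equation reduces to one already holding in $\ca{M}$. Your extra care about checking naturality against the enlarged class of 2-cells is exactly the point the paper compresses into that one observation.
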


\begin{proof}
 The 1-cell part of these structures do satisfy the necessary conditions (see Proposition~\ref{SLICE_BRAIDED_PROP} and Proposition~\ref{SLICE_SYLLEPTIC_PROP}). The 2-cell part of the braiding and syllepsis coincide with their 2-cell part in $\ca{M}$. But the domain 2-functor
 \[
  \Map_q(\ca{M},B) \rightarrow \ca{M}
 \]
 is locally fully faithful, so the desired 2-naturality and compatibility conditions all follow from the fact that they do hold in $\ca{M}$.
\end{proof}

\begin{rmk}
 Let $\ca{M}$ be a Gray monoid equivalent to $\Mod(\ca{V})$ for some cosmos $\ca{V}$, and let $\ca{B}$ be a monoidal $\ca{V}$-category. A pseudomonoid in $\Map_q(\ca{M},\ca{B})$ whose underlying object is Cauchy complete corresponds to a monoidal $\ca{V}$-category equipped with a $\ca{V}$-functor to $\overline{\ca{B}}$ which is \emph{multiplicative} in the sense of \cite{MAJID}, that is, it is a functor equipped with a $\ca{V}$-natural isomorphism $FA \otimes FB \rightarrow F(A\otimes B)$ and an isomorphism $I \rightarrow FI$, subject to no coherence conditions (cf.\ \cite[Example 2.5]{MCCRUDDEN_BALANCED}). 
\end{rmk}

\begin{prop}\label{Q_LEFT_ADJ_PROP}
 Let $\ca{M}$ be a braided Gray monoid and let $(B,m,u)$ be a map pseudomonoid in $\ca{M}$. Then the assignment that sends a 2-cell $\phi \colon (a,\alpha) \Rightarrow (b,\beta)$ to
 \[
  L(\phi) \defl \vcenter{\hbox{


}}
 \]
 extends the 2-functor $L$ from Proposition~\ref{LEFT_BIADJOINT_2FUNCTOR_PROP} to a 2-functor
 \[
  \Map_q(\ca{M},B) \rightarrow \Comon_q(B)
 \]
 and the 2-cell $\chi$ from Proposition~\ref{LEFT_ADJ_MONOIDAL_PROP} defines a 2-natural isomorphism $\star \cdot L\times L \Rightarrow L \cdot \bullet$ which equips $L$ with the structure of a strong monoidal 2-functor. If $\ca{M}$ and $B$ are braided, then $L$ becomes braided (where the necessary modification is an identity modification, as in Proposition~\ref{LEFT_ADJ_BRAIDED_PROP}). If $\ca{M}$ is sylleptic and $B$ is symmetric, then $L$ is sylleptic.
\end{prop}

\begin{proof}
 We have to check that $L$ is a 2-functor and that $\chi$ is a 2-natural transformation. The former follows from the definition of the 2-category structure on $\Comon_q(B)$, and 2-naturality of $\chi$ follows from the equation
 \[
  \vcenter{\hbox{


}}
 \]
 The fact that $L$ is braided if $\ca{M}$ is follows directly from Proposition~\ref{LEFT_ADJ_BRAIDED_PROP} because the braiding of $\Map_q(\ca{M},B)$ is contained in the subcategory $\Map(\ca{M},B)$. Similarly, the syllepsis is contained in $\Map(\ca{M},B)$, so it gets sent to an identity by $L$, and the claim that $L$ is sylleptic follows from Proposition~\ref{LEFT_ADJ_SYLLEPTIC_PROP}.
\end{proof}

\begin{thm}
 Let $\ca{M}$ be a 2-category with Tannaka-Krein objects, and let $B \in \ca{M}$. Then the 2-functor
 \[
  L \colon  \Map_q(\ca{M},B) \rightarrow \Comon_q(B)
 \]
 from Proposition~\ref{Q_LEFT_ADJ_PROP} has a right biadjoint whose restriction to $\Comon(B)$ is the pseudofunctor $\Rep(-)$ from Proposition~\ref{REP_2_FUNCTOR_PROP}). 
 
 If $\ca{M}$ is a Gray monoid and $B$ is a map pseudomonoid, then the biadjunction lifts to the categories of pseudomonoids in $\Comon_q(B)$ and pseudomonoids in $\Map_q(\ca{M},B)$. Furthermore, if $\ca{M}$ and $B$ are braided the biadjunction lifts to braided pseudomonoids, and if $\ca{M}$ is sylleptic and $B$ is symmetric the biadjunction lifts to symmetric pseudomonoids.
\end{thm}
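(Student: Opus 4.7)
The plan is to follow the pattern of Theorem~\ref{TANNAKIAN_BIADJUNCTION_2CAT_THM}, constructing the right biadjoint via the universal property of Tannaka-Krein objects and establishing a pseudonatural equivalence
\[
\theta_{w,c} \colon \Map_q(\ca{M},B)\bigl(w,\Rep(c)\bigr) \rightarrow \Comon_q(B)\bigl(L(w),c\bigr) \smash{\rlap{.}}
\]
On objects and 1-cells, $\Rep(-)$ is defined exactly as in Proposition~\ref{REP_2_FUNCTOR_PROP}, and on 1-cells $\theta_{w,c}$ sends $(s,\sigma)$ to the same comonad morphism as in the proof of Theorem~\ref{TANNAKIAN_BIADJUNCTION_2CAT_THM}. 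The new content lies entirely at the level of 2-cells.

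First I would define $\theta_{w,c}$ on a 2-cell $\phi \colon (s,\sigma) \Rightarrow (t,\tau)$ in $\Map_q(\ca{M},B)$ by an obvious pasting: whisker $\phi$ with $\overline{w}$, paste with $\sigma^{-1}$, $\tau$, $\rho_c$, and the adjunction units/counits, so that the resulting 2-cell has the shape $L(w) \Rightarrow \id_B$ required to be a 2-cell in $\Comon_q(B)$. This is the quantum analogue of the string diagram used to define $L$ on 2-cells in Proposition~\ref{Q_LEFT_ADJ_PROP}, applied with $c$ and $v_c$ in place of $w$ and $v$. Checking that $\theta_{w,c}(\phi)$ really is a 2-cell of comonads amounts to verifying the single compatibility with the comultiplication of $c$; this reduces, after unraveling $L$, to using the coaction axioms for $\rho_c$ together with the counit law, and is a routine string-diagram computation.

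Next I would show that $\theta_{w,c}$ is fully faithful and essentially surjective on 1-cells, just as in the proof of Theorem~\ref{TANNAKIAN_BIADJUNCTION_2CAT_THM} (this part is unchanged since the object and 1-cell structures are the same). For full faithfulness on 2-cells, the key input is the universal property of the Tannaka-Krein object $\Rep(c)$: whiskering with $v_c$ gives an equivalence between 2-cells $s \Rightarrow t$ in $\ca{M}$ and morphisms of $c$-coactions from $(v_c s, \rho_c s)$ to $(v_c t, \rho_c t)$. In the quantum setting the constraint on 2-cells in $\Map_q(\ca{M},B)$ is trivial, so a 2-cell $(s,\sigma) \Rightarrow (t,\tau)$ is just a 2-cell $s \Rightarrow t$, and similarly a 2-cell in $\Comon_q(B)$ is a morphism $L(w) \Rightarrow \id_B$ satisfying a single comultiplication axiom. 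The bijection between these two sets of data is then obtained by translating through $v_c$ using the Tannaka-Krein universal property, exactly as in Remark~\ref{TK_DOUBLE_LIMIT_RMK}. The main obstacle I anticipate is bookkeeping: showing that the comultiplication compatibility on the $\Comon_q$ side corresponds precisely (and not merely up to an invertible 2-cell) to the absence of any constraint on the $\Map_q$ side, which must be traced through several pastings involving $\sigma$, $\tau$, $\rho_c$ and the mate calculus from Section~\ref{STRING_DIAGRAMS_SECTION}.

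Once the biadjunction $L \dashv \Rep(-)$ is established at the level of plain 2-categories, the monoidal, braided and symmetric lifts follow formally. By Proposition~\ref{Q_LEFT_ADJ_PROP}, $L \colon \Map_q(\ca{M},B) \rightarrow \Comon_q(B)$ is strong monoidal, braided if $\ca{M}$ and $B$ are braided, and sylleptic if $\ca{M}$ is sylleptic and $B$ is symmetric. Therefore Corollary~\ref{LIFT_TO_PSEUDOMONOIDS_COR}, Corollary~\ref{LIFT_TO_BRAIDED_PSEUDOMONOIDS_COR} and Corollary~\ref{LIFT_TO_SYMMETRIC_PSEUDOMONOIDS_COR} apply verbatim to this biadjunction, and yield the corresponding lifted biadjunctions on $\PsMon$, $\BrPsMon$ and $\SymPsMon$. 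As in the proofs of Theorems~\ref{TANNAKIAN_ADJUNCTION_MONOIDAL_THM}, \ref{TANNAKIAN_ADJUNCTION_BRAIDED_THM} and \ref{TANNAKIAN_ADJUNCTION_SYLLEPTIC_THM}, normality of $L$ and of $\Rep(-)$ (the latter chosen as in Proposition~\ref{REP_2_FUNCTOR_PROP}) ensures that we land in the hypotheses of these corollaries and that the underlying 1-cells of the unit and counit of the lifted biadjunction agree with those of the original Tannakian biadjunction.
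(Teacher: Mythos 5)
Your proposal is correct and follows essentially the same route as the paper: extend $\theta_{w,c}$ to the new 2-cells by the pasting of $\sigma^{-1}$, the given 2-cell, $\tau$ and the adjunction counit (note the paper's formula does not actually involve $\rho_c$), prove full faithfulness via the Tannaka--Krein universal property of whiskering with $v_c$, and obtain all the monoidal, braided and symmetric lifts formally from Proposition~\ref{Q_LEFT_ADJ_PROP} together with Corollaries~\ref{LIFT_TO_PSEUDOMONOIDS_COR}, \ref{LIFT_TO_BRAIDED_PSEUDOMONOIDS_COR} and \ref{LIFT_TO_SYMMETRIC_PSEUDOMONOIDS_COR}. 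The only presentational difference is that the paper first defines $\Rep(-)$ explicitly on the new 2-cells of $\Comon_q(B)$ via $T^{-1}$ before extending $\theta$, whereas you let the equivalence determine it; this is immaterial.
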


\begin{proof}
 It suffices to check that $L$ has a right biadjoint; the desired lifts are consequences of Proposition~\ref{Q_LEFT_ADJ_PROP}, Corollary~\ref{LIFT_TO_PSEUDOMONOIDS_COR}, Corollary~\ref{LIFT_TO_BRAIDED_PSEUDOMONOIDS_COR} and Corollary~\ref{LIFT_TO_SYMMETRIC_PSEUDOMONOIDS_COR} respectively.
 
 We thus have to extend $\Rep(-)$ to the new 2-cells in $\Comon_q(B)$ and we have to show that this gives the desired biadjoint. Let $\xi\colon \phi \Rightarrow \phi^{\prime} \colon c \rightarrow c^{\prime}$ be the 2-cell in $\Comon_q(B)$. With the notation from Proposition~\ref{REP_2_FUNCTOR_PROP}, the 2-cell
 \[
\vcenter{\hbox{

\begin{tikzpicture}[y=0.80pt, x=0.8pt,yscale=-1, inner sep=0pt, outer sep=0pt, every text node part/.style={font=\scriptsize} ]
\path[draw=black,line join=miter,line cap=butt,line width=0.500pt]
  (1133.8583,999.2126) .. controls (1133.8583,1016.9291) and
  (1133.8583,1016.9291) .. (1133.8583,1034.6456);
\path[draw=black,line join=miter,line cap=butt,line width=0.500pt]
  (1133.8583,1034.6456) .. controls (1123.3781,1044.2293) and
  (1117.4415,1055.0291) .. (1116.1417,1070.0787);
\path[draw=black,line join=miter,line cap=butt,line width=0.500pt]
  (1133.8583,1034.6456) .. controls (1143.3204,1038.5316) and
  (1147.9141,1045.1113) .. (1151.5748,1052.3622);
\path[fill=black] (1133.8849,1035.3916) node[circle, draw, line width=0.500pt,
  minimum width=5mm, fill=white, inner sep=0.25mm] (text5986) {$\rho_c$    };
\path[fill=black] (1152.1125,1053.6193) node[circle, draw, line width=0.500pt,
  minimum width=5mm, fill=white, inner sep=0.25mm] (text5990) {$\xi$    };
\path[fill=black] (1131.3707,996.42218) node[above right] (text5994) {$v_c$
  };
\path[fill=black] (1113.1431,1079.5038) node[above right] (text5998) {$v_c$   };

\end{tikzpicture}
}}
 \]
 defines a morphism of coactions $\rho_\phi \rightarrow \rho_{\phi^{\prime}}$. We define $\Rep(\xi)$ to be the image of that morphism of coactions under $T^{-1}$. We leave it to the reader to check that this defines a pseudofunctor (with constraints as defined in Proposition~\ref{REP_2_FUNCTOR_PROP}).
 
 It remains to check that $\Rep(-)$ is a right biadjoint of $L$. To see this, we extend the strict natural equivalence
 \[
 \theta_{w,c} \colon \Map(\ca{M},B)(w,v_c) \rightarrow \Comon(B)\bigl(L(w),c\bigr)
 \]
 from Theorem~\ref{TANNAKIAN_BIADJUNCTION_2CAT_THM} to a strict natural equivalence
 \[
 \theta_{w,c} \colon \Map_q(\ca{M},B)(w,v_c) \rightarrow \Comon_q(B)\bigl(L(w),c\bigr) \rlap{\smash{,}}
 \]
 that is, on objects and 1-cells we define $\theta_{w,c}$ as in the proof of Theorem~\ref{TANNAKIAN_BIADJUNCTION_2CAT_THM}. Given a 2-cell $\alpha \colon (s,\sigma) \Rightarrow (t,\tau)\colon w \rightarrow v_c$ in $\Map_q(\ca{M},B)$, we define
 \[
 \theta_{w,c}(\alpha) \defl \vcenter{\hbox{


}}
 \]
 From the definition of 2-cells in $\Comon_q(B)$ it follows easily that this is indeed a 2-cell $\theta_{w,c}(s,\sigma) \Rightarrow \theta_{w,c}(t,\tau)$. The naturality square of $\theta_{w,c}$ still commutes strictly (cf.\ the proof of Theorem~\ref{TANNAKIAN_BIADJUNCTION_2CAT_THM}), and we leave it to the reader to check that this defines the desired strict natural transformation.
 
 From Theorem~\ref{TANNAKIAN_BIADJUNCTION_2CAT_THM} we know that $\theta_{w,c}$ is surjective on objects. Hence it is an equivalence if and only if it is fully faithful. Faithfulness follows from the fact that whiskering with $v_c$ is faithful (cf.\ Remark~\ref{TK_DOUBLE_LIMIT_RMK}). If $\xi \colon \theta_{w,c}(s,\sigma) \Rightarrow \theta_{w,c}(t,\tau)$ is a 2-cell in $\Comon_q(B)$, then
 \[
  \vcenter{\hbox{


}}
 \]
 is a morphism of coactions from $\rho_c \cdot s$ to $\rho_c \cdot t$. Since the functor $T$ from Remark~\ref{TK_DOUBLE_LIMIT_RMK} is fully faitfhul, this morphism must be of the form $v_c \cdot \alpha$ for a unique 2-cell $\alpha \colon s \Rightarrow t$. It follows immediately from the definition of $\theta$ that $\theta_{w,c}(\alpha)=\xi$.
\end{proof}

\bibliographystyle{amsalpha}
\bibliography{fttfinal}

\providecommand{\bysame}{\leavevmode\hbox to3em{\hrulefill}\thinspace}
\providecommand{\MR}{\relax\ifhmode\unskip\space\fi MR }
\providecommand{\MRhref}[2]{%
  \href{http://www.ams.org/mathscinet-getitem?mr=#1}{#2}
}
\providecommand{\href}[2]{#2}
\begin{thebibliography}{LFSW11}

\bibitem[AT69]{APPLEGATE_TIERNEY}
H.~Appelgate and M.~Tierney, \emph{Categories with models}, Sem. on {T}riples
  and {C}ategorical {H}omology {T}heory ({ETH}, {Z}{\"u}rich, 1966/67),
  Springer, Berlin, 1969, pp.~156--244. \MR{0242916 (39 \#4243)}

\bibitem[BD97]{BAEZ_DOLAN}
John Baez and James Dolan, \emph{Title: Higher-dimensional algebra {I}{I}{I}:
  n-categories and the algebra of opetopes}, preprint,
  \href{http://arxiv.org/abs/q-alg/9702014v1}{\tt arXiv:\ 9702014v1\
  [math.QA]}, 1997.

\bibitem[B{\'e}n73]{BENABOU}
Jean B{\'e}nabou, \emph{Les distributeurs}, Rapport No.~33, S{\'e}minaires de
  Math.\ Pure, Univ.\ Catholique de Louvain, 1973.

\bibitem[BLV10]{BRUGUIERES_LACK_VIRELIZIER}
Alain Brugui{\`e}res, Stephen Lack, and Alexis Virelizier, \emph{{H}opf monads
  on monoidal categories}, preprint,
  \href{http://arxiv.org/abs/1003.1920v4}{\tt arXiv:\ 1003.1920v4\ [math.QA]},
  2010.

\bibitem[Bor94]{BORCEUX}
Francis Borceux, \emph{Handbook of categorical algebra. 1}, Encyclopedia of
  Mathematics and its Applications, vol.~50, Cambridge University Press,
  Cambridge, 1994, Basic category theory. \MR{MR1291599 (96g:18001a)}

\bibitem[Bru04]{BRUGUIERES}
Alain Brugui{\`e}res, \emph{On a tannakian theorem due to {N}ori}, unpublished,
  available on
  \href{http://www.math.univ-montp2.fr/~bruguieres/}{http://www.math.univ-montp2.fr/~bruguieres/}
  (July 10, 2011), 2004.

\bibitem[BV07]{BRUGUIERES_VIRELIZIER}
Alain Brugui{\`e}res and Alexis Virelizier, \emph{Hopf monads}, Adv. Math.
  \textbf{215} (2007), no.~2, 679--733. \MR{2355605 (2009b:18006)}

\bibitem[CLS10]{CHIKHLADZE_LACK_STREET}
Dimitri Chikhladze, Stephen Lack, and Ross Street, \emph{{H}opf monoidal
  comonads}, preprint, \href{http://arxiv.org/abs/1002.1122v2}{\tt arXiv:\
  1002.1122v2\ [math.CT]}, 2010.

\bibitem[Cru08]{CRUTTWELL}
G.~S.~H. Cruttwell, \emph{{N}ormed spaces and the change of base for enriched
  categories}, Ph.D. thesis, Dalhousie University, 2008, Available on
  \href{http://geoff.reluctantm.com/publications/GThesis.pdf}{http://geoff.reluctantm.com/publications/GThesis.pdf}
  (October 21, 2011).

\bibitem[Day77]{DAY_LINEAR}
B.~J. Day, \emph{Linear monads}, Bull. Austral. Math. Soc. \textbf{17} (1977),
  no.~2, 177--192. \MR{0466260 (57 \#6140)}

\bibitem[Day96]{DAY}
Brian~J. Day, \emph{Enriched {T}annaka reconstruction}, J. Pure Appl. Algebra
  \textbf{108} (1996), no.~1, 17--22. \MR{MR1382240 (97d:18008)}

\bibitem[Del90]{DELIGNE}
P.~Deligne, \emph{Cat\'egories tannakiennes}, The {G}rothendieck {F}estschrift,
  {V}ol.\ {II}, Progr. Math., vol.~87, Birkh\"auser Boston, Boston, MA, 1990,
  pp.~111--195. \MR{MR1106898 (92d:14002)}

\bibitem[DM82]{DELIGNE_MILNE}
Pierre Deligne and James~S. Milne, \emph{Tannakian categories}, Hodge cycles,
  motives, and {S}himura varieties, Lecture Notes in Mathematics, vol. 900,
  Springer-Verlag, Berlin, 1982, pp.~101--228. \MR{654325 (84m:14046)}

\bibitem[DMS03]{DUALIZATIONS_ANTIPODES}
Brian Day, Paddy McCrudden, and Ross Street, \emph{Dualizations and antipodes},
  Appl. Categ. Structures \textbf{11} (2003), no.~3, 229--260. \MR{1990034
  (2004b:18013)}

\bibitem[DP08]{DAY_PASTRO}
Brian Day and Craig Pastro, \emph{Note on {F}robenius monoidal functors}, New
  York J. Math. \textbf{14} (2008), 733--742. \MR{2465800 (2009k:18001)}

\bibitem[DS97]{DAY_STREET}
Brian Day and Ross Street, \emph{Monoidal bicategories and {H}opf algebroids},
  Adv. Math. \textbf{129} (1997), no.~1, 99--157. \MR{MR1458415 (99f:18013)}

\bibitem[Dub68]{DUBUC_TRIANGLE}
Eduardo Dubuc, \emph{Adjoint triangles}, Reports of the {M}idwest {C}ategory
  {S}eminar, {II}, Springer, Berlin, 1968, pp.~69--91. \MR{0233864 (38 \#2185)}

\bibitem[Dub70]{DUBUC}
Eduardo~J. Dubuc, \emph{Kan extensions in enriched category theory}, Lecture
  Notes in Mathematics, Vol. 145, Springer-Verlag, Berlin, 1970. \MR{MR0280560
  (43 \#6280)}

\bibitem[EK66]{EILENBERG_KELLY}
Samuel Eilenberg and G.~Max Kelly, \emph{Closed categories}, Proc. {C}onf.
  {C}ategorical {A}lgebra ({L}a {J}olla, {C}alif., 1965), Springer, New York,
  1966, pp.~421--562. \MR{0225841 (37 \#1432)}

\bibitem[FL82]{FONTAINE_LAFFAILLE}
Jean-Marc Fontaine and Guy Laffaille, \emph{Construction de repr\'esentations
  {$p$}-adiques}, Ann. Sci. \'Ecole Norm. Sup. (4) \textbf{15} (1982), no.~4,
  547--608 (1983). \MR{MR707328 (85c:14028)}

\bibitem[GP97]{GORDON_POWER}
R.~Gordon and A.~J. Power, \emph{Enrichment through variation}, J. Pure Appl.
  Algebra \textbf{120} (1997), no.~2, 167--185. \MR{MR1466618 (98i:18004)}

\bibitem[GPS95]{GORDON_POWER_STREET}
R.~Gordon, A.~J. Power, and Ross Street, \emph{Coherence for tricategories},
  Mem. Amer. Math. Soc. \textbf{117} (1995), no.~558, vi+81. \MR{1261589
  (96j:18002)}

\bibitem[Gur06]{GURSKI}
Nick Gurski, \emph{{A}n algebraic theory of tricategories}, Ph.D. thesis,
  University of Chicago, 2006, Available on
  \href{http://www.math.yale.edu/~mg622/tricats.pdf}{http://www.math.yale.edu/~mg622/tricats.pdf}
  (October 12, 2011).

\bibitem[Hai08]{PHUNG_HO_HAI}
Ph{\`u}ng~H{\^o} Hai, \emph{Tannaka-{K}rein duality for {H}opf algebroids},
  Israel J. Math. \textbf{167} (2008), 193--225. \MR{2448024 (2009g:16059)}

\bibitem[Hov04]{HOVEY}
Mark Hovey, \emph{Homotopy theory of comodules over a {H}opf algebroid},
  Homotopy theory: relations with algebraic geometry, group cohomology, and
  algebraic {$K$}-theory, Contemp. Math., vol. 346, Amer. Math. Soc.,
  Providence, RI, 2004, pp.~261--304. \MR{2066503 (2005f:18011)}

\bibitem[Joh89]{JOHNSEN}
S.~R. Johnsen, \emph{Small {C}auchy completions}, J. Pure Appl. Algebra
  \textbf{62} (1989), no.~1, 35--45. \MR{MR1026873 (90j:18007)}

\bibitem[JS91]{JOYAL_STREET_TENSOR}
Andr{\'e} Joyal and Ross Street, \emph{The geometry of tensor calculus. {I}},
  Adv. Math. \textbf{88} (1991), no.~1, 55--112. \MR{1113284 (92d:18011)}

\bibitem[Kel74]{KELLY_DOCTRINAL}
G.~M. Kelly, \emph{Doctrinal adjunction}, Category {S}eminar ({P}roc. {S}em.,
  {S}ydney, 1972/1973), Springer, Berlin, 1974, pp.~257--280. Lecture Notes in
  Math., Vol. 420. \MR{0360749 (50 \#13196)}

\bibitem[Kel82]{KELLY_FINLIM}
\bysame, \emph{Structures defined by finite limits in the enriched context.
  {I}}, Cahiers Topologie G\'eom. Diff\'erentielle \textbf{23} (1982), no.~1,
  3--42, Third Colloquium on Categories, Part VI (Amiens, 1980). \MR{MR648793
  (83h:18007)}

\bibitem[Kel05a]{KELLY_BASIC}
\bysame, \emph{Basic concepts of enriched category theory}, Repr. Theory Appl.
  Categ. (2005), no.~10, vi+137 pp. (electronic), Reprint of the 1982 original
  [Cambridge Univ. Press, Cambridge; MR0651714]. \MR{MR2177301}

\bibitem[Kel05b]{KELLY_COSMOS}
\bysame, \emph{On the operads of {J}. {P}. {M}ay}, Repr. Theory Appl. Categ.
  (2005), no.~13, 1--13 (electronic). \MR{MR2177746 (2006f:18005)}

\bibitem[KS74]{KELLY_STREET}
G.~M. Kelly and Ross Street, \emph{Review of the elements of {$2$}-categories},
  Category {S}eminar ({P}roc. {S}em., {S}ydney, 1972/1973), Springer, Berlin,
  1974, pp.~75--103. Lecture Notes in Math., Vol. 420. \MR{MR0357542 (50
  \#10010)}

\bibitem[Lac00]{LACK_PSEUDOMONADS}
Stephen Lack, \emph{A coherent approach to pseudomonads}, Adv. Math.
  \textbf{152} (2000), no.~2, 179--202. \MR{1764104 (2001f:18017)}

\bibitem[Lac02]{LACK_2CAT}
\bysame, \emph{A {Q}uillen model structure for 2-categories}, $K$-Theory
  \textbf{26} (2002), no.~2, 171--205. \MR{1931220 (2003m:55028)}

\bibitem[Lac10a]{LACK_COMPANION}
\bysame, \emph{A 2-categories companion}, Towards higher categories, IMA Vol.
  Math. Appl., vol. 152, Springer, New York, 2010, pp.~105--191. \MR{2664622
  (2011d:18012)}

\bibitem[Lac10b]{LACK_GRAY}
\bysame, \emph{A {Q}uillen model structure for {G}ray-categories}, preprint,
  \href{http://arxiv.org/abs/1001.2366v2}{\tt arXiv:\ 1001.2366v2\ [math.CT]},
  2010.

\bibitem[Law73]{LAWVERE_METRIC}
F.~William Lawvere, \emph{Metric spaces, generalized logic, and closed
  categories}, Rend. Sem. Mat. Fis. Milano \textbf{43} (1973), 135--166 (1974).
  \MR{MR0352214 (50 \#4701)}

\bibitem[LF09]{LOPEZ_FRANCO}
Ignacio L{\'o}pez~Franco, \emph{{A}utonomous pseudomonoids}, Ph.D. thesis,
  University of Cambridge, 2009, Available on
  \href{http://www.dspace.cam.ac.uk/handle/1810/219201}{http://www.dspace.cam.ac.uk/handle/1810/219201}
  (October 28, 2011).

\bibitem[LFSW11]{DUALS_INVERT}
Ignacio L{\'o}pez~Franco, Ross Street, and Richard Wood, \emph{Duals invert},
  Applied Categorical Structures \textbf{19} (2011), 321--361,
  10.1007/s10485-009-9210-7.

\bibitem[Maj92]{MAJID}
Shahn Majid, \emph{Tannaka-{K}re\u\i n theorem for quasi-{H}opf algebras and
  other results}, Deformation theory and quantum groups with applications to
  mathematical physics ({A}mherst, {MA}, 1990), Contemp. Math., vol. 134, Amer.
  Math. Soc., Providence, RI, 1992, pp.~219--232. \MR{1187289 (93k:16073)}

\bibitem[McC00a]{MCCRUDDEN_BALANCED}
Paddy McCrudden, \emph{Balanced coalgebroids}, Theory Appl. Categ. \textbf{7}
  (2000), No. 6, 71--147 (electronic). \MR{MR1764504 (2001f:18018)}

\bibitem[McC00b]{MCCRUDDEN_REPR_COALGEBROIDS}
\bysame, \emph{Categories of representations of coalgebroids}, Adv. Math.
  \textbf{154} (2000), no.~2, 299--332. \MR{MR1784678 (2002b:18008)}

\bibitem[McC02]{MCCRUDDEN_MASCHKE}
\bysame, \emph{Tannaka duality for {M}aschkean categories}, J. Pure Appl.
  Algebra \textbf{168} (2002), no.~2-3, 265--307, Category theory 1999
  (Coimbra). \MR{MR1887160 (2003d:18012)}

\bibitem[Par81]{PAREIGIS_DG}
Bodo Pareigis, \emph{A noncommutative noncocommutative {H}opf algebra in
  ``nature''}, J. Algebra \textbf{70} (1981), no.~2, 356--374. \MR{623814
  (83g:16021)}

\bibitem[Par96]{PAREIGIS}
\bysame, \emph{Reconstruction of hidden symmetries}, J. Algebra \textbf{183}
  (1996), no.~1, 90--154. \MR{MR1397390 (98h:18009)}

\bibitem[Pen71]{PENROSE}
Roger Penrose, \emph{Applications of negative dimensional tensors},
  Combinatorial {M}athematics and its {A}pplications ({P}roc. {C}onf.,
  {O}xford, 1969), Academic Press, London, 1971, pp.~221--244. \MR{0281657 (43
  \#7372)}

\bibitem[Saa72]{SAAVEDRA}
Neantro Saavedra{\ }Rivano, \emph{Cat\'egories {T}annakiennes}, Lecture Notes
  in Mathematics, Vol. 265, Springer-Verlag, Berlin, 1972. \MR{MR0338002 (49
  \#2769)}

\bibitem[Ser68]{SERRE}
Jean-Pierre Serre, \emph{Corps locaux}, Hermann, Paris, 1968, Troisi{\`e}me
  {\'e}dition, Publications de l'Universit{\'e} de Nancago, No. VIII.
  \MR{MR0354618 (50 \#7096)}

\bibitem[Shu10]{SHULMAN}
Michael Shulman, \emph{{C}onstructing symmetric monoidal bicategories},
  preprint, \href{http://arxiv.org/abs/1004.0993v1}{\tt arXiv:\ 1004.0993v1\
  [math.CT]}, 2010.

\bibitem[Str72]{STREET_FTM}
Ross Street, \emph{The formal theory of monads}, J. Pure Appl. Algebra
  \textbf{2} (1972), no.~2, 149--168. \MR{MR0299653 (45 \#8701)}

\bibitem[Str83]{STREET_ABSOLUTE}
\bysame, \emph{Absolute colimits in enriched categories}, Cahiers Topologie
  G\'eom. Dif\-f\'eren\-tielle \textbf{24} (1983), no.~4, 377--379.
  \MR{MR749468 (85i:18001)}

\bibitem[Str96]{STREET_STRING_DIAGRAMS}
\bysame, \emph{Categorical structures}, Handbook of algebra, {V}ol.\ 1,
  North-Holland, Amsterdam, 1996, pp.~529--577. \MR{1421811 (97j:18007)}

\bibitem[Str04]{STREET_FROBENIUS}
\bysame, \emph{Frobenius monads and pseudomonoids}, J. Math. Phys. \textbf{45}
  (2004), no.~10, 3930--3948. \MR{2095680 (2005h:18026)}

\bibitem[Str07]{STREET_QUANTUM_GROUPS}
\bysame, \emph{Quantum groups}, Australian Mathematical Society Lecture Series,
  vol.~19, Cambridge University Press, Cambridge, 2007, A path to current
  algebra. \MR{MR2294803 (2008a:16061)}

\bibitem[Szl09]{SZLACHANYI}
Korn{\'e}l Szlach{\'a}nyi, \emph{{F}iber functors, monoidal sites and {T}annaka
  duality for bialgebroids}, preprint,
  \href{http://arxiv.org/abs/0907.1578v1}{\tt arXiv:\ 0907.1578v1\ [math.QA]},
  2009.

\bibitem[Ver11]{VERITY}
Dominic Verity, \emph{{E}nriched categories, internal categories and change of
  base}, Repr. Theory Appl. Categ. (2011), no.~20, 266 pp. (electronic),
  Originally published as: Ph.D. thesis, Cambridge University, 1992.

\bibitem[Wed04]{WEDHORN}
Torsten Wedhorn, \emph{On {T}annakian duality over valuation rings}, J. Algebra
  \textbf{282} (2004), no.~2, 575--609. \MR{MR2101076 (2005j:18007)}

\bibitem[Win84]{WINTENBERGER}
Jean-Pierre Wintenberger, \emph{Un scindage de la filtration de {H}odge pour
  certaines vari\'et\'es alg\'ebriques sur les corps locaux}, Ann. of Math. (2)
  \textbf{119} (1984), no.~3, 511--548. \MR{MR744862 (86k:14015)}

\end{thebibliography}

\end{document}